\documentclass[12pt]{amsart}
\usepackage{amssymb,amsmath,amsthm,verbatim}
\usepackage{tikz,xypic}
\usepackage{bbm} 

\usepackage{tabularx}
\usepackage{graphicx}
\usetikzlibrary{patterns}
\usetikzlibrary{arrows.meta}
\usetikzlibrary{arrows}
\usepackage{amscd}
\usepackage{parskip}
\usepackage{enumitem}
\usepackage{comment}  
\usepackage{cancel} 
\usepackage{mathtools} 
\usepackage{cleveref} 

\usepackage[enableskew,vcentermath]{youngtab} 
\usepackage{ytableau} 
\usepackage{tcolorbox}
\usepackage[all]{xy} 
\usepackage{multicol}
\usepackage{verbatim}

\xymatrixcolsep{1.9pc}
\xymatrixrowsep{1.9pc}

\newlength{\cellsize}
\newcommand\tableau[1]{
\vcenter{
\let\\=\cr
\baselineskip=-16000pt
\lineskiplimit=16000pt
\lineskip=0pt
\halign{&\tableaucell{##}\cr#1\crcr}}}

\newcommand{\tableaucell}[1]{{%
\def \arg{#1}\def \void{}%
\ifx \void \arg
\vbox to \cellsize{\vfil \hrule width \cellsize height 0pt}%
\else
\unitlength=\cellsize
\begin{picture}(1,1)
\put(0,0){\makebox(1,1)[c]{$#1$}}
\put(0,0){\line(1,0){1}}
\put(0,1){\line(1,0){1}}
\put(0,0){\line(0,1){1}}
\put(1,0){\line(0,1){1}}
\end{picture}%
\fi}}

\newcommand{\ribbon}[1]{%
\begin{tikzpicture}[scale=0.4, baseline=(current bounding box.center)]
  \foreach \x/\y in {#1} {
    \draw (\x,\y) rectangle ++(1,1);
  }
\end{tikzpicture}
}  

\newcommand{\old}[1]{}
\theoremstyle{plain}
\newtheorem{thm}{Theorem}[section]
\newtheorem{lem}[thm]{Lemma}
\newtheorem{conj}[thm]{Conjecture}
\newtheorem{cor}[thm]{Corollary}

\newtheorem{prop}[thm]{Proposition}
\theoremstyle{definition}
\newtheorem{defn}[thm]{Definition}
\newtheorem{remark}[thm]{Remark}
\newtheorem{ex}[thm]{Example}
\newtheorem{rk}[thm]{Remark}

\def\reals{{\mathbb R}}
\def\CC{{\mathbb C}}
\def\FF{{\mathbb F}}

\def\cS{{\mathcal{S}}}
\def\cA{{\mathcal{A}}} 
\def\cM{{\mathcal{M}}} 
\def\cF{{\mathcal{F}}} 
\def\minNBC{{\rm{NBC}^+}} 
\def\fS{\mathfrak{S}} 
\def\cB{{\mathcal{B}}} 
\def\Sw{{\mathrm{Swap}}} 

\def\one{{\mathbbm{1}}}

\def\a{{\alpha}}

\def\rank{{\operatorname{rank}}}

\def\Ind{{\operatorname{Ind}}}

\def\ch{{\operatorname{ch\,}}}  
 
\def\Rib{{\operatorname{Rib}}} 
\def\sgn{{\operatorname{sgn}}}  
\def\Col{{\operatorname{Col}}} 
\def\Row{{\operatorname{Row}}} 

\title[
Stability]{
Stability and ribbon bases 
 for the  rank-selected homology of geometric lattices
 } 

\author{Patricia Hersh} 
\address{Department of Mathematics, University of Oregon, Eugene, OR 97403}
\email{plhersh@uoregon.edu}

\author{Sheila Sundaram}\address{Department of Mathematics, University of Minnesota, Minneapolis, MN 55455}
\email{shsund@umn.edu}

\thanks{This material is based upon work supported by the National Science Foundation under Grant No. DMS-1929284 while the authors were in residence at the Institute for Computational and Experimental Research in Mathematics in Providence, RI, during the Categorification and Computation in Algebraic Combinatorics  program.
This work was also supported by NSF-RTG grant DMS-2039316.} 

\subjclass[2020]{05E18, 05E45, 05B35, 05E10, 20C30, 05A18, 06A07, 55U15}

\begin{document}

\begin{abstract}
This paper analyzes  the representation theoretic  stability, in the sense of Thomas Church  and Benson Farb,  of the rank-selected homology of the Boolean lattice and the partition lattice, proving sharp uniform representation stability bounds in both cases.  It proves a conjecture of the first author and Reiner by giving the  sharp stability bound 
for general rank sets for the partition lattice.  Along the way, a new homology basis sharing useful features with the polytabloid basis for Specht modules is introduced for the rank-selected homology and for the Whitney homology of any geometric lattice, resolving an old open question of Bj\"orner. 
These bases give a  matroid 
theoretic analogue of Specht modules.  
\end{abstract}

\maketitle


\section{Introduction}

Let $\fS_n$ denote the symmetric group on $n$ letters.  Let $\Pi_n$ denote the \emph{partition lattice}, that is, the partially ordered set (poset) consisting of the set partitions of $\{ 1,2,\dots,n\} $ ordered by reverse refinement.  This is the intersection lattice of the type A braid arrangement, namely the arrangement of hyperplanes $x_i=x_j$ for $1\le i < j \le n$ in $\CC^n$.  See e.g. \cite{EC1}.

 Much of this paper is devoted to proving a sharp representation stability bound (in the sense of \cite{Church, Church-Farb, CEF, Wilson})  for the rank-selected homology  of  the partition lattice $\Pi_n$. 
In our effort  to better understand these $\fS_n$-representations, we were led to construct new homology bases for all geometric lattices; specifically, these are  bases  sharing enough of the structure of the polytabloid bases for Specht modules  to allow the   transfer of techniques usually reserved for the study of individual Specht modules to our setting.  

In short, this effort led to  
solutions to the following two  
open problems from the literature.
\begin{enumerate}
    \item The determination of a sharp  stability bound for the $\fS_n$-homology representation  $\beta_S(\Pi_n)$ of the rank-selected subposet $\Pi_n^S$ of the partition lattice $\Pi_n$,  as well as for the corresponding  Whitney homology module $W\!H_S(\Pi_n)$, for any fixed $S$.  This is Theorem~\ref{main-partition-stability-theorem}, and settles a conjecture  of the first author and Reiner \cite[Conjecture 11.3]{Hersh-Reiner}.
    
\item The construction of a new ribbon basis for the rank-selected homology (as well as the rank-selected Whitney homology)  of any geometric lattice.  This  basis  resolves  Question 7.6.5(e) in Bj\"orner's survey paper \cite{Bj-hom-matroid}, a question dating back to 1992, and leads to a  graphical/matroidal  analogue of Specht modules.  This result is in  Theorem~\ref{thm:homology-basis-EL-lab} for the rank-selected homology of a  geometric lattice, and Theorem~\ref{thm:min-label-gives-NBC-homology-basis} for its rank-selected Whitney homology.
\end{enumerate}

\vskip 5pt

We also give a solution to the analogue of (1) for the Boolean lattice $B_n$, in Theorem~\ref{thm:Bool-stability}. 
Multiplicity stability  for the rank-selected homology of the Boolean lattice was already shown  in \cite[Section 7.3, Theorem 7.6]{Church-Farb}, a paper which focused more on exhibiting stability in many different contexts  rather than on obtaining sharp uniform stability bounds as we do here. 
We refer the reader to Remark~\ref{ex:not-rep-stable} for a sequence of $\fS_n$-posets whose  rank-selected homology modules are not even multiplicity representation stable.

In the special case of the rank set $S=\{ 1,2,\dots ,i\} $, the  Whitney homology $W\!H_{\{1,2,\dots ,i\}}(P)$ of any  geometric lattice $P$  is well known to be isomorphic to the $i$-th graded piece of the Orlik-Solomon algebra (see \cite{Bj-hom-matroid, Orlik-Solomon}) and to the $i$-th cohomology of the complement of the complexified arrangement when $P$ is the intersection lattice of a hyperplane arrangement. 
In this case, the  map sending the monomial generators (indexed by NBC independent sets) for the $i$-th graded piece of the Orlik-Solomon algebra to the elements of our  ribbon basis is an isomorphism; moreover, when there is a group action on the poset, this isomorphism  is group-equivariant.
It turns out that
Orlik and Solomon proved a closely related precursor to this 
isomorphism  in Theorem 3.7 of \cite{Orlik-Solomon}.  
Specifically, they established an isomorphism of algebras  between the Orlik-Solomon algebra and algebras generated by bases to which our  ribbon bases specialize when   $S=\{ 1,2,\dots ,i\}$.

The homology groups of the partition lattice and its $\fS_n$-invariant subposets  yield  $\fS_n$-modules that have been studied extensively by many authors \cite{Stanley-aspects},  \cite{Hanlon-fixed},  \cite{Barcelo},  \cite{Sundaram}, \cite{Wachs-cohomology}.  The family of rank-selected subposets, first considered in \cite{Stanley-aspects}, and then in \cite{Hanlon-fixed} and \cite{Sundaram}, also has interesting $\fS_n$-homology,  made elusive by the complicated  interaction of rank-selection with   the product operation on posets.  
In addition, because the stabilizer subgroups are generally wreath products, the  plethysm operation is inextricably linked with the study of symmetric group actions in the partition lattice.  In spite of the existence of a plethystic  formula \cite[Theorem 2.13]{Sundaram} for  the rank-selected homology module,    
even ascertaining the multiplicity of the trivial representation has proven quite subtle (see e.g. \cite{Stanley-aspects}, \cite{Sundaram}, \cite{Hanlon-Hersh} and  \cite{Hersh}). Work of Lehrer, Orlik, and Solomon \cite{Lehrer-Solomon}, \cite{Orlik-Solomon},  shows that in the special case of the bottom $i$ consecutive ranks,  the Whitney homology  captures the $\fS_n$-equivariant cohomology of the configuration space of $n$ distinct points in the plane. These Whitney homology modules, up to a sign twist, coincide with the $\fS_n$-module structure of an exterior  algebra over the multilinear part of the free Lie algebra on $n$ generators (see \cite{Lehrer-Solomon},  \cite{Barcelo-Bergeron}, \cite[Theorem 1.7]{Sundaram}). 

The rank-selected subposets of a geometric lattice  are in essence the matroid theoretic analogue of the sets of partial flags with a prescribed list of dimensions in a vector space. One could thereby argue that results about rank-selected homology of geometric lattices are the matroid theoretic analogue of results about the homology of partial flag varieties.  
This analogy is especially clear in the case of  the poset of subspaces of a finite-dimensional   vector space (which is a finite poset when one works over a finite field). Here the elements at rank $i$ are exactly the $i$-dimensional subspaces, and the chains of comparable elements are exactly the flags.  More generally, the atoms of a geometric lattice for a realizable matroid are the equivalence classes of nonzero vectors, up 
to rescaling, in a vector configuration giving a realization of the matroid; meanwhile the elements of rank $i$ in the same geometric lattice are the maximal  collections of nonzero vectors whose span is $i$-dimensional (in other words, maximal collections of atoms whose join has rank $i$), again yielding a strong analogy with partial flags.  

Using our ribbon bases, we show how the rank-selected homology of a geometric lattice shares a surprisingly large amount of structure with the traditional Specht modules.  In this setting,  the numbers $\{ 1,2,\dots ,n\} $ appearing  in a standard Young tableau of size $n$ are replaced by a set of atoms of the geometric lattice comprising a basis for the associated   matroid.  
In the case of the partition lattice $\Pi_n$, namely the geometric lattice of rank $n-1$ arising as the  lattice of flats for the graphic matroid given by the complete graph, the entries filling the individual boxes of a Young diagram of size $n-1$  are sets $\{ i,j\} $ with $1\le i < j\le n$. Equivalently, they are edges in a complete graph $K_n$ with $n$ labeled vertices.  In this case, the set of entries filling a Young diagram is a matroid basis, consisting of a set of graph edges comprising a spanning tree of $K_n$.
The notion of representation stability and the closely related concept of FI-module were introduced in a series of papers (see e.g.\  \cite{Church},  \cite{Church-Farb}, \cite{CEF}) and \cite{Wilson}. 
In type A, this is  a notion of stability for a sequence  of symmetric group representations.
See Section ~\ref{bg-stability} for definitions. 

Our point of entry to representation stability was the following result.  

\begin{thm}[Church-Farb]
 The $\fS_n$-module structure for the cohomology group $H^i $  of the configuration space ${\rm Conf}_n({\reals}^{2})$ 
 of  $n$ distinct labeled  points in the  plane  stabilizes for $n\ge 4i$, in the sense described  in Definition ~\ref{stable-def}.  
\end{thm}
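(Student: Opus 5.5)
We follow the original Church--Farb argument via FI-modules and generation degree, combined with the Lehrer--Solomon description of $H^i(\mathrm{Conf}_n(\reals^2))$. By the results of Lehrer and Solomon recalled above, $H^i(\mathrm{Conf}_n(\reals^2))$ is $\fS_n$-isomorphic to the Whitney homology $W\!H_{\{1,\dots,i\}}(\Pi_n)$. Equivalently, it is the degree-$i$ piece of the Orlik--Solomon algebra of the braid arrangement, which is generated by classes $\omega_{jk}$ ($1\le j<k\le n$) subject to the Arnold relations. This makes $n\mapsto H^i(\mathrm{Conf}_n(\reals^2))$ into an FI-module $V$: an injection $[m]\hookrightarrow[n]$ acts by forgetting points, which on the algebra sends $\omega_{jk}\mapsto\omega_{f(j)f(k)}$. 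The plan is to bound the generation degree and the relation degree of $V$, and then invoke the Church--Ellenberg--Farb theorem. That theorem says that an FI-module generated in degree $\le d$ with relations in degree $\le r$ is uniformly representation stable (Definition~\ref{stable-def}) for $n\ge d+r$.

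\textbf{Generation degree.} Every element of $V_n$ is a linear combination of monomials $\omega_{j_1k_1}\cdots\omega_{j_ik_i}$. Each such monomial involves at most $2i$ distinct indices, so it is the image of a monomial in $V_{2i}$ under an injection $[2i]\hookrightarrow[n]$. Hence $V$ is generated in degree $\le 2i$.

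\textbf{Relation degree.} The Orlik--Solomon presentation is a quotient of the exterior algebra on the $\omega_{jk}$ by the Arnold relations $\omega_{ab}\omega_{bc}+\omega_{bc}\omega_{ca}+\omega_{ca}\omega_{ab}$. In degree $i$, the relations are spanned by products of one Arnold relation with $i-2$ further generators. Each such product involves at most $3+2(i-2)=2i-1\le 2i$ indices, and the exterior-algebra relations likewise involve at most $2i$ indices. Thus $V$ is a quotient of the free FI-module $M(W_{2i})$, which is generated in degree $2i$, by a submodule generated in degree $\le 2i$. Consequently the kernel of $M(\text{gens})\to V$ is generated in degree $\le 2i$, so $r\le 2i$.

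\textbf{Conclusion and main obstacle.} The criterion then gives stability for $n\ge d+r=4i$. The main obstacle is the relation-degree step. One must check carefully that the kernel of the free FI-module surjection is generated by the images of relations supported on at most $2i$ points. This requires knowing that the ideal of Arnold relations, in each degree, is spanned by FI-translates of relations on small supports, rather than merely being generated by them as an ideal in each $V_n$. I would handle this by working with the free commutative-graded algebra as an FI-module, whose degree-$i$ part is $M(\cdot)$-free and generated in degree $2i$. I would then observe that the ideal's degree-$i$ part is spanned by products of one Arnold relation and monomials, each supported on $\le 2i$ indices, and is therefore an FI-submodule generated in degree $\le 2i$.
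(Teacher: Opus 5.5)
Your argument is correct and gives $n\ge 4i$, but it follows a different route from the one the paper's machinery supports. The paper itself gives no proof of this statement: it quotes it from Church--Farb as its starting point.

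You present $V_n=H^i(\mathrm{Conf}_n(\reals^2))$ by generators and relations and invoke the Church--Ellenberg--Farb theorem for finitely presented FI-modules, so bounding the relation degree becomes the main work. In the paper's framework relations never enter. Use $H^i\cong W\!H_{\{1,\dots,i\}}(\Pi_n)$ and split over the types $(\mu,1^{n-|\mu|})$ of rank-$i$ set partitions. Each piece is then $\Ind_{\fS_{n-|\mu|}\times\fS_{|\mu|}}^{\fS_n}(\one\otimes\widehat{W\!H})$ (Definition~\ref{def:essential-part}, Remark~\ref{WH-is-quasi-free}). Lemma~\ref{lem:quasi-free-implies-increasing-mult} therefore gives \emph{sharp} stability at $\max(|\lambda|+\lambda_1)$, taken over the irreducibles $\cS^\lambda$ in the essential parts. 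By Lemma~\ref{lem:equiv-Statement}, $|\lambda|=|\mu|\le 2i$, so this is at most $4i$ immediately. The first-row analysis (Hersh--Reiner, or Theorem~\ref{main-partition-stability-theorem} with $S=\{1,\dots,i\}$) lowers it to the sharp value $3i+1$.

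Your approach is more general, since it applies to any finitely presented FI-module. However, it only yields an upper bound and cannot detect sharpness. The quasi-free structure is what the paper exploits: the module is itself free, so with the right presentation there are no relations at all. That both removes the obstacle you flag and lets one compute the stable range exactly.

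Two small repairs are needed.

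First, a single free module $M(W_{2i})$ vanishes in degrees below $2i$, so it cannot surject onto $V$ (for example, $V_3\neq 0$ when $i=2$). Use instead the degree-$i$ exterior power $E^i$ of the span of the $\omega_{jk}$. It splits by vertex support as $\bigoplus_{m\le 2i}M(W_m)$. Its kernel onto $V$ is spanned by Arnold relations times monomials, hence generated in degree $\le 2i-1$, exactly as you argue.

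Second, the criterion ``stable for $n\ge d+r$'' must be read with $r\ge d$. Generation in degree $\le d$ gives surjectivity degree $\le d$, and relations in degree $\le r$ give injectivity degree $\le r$. The resulting stable range is $n\ge \max(d,r)+d$. For instance, when $i=1$ there are no relations at all, yet $H^1$, with Frobenius characteristic $h_2h_{n-2}$, only stabilizes at $n=4$. Since you take $r=2i=d$, your conclusion is unaffected.
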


This bound was improved by the first author  and Reiner in \cite[Theorem 1.1]{Hersh-Reiner} to a sharp bound for the configuration space ${\rm Conf}_n({\reals}^{d})$   of $n$ distinct points in $\reals^d $ for any $d$. 
It was shown in \cite{SW} that, for even $d$, as $\fS_n$-modules,   the  cohomology of ${\rm Conf}_n({\reals}^{d})$  is determined by the Whitney homology $W\!H_{\{ 1,2,\dots ,i\} }(\Pi_n)$. 
While \cite{Hersh-Reiner} focuses on the stability of the latter,  
here we turn to  arbitrary rank-selected homology, including proving Conjecture 11.3 from \cite{Hersh-Reiner}.

  A fundamental result in \cite{Sundaram} asserts that the Whitney homology of any Cohen-Macaulay poset decomposes into a direct sum of two rank-selected homology modules.  Building on this, 
we show in Proposition~\ref{prop:WHS-bound-equals-betaS-bound-anyP-AGAIN} 
that, for any sequence of graded Cohen-Macaulay posets $\{P_n\}$ with group of automorphisms $\fS_n$, and  for any rank set $S$, the rank-selected homology $\fS_n$-modules $\beta_S(P_n)$ and the Whitney homology $W\!H_S(P_n)$ have the same sharp stability bound, when either of them stabilizes at $k_P\max S - |S| + 1$; here $k_P$ is a constant determined by  $\{ P_n \}$.  We prove that $k_P=2$ for Boolean lattices, and $k_P=4$  for partition lattices.

An  important consequence  is that this  allows us to work with only the rank-selected Whitney homology module $W\!H_S(\Pi_n)$, which turns out to be more tractable than the rank-selected homology module, as we now explain.
We  prove that $\{ W\!H_S(\Pi_n) \} $
 is a quasi-freely generated FI-module 
  (a term we introduce in Section ~\ref{generalities-section}  
 to describe ideas in \cite{CEF} and \cite{Hersh-Reiner}) 
whose  FI-module generators have degrees bounded above by $2\max S$. 
This enables  an enhanced version of a theorem  of  Hemmer \cite[Theorem 2.4]{Hemmer}, proven in ~\cite{Hersh-Reiner} and recalled in  Lemma~\ref{lem:quasi-free-implies-increasing-mult}, 
to be applied, 
reducing our analysis to  giving a sharp bound on  the length of the first row in the irreducible $\fS_n$-representations  serving as the FI-module generators (a notion defined in \cite{CEF} and defined in the quasi-free case that we will focus upon in Definition ~\ref{quasi-free}).  In contrast, there is no reason to expect that  the rank-selected homology $\{ \beta_S(\Pi_n)\} $, while also an FI-module,  is quasi-freely generated.  We do prove that $\{ \beta_S(\Pi_n) \}$ for any fixed $S$  is a finitely generated FI-module, however, by using its relationship to $\{ W\!H_S(\Pi_n)\} $.  

Our results show how  Young symmetrizers can be a   
very effective tool for giving upper bounds on the
length of the first row  of all irreducibles appearing in $\fS_n$-representations, and thereby 
deducing new stability bounds via Hemmer's lemma.  The property of a Young symmetrizer $b_Ta_T$ 
that we use for $T$ of shape $\lambda $, where $\lambda $ is an integer partition of  $n$,  is that whenever $b_Ta_T \cdot V=0$ for an $\fS_n$-module $V$, then  $\langle V, \cS^{\lambda} \rangle =0$ for the irreducible $\fS_n$-module $\cS^\lambda$.
For the Boolean lattice, the module $V$ is known  to be a ribbon Specht module, making it amenable to the usage of Young symmetrizers in this manner. A challenge  for the partition lattice is that it remains very mysterious how to decompose the rank-selected homology into Specht modules of any type, ribbon or otherwise.

Efforts to overcome this obstacle culminated in the second main contribution of this paper: 
we construct new ``ribbon'' bases for the rank-selected homology and for the rank-selected  Whitney homology of all geometric lattices, designed to carry much of the structure of the polytabloid bases for Specht modules.  
In the case of the partition lattice, we apply Young symmetrizers to the elements of the new ribbon bases for 
$W\!H_S(\Pi_n)$ 
to obtain the desired sharp upper bound on first row length.  This can also be done for the Boolean lattices, though in that case one can instead  use the Littlewood-Richardson rule. 
As we explain in depth near the end of Section~\ref{Boolean-section}, the approach based on the Littlewood-Richardson rule does not apply for  the partition lattice, hence our use of  Young symmetrizers.

The end result was a sharp representation stability bound for both the rank-selected Whitney homology and the rank-selected homology of the partition lattice, as well as for the Boolean lattice.   Verifying  the conditions necessary  to get
$b_Ta_TV=0$ 
 in the case of the partition lattice, where $V=W\!H_S(\Pi_n)$, 
 involved a  rather delicate  combinatorial argument appearing in Section ~\ref{H-R-Conj-proved}.  

\begin{ex}\label{ex:annihilate-basis-eltby-Youngsym} 
We give a small example illustrating the ideas in our  construction of the rank-selected homology basis for an arbitrary geometric lattice. The details appear in Section~\ref{geometric-ribbon-section}.

Consider the rank 3 partition lattice $\Pi_4$  and the rank-set $S=\{2\}$.  The filling $F$ of the ribbon with row lengths 2,1 with an NBC set of atoms  indexes a homology basis element obtained from 
$v_F$, a difference of \emph{tabloids} $\{F\}$, as indicated below.
\[F=\tableau{& \text{\small\bf 13}\\ \text{\small 12} &\text{\small\bf 14}}  
\] 
Then 
\[ v_F
=\left\{
\tableau{& \text{\small\bf 13}\\ \text{\small 12}&\text{\small\bf 14}}  \right\} -   \left\{\tableau{& \text{\small\bf 14}\\   \text{\small 12} & \text{\small\bf 13}}  \right\}  =  (1-(3,4)) \cdot\{ F\}
.\]
Our construction maps $v_F$ to the following homology cycle in the rank-selected subposet $\Pi_4^S$: 
\[ (\hat 0< |124|3|<\hat 1)-(\hat 0<|123|4|<\hat 1).
\]

The Young symmetrizer corresponding to the standard Young tableau $\scalebox{0.85}{\ensuremath{ \tableau{1 & 2 &3 & 4}}}$ has a right factor of $(1+(3,4))$, which can be seen to annihilate this homology cycle.
\end{ex}

In Section ~\ref{background-section}, we review terminology and background.  Section ~\ref{generalities-section} gives a general result (see Proposition ~\ref{prop:WHS-bound-equals-betaS-bound-anyP-AGAIN})  allowing us to use sharp stability of Whitney homology to deduce sharp stability in rank-selected homology.  We also show  in Section ~\ref{generalities-section} for any rank set $S$ that 
$\{ W\!H_S(B_n)\} $, $\{ \beta_S(B_n)\} $, 
$\{ W\!H_S(\Pi_n)\} $ and $\{ \beta_S(\Pi_n)\} $ are all FI-modules.  Then in Section ~\ref{Boolean-section}, we prove that $\{ W\!H_S(B_n)\} $ and $\{ \beta_S(B_n)\} $ are both uniformly representation stable in the sense of \cite[Definition 2.6]{Church-Farb}, each having  sharp representation stability bound of $2\max S - |S| + 1$. 
We construct new ribbon homology bases for the rank-selected homology and the rank-selected Whitney homology  of any geometric lattice in Section ~\ref{geometric-ribbons}.  Specifically, see Theorems ~\ref{thm:homology-basis-EL-lab} and ~\ref{thm:min-label-gives-NBC-homology-basis}.   In Section ~\ref{OS-connection}, 
we explain and justify the relationship between our ribbon basis for $W\!H_{\{ 1,2,\dots ,i\}} (P)$ and the monomial basis given by NBC independent sets for the $i$-th graded piece of the Orlik-Solomon algebra.    In Section ~\ref{Young-section} we provide a key lemma showing how Young symmetizers act on our ribbon bases in the case of the partition lattice,  enabling us later in the paper to prove that individual irreducible representations with large enough first row do not appear in the Whitney homology $W\!H_S(\Pi_n)$; this lemma demonstrates the  strong analogy with traditional Specht modules.  

Finally, Section ~\ref{Pi-n-section} focuses on proving uniform representation stabilty by giving a sharp stability bound for
$\{ \beta_S(\Pi_n)\} $ and $\{ W\!H_S(\Pi_n)\} $.
In Section ~\ref{small-S-section}, we give an inequality that is convenient for proving sharp stability of $\{ W\!H_S(\Pi_n)\} $ and $\{ \beta_S(\Pi_n)\} $ for small $|S|$.  In Section~\ref{H-R-Conj-proved},  we prove the sharp stability bound conjectured by the first author and Reiner for any $S$.  That is, in  Theorem~\ref{main-partition-stability-theorem}  we prove that the rank-selected homology $\beta_S$ of the partition lattice $\Pi_n$ stabilizes sharply at $4\max S - |S| + 1$ for any $S$.  We conclude with stronger stability bounds for individual components of $ \{ W\!H_S(\Pi_n) \} $ in Section~\ref{precise-section}, and a few words about 
matroidal/graphical  Specht-like modules for other shapes  in Section ~\ref{graphical-section}.

We adopt the following  conventions in this paper. 

\begin{enumerate}
    \item Let $P$ be a bounded poset.  By the \emph{homology of $P$} we mean the reduced homology of the order complex of the \emph{proper part} $P\setminus \{\hat 0, \hat 1\}$ of $P$.  We write $\tilde{H}_{\bullet}(P)$ for the reduced homology of $P$, and $\tilde{H}_{\bullet}(x,y)$ for the reduced homology of any interval $[x,y]$ in $P$.
    \item All homology in this paper is reduced and taken over the complex numbers. 
    \item The ground field for representations of $\fS_n$ is the field of complex numbers.
    \item Unless explicitly stated otherwise, all posets will be bounded and graded. 
    \item For a bounded and graded poset $P$ of rank $r$, we will refer to the set of ranks $\{1,2,\ldots, r-1\}$ as the \emph{nontrivial} ranks of $P$.
    \item Any rank set $S$ in $P$ will always be a subset of the nontrivial ranks of $P$. 
\end{enumerate}

\section{Terminology and background}\label{background-section}

\subsection{Background on topological combinatorics and lexicographic shellability}

Given a finite partially ordered set  (poset) $P$, the {\it order complex} of $P$ 
 is the  abstract simplicial complex whose $i$-dimensional faces are  the  $(i+1)$-chains $u_0 < u_1 < \cdots < u_i$ of comparable poset elements.   This is denoted $\Delta (P)$.

A simplicial complex  is {\it pure of dimension $d$} if all its maximal faces are $d$-dimensional. 
For any face $F$ in an abstract simplicial complex, $\overline{F}$ denotes  the collection of faces consisting of $F$ and all its subsets; in other words $\overline{F}$ is the closure of $F$.  

A simplicial complex is  {\it shellable} if there is a total order $F_1,\dots ,F_k$ on its facets (i.e., maximal faces) such that $\overline{F_j}\cap \cup_{i<j}\overline{F_i}$ is a pure codimension one subcomplex of $\overline{F_j}$ for each $j\ge 2$.  Any such ordering on the  facets is called a {\it shelling}.   A {\it homology facet} in a shelling is any facet $F_j$ such that 
$\overline{F_j} \cap (\cup_{i<j} \overline{F_i}) = \partial F_j)$.  Having a shelling implies that $\Delta $ is homotopy equivalent to a wedge of spheres in which the $i$-dimensional homology facets of the shelling index the $i$-dimensional spheres in this wedge of spheres.   All of the simplicial complexes in this paper will be pure, so all of the homology facets will be top dimensional.   A shelling gives a way to build a simplicial complex by attaching facets sequentially with the homotopy type of the complex remaining unchanged at each shelling step except for those steps attaching homology facets; each homology facet attachment closes off a sphere, thereby increasing the top Betti number of the complex by one.   

Equivalently, a shelling is a total order $F_1,\dots ,F_k$ on the facets such that each $\overline{F_j}$ for $j\ge 2$ has a unique minimal face $G_j$ that is contained in $\overline{F_j}\setminus \cup_{i<j} \overline{F_i}$.  This minimal face $G_j$ is often called the {\it restriction face} of $G_j$.

A poset is {\it bounded} if it has a unique minimal element, denoted $\hat{0}$, and also has a unique maximal element, denoted $\hat{1}$. 
We say that a bounded poset is {\it shellable} if its order complex is shellable.  A {\it shelling } of a poset is a total order on its maximal chains such that the induced ordering on the corresponding facets in its order complex is a shelling order.

One of the primary techniques for proving a poset is shellable is to prove it is EL-shellable, a notion introduced in \cite{BjTAMS1980}.  
A poset $P$ is {\it EL-shellable} if its Hasse diagram admits an edge labeling $\lambda $ (known as an {\it EL-labeling}) such that (1) for each $u<v$ in $P$ there is a unique saturated chain $u\prec u_1 \prec u_2 \prec \cdots \prec u_k  \prec v$ 
from $u$ to $v$ having $$\lambda(u,u_1)\le \lambda (u_1,u_2) \le \cdots \le \lambda (u_k,v)$$ and (2)  this weakly ascending  label sequence is lexicographically smaller than the label sequence for every other saturated chain from $u$ to $v$.  

Bj\"orner proved that any total order $\Gamma $  on the maximal chains of an EL-shellable poset $P$ that is compatible with the lexicographic partial order on label sequences, induces a shelling order on $\Delta (P)$ by ordering facets of $\Delta (P)$ according to the ordering $\Gamma $ on the corresponding maximal chains of $P$.    The homology facets in any such shelling are exactly the facets corresponding to maximal chains whose label sequences are strictly decreasing.  In fact, the restriction face $G_j$ of a facet $F_j$ in the order complex is the face given by the chain $C_j$ we describe next;  within the  poset maximal chain $M_j$ corresponding to $F_j$, the chain $C_j$ consists of  exactly those elements of $M_j$ where the  descents in the label sequence for $M_j$ are located.

The \emph{cover relations} of a poset are the order relations $u\prec v$ in which $u < v$ and there is no intermediate element $z$ satisfying $u<z<v$.  The \emph{atoms} of a bounded poset are the elements which 
cover $\hat{0}$.  A poset $P$ is a  \emph{lattice} if  each two elements $x,y\in P$ have a unique least upper bound, denoted $x\vee y$, and a unique greatest lower bound, denoted $x\wedge y$.  A poset $P$ is \emph{semimodular} if every pair of elements  $x,y\in P$ both covering a common element $u$ are themselves both covered by some other element $v \in P$.   A lattice $L$ is \emph{atomic} if each element can be expressed as a join of atoms.  A lattice $L$  is a \emph{geometric lattice} if it semimodular and  atomic; 
the finite geometric lattices are exactly the lattices of flats of  matroids.

 A bounded poset is {\it graded} if all its maximal chains have the same length.  In this case, its order complex is pure.  A bounded  poset $P$ is \emph{Cohen-Macaulay} if for every $x,y$ in $P$ with $x<y$, the reduced homology of the interval $[x,y]$ vanishes in all except possibly the top dimension. 
 It is known that a shellable graded poset $P$ is Cohen-Macaulay.  
 See \cite[Appendix]{BjTAMS1980} or \cite{WachsPosetTop2007} for details.

Given a bounded and graded poset $P$ of rank $n$, the \emph{nontrivial ranks} of $P$ are the  ranks $\{1, \ldots, n-1\}$.  
Let $S$ be any subset of the nontrivial ranks  of a graded poset $P$, and denote by $P^S$ the subposet of $P$ consisting of the elements of $P$ having ranks in $S$.   Bj\"orner  proved that any EL-labeling on  a graded, bounded poset $P$ induces a shelling on $P^S$ for each rank set $S$.    This shelling, which will figure prominently in our paper, is obtained as follows. 

\begin{thm}[{\cite[Proof of Theorem 2.7, Theorem 4.1]{BjTAMS1980}}]\label{thm:rank-sel-hom-facets} Suppose we have an EL-labeling of the graded  poset $P$.  Consider the rank-selected subposet $P^S$.
  To each maximal chain $\gamma $ in $P^S$, associate the lexicographically earliest maximal chain $f_{first}(\gamma )$ in $P$ that contains $\gamma $ as a subchain.   The lexicographic order on the associated maximal chains $f_{first}(\gamma) $ in $P$ induces a partial order on the maximal chains $\gamma $ in $P^S$.   Any linear extension of this partial order  on maximal chains in $P^S$
  is a shelling order for $\Delta(P^S)$.
 
\end{thm}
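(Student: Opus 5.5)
We verify the shelling condition in its restriction-face form. Order the maximal chains $\gamma_1,\gamma_2,\dots$ of $P^S$ by a linear extension of the stated partial order. For a chain $\gamma = (\hat 0 < x_1 < \cdots < x_k < \hat 1)$ with $\operatorname{rank}(x_j)=s_j$, $S=\{s_1<\cdots<s_k\}$, write $f_{first}(\gamma)$ for the concatenation of the unique increasing (lexicographically first) saturated chains in each interval $[x_{j},x_{j+1}]$; property (1) of the EL-labeling guarantees these exist and are unique. Call $j$ an \emph{$S$-descent} of $\gamma$ if the label of the last edge of the increasing chain in $[x_{j-1},x_j]$ exceeds the label of the first edge of the increasing chain in $[x_j,x_{j+1}]$. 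We will show that the restriction face of $\gamma$ is $R(\gamma)=\{x_j : j \text{ an } S\text{-descent}\}$. Concretely, we must show that a face $G\subseteq\gamma$ lies in some earlier facet exactly when $G$ misses some element of $R(\gamma)$.

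First, suppose $G$ omits $x_j$ with $j$ an $S$-descent. Since $j$ is a descent, the chain through $x_j$ is not the increasing chain of $[x_{j-1},x_{j+1}]$. By property (2) of the EL-labeling, the increasing chain of $[x_{j-1},x_{j+1}]$ is lexicographically strictly smaller, and it passes through some rank-$s_j$ element $x_j'\ne x_j$. Let $\gamma'$ be $\gamma$ with $x_j$ replaced by $x_j'$. Then $f_{first}(\gamma')$ agrees with $f_{first}(\gamma)$ below $x_{j-1}$ and is lexicographically smaller in $[x_{j-1},x_{j+1}]$. Hence $f_{first}(\gamma')<f_{first}(\gamma)$, so $\gamma'$ comes earlier, and $G\subseteq\gamma\cap\gamma'$. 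It suffices to treat one omitted element at a time, since $G\subseteq\gamma\setminus\{x_j\}$.

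Conversely, suppose $G\supseteq R(\gamma)$ and $G\subseteq \gamma''$ for some maximal chain $\gamma''$ of $P^S$. We show $f_{first}(\gamma)\le f_{first}(\gamma'')$ lexicographically, with equality only if $\gamma''=\gamma$; thus $\gamma''$ cannot precede $\gamma$. This is the main obstacle, and we plan to argue as follows. The elements of $R(\gamma)$ cut $P$ into intervals $[y,z]$ between consecutive descent elements, and inside each such interval the chain $f_{first}(\gamma)$ is weakly increasing. To see this, note that at non-descent positions the last label of one increasing segment is at most the first label of the next, so the concatenation is weakly increasing. By EL-property (1), $f_{first}(\gamma)$ restricted to $[y,z]$ is therefore the unique increasing chain of $[y,z]$, which by property (2) is lexicographically smallest among all saturated chains of $[y,z]$. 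The chain $\gamma''$ also passes through $y$ and $z$, so its $f_{first}$ restricted to $[y,z]$ is some saturated chain there, and is hence lexicographically $\ge$ that of $\gamma$. Moreover it is strictly greater unless it equals the increasing chain, which would force $\gamma''$ to agree with $\gamma$ at all rank-$S$ elements of $[y,z]$. Comparing interval by interval from the bottom then gives the claim.

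The only delicate point is that the lexicographic order on $f_{first}$ is compatible with this interval-by-interval comparison: the first interval where the two chains differ determines the comparison. This holds because lexicographic order is determined by the first differing label and the label sequences are concatenations over the common cut points. With both directions, each $\gamma_j$ has the unique minimal new face $R(\gamma_j)$, which is the shelling condition stated earlier.
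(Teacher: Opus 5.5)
Your proof is correct. The first half produces an earlier facet by swapping in the increasing chain of $[x_{j-1},x_{j+1}]$. The second half shows that every other facet through $R(\gamma)$ has strictly larger $f_{first}$, so the new faces of $\gamma$ are exactly those containing $R(\gamma)$.

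Two steps you leave implicit both follow from uniqueness of increasing chains in subintervals. First, the increasing chain of $[x_{j-1},x_{j+1}]$ cannot pass through $x_j$, since it would then equal the concatenation, which descends at $x_j$. Second, $f_{first}(\gamma')$ restricted to $[x_{j-1},x_{j+1}]$ is exactly that increasing chain.

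The paper gives no proof of its own, only the citation to Björner. So the useful comparison is with the standard route to this result, which goes as follows:
\begin{itemize}
\item Take the lexicographic shelling of $\Delta(P)$.
\item Use the general fact that for \emph{any} shelling $m_1,\dots,m_t$ of $\Delta(P)$, ordering the facets of $\Delta(P^S)$ by the first $m_i$ containing them is a shelling.
\item The key step of that fact: if $m_k$ precedes the first facet $\widehat{\gamma'}$ containing $\gamma'$ and meets it in codimension one, the element of $\widehat{\gamma'}$ it misses must have rank in $S$, since otherwise $m_k\supseteq\gamma'$. Then $m_k\cap P^S$ is the required earlier facet.
\end{itemize}
Since $f_{first}(\gamma)$ is strictly lexicographically first among maximal chains containing $\gamma$, this yields the theorem, with ties in the lexicographic order broken compatibly with the given linear extension.

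That route is more general, because it uses no EL structure beyond the shelling of $\Delta(P)$. However, it does not describe the restriction faces. Your direct argument identifies them explicitly as the $S$-descent elements of $f_{first}(\gamma)$. As a result it also gives Proposition~\ref{folklore-homology-facets} immediately: the homology facets are exactly those $\gamma$ with $R(\gamma)=\gamma$.
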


The following fact  which appears implicitly in \cite{BjTAMS1980} will play a critical role in how we will construct homology bases later in the paper. 
\begin{prop}\label{folklore-homology-facets}
    The homology facets in this shelling for $P^S$ are the facets given by those maximal chains $\gamma $ of $P^S$ such that $f_{first}(\gamma )$ has label sequence with descents at exactly the ranks in $S$. 
\end{prop}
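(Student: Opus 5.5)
The approach is to compute directly the restriction face of each facet in the shelling of Theorem~\ref{thm:rank-sel-hom-facets}, and then to read off the homology facets as exactly those facets whose restriction face is the whole facet.

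Write $S=\{s_1<\dots<s_k\}$ and let $\gamma=(x_1<\dots<x_k)$ be a maximal chain of $P^S$ with $\operatorname{rank} x_j=s_j$, and put $x_0=\hat 0$, $x_{k+1}=\hat 1$. The first step is to identify $f_{first}(\gamma)$. By the EL-property applied to each interval $[x_{j-1},x_j]$, the lexicographically earliest maximal chain containing $\gamma$ is the concatenation of the unique weakly increasing chains in these intervals. So the label sequence of $f_{first}(\gamma)$ can have descents only at the ranks $s_j$. Hence ``descents at exactly $S$'' is equivalent to saying that at each $x_j$ the last label below $x_j$ exceeds the first label above it.

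The second step handles facets with a missing descent. Suppose there is no descent at $x_j$. Then $f_{first}(\gamma)$ restricted to $[x_{j-1},x_{j+1}]$ is weakly increasing, so it is the unique increasing chain of that interval. Let $x_j'\neq x_j$ be any other rank-$s_j$ element of $[x_{j-1},x_{j+1}]$; such an element exists because $P$ is graded and the interval has length at least $2$, and we may assume it is not a chain, since otherwise $\gamma\setminus x_j$ lies only in $\gamma$. Form $\gamma'$ by replacing $x_j$ with $x_j'$. Then $f_{first}(\gamma')$ agrees with $f_{first}(\gamma)$ outside $[x_{j-1},x_{j+1}]$. Inside that interval it is a non-increasing chain, which by EL condition (2) is lexicographically later. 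Thus $\gamma'$ comes later, and this is the wrong direction.

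So the argument should be reorganized, and this is the main obstacle: the roles must be arranged so that the ascent facet is the one covered by earlier facets. I would argue instead via the restriction face. Bj\"orner's proof of Theorem~\ref{thm:rank-sel-hom-facets} shows that, for $j$ with no descent at $s_j$, the face $\gamma\setminus x_j$ is never contained in an earlier facet. Consequently the restriction face is precisely $\{x_j : \text{descent at } s_j\}$. To see the converse direction, suppose there is a descent at $x_j$. Take the increasing chain of $[x_{j-1},x_{j+1}]$ and let $x_j'$ be its rank-$s_j$ element. Since $f_{first}(\gamma)$ on $[x_{j-1},x_{j+1}]$ is not increasing, $x_j'\ne x_j$. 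The chain $\gamma'$ then has $f_{first}(\gamma')$ lexicographically smaller than $f_{first}(\gamma)$: the two agree before $x_{j-1}$, and on $[x_{j-1},x_{j+1}]$ the increasing chain is lex-first. The segments above $x_{j+1}$ coincide, so the comparison is decided before reaching them. Thus $\gamma\setminus x_j$ lies in an earlier facet, which shows that every $x_j$ at a descent must belong to the restriction face.

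For the other inclusion I would invoke Bj\"orner's result that the restriction map is given by descents, i.e.\ that $\gamma\setminus x_j$ for an ascent position $j$ is not in any earlier facet. This is the step I expect to need the most care. The argument runs as follows. Any earlier $\gamma''\supseteq\gamma\setminus x_j$ differs from $\gamma$ only at position $j$. Its $f_{first}$ restricted to $[x_{j-1},x_{j+1}]$ is then not the increasing chain, because the increasing chain passes through $x_j$, and so it is lexicographically later. Combining both inclusions, the restriction face equals the whole facet $\gamma$ exactly when descents occur at every rank of $S$. That is exactly the homology facet condition, which proves Proposition~\ref{folklore-homology-facets}.
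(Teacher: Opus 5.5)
Your argument is correct and is essentially the restriction-face computation underlying Bj\"orner's proof, which the paper simply cites rather than reproducing: a descent at $s_j$ puts $\gamma\setminus x_j$ in the strictly earlier facet obtained by rerouting through the increasing chain of $[x_{j-1},x_{j+1}]$, while an ascent at $s_j$ forces every other facet containing $\gamma\setminus x_j$ to come strictly later. Two expository fixes: the computation in your second paragraph is not ``the wrong direction''---it is exactly the ascent-case argument you reuse in your last paragraph, so that detour and the appeal to Bj\"orner in your third paragraph can be deleted---and you should drop the claim that some $x_j'\neq x_j$ always exists, since the interval may have a single element at rank $s_j$ (a case you already handle correctly).
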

\begin{proof}    See \cite[Proof of Theorem 2.7, Theorem 4.1]{BjTAMS1980}. 
\end{proof}

Good sources of further 
background material  are \cite[Chapter 3]{EC1} for posets, \cite{WachsPosetTop2007} for topological combinatorics, and  \cite{Bj-hom-matroid} or \cite{Welsh} for  matroids.

\subsection{Background on group actions on  posets}

An \emph{automorphism} of a finite poset $P$ is an order-preserving  bijection 
$f:P\rightarrow P$; note that the  inverse map is also order-preserving.  
When a poset $P$ has $G$ as a group of automorphisms on it, we call $P$ a \emph{$G$-poset}.
For example,  any Coxeter hyperplane arrangement given by a Coxeter group $W$ has  intersection lattice which is a geometric lattice that  is a $W$-poset.

Two families of geometric lattices with group actions that will figure especially prominently in this paper are the Boolean lattice and the partition lattice.  We also have related results for the lattice of subspaces of a finite dimensional vector space  over a finite field.   
Now we recall the definitions for these families of posets.  

The Boolean lattice $B_n$ is the poset of subsets of $\{ 1,2,\dots ,n\} $ ordered by containment.
The partition lattice $\Pi_n$ is a partial order on the partitions of the set $\{ 1,2,\dots , n\} $ into disjoint unordered sets called blocks; the set partitions are ordered by reverse refinement.  For example, the set partition 
$13|25|4$ with blocks $\{ 1,3\} $, $\{ 2,5\} $ and $\{ 4\} $ is less than  $134|25$.  

The action of  $\fS_n$ on $\{ 1,2,\dots ,n\} $ induces poset automorphisms on $B_n$ and $\Pi_n$.
Notice  that these actions are rank-preserving and commute with the boundary map for the order complex.
 Thus, these $\fS$-actions  induce $\fS_n$-representations on each of the homology groups of these order complexes.  Since homology is concentrated in top degree for $B_n$ and $\Pi_n$ (by virtue of both posets being shellable), the interesting $\fS_n$-representations are on top homology.   Since the $\fS_n$-actions preserve rank, this leads to a whole family of $\fS_n$-representations known as rank-selected homology 
 that we will discuss in the next section.

The subspace lattice $\mathcal{B}_n(q)$ is the poset of subspaces of an $n$-dimensional vector space over the finite field $\FF_q$, with these subspaces ordered by containment.  The group $GL_n(\FF_q)$ acts on $\mathcal{B}_n(q)$ in an order-preserving, rank-preserving manner, again giving rise to representations on the homology groups.
See  \cite[Chapter 3]{EC1} and
\cite{Stanley-aspects} for more details.

\subsection{Background on rank-selected homology}\label{rank-select-section-bg}
Henceforth all  posets in this paper will be bounded and graded.  

Let  $S$ be a subset  of the nontrivial ranks of a poset $P$.
  As observed in Theorem~\ref{thm:rank-sel-hom-facets}, rank-selection in  $P$ preserves shellability.  By a result of Baclawski \cite[Theorem~6.4]{Bac1980},  rank-selection also preserves  the Cohen-Macaulay property. 
  In particular if $P$ is shellable or simply  Cohen-Macaulay,  each $P^S$ has homology concentrated in top degree, a situation  which applies to the  main poset considered in this paper, the partition lattice $\Pi_n$.

  For a $G$-poset $P$ and any subset $S$ of nontrivial ranks,  
following \cite{Stanley-aspects}, let 
$\alpha_{S}(P)$ denote the  $G$-module on the
chains of $P$ having rank set $S$; this is a permutation module.  Define the virtual $G$-module 
\begin{equation}
\label{beta-definition}
\beta_{S}(P) :=\sum_{T \subseteq S}(-1)^{|S \setminus T|} \alpha_{T}(P).
\end{equation}
By inclusion-exclusion we have 
\begin{equation}
\label{alpha-definition}
\alpha_{S}(P) =\sum_{T \subseteq S} \beta_{T}(P).
\end{equation}

When the poset $P$ is Cohen-Macaulay (and hence also when it is shellable), 
the Hopf trace formula shows that $\beta_{S}(P)$ is the true $G$-module afforded by the 
top homology of $P_S$ (see \cite[Theorem 1.2]{Stanley-aspects}).

\begin{defn}\label{def-Whit}
 The $i$-th {\it Whitney homology} groups  of a graded poset $P$ with minimal element $\hat{0}$ are defined via  the direct sum
\begin{equation*}\label{eqn:Whit-def}
W\!H_i(P):= \bigoplus_{\substack{x \in P \\ \rank(x)=i}} \tilde{H}_{i-2}(\hat{0},x).
\end{equation*} 
\end{defn}
When  $P$ is a $G$-poset, 
 the $G$-module structure of the Whitney homology is given by
 \begin{equation}\label{eqn:Whit-G-struct}
W\!H_i(P) \cong \bigoplus_{x\in P/G} \Ind_{G_x}^G \tilde{H}_{i-2} (\hat{0},x),
\end{equation}
where $P/G$ is the set of $G$-orbits of $P$ and $G_x$ is the stabilizer of $x$.

\begin{defn}\label{def:Whit-S}
Let $S$ be a subset of the nontrivial ranks of $P$. 
Define $W\!H_S(P)$ to be the direct sum 
\begin{equation}\label{eqn:WH-S-defn}
W\!H_S(P):= \bigoplus_{\substack{x \in P \\ \rank(x)=\max S}} \tilde{H}_{|S|-2}([\hat{0},x]^{S\setminus\{\max S\}}).
\end{equation}
Here  $[\hat 0,x]^{S\setminus\{\max S\}}$ is the rank-selected subinterval of the interval $[\hat 0,x]$ in $P$ specified by the rank-set ${S\setminus\{\max S\}}$, noting that $x$ itself is at rank $\max S$.  In particular $[\hat 0,x]^{S\setminus\{\max S\}}$ is a subposet of $P^S$, of rank $|S|$.  
\end{defn}
For a $G$-poset $P$, as a $G$-module, $W\!H_S(P)$ is a sum of induced modules indexed by orbits in $P^S/G$. 
  The acyclicity of Whitney homology established in  \cite{Sundaram} 
   gives Proposition~\ref{prop:alt-sum-expression-S} below, establishing equivalent representation theoretic relationships for rank-selection in any Cohen-Macaulay $G$-poset $P$.  
For a subset $S$ of ranks, let 
$S^{(i) } $ denote  the set obtained from $S$ by deleting the  $i$ largest elements from $S$.  
\begin{prop}[{\cite[Lemma 1.1, Proposition 1.9]{Sundaram}}]\label{prop:alt-sum-expression-S} Let $P$ be a Cohen-Macaulay $G$-poset.  As $G$-modules, 
the rank-selected homology $\beta_S(P)$  is related to the  rank-selected Whitney homology of the poset $P^S$ 
 via  the following two identities expressing each in terms of the other:
 \begin{enumerate}
 \item
 $W\!H_S(P) = \beta_S(P) + \beta_{S^{(1)}}(P)$
 \item  
$\beta_S (P) = W\!H_S(P) - W\!H_{S^{(1)}}(P) + W\!H_{S^{(2)}}(P)  - \cdots. $
\end{enumerate} 
\end{prop}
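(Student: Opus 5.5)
The plan is to prove identity (1) directly, as an equality of virtual modules, and then obtain (2) from (1) by telescoping. For (1), write $m=\max S$ and $S'=S\setminus\{m\}=S^{(1)}$. By Definition~\ref{def:Whit-S}, $W\!H_S(P)$ is the direct sum over $x$ of rank $m$ of $\tilde H_{|S|-2}([\hat 0,x]^{S'})$. Since $P$ is Cohen-Macaulay, each interval $[\hat0,x]$ is Cohen-Macaulay. By Baclawski's theorem, so is its rank-selection $[\hat0,x]^{S'}$, so its homology is concentrated in top degree. The Hopf trace formula (as in \cite[Theorem 1.2]{Stanley-aspects}) then gives, as $G_x$-modules,
\[
\tilde H_{|S|-2}([\hat0,x]^{S'})=\sum_{T\subseteq S'}(-1)^{|S'\setminus T|}\alpha_T([\hat0,x]).
\]
Summing over all $x$ of rank $m$ and using \eqref{eqn:Whit-G-struct}-style induction, we get
\[
\bigoplus_{\rank x=m}\alpha_T([\hat0,x])\cong\alpha_{T\cup\{m\}}(P).
\]
This holds because a chain with rank set $T\cup\{m\}$ is exactly a pair consisting of its top element $x$ together with a chain in $[\hat0,x]$ of rank set $T$, and $G$ permutes these pairs compatibly. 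Hence
\[
W\!H_S(P)=\sum_{T\subseteq S'}(-1)^{|S'\setminus T|}\alpha_{T\cup\{m\}}(P).
\]

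Next expand $\beta_S(P)+\beta_{S'}(P)$ using \eqref{beta-definition}. The subsets $U\subseteq S$ split into two kinds. Those with $m\in U$, written $U=T\cup\{m\}$ with $T\subseteq S'$, carry sign $(-1)^{|S\setminus U|}=(-1)^{|S'\setminus T|}$. Those with $m\notin U$, i.e.\ $U=T\subseteq S'$, carry sign $(-1)^{|S'\setminus T|+1}$. These latter terms cancel exactly against $\beta_{S'}(P)=\sum_{T\subseteq S'}(-1)^{|S'\setminus T|}\alpha_T(P)$. What remains is precisely the expression for $W\!H_S(P)$ above, which proves (1). The edge case $S=\{m\}$, where $S'=\emptyset$, needs a convention check. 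There $\beta_\emptyset$ and $\alpha_\emptyset$ are the trivial module, $[\hat0,x]^{\emptyset}$ has reduced $(-1)$-homology equal to $\mathbb{C}$, and both sides equal the permutation module on rank-$m$ elements.

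For (2), apply (1) successively to $S, S^{(1)}, S^{(2)},\dots$ and form the alternating sum
\[
W\!H_S-W\!H_{S^{(1)}}+W\!H_{S^{(2)}}-\cdots.
\]
This telescopes to $\beta_S\pm\beta_\emptyset$-type boundary terms. I expect the only delicate point to be the convention at the end of the chain: one must interpret $W\!H_{\emptyset}(P)$ as the trivial module (homology of $[\hat0,\hat0]$ in degree $-2$ under the convention making $\beta_\emptyset$ trivial), so that the last boundary term cancels exactly. Fixing these degenerate conventions consistently with \eqref{beta-definition} is, I expect, the main care needed; the rest is bookkeeping.
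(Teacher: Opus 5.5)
Your proof is correct, but it takes a different route from the one the paper points to. The paper gives no argument of its own: it cites Sundaram and attributes the identities to the acyclicity of Whitney homology, which is an exactness statement at the level of chains.

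You instead prove (1) purely as an identity of virtual characters. You apply the Hopf trace formula on each Cohen--Macaulay interval $[\hat 0,x]^{S^{(1)}}$, regroup the chains with rank set $T\cup\{\max S\}$ by their rank-$\max S$ element, and then use inclusion--exclusion in \eqref{beta-definition}. Identity (2) then telescopes. You correctly note that (2) needs the convention $W\!H_\emptyset(P)=\one$; equivalently, the sum stops at the singleton and one adds $(-1)^{|S|}\one$, which matches the paper's later use of $\beta_{\{i\}}=W\!H_{\{i\}}-\one$.

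Your route is shorter and self-contained, needing only Baclawski's theorem and the Hopf trace formula. Because it compares characters, it yields an abstract $G$-isomorphism in (1); over $\mathbb{C}$ this is all the statement asserts.

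The exactness viewpoint gives more. Consider the long exact sequence of the pair $(P^S,P^{S^{(1)}})$; its relative homology in degree $|S|-1$ is exactly $W\!H_S(P)$. Cohen--Macaulayness of $P^S$, together with a dimension count for $P^{S^{(1)}}$, collapses that sequence to a natural short exact sequence
\[
0\to\tilde H_{|S|-1}(P^S)\to W\!H_S(P)\to\tilde H_{|S|-2}(P^{S^{(1)}})\to 0,
\]
which is compatible with rank-preserving poset embeddings. The paper implicitly relies on this naturality in Proposition~\ref{second-beta}, where the decomposition of $W\!H_S(P_n)$ is pushed forward summand by summand along $\phi_n$. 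Your character identity alone would not justify that later step, although it fully suffices for the statement at hand.
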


 Recall that $\Pi_n$ is the poset of set partitions of $\{ 1,2,\dots ,n\} $ ordered by refinement.  
 A major focus of the present paper is proving the following conjecture from \cite{Hersh-Reiner}.
 \begin{conj}[{\cite[Conjecture 11.3]{Hersh-Reiner}}]
Given a subset $S\subseteq \{ 1,2,\dots ,n-2 \} $, 
the rank-selected 
homology  $\fS_n$-representation  $\beta_S (\Pi_n)$ stabilizes sharply at 
 $4 \max S - |S| +1$.
 \end{conj}
 
The special cases for the two rank sets $S=\{i\}$ and $S=\{1,\ldots,i\}$ were resolved in \cite{Hersh-Reiner}.

\subsection{Background on  representation stability}\label{bg-stability}

The following notion was introduced in a series of papers including \cite{Church-Farb}.

\begin{defn}\label{stable-def}
We say that a sequence  of $\fS_n$-modules $V_1,V_2,\dots $  {\it has stabilized} at   some $B>0$ 
if for each $n\ge B$ 
we have $V_n = \sum c_{\lambda } V(\lambda ) $ where $c_{\lambda } $ is a nonnegative integer which does not depend on $n$,  $\lambda $ is an integer partition of some integer $m\le B$ and $V(\lambda )$ is the irreducible representation  $\cS^{(n-m,\lambda )}$ of the symmetric group $\fS_n$ given by the \emph{padded} integer partition  obtained from $\lambda $   by adding to
 $\lambda $  a new largest row of length  $n-m$, for $n - m \ge \lambda_1$.
\end{defn}

If $B$  is minimal with respect to the above  property, we say the sequence $\{V_n\}$ \emph{stabilizes sharply} at $B$.  In other words, $B$ is the least index for which the decomposition above holds for $n\ge B$.

For further background on symmetric functions we refer the reader to \cite[Chapter 1]{Macdonald} or \cite[Chapter 7]{Stanley}.   
The Frobenius characteristic map,  denoted by $\ch$, 
is the isomorphism  from the ring of $\fS_n$-modules to the ring of symmetric functions sending the irreducible $\fS_n$-module  indexed by  $\lambda$ to the \emph{Schur function} $s_{\lambda }$.  In particular we have $\ch \mathbbm{1}_{\fS_n}=s_{(n)}=h_n$ and $\ch \mathrm{sgn}_{\fS_n}=s_{1^n}=e_n$, where $\mathbbm{1}$ and $\mathrm{sgn}$ are respectively the trivial and sign representations, and $h_n$ and $e_n$ are respectively the homogeneous and elementary symmetric functions of degree $n$. 

Wreath products of groups and representations arise naturally when considering group actions  on the partition lattice.  In particular, the stabilizer of the set partition consisting of $k$ blocks of size $n$ in the partition lattice $\Pi_{kn}$ is  the wreath product group $\fS_k[\fS_n]$.   If $V$ and $W$ are $\fS_k$- and $\fS_n$-modules respectively, then we denote by $V[W]$
 the corresponding representation of  $\fS_k[\fS_n]$. 

 In the ring of symmetric functions, the operation of \emph{plethysm} captures the wreath product construction \cite{Macdonald, Stanley}. With the modules $V, W$ as defined above,   the salient property of this operation for our purposes is the following fact about the Frobenius  characteristic of the $\fS_{kn}$-module obtained by inducing the module $V[W]$  from $\fS_k[\fS_n]$ up to $\fS_{kn}$. 
 
 We have
 \begin{equation}\label{eqn:plethysm-wreath-def}
 \ch V[\ch W]=\ch \Ind_{\fS_k[\fS_n]}^{\fS_{kn}} V[W].
 \end{equation}
Hemmer's result  below
is an important tool in establishing representation stability bounds 
in the case of FI-modules, 
first used in \cite{Church-Farb}, and later in a more expanded form in \cite{Hersh-Reiner}. 
\begin{lem}[{\cite[Lemma 2.3, Theorem 2.4]{Hemmer}}]
\label{Hemmer's-lemma}
The decomposition 
$$
s_\lambda h_k = \sum_{\mu} c^{\mu}_{\lambda,(k)} s_\mu
$$
stabilizes for $k \geq \lambda_1$,
in that if $\mu+1:=(\mu_1+1,\mu_2,\mu_3,\ldots)$, then
$
c^{\mu+1}_{\lambda,(k+1)}=c^{\mu}_{\lambda,(k)}
$.  
\end{lem}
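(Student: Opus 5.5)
The plan is to prove the stability statement of Lemma~\ref{Hemmer's-lemma} by comparing Littlewood--Richardson coefficients through the Pieri rule. Multiplication by $h_k$ adds a horizontal strip of size $k$. So $c^{\mu}_{\lambda,(k)}$ equals $1$ if $\lambda\subseteq\mu$, $|\mu/\lambda|=k$, and $\mu/\lambda$ is a horizontal strip, and equals $0$ otherwise. The interlacing condition reads
\[\mu_1\ge\lambda_1\ge\mu_2\ge\lambda_2\ge\cdots.\]
The claim is therefore that, for $k\ge\lambda_1$, the map $\mu\mapsto\mu+1$ is a bijection from partitions $\mu\vdash|\lambda|+k$ with $\mu/\lambda$ a horizontal strip onto partitions $\nu\vdash|\lambda|+k+1$ with $\nu/\lambda$ a horizontal strip.

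\emph{Step 1 (the map is well defined).} Suppose $\mu/\lambda$ is a horizontal strip of size $k$. Then $\mu+1$ is still a partition, because its first row only grew. The interlacing inequalities still hold, since only $\mu_1$ changed and it increased. The size is now $|\lambda|+k+1$. Hence $c^{\mu+1}_{\lambda,(k+1)}=1$ whenever $c^{\mu}_{\lambda,(k)}=1$. This step needs no hypothesis on $k$.

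\emph{Step 2 (the map is onto), which is the crux.} Take $\nu$ with $\nu/\lambda$ a horizontal strip of size $k+1$, and define $\mu=(\nu_1-1,\nu_2,\nu_3,\ldots)$. Two things must be checked: $\nu_1-1\ge\nu_2$, so that $\mu$ is a partition, and $\nu_1-1\ge\lambda_1$, so that interlacing survives. Both follow from $\nu_1-1\ge\lambda_1$, because $\lambda_1\ge\nu_2$.

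To get $\nu_1-1\ge\lambda_1$, count the boxes of the strip row by row. Interlacing gives $\nu_{i+1}\le\lambda_i$. The boxes in rows $i\ge2$ therefore number
\[\sum_{i\ge1}(\nu_{i+1}-\lambda_{i+1})\le\sum_{i\ge1}(\lambda_i-\lambda_{i+1})=\lambda_1.\]
This is the standard fact that a horizontal strip has at most $\lambda_1$ boxes outside its first row, since no two of its boxes share a column.

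It follows that the first row receives at least $k+1-\lambda_1\ge1$ boxes, so $\nu_1\ge\lambda_1+1$. This is exactly where the hypothesis $k\ge\lambda_1$ is used.

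\emph{Step 3 (conclusion).} The map $\mu\mapsto\mu+1$ is injective, and by Steps 1 and 2 it is a bijection between the two index sets. All coefficients are $0$ or $1$, so $c^{\mu+1}_{\lambda,(k+1)}=c^{\mu}_{\lambda,(k)}$ for all $\mu$ when $k\ge\lambda_1$. This also covers those $\nu$ in the degree-$(k+1)$ expansion that are not of the form $\mu+1$: their coefficient is $0$, by the contrapositive of Step 2.

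The only delicate point is the column-counting inequality in Step 2. I would present it through the interlacing inequalities as above rather than pictorially.
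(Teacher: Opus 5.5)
Your argument is correct. It is the standard Pieri-rule proof: a horizontal strip has at most $\lambda_1$ boxes outside its first row, so for $k\ge\lambda_1$ every $\nu$ in the support of $s_\lambda h_{k+1}$ has $\nu_1>\lambda_1\ge\nu_2$. Hence $\mu\mapsto\mu+1$ is a bijection of supports, which is slightly stronger than the displayed coefficient identity and is the real content of ``stabilizes.'' The paper gives no proof of its own and simply cites Hemmer's Lemma~2.3 and Theorem~2.4, which to my recollection rest on this same horizontal-strip count.
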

We will make extensive use of the enhanced version of Hemmer's Lemma, namely Lemma 2.2 in \cite{Hersh-Reiner}, which we recall in  Section ~\ref{generalities-section} after providing further background first.  
Let $f=\sum_\lambda c_\lambda s_\lambda$ be any Schur positive symmetric function.  We say $f$ has \emph{sharp first row length  upper bound} $k$ if $c_\lambda\ne 0$ implies $\lambda_1\le k$,  and equality holds for at least one $\lambda$.
The next lemma will also be used frequently in the rest of the paper.  See  \cite[Proposition 4.3 and Theorem 4.4]{Hersh-Reiner} 
 for related ideas.  
\begin{lem}\label{plethysm-first-row-bound}The plethysm 
$s_{\lambda }[h_n]$ has sharp first row length  upper bound of $|\lambda |\cdot (n-1) + \lambda_1$.  In particular, 
$s_{\lambda}[h_2]$ has sharp first row length upper bound of $|\lambda | + \lambda_1$ and even more specifically $e_d[h_2]$ has sharp first row length  upper bound of $d+1$.
\end{lem}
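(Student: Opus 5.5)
Write $N=|\lambda|$; then $s_\lambda[h_n]$ is the Frobenius characteristic of $\Ind_{\fS_N[\fS_n]}^{\fS_{Nn}} \cS^\lambda[\one]$. I would prove the upper bound and the sharpness separately, both via multiplicities of irreducibles whose first row is long.

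\emph{Upper bound.} For $m\ge 0$, the total multiplicity of irreducibles $\cS^\mu$ with $\mu_1\ge m$ in an $\fS_{Nn}$-module $V$ is controlled by restriction to $\fS_m\times\fS_{Nn-m}$: if $\mu_1\ge m$ then $\cS^\mu$ restricted to $\fS_m\times \fS_{Nn-m}$ contains the trivial module of $\fS_m$ tensored with something nonzero. So it suffices to show that $\Ind_{\fS_N[\fS_n]}^{\fS_{Nn}}\cS^\lambda[\one]$ has no $\fS_m$-invariants (for the $\fS_m$ permuting the first $m$ letters) when $m>N(n-1)+\lambda_1$. By Mackey, this reduces to double cosets $\fS_m\, g\, \fS_N[\fS_n]$, i.e. to the ways the letters $1,\dots,m$ are distributed among the $N$ blocks of an ordered set partition of type $(n^N)$. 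Suppose block $j$ receives $a_j$ of these letters, so $\sum_j a_j=m$. The intersection of $\fS_m$ with the conjugated wreath product contains $\prod_j \fS_{a_j}$ (which acts trivially on $\cS^\lambda[\one]$), together with permutations of the blocks having $a_j=n$, i.e. blocks lying entirely inside $\{1,\dots,m\}$. Because $\sum_j \max(a_j-(n-1),0)\ge m-N(n-1)>\lambda_1$, more than $\lambda_1$ blocks are full. The stabilizer therefore contains a copy of $\fS_k$ with $k>\lambda_1$, acting on $\cS^\lambda$ by permuting $k$ of the $N$ wreath factors. A nonzero invariant would require $\cS^\lambda\downarrow_{\fS_k\times\fS_{N-k}}$ to contain the trivial module of $\fS_k$, which by Pieri needs $\lambda_1\ge k$, a contradiction. (More carefully, the full intersection is $\fS_k[\fS_n]\times\prod_{a_j<n}\fS_{a_j}$, and the invariants come out the same.) This step is the main bookkeeping obstacle: the double cosets and stabilizers must be identified correctly.

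\emph{Sharpness.} Reverse the argument with $m=N(n-1)+\lambda_1$. Take the double coset where exactly $\lambda_1$ blocks are full and every other block has exactly $n-1$ letters from $\{1,\dots,m\}$. The stabilizer contains $\fS_{\lambda_1}$ permuting the full blocks, and $\cS^\lambda$ does have $\fS_{\lambda_1}$-invariants. This gives $\fS_m$-invariants, hence some $\mu$ with $\mu_1\ge m$ appears, and combined with the upper bound, equality holds.

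An alternative for sharpness: the leading term in dominance order of $s_\lambda[h_n]$ is $s_{(N(n-1)+\lambda_1,\lambda_2,\dots)}$. Concretely, the monomial expansion has the term $x_1^{N(n-1)+\lambda_1}x_2^{\lambda_2}\cdots$ arising from substituting the monomials $x_1^{n-1}x_i$ of $h_n$, and no term of higher dominance occurs. I would use whichever is cleaner, probably the invariant count.

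The special cases then follow by substitution: $n=2$ gives $|\lambda|+\lambda_1$, and $\lambda=(1^d)$ gives $d+1$.
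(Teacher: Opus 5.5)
Your proof is correct, but its main route differs from the paper's. The paper argues with the monomial expansion of $s_\lambda[h_n]$ as a sum over semistandard fillings of $\lambda$ by degree-$n$ monomials, ordered so that $x_1^n$ is smallest and the $x_1^{n-1}x_j$ come next. The largest power of $x_1$ is $|\lambda|(n-1)+\lambda_1$, attained by putting $x_1^{n-1}x_i$ in every box of row $i$. Schur positivity then converts ``top power of $x_1$'' into ``longest first row.'' Your ``alternative'' paragraph is essentially this argument.

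Your main Mackey argument is its dual. As an $\fS_{Nn-m}$-module, $V^{\fS_m}$ has characteristic $h_m^\perp s_\lambda[h_n]$, which is exactly the coefficient of $x_1^m$ in $s_\lambda[h_n]$. Your extremal double coset ($\lambda_1$ full blocks, every other block missing one letter) is precisely the paper's extremal filling ($x_1^n$ across row $1$, $x_1^{n-1}x_i$ in row $i$).

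What your version buys is that both directions are statements about invariants of an honest module. Non-cancellation is therefore automatic, and the upper bound becomes the clean count $k\ge m-N(n-1)>\lambda_1$ of full blocks. The paper instead leaves implicit the column-strictness argument: $x_1^n$ can only occupy first-row boxes, and every other box contributes at most $n-1$ to the $x_1$-exponent. The paper's version is shorter and avoids the Mackey bookkeeping.

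Two small precision points:
\begin{itemize}
\item $\fS_{Nn}/\fS_N[\fS_n]$ is indexed by \emph{unordered} set partitions. The double cosets also record how the letters outside $\{1,\dots,m\}$ are grouped, not just the $a_j$. This is harmless, since only the number of full blocks affects the invariants.
\item $s_{(N(n-1)+\lambda_1,\lambda_2,\dots)}$ is the lexicographically leading constituent, not a dominance maximum. For example, $e_3[h_2]=s_{411}+s_{33}$, and $(4,1,1)$ and $(3,3)$ are incomparable. For the first-row bound you only need the $x_1$-exponent anyway.
\end{itemize}
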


\begin{proof}
By definition of the plethysm operation \cite{Macdonald}, 
 we may use the expression for $s_{\lambda }[h_n]$ as a sum over monomials obtained by taking the various semistandard fillings of the shape $\lambda $ with monomials of degree $n$. We impose a total order on all possible monomials of degree $n$  in the countable set of variables $x_1,x_2,x_3,\dots $ in such a way that $x_1^n$ is the smallest monomial of degree $n$, while monomials of the form $x_1^{n-1}x_j$ are smaller than all other monomials of degree $n$.   
 Now the highest power of $x_1$ that may appear in any such monomial  in our expression for  $s_{\lambda }[h_n]$ comes from filling each box in  the $i$-th row of  $\lambda $   with the monomial $x_1^{n-1}x_i$.  Thus, the largest possible power of $x_1$ in $s_{\lambda }[h_n]$ is $|\lambda |\cdot (n-1) + \lambda_1$. 
\end{proof}
\begin{remark}\label{ex:not-rep-stable}
Recent work of Sagan and the second author \cite{Sagan-Sundaram} provides an example of a sequence of posets $P_n$ with $\fS_n$-actions whose rank-selected homology does not stabilize.  Let $P_n=\Omega_n$ be
 the face lattice of  the permutahedron (a convex polytope).
 Let $T$ be a subset  of the ranks $\{1,2,\ldots, n-1\}$  in $\Omega_n$, and consider the corresponding rank-selected subposet of $\Omega_n$. Theorem~5.1 of  \cite{Sagan-Sundaram} asserts that the multiplicity of the trivial representation in the rank-selected homology is given by 
\[
b_n(T):=|\{\sigma\in \mathfrak{S}_{n-1} \mid  \sigma(i)>\sigma(i+1) \iff i\in T\}|.
\]
For fixed $T$ not containing $n-1$, the multiplicities  $b_n(T)$  
increase strictly as $n$ increases, showing that the homology modules do not stabilize.
\end{remark}

The notion of representation stability was enriched  with further structure  in \cite{CEF}  with the introduction of  FI-modules, 
briefly described next.  Some readers may find this viewpoint helpful for understanding our results.  
The category FI is the category whose objects are finite sets ${\bf n} := \{ 1,2,\dots ,n\} $ and whose morphisms are injections.  An \emph{FI-module} over a commutative ring $k$ is a functor $V$ from FI to the category of modules over $k$.  The $k$-module $V(\bf{n})$ is typically denoted by $V_n$.  Thus, the symmetric group $\fS_n$ acts on $V_n$ by virtue of acting on 
${\bf n}$. 
An FI-module has built into it an $\fS_n$-module $V_n$  for each $n$, 
and a set of maps $V(i_j)$  from $V_n $ to 
$V_{n+1} $ indexed by the injections  $i_j : {\bf n} \rightarrow {\bf n+1}$, such that for each $\sigma  \in \fS_{n+1}$ and $\tau \in \fS_n$ 
satisfying $\sigma \circ i_j = i_k \circ \tau $ for $i_j,i_k$ injections from $V_n$ to $V_{n+1}$, we have  
$V(\sigma )\circ V(i_j) = V(i_k)\circ V(\tau )$; in other words, $V$ is functorial. 
It is proven in  \cite[Theorem 1.13]{CEF} that an FI-module (over a field of characteristic 0)  is finitely generated if and only if the associated sequence of $\fS_n$-modules is 
uniformly representation stable (an especially strong form of representation stability defined in Definition 2.6 in  \cite{Church-Farb} and described next). 
In what follows, we call the maps $\phi_n:V_n\rightarrow V_{n+1}$ \emph{consistent} if $\sigma \circ \phi_n = \phi_n \circ \iota (\sigma) $ for each $n$ and each $\sigma \in \fS_n$ where $\iota :\{ 1,2,\dots ,n\} \rightarrow \{ 1,2,\dots ,n+1\} $ sends each element of $\{ 1,2,\dots ,n\} $ to itself.

\begin{defn}\label{uniform-rep-stability}
    Let $\{ V_n,\phi_n \} $ be a consistent sequence of $\fS_n$-representations.
    The sequence $\{ V_n,\phi_n \} $ is \emph{uniformly representation stable with stable range $n\ge N$} if the following conditions all hold:
    \begin{enumerate}
        \item The map $\phi_n:V_n\rightarrow V_{n+1}$ is injective for all $n\ge N$.
        \item The span of the $\fS_{n+1}$-orbit of $\phi_n(V_n)$ equals all of $V_{n+1}$ for all $n\ge N$.
        \item The sequence $V_1,V_2,\dots $ stabilizes at $N$, in the sense of Definition ~\ref{stable-def}.
    \end{enumerate}
\end{defn}

\subsection{Background on Young symmetrizers, Specht modules, and tableaux combinatorics}
\label{subsec:YoungSym}

In this section we describe the background that we need on Young symmetrizers, loosely following \cite[Chapter 7]{Fulton} and \cite{James}.  A concise description of polytabloids and Specht modules associated to ribbons, specifically in connection to the topology of the order complex of the Boolean lattice, can be found in \cite[\S 2.2-2.3, \S 3.4]{WachsPosetTop2007}.

Young symmetrizers are elements in the group algebra of the symmetric group, defined  as follows.  Let $\lambda$ be an integer partition of $n$. A \emph{$\lambda$-tableau} $T$ is  an arbitrary bijective  filling of the Ferrers diagram of $\lambda$ with the integers $\{1,2,\ldots,n\}$.   
Define the row stabilizer $\mathrm{Row}_T$ (respectively, column stabilizer $\mathrm{Col}_T$)  of $T$ to be the subgroup of $\fS_n$ consisting of those permutations which permute the entries of each row (respectively, column) among themselves.  

Now define, in the group algebra,  the \emph{row symmetrizer} of a $\lambda$-tableau $T$ to be the  element 
\[a_T:= \sum_{p\in \mathrm{Row}_T} p\]
and the \emph{column symmetrizer} of $T$ to be
\[b_T:= \sum_{q\in \mathrm{Col}_T} \mathrm{sgn}(q) q.\]
Finally define the \emph{Young symmetrizer} associated to the $\lambda$-tableau $T$ to be 
\[c_T:= b_T a_T .\]
The  Young symmetrizer $c_T$ is, up to a scalar factor, a primitive idempotent in the group algebra.  It plays  an important role in the definition of the \emph{Specht module}, an explicit construction of the irreducible representations of the symmetric group in the group algebra. 
A \emph{standard Young tableau} of shape $\lambda$ is a $\lambda$-tableau $T$ where each row increases left to right and each column increases top to bottom. For each standard Young tableau $T$ of shape $\lambda$, the left ideal in the group algebra of $\fS_n$ generated by $c_T$ is an irreducible $\fS_n$-module; this is the Specht module indexed by $\lambda$. Moreover, a basis for the Specht module $\mathcal{S}^\lambda$ is given by the standard Young tableaux of shape $\lambda$. See \cite[p.17]{James}.  

The proof of our stability bound rests on showing that certain irreducible representations of $\fS_n$ do not appear in the rank-selected Whitney homology of $\Pi_n$. Our strategy is to use Young symmetrizers to verify this.  The underlying principle comes from the theory of semisimple algebras (over a field of characteristic zero), and the Wedderburn theorem; see  \cite[Chapter 18, Theorem 4 and Proposition 8]{Dummit-Foote}. Since this detection principle is an important feature of our arguments, we begin by stating it precisely for the symmetric group $\fS_n$. 
For a standard Young tableau $T$ of shape $\lambda$, let $c_T=b_Ta_T$ be the  Young symmetrizer indexed by $T$, as defined above.  The next result essentially says that the Young symmetrizer $c_T$ acts on any $\fS_n$-module as a projection onto an isomorphic copy of $\mathcal{S}^\lambda$ in the module, if such a copy exists.

\begin{thm}\label{thm:irrep-detection}  Let $V$ be an $\fS_n$-module.  
Then $V$ contains a submodule isomorphic to $\mathcal{S}^\lambda$ if and only if there is a standard Young tableau $T$ of shape $\lambda$ and a vector $v\in V$ such that $c_T v\ne 0$.

Equivalently, the irreducible indexed by $\lambda$ does not appear in $V$ if and only if $c_T v= 0$ for every standard Young tableau $T$ of shape $\lambda$ and for every $v\in V$.  
\end{thm}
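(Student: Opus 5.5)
The plan is to work in the semisimple group algebra $\CC[\fS_n]$ and split $V$ into isotypic components. By Maschke's theorem, $V \cong \bigoplus_\mu (\mathcal{S}^\mu)^{\oplus m_\mu}$. Let $I_\mu \subseteq \CC[\fS_n]$ be the two-sided ideal (Wedderburn block) corresponding to $\mathcal{S}^\mu$. By the Wedderburn theorem cited from \cite{Dummit-Foote}, $I_\mu$ acts as zero on $\mathcal{S}^\nu$ for $\nu\ne\mu$. Moreover, $I_\mu \cong \mathrm{End}(\mathcal{S}^\mu)$ acts faithfully on $\mathcal{S}^\mu$.

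First I will show that $c_T \in I_\lambda$ for any standard $T$ of shape $\lambda$. From the background recalled above, the left ideal $\CC[\fS_n]c_T$ is isomorphic to $\mathcal{S}^\lambda$. Since $c_T$ is a nonzero scalar multiple of an idempotent $e$, we have $c_T \in \CC[\fS_n]c_T \subseteq \CC[\fS_n] e$, and this left ideal is an irreducible module of type $\lambda$. Every minimal left ideal of type $\lambda$ lies in the isotypic block $I_\lambda$, so $c_T \in I_\lambda$.

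Both directions of the equivalence then follow. If $c_T v \ne 0$ for some $v\in V$, decompose $v=\sum_\mu v_\mu$ into isotypic parts. Then $c_T v = c_T v_\lambda$ because $c_T\in I_\lambda$ kills the other components. Hence the $\lambda$-isotypic component is nonzero, so $V$ contains a copy of $\mathcal{S}^\lambda$. Conversely, suppose $W\subseteq V$ is isomorphic to $\mathcal{S}^\lambda$, and fix any standard $T$ of shape $\lambda$. Since $c_T\neq 0$ lies in $I_\lambda$, which acts faithfully on $\mathcal{S}^\lambda$, there is $w\in W$ with $c_T w\ne 0$. A more concrete alternative: transport $c_T$ to $W \cong \CC[\fS_n]c_T$ and note $c_T\cdot c_T = \kappa c_T$ with $\kappa = n!/f^\lambda \ne 0$, so $c_T$ acts nontrivially. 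The second formulation is simply the contrapositive of the first, with the quantifiers over all $T$ and all $v$ exchanged.

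The only step needing care is justifying that $c_T$ lies in the single block $I_\lambda$. Equivalently, $c_T$ annihilates every $\mathcal{S}^\nu$ with $\nu\ne\lambda$. If one prefers to avoid Wedderburn theory, this can be done directly with the standard lemma from \cite{James}/\cite{Fulton}: $b_T a_{T'}$-type products vanish when the shapes are not dominance-compatible. One applies that lemma in both dominance directions, using $a_T\CC[\fS_n]b_T$ and its transpose. I would present the Wedderburn argument as the main proof and mention this lemma only as an alternative.
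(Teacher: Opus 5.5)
Your proposal is correct and matches the paper's approach: the paper gives no separate proof and simply attributes the statement to the Wedderburn decomposition of the semisimple algebra $\CC[\fS_n]$, citing Dummit--Foote. Your argument that $c_T$ is a nonzero element of the block $I_\lambda$, which annihilates every other isotypic component and acts faithfully on $\mathcal{S}^\lambda$, is that principle made explicit. Your optional dominance-order alternative also works: for $T'$ of shape $\nu\ne\lambda$, one has $a_T\,\CC[\fS_n]\,b_{T'}=0$ in one lexicographic direction, and, via the anti-involution $g\mapsto g^{-1}$, $b_T\,\CC[\fS_n]\,a_{T'}=0$ in the other, so in either case $c_T\,\CC[\fS_n]\,c_{T'}=0$.
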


In this paper we make frequent use of this detection principle by adapting  
the following lemma,  extracted from the proof of \cite[\S 7.2, Lemma 2]{Fulton}.  
\begin{lem}\label{lem:Youngsym-annihil-Fulton}
    Let $T$ and $T'$ be  Young tableaux. Suppose there exist two entries  $i,j$ having the property that 
$i,j$ appear  in the same row of $T$, as well as in the same column of $T'$. Then  $a_T b_{T'}=0$.
\end{lem}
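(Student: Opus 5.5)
The plan is to use the transposition $t=(i,j)$, which lies in both $\Row_T$ and $\Col_{T'}$, and to compute $a_T t b_{T'}$ in two ways.

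First, since $t\in\Row_T$, right multiplication by $t$ permutes $\Row_T$. Reindexing the sum defining $a_T$ gives $a_T t=\sum_{p\in\Row_T} pt=a_T$.

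Second, since $t\in\Col_{T'}$, left multiplication by $t$ permutes $\Col_{T'}$, and $\sgn(tq)=-\sgn(q)$. Reindexing by $q\mapsto tq$ gives
\[
t\,b_{T'}=\sum_{q\in\Col_{T'}}\sgn(q)\,tq=\sum_{q'\in\Col_{T'}}\sgn(t^{-1}q')\,q'=-\,b_{T'}.
\]

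Combining the two computations in the associative group algebra yields
\[
a_T b_{T'}=(a_T t)\,b_{T'}=a_T\,(t\,b_{T'})=-\,a_T b_{T'}.
\]
Hence $2a_Tb_{T'}=0$. Since we work over $\CC$, where $2$ is invertible, it follows that $a_Tb_{T'}=0$.

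The only point needing care is the hypothesis that makes $t$ available: $t$ lies in $\Row_T$ exactly because $i$ and $j$ share a row of $T$, and $t$ lies in $\Col_{T'}$ exactly because they share a column of $T'$. No step presents a real obstacle; the characteristic-zero assumption of the paper's conventions is what allows division by $2$.
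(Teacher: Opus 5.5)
Your proof is correct and follows the transposition argument from Fulton \S 7.2 that the paper cites in place of giving its own proof: $t=(i,j)\in \Row_T\cap\Col_{T'}$ gives $a_Tt=a_T$ and $tb_{T'}=-b_{T'}$, which forces $a_Tb_{T'}=-a_Tb_{T'}$. The paper's analogous computations elsewhere (e.g.\ Example~\ref{ex:symm-annihilates-asis-elt-Boolean-larger-ex}) instead write $a_T$ with a right factor $(1+t)$ and $b_{T'}$ with a left factor $(1-t)$ and use $(1+t)(1-t)=0$; this is the same idea, but it avoids dividing by $2$.
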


For basic combinatorial facts about partitions in the context of the representation theory of $\fS_n$, we refer the reader to \cite{Macdonald, Stanley}.  Given two partitions $\lambda, \mu$ such that $\mu\subseteq \lambda$, the \emph{skew shape} $\lambda/\mu$ has Ferrers diagram consisting of those boxes in $\lambda$ that are NOT in $\mu$. We say the skew-shape is \emph{connected} if, when  the boxes are numbered from left to right and bottom to top,  any two consecutive boxes share an edge.

A \emph{ribbon} (also called a border strip in \cite{Macdonald, Stanley}, and a skew hook in \cite{WachsPosetTop2007}), is a connected skew shape 
that does not contain a 2 by 2 square of four boxes. 
We specify a ribbon  
 by its sequence of row lengths  $(r_1, r_2, \ldots, r_k)$ from bottom to top, and we denote this by $\Rib(r_1, r_2, \ldots, r_k)$. 
 See Figure~\ref{fig:skew-shape} for an example of an arbitrary skew shape (left) and a ribbon (right).  Note that the ribbon $\Rib(1,1,\ldots, 1, n-i)$, having all rows of length 1 except possibly the top row, coincides with the partition shape $(n-i, 1^i)$.

\begin{figure}
\ytableausetup{smalltableaux, boxsize=1.1em}
\begin{center}
\begin{ytableau}
\none &\none  & & &\\
\none &\none  & &\\
\none & &\\
& &
\end{ytableau}
\qquad\qquad \qquad 
\ribbon{0/0, 1/0, 2/0, 2/1,2/2, 3/2, 4/2, 5/2, 5/3, 6/3}

\end{center}
\ytableausetup{nosmalltableaux}
\caption{Left to right: the connected skew shape $(5,4,3,3)/(2,2,1)$ and the ribbon $\Rib(3,1,4,2)$}
\label{fig:skew-shape}
\end{figure}

Another way to define the Specht module corresponding to the partition $\lambda$ is in terms of \emph{polytabloids} \cite[Chapter 4]{James}.  For a fixed partition $\lambda$ of $n$, define an equivalence relation on the set of tableaux of shape $\lambda$ by $T_1 \equiv T_2$ if and only if $T_2=\sigma(T_1)$ for some $\sigma\in R(T)$. The \emph{tabloid} $\{T\}$  is then the equivalence class of the 
 tableau $T$; we sometimes refer to a tabloid as the tableau obtained from $T$ by ``forgetting" the order of the entries in each row.  The vector space $M_\lambda$ of tabloids of shape $\lambda$ is an $\fS_n$-module under the action defined by 
 \begin{equation}\label{eqn:fSn-action-on-tabloids} \sigma\cdot \{T\}= \{\sigma \cdot T\}
 ;
 \end{equation}
 it is  isomorphic to the induced representation $\Ind\, \one_{\fS_{\lambda}}^{\fS_n}$, where $\fS_\lambda$ is the Young subgroup indexed by the partition  $\lambda$. The \emph{polytabloid} $v_T$ is then defined to be the linear combination of tabloids obtained by applying the column symmetrizer $b_T$ to the tabloid $\{T\}$ (see \cite[p. 86, Eqn (4)]{Fulton}, \cite[p. 13, Definition 4.3]{James}):
 \begin{equation}\label{eqn:polytabloid}v_T:=\{b_T \cdot T\}=\sum_{\sigma\in \mathrm{Col}(T)} \sgn(\sigma) \{\sigma \cdot T\}.
 \end{equation}
 Thus $v_T$ is the result of  
 permuting the elements in the columns of $T$, and attaching the sign of the permutation at each step.
 The Specht module $\cS^\lambda$ is then defined to be the submodule of $M^\lambda$ spanned by the polytabloids $v_T$ as $T$ ranges over all tableaux of shape $\lambda$.  As an $\fS_n$-module,  $\cS^\lambda$ is in fact cyclically generated by $v_T$ for any one tableau $T$, which we may take to be a standard Young tableau.

 The polytabloid definition in~\eqref{eqn:polytabloid}  generalizes in the obvious way to skew shapes $\lambda/\mu$, using the analogous definitions for row,  column and Young symmetrizers and stabilizers.  The culmination is the definition of the skew Specht module $\cS^{\lambda/\mu}$, again generated by $v_T$ for a single tableau $T$ of shape $\lambda/\mu$, which we may take to be a \emph{standard} tableau, i.e., \emph{row strict}, with rows increasing strictly  left to right and \emph{column strict}, with columns increasing strictly top to bottom.  

 We will often refer to a filling of a Young diagram as a \emph{standard filling} if the filling is both row  strict and column strict.

\section{Generalities regarding  stability bounds and sharpness}\label{generalities-section}

First we establish a simple lemma. 
\begin{lem}\label{lem:weak-strict-stab-for-sums} Suppose $\{U_n\},\{V_n\}, \{W_n\}$ are sequences of $\fS_n$-modules such that $U_n=V_n\oplus W_n$.  
\begin{enumerate}
\item[(a)] \cite[Lemma 3.1]{MMPR2023} 
If any two of the three modules $\{U_n\},\{V_n\}, \{W_n\}$ stabilize for  $n\ge m$, then so does the third.
\item[(b)]
 Assume $\{W_n\}$ stabilizes sharply at or before $d-1$ for some $d\ge 2$. Then the direct sum 
$U_n$ stabilizes sharply at $n=d$ if and only if the summand $V_n$ stabilizes sharply at $n=d$. 
\end{enumerate}
\end{lem}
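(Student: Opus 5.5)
Part (a) is quoted from \cite[Lemma 3.1]{MMPR2023}, so the work is in part (b). I would argue through multiplicities. By Definition~\ref{stable-def}, a sequence $\{X_n\}$ has stabilized at $B$ exactly when, for every partition $\lambda$ and all $n\ge B$ with $n-|\lambda|\ge \lambda_1$, the multiplicity of $\cS^{(n-|\lambda|,\lambda)}$ in $X_n$ is a constant $c_\lambda(X)$ independent of $n$, and every irreducible constituent of $X_n$ for $n\ge B$ is of this padded form with $|\lambda|\le B$. Multiplicities are additive in direct sums, so $U_n=V_n\oplus W_n$ gives $\langle U_n,\cS^\mu\rangle=\langle V_n,\cS^\mu\rangle+\langle W_n,\cS^\mu\rangle$ for every $\mu\vdash n$.

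First, part (a) settles the stabilization itself. Since $\{W_n\}$ stabilizes sharply at some $d'\le d-1$, it has in particular stabilized at $d-1$ and at $d$. Applying (a) with $m=d$ and $m=d-1$ shows that $U$ stabilizes at $d$ if and only if $V$ does, and that $U$ stabilizes at $d-1$ if and only if $V$ does.

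Next, sharpness. "Stabilizes sharply at $d$" means stabilizing at $d$ but not at $d-1$; this is the natural reading of minimality, since stabilizing at $B$ implies stabilizing at every $B'\ge B$. I would check that implication directly from the definition: if every constituent is padded from some $\lambda$ with $|\lambda|\le B$, the bound $|\lambda|\le B'$ still holds, and the constant multiplicities persist for $n\ge B'$. Combining the two equivalences from the previous paragraph, $U$ stabilizes at $d$ and not at $d-1$ if and only if $V$ does. This is exactly the claim, and the hypothesis $d\ge 2$ simply ensures that $d-1>0$ is a legitimate candidate bound.

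The main obstacle I anticipate is purely definitional: confirming that sharp stabilization at $d$ really is equivalent to "stable at $d$ but not at $d-1$". This requires the monotonicity above, which is where the padding condition $n-m\ge\lambda_1$ needs a moment's care. If the paper's intended reading of (a) were only the weaker one-directional statement, I would instead redo the multiplicity bookkeeping by hand for $n\ge d-1$. For $U\Rightarrow V$, subtract $W$'s constant multiplicities from $U$'s. For $V\Rightarrow U$, add them. In both directions $W$ contributes no irreducibles outside padded shapes from partitions of size at most $d-1$, so nothing new appears.
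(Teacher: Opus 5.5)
Your proof is correct and matches the paper's argument: both use part (a) with $m=d$ to transfer stabilization at $d$, and with $m=d-1$ (since $\{W_n\}$ is already stable there) to rule out stabilization at $d-1$. Your explicit check that stabilizing at $B$ implies stabilizing at every $B'\ge B$ makes precise a step the paper uses implicitly, namely that sharpness at $d$ means stable at $d$ but not at $d-1$.
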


\begin{proof} Item (a) is clear, so we  only prove Item (b). 
We have \[U_n=V_n\oplus W_n,\]
with $\{W_n\}$ stabilizing at or before $d-1$. 

Suppose $U_n$ stabilizes sharply at $n=d$. Then by Part (a),  $V_n$ stabilizes for all $n
\ge d$, since both $U_n$ and $W_n$ do.  However, $V_n$ cannot stabilize sooner, otherwise the direct sum $U_n$ would stabilize sharply at $n= d-1$ or earlier, because of $W_n$, and this contradicts our hypothesis.

Conversely suppose the summand $V_n$ stabilizes sharply at $n=d$. Then again by Part (a),  the sum $U_n$ stabilizes for all  $n\ge d$. As before, if $U_n$ stabilized sharply at or before $d-1$, this would force $V_n=U_n-W_n$ to stabilize at or before $d-1$, a contradiction.
\end{proof}

\begin{prop}\label{prop:WHS-bound-equals-betaS-bound-anyP-AGAIN} 
Let $\{P_n\}$ be a sequence of Cohen-Macaulay $\fS_n$-posets, $n\ge 1$. 
For a fixed subset $S$ of positive integers 
consider the sequences of $\fS_n$-modules $\{\beta_S(P_n)\}$ and $\{W\!H_S(P_n)\}$.  Let $f$ be a function defined on all subsets $S$, with $f(S)$  nonnegative   for all $S$ and $f(\emptyset)=1$.
\begin{enumerate}
    \item Assume the function $f$ satisfies $f(S\setminus\{\max S\})\le f(S)$ for all $S\ne\emptyset$. 
    Then  the Whitney homology module 
$W\!H_S(P_n)$ stabilizes when $n\ge f(S)$ 
 for all  $S$
 if and only if the rank-selected homology module $\beta_S(P_n)$ stabilizes for  $n\ge f(S)$ for all $S$. 
     \item Assume the function $f$ satisfies $f(S\setminus\{\max S\}) \le f(S)-1$ for all $S\ne\emptyset$. 
     Then  the Whitney homology module 
$W\!H_S(P_n)$ stabilizes sharply at $n=f(S)$ for all subsets $S$  if and only if the rank-selected homology module $\beta_S(P_n)$ stabilizes sharply at $n=f(S)$ for all $S$.  
In particular, for fixed  $k\ge 2$, the function $f(S)=k \max S -|S|+1$ 
 satisfies the above  inequality. 
\end{enumerate}
\end{prop}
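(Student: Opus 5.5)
The plan is induction on $|S|$, using Proposition~\ref{prop:alt-sum-expression-S}(1), $W\!H_S(P_n)=\beta_S(P_n)\oplus\beta_{S^{(1)}}(P_n)$, which holds as genuine $\fS_n$-modules since every $P_n$ is Cohen--Macaulay. For the base case $S=\emptyset$, both $\beta_\emptyset(P_n)$ and $W\!H_\emptyset(P_n)$ are the trivial module (with the convention that the empty chain gives the reduced homology in degree $-1$). The trivial modules $\cS^{(n)}=V(\emptyset)_n$ stabilize at $n\ge 1=f(\emptyset)$, and they stabilize sharply there, since $1$ is the least possible index. For nonempty $S$ we have $S^{(1)}=S\setminus\{\max S\}$, which has one fewer element, so the inductive hypothesis applies to it.

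For part (1), suppose first that $W\!H_T(P_n)$ stabilizes for $n\ge f(T)$ for all $T$. We show by induction on $|S|$ that $\beta_S$ does as well. By induction $\beta_{S^{(1)}}$ stabilizes for $n\ge f(S^{(1)})$. Since $f(S^{(1)})\le f(S)$, it also stabilizes for $n\ge f(S)$, because stabilizing at $B$ implies stabilizing at any $B'\ge B$. Now $W\!H_S$ and $\beta_{S^{(1)}}$ both stabilize for $n\ge f(S)$, so Lemma~\ref{lem:weak-strict-stab-for-sums}(a) gives that $\beta_S$ does. Conversely, if every $\beta_T$ stabilizes for $n\ge f(T)$, then $\beta_S$ and $\beta_{S^{(1)}}$ both stabilize for $n\ge f(S)$, and the direct sum $W\!H_S$ stabilizes there by the same lemma. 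This direction needs no induction.

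For part (2), take $d=f(S)$. The hypothesis $f(S^{(1)})\le f(S)-1$ together with $f(S^{(1)})\ge 1$ (the base value is $1$, and on nonempty sets the relevant values are positive) gives $d\ge 2$, as Lemma~\ref{lem:weak-strict-stab-for-sums}(b) requires. If all $\beta_T$ stabilize sharply at $f(T)$, then $W_n:=\beta_{S^{(1)}}(P_n)$ stabilizes sharply at $f(S^{(1)})\le d-1$. Lemma~\ref{lem:weak-strict-stab-for-sums}(b), applied with $U=W\!H_S$ and $V=\beta_S$, then shows $W\!H_S$ stabilizes sharply at $d$.

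The converse, that sharp stability of all $W\!H_T$ implies sharp stability of all $\beta_T$, is again an induction on $|S|$. By induction $\beta_{S^{(1)}}$ stabilizes sharply at $f(S^{(1)})\le d-1$. Lemma~\ref{lem:weak-strict-stab-for-sums}(b) then transfers sharpness at $d$ from $U=W\!H_S$ to $V=\beta_S$.

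There is one subtlety in reading the lemma. Its hypothesis "$W_n$ stabilizes sharply at or before $d-1$" is used only to mean that $W_n$ stabilizes for $n\ge d-1$. That is exactly what we have here, so the lemma applies as stated.

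It remains to check the example $f(S)=k\max S-|S|+1$ with $k\ge 2$, and this is a direct computation. For $S\neq\emptyset$, write $m=\max S$. If $|S|=1$, then $f(S)=km$ and $f(\emptyset)=1\le km-1$, since $km\ge 2$. If $|S|\ge 2$, let $m'=\max S^{(1)}$. Because ranks are distinct integers, $m'\le m-1$, so
\[
f(S^{(1)})=km'-|S|+2\le k(m-1)-|S|+2=f(S)-(k-1)\le f(S)-1 .
\]
The expression defining $f(\emptyset)$ would give $k\cdot 0-0+1=1$, consistent with the required $f(\emptyset)=1$, and $f$ is nonnegative since $km\ge |S|$.

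I expect the only real obstacle to be bookkeeping. One must make sure sharpness is applied only at indices where the lemma's hypothesis $d\ge2$ holds, and that the base case $\emptyset$ is handled consistently with the Cohen--Macaulay identity. Everything else follows formally from Proposition~\ref{prop:alt-sum-expression-S}(1) and Lemma~\ref{lem:weak-strict-stab-for-sums}.
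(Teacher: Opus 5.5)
Your proposal is correct and follows the paper's argument: the decomposition $W\!H_S=\beta_S\oplus\beta_{S^{(1)}}$ from Proposition~\ref{prop:alt-sum-expression-S}(1), combined with both parts of Lemma~\ref{lem:weak-strict-stab-for-sums}, and an induction (you on $|S|$, the paper on $\max S$) to control $\beta_{S^{(1)}}$. Your version is a bit more careful than the paper's: you make explicit the induction needed in the $W\!H\Rightarrow\beta$ direction of part (1), and you check the hypothesis $d\ge 2$ of Lemma~\ref{lem:weak-strict-stab-for-sums}(b).
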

\begin{proof}  
We will use Proposition~\ref{prop:alt-sum-expression-S} and Lemma~\ref{lem:weak-strict-stab-for-sums} to prove the stated  equivalence.
  
Recall that $S^{(i)}$ is defined to be the rank set resulting from removing the $i$ largest elements in $S$.  We make two elementary observations:
\begin{itemize}
    \item 
  $\max S^{(i)}  \le \max S-i$, and 
 \item  $|S^{(i)}|=|S|-i$. 
\end{itemize}
In Case (a) this implies the  inequality $f(S^{(i)}) \le f(S)$, whereas in Case (b) we obtain  the inequality $f(S^{(i)}) \le f(S)-i\le f(S)-1$ for $i\ge 1$.  

 Item (1) of Proposition~\ref{prop:alt-sum-expression-S} asserts that 
\begin{equation*}\label{eqn:Whit-beta2}W\!H_S(P_n)=\beta_S(P_n) \oplus \beta_{S^{(1)}}(P_n).
\end{equation*}

The claim in Case (a) follows immediately from the observation of Part (a) of Lemma~\ref{lem:weak-strict-stab-for-sums}.  

For Case (b), we use Part (b) of Lemma~\ref{lem:weak-strict-stab-for-sums}.  The key point is that the stability bound $f(S^{(1)})$ is now  strictly  smaller than $f(S)$.  Aassume we have the sharp bound for $\beta_S(P_n)$ for all $S$.  Then the claim follows from Part (b) of Lemma~\ref{lem:weak-strict-stab-for-sums}, taking $W_n=\beta_{S^{(1)}}(P_n)$,

For the converse in Case (b), assume we have the sharp  bound for $W\!H_S(P_n)$ for all $S$.  By induction on $\max S$, we may assume  that the sharp bound holds for $\beta_{(S^{(1)}}(P_n)$.  Then the claim follows exactly as before.
\end{proof}
\begin{cor}\label{cor:chain-module-Stab} Let $\{P_n\}$ be a sequence of Cohen-Macaulay $\fS_n$-posets, $n\ge 1$.  Let $f(S)$ be a nonnegative integer-valued  function such that $f(\emptyset)=1$ and for nonempty $S$, $f(S)\le f(\{\max S\})$ with equality if and only if $S$ equals the singleton set $\{\max S\}$.   

Then the $\fS_n$-module of chains $\alpha_S(P_n)$ stabilizes sharply at $f(\{\max S\})$ if either 

$\bullet$ $\beta_S(P_n)$ stabilizes sharply at $f(\{\max S\})$ for all $S$, 

or if 

$\bullet$ $W\!H_S(P_n)$ stabilizes sharply at $f(\{\max S\})$ for all $S$.

In particular, for fixed $k\ge 2$, the function $f(S)=k \max S -|S|+1$ 
 satisfies the above conditions.
\end{cor}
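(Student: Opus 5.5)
The plan is to expand $\alpha_S(P_n)$ into rank-selected homology modules and apply Lemma~\ref{lem:weak-strict-stab-for-sums}(b) once, with the singleton term $\beta_{\{\max S\}}(P_n)$ as the distinguished summand $V_n$. Equation~\eqref{alpha-definition} gives the genuine direct sum decomposition of $\fS_n$-modules
\[
\alpha_S(P_n)=\bigoplus_{T\subseteq S}\beta_T(P_n)=\beta_{\{\max S\}}(P_n)\oplus W_n,\qquad W_n:=\bigoplus_{T\subseteq S,\ T\neq\{\max S\}}\beta_T(P_n).
\]
Set $d=f(\{\max S\})$. If $d=1$ then $S=\{\max S\}$ and $W_n=\beta_\emptyset(P_n)$ is the trivial module, so there is nothing to split; assume $d\ge 2$.

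I read the hypothesis as the form in which it will be applied: each $\beta_T(P_n)$ stabilizes sharply at $f(T)$ (for the model function $f(S)=k\max S-|S|+1$, this comes from Proposition~\ref{prop:WHS-bound-equals-betaS-bound-anyP-AGAIN}). Under this reading, the singleton summand stabilizes sharply at $f(\{\max S\})=d$.
\begin{enumerate}
\item First I show every summand of $W_n$ stabilizes at or before $d-1$.
\begin{itemize}
\item For $T=\emptyset$, $\beta_\emptyset$ is the trivial module, which stabilizes at $f(\emptyset)=1\le d-1$.
\item For nonempty $T\subseteq S$ with $\max T=\max S$ and $T\ne\{\max S\}$, the hypothesis on $f$ gives strict inequality $f(T)<f(\{\max T\})=d$.
\item For nonempty $T$ with $\max T<\max S$, I need $f(T)\le f(\{\max T\})<f(\{\max S\})$. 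This requires $f$ to be strictly increasing on singletons. That is automatic for $f(S)=k\max S-|S|+1$, since $f(\{m\})=km$; for general $f$ I would add it as an implicit assumption.
\end{itemize}
\item Lemma~\ref{lem:weak-strict-stab-for-sums}(a), applied repeatedly, shows the finite direct sum $W_n$ stabilizes at or before $d-1$.
\item Lemma~\ref{lem:weak-strict-stab-for-sums}(b), with $V_n=\beta_{\{\max S\}}(P_n)$, then shows $\alpha_S(P_n)$ stabilizes sharply at $d$.
\end{enumerate}

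For the Whitney homology bullet, I first transfer the hypothesis to rank-selected homology and then rerun the argument above. I apply Proposition~\ref{prop:WHS-bound-equals-betaS-bound-anyP-AGAIN}(2) to the function in question. Its hypothesis $f(S\setminus\{\max S\})\le f(S)-1$ holds because removing $\max S$ lowers the maximum: either $S\setminus\{\max S\}=\emptyset$, or $f(S\setminus\{\max S\})\le f(\{\max (S\setminus\{\max S\})\})<f(\{\max S\})$, combined with the inequality $f(S)$ versus $f(\{\max S\})$. For the explicit $f=k\max S-|S|+1$ with $k\ge 2$, the proposition already records this. The conclusion is that $\beta_T(P_n)$ has the same sharp bounds as $W\!H_T(P_n)$, and the first argument applies.

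The main obstacle is the bookkeeping for summands $T$ whose maximum is smaller than $\max S$. This is exactly where a monotonicity of $f$ in $\max S$ is needed, and the stated hypotheses do not make it explicit. I would verify it directly for $f(S)=k\max S-|S|+1$: there $f(T)\le k(\max S-1)<k\max S=f(\{\max S\})$, which gives strict inequality for every $T\ne\{\max S\}$ at once. The final sentence of the corollary, that this $f$ satisfies the conditions, is immediate: $f(S)=f(\{\max S\})-(|S|-1)$ and $f(\emptyset)=1$.
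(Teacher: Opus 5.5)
Your handling of the $\beta$ bullet is the paper's argument: you split $\beta_{\{\max S\}}(P_n)$ off from \eqref{alpha-definition} and apply Lemma~\ref{lem:weak-strict-stab-for-sums}(b). Your caveat there is justified. Summands with $\max T<\max S$ need $f$ to be strictly increasing on singletons, and the paper's proof uses this silently when it writes $f(\{\max T\})\le f(\{\max S\})$.

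The gap is in the Whitney bullet. You reduce it to the $\beta$ bullet through Proposition~\ref{prop:WHS-bound-equals-betaS-bound-anyP-AGAIN}(2). That proposition's hypothesis $f(S\setminus\{\max S\})\le f(S)-1$ does not follow from the corollary's assumptions, even after adding strict monotonicity on singletons. Your verification only shows that $f(S\setminus\{\max S\})$ and $f(S)$ are both at most $f(\{\max S\})-1$, which says nothing about how they compare with each other. Concretely, take $f(\emptyset)=1$, $f(\{m\})=10m$, and $f(S)=2$ for $|S|\ge 2$. This satisfies every stated condition and is strictly increasing on singletons, yet $f(\{1,2\})=2>1=f(\{1,2,3\})-1$. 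The case $S\setminus\{\max S\}=\emptyset$ also silently requires $f(\{m\})\ge 2$. As written, your argument proves the Whitney bullet only for $f(S)=k\max S-|S|+1$, where the proposition records the needed inequality.

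The missing idea is to skip the transfer and decompose $\alpha_S(P_n)$ directly into Whitney homology modules, as the paper does. Pair each $T\subseteq S$ containing $\max S$ with $T\setminus\{\max S\}$, and use $W\!H_T(P_n)=\beta_T(P_n)\oplus\beta_{T\setminus\{\max T\}}(P_n)$ from Proposition~\ref{prop:alt-sum-expression-S}(1). This gives
\[
\alpha_S(P_n)=\bigoplus_{T\subseteq S,\ \max S\in T} W\!H_T(P_n).
\]
Every summand has $\max T=\max S$, so the stated hypothesis on $f$ gives $f(T)<f(\{\max S\})$ for all $T\ne\{\max S\}$ directly. Lemma~\ref{lem:weak-strict-stab-for-sums}(b) with $V_n=W\!H_{\{\max S\}}(P_n)$ then finishes. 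This route needs no monotonicity of $f$ across different maxima, so the extra assumption you introduce is unnecessary for the Whitney bullet.
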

\begin{proof} Assume the sharp stability bound $f(S)$ holds for $\beta_S(P_n)$ for every  $S$.  Then the conclusion follows from the general formula~\eqref{alpha-definition}, applied to the sequence of posets $\{P_n\}$:
\[\alpha_S(P_n)=\sum_{T\subseteq S} \beta_T(P_n).\]
Each $\fS_n$-module $\beta_T(P_n)$, $ \emptyset\ne T\subset S$, $T\ne \{\max S\}$ on the right  stabilizes strictly before $f(\{\max S\})$, from the inequality $f(T)\le f(\{\max T\})\le f(\{\max S\})$  for  $T\subseteq S$ and the fact that $f(T)= f(\{\max S\})$ if and only if $T=\{\max S\}$.     
Also $\beta_{\{\max S\}}(P_n)$ stabilizes sharply at $f(\{\max S\})$.
Thus we can write $\alpha_S(P_n)=W_n\oplus \beta_{\{\max S\}}(P_n)$ where $W_n$ stabilizes strictly before $\beta_{\{\max S\}}(P_n)$.
The claim now follows from Part~(b) of Lemma~\ref{lem:weak-strict-stab-for-sums}.

Now assume  the sharp stability bound $f(S)$ holds for $W\!H_S(P_n)$ for every  $S$.   It is easy to see that Item (1) of Proposition~\ref{prop:alt-sum-expression-S} implies the expression below for $\alpha_S(P_n)$ as a sum of Whitney homologies:
\begin{equation}\label{eqn:chain-is-Whitney-sum}
\alpha_S(P_n)= \bigoplus_{\substack{T\subseteq S \\ \max S\in T}} W\!H_T(P_n).
\end{equation}  
For this it suffices to note that every subset $T$ of $S$ containing $\max S$ can be paired up with the subset $T\setminus\{\max S\}$, and the direct sum of the latter two rank-selected homoloy modules is precisely $W\!H_T(P_n)$.
Since $f(T)$ is strictly less than $f(\{\max S\})$ for every $T$ in the sum above except for the singleton set $T=\{\max S\}$, the conclusion follows as before from Part~(b) of Lemma~\ref{lem:weak-strict-stab-for-sums}.
\end{proof}

We will apply a specific choice of the function $f(S)$ to obtain sharp stability bounds, namely,   for the Boolean lattice with $f(S)=2\max S-|S|+1$, and for the partition lattice with  $f(S)=4\max S-|S|+1$.

Both in \cite{Hersh-Reiner} and in our present work, representation stability in Whitney homology takes an especially nice form.  This leads us to establish the following convenient definition  (stated first informally and then more formally) in order to better describe this structure.  While we did not  find this definition explicitly  in the literature, it is certainly implicit in \cite{CEF}.  
 We refer readers specifically  to the section on free FI-modules in \cite{CEF} where closely related  ideas are discussed in a much more formal  language.

Recall first  that Proposition 2.6 in \cite{CEF} introduces the generators of an FI-module $V$, leading to the notation $\{ M_n(S^{\lambda })\} $ for the generators described  
below.   
As a word of caution, these generators
$\{ M_n(S^{\lambda })\} $ are quite different from the $V(\lambda )$'s appearing earlier in  Definition ~\ref{stable-def} when we introduced the notion of representation stability.  

\begin{defn}[{\bf Informal Version}] 
Consider a finitely generated  FI-module $V$.  
    Suppose  there is  a finite list (with repetition allowed) of irreducible representations $\cS^{\lambda^{(1)} },\cS^{\lambda^{(2)} },\dots ,\cS^{\lambda^{(k)}}$ 
    that will serve as a complete set of  FI-module generators for $V$.  
    Suppose additionally  
    that each of these irreducible representations  
$\cS^{\lambda^{(i)}}$ appears in 
$V_{|\lambda^{(i)}|}$ and contributes
$$\Ind_{\fS_{|\lambda^{(i)}|}\times \fS_{n-|\lambda^{(i)}|}}^{\fS_n}\, (\cS^{\lambda^{(i)}}\otimes \one_{\fS_{n-|\lambda^{(i)}|}} )$$ to $V_n$ for each $n\ge |\lambda^{(i)}|$.  Suppose also that each $V_n$ is exactly the direct sum of these contributions to it.    In this case, we say that $V$ is
{\em quasi-freely generated} by $\cS^{\lambda^{(1)}}, \cS^{\lambda^{(2)}},\dots ,\cS^{\lambda^{(k)}}$. 

The modules $\{ \cS^{\lambda^{(i)}} : i=1,2,\dots ,k\} $ form a complete set of \emph{FI-module generators}, a notion defined more generally in \cite{CEF}. 

Denote by $\{ M_n(\cS^{\lambda^{(i)}} )\} $ the FI-module appearing as
the direct summand above given by $\cS^{\lambda^{(i)} }$. 
\end{defn}

In the language of \cite{CEF}, this FI-module is generated in degree $\max (|\lambda^{(i)}|)$.  
Typically the degree in which an FI-module is generated is strictly lower than the degree in which stability first occurs, discussed shortly.   Now we define quasi-free generation more formally.

Fix an integer $n_0\ge 1$. For any $\fS_{n_0}$-module $W$, define a sequence of $\fS_n$-modules $\{M_n(W)\}$ as follows.
\[ M_n(W)=\begin{cases} \Ind_{\fS_{n_0}\times \fS_{n-n_0}}^{\fS_n}\,(W\otimes \mathbbm{1}_{n-n_0}), & n\ge n_0\\
0 & \text{otherwise}.
\end{cases}
\]
Note that  $\{ M_n(W)\} $ is an FI-module.  
Since direct sums of FI-modules are FI-modules, one may also speak of 
$\{ M_n(
\oplus_{\lambda }c_{\lambda } \cS^{\lambda })\} $ as an FI-module  where the sum can be over partitions of more than one positive integer.

\begin{defn}\label{quasi-free}
Call an FI-module that is a finite direct sum of FI-modules of the form $\{ M_n(\cS^{\lambda })\} $ a \emph{ quasi-freely generated}  FI-module.
\end{defn}

\begin{rk}
    We make the convention in Definition ~\ref{quasi-free} that quasi-freely generated FI-modules are necessarily finitely generated.  One need not require finite generation, but  we make this choice   to avoid having to repeatedly say ``quasi-freely generated, finitely generated".
\end{rk}

Next we recall and slightly reframe results we will need for proving our representation stability bounds and also for proving that they are sharp.  The key tool will be Lemma 2.2 of \cite{Hersh-Reiner}.  This is an enhancement of Lemma~\ref{Hemmer's-lemma}.  We will rephrase  Lemma 2.2  
from \cite{Hersh-Reiner} in the language of FI-modules
in order to give  some important consequences
that we will also need.

\begin{lem}[{\cite[Lemma 2.2]{Hersh-Reiner}}] 
\label{lem:quasi-free-implies-increasing-mult}
The following statements hold.
\begin{enumerate}
\item (Injectivity property)
For any partition $\mu$, let $\mu^+$ denote the partition $(\mu_1+1, \mu_2,\ldots)$, and let $(\oplus_{\mu\vdash n} \cS^{\mu })^{(+1)}$ denote $\oplus_{\mu\vdash n} \cS^{\mu^+}$. Then the sequence $\{M_n(\cS^\mu)\}$ has the following property:  \[M_n(\cS^\mu)^{(+1)} \text{ is an $\fS_{n+1}$-submodule of } M_{n+1}(\cS^{\mu }).\]   
\item (Sharp stability bound) For any 
true (i.e.\! non-virtual) symmetric group module $W=\oplus_\mu c_\mu \cS^\mu$, 
the sequence $\{M_n(W)\}$ stabilizes sharply at $n=\max\{|\mu|+\mu_1: c_\mu \ne 0\}$.

\item (Sharpness for irreducibles) In particular when $W$ is a single irreducible $\cS^\mu$,  the sequence $\{M_n(\cS^\mu)\}$ stabilizes sharply at $n=|\mu|+\mu_1$.  
\end{enumerate}
\end{lem}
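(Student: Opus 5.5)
The plan is to work entirely on the level of Frobenius characteristics, where $\ch M_n(\cS^\mu)=s_\mu h_{n-|\mu|}$ for $n\ge |\mu|$. Everything then reduces to the Pieri rule for $s_\mu h_k$ together with Hemmer's Lemma~\ref{Hemmer's-lemma}. By the Pieri rule, $s_\mu h_k=\sum_\nu s_\nu$, where $\nu$ ranges over the partitions with $\nu/\mu$ a horizontal strip of size $k$. Each such $\nu$ appears with multiplicity one.

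\textbf{Item (1).} Take a horizontal strip $\nu/\mu$ of size $k=n-|\mu|$. Adding one box to the first row of $\nu$ gives $\nu^+$, and $\nu^+/\mu$ is again a horizontal strip, now of size $k+1$. The reason is that the first row has no row above it, so lengthening it cannot create two boxes in one column of the strip. The map $\nu\mapsto\nu^+$ is injective. Hence $\sum_\nu s_{\nu^+}$ is a subsum of $s_\mu h_{k+1}$, which is exactly the claimed submodule inclusion, because everything is multiplicity-free and semisimple.

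\textbf{Item (3).} Write $k=n-|\mu|$ and $\nu=(n-m,\lambda)$ with $\lambda=(\nu_2,\nu_3,\dots)$, $m=|\lambda|$. Then $\nu/\mu$ is a horizontal strip exactly when the interlacing conditions hold:
\[
\mu_1\le\nu_1,\qquad \mu_{i+1}\le \lambda_i\le \mu_i \ \text{ for } i\ge1 .
\]
The conditions on $\lambda$ alone do not depend on $n$, and they give a finite set $\Lambda$ of $\lambda$'s. The only $n$-dependent conditions are $\nu_1=n-m\ge\mu_1$ and $\nu_1\ge\lambda_1$. Since $\lambda_1\le\mu_1$, the second follows from the first. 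So $M_n(\cS^\mu)=\bigoplus_{\lambda\in\Lambda,\ n-|\lambda|\ge\mu_1} V(\lambda)_n$.

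Stability in the sense of Definition~\ref{stable-def} at $n$ requires every $\lambda\in\Lambda$ to be present, with $V(\lambda)_n$ defined. The binding case is the $\lambda\in\Lambda$ of largest size, namely $\lambda=\mu$ itself. For it the condition $n-|\mu|\ge\mu_1$ gives $n\ge|\mu|+\mu_1$.

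For sharpness, at $n=|\mu|+\mu_1-1$ the constituent $\lambda=\mu$ is missing, while it appears at all larger $n$. So the multiplicities $c_\lambda$ are not constant from that point on, and the sequence has not stabilized there. One must also rule out stabilization at an even smaller $B$. This follows because stabilization at $B$ implies stabilization at every $n\ge B$, including $|\mu|+\mu_1-1$. The small edge cases ($\mu=\emptyset$, or $n<|\mu|$ where $M_n=0$) will be checked directly.

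\textbf{Item (2).} For $W=\bigoplus c_\mu\cS^\mu$, the module $M_n(W)$ is $\bigoplus c_\mu M_n(\cS^\mu)$. By the analysis above, each summand's $V(\lambda)$-multiplicities are constant for $n\ge|\mu|+\mu_1$. So the sum stabilizes at the maximum $N=\max\{|\mu|+\mu_1: c_\mu\ne0\}$.

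Sharpness is the step needing care, because summands could in principle compensate one another. They cannot here: all multiplicities are nonnegative, and each $\lambda$-multiplicity is nondecreasing in $n$ (it only switches on at $n=|\lambda|+\mu_1$). Choose $\mu$ attaining the maximum $N$. Then the coefficient of $V(\mu)$ at $n=N-1$ is strictly smaller than at $N$, because that summand contributes $V(\mu)$ only from $N$ onward and no term decreases. Hence the sequence is not stable at $N-1$.

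The main obstacle I expect is making precise the padding convention of Definition~\ref{stable-def}, i.e. the requirement $n-m\ge\lambda_1$ and what "has stabilized at $B$" means when some $V(\lambda)$ are not yet defined. The remaining steps are direct Pieri bookkeeping, with Hemmer's Lemma~\ref{Hemmer's-lemma} as a cross-check.
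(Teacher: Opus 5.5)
Your proposal is correct and is essentially the argument behind this lemma. The paper gives no proof of its own: it imports the statement from Hersh--Reiner as an enhancement of Hemmer's Lemma~\ref{Hemmer's-lemma}, and your horizontal-strip bookkeeping for $s_\mu h_{n-|\mu|}$ via the Pieri rule is exactly that reasoning. The one point to phrase precisely is the padding convention you flagged: sharpness holds because stabilizing at $B$ forces every $V(\lambda)$ with nonzero stable coefficient to be a genuine irreducible at $n=B$. In particular this applies to $V(\mu)$ with $|\mu|+\mu_1=N$, whose stable coefficient is nonzero by nonnegativity alone, so $B\ge N$. It is not that the coefficient of $V(\mu)$ drops at $n=N-1$, since $(\mu_1-1,\mu)$ is not a partition there.
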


In the language of FI-modules, the results above imply the following.

\begin{prop}\label{prop:quasi-freely-gen-immplies-sharp-max}
If $V$ is a quasi-freely generated FI-module, then $V$ stabilizes sharply at $\max \{ |\lambda |+\lambda_1 : \cS^{\lambda } \text{ is a generator for its quasi-free structure} \} $.
\end{prop}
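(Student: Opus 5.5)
The plan is to reduce the statement directly to Lemma~\ref{lem:quasi-free-implies-increasing-mult}, Part (2). By Definition~\ref{quasi-free}, a quasi-freely generated FI-module $V$ is a finite direct sum $V_n=\bigoplus_{i=1}^k M_n(\cS^{\lambda^{(i)}})$, where the $\cS^{\lambda^{(i)}}$ (with repetition allowed) are its generators. The first step is to collect these summands according to the size $m=|\lambda^{(i)}|$ of the generator. For each $m$ we set $W_m=\bigoplus_{i:\,|\lambda^{(i)}|=m}\cS^{\lambda^{(i)}}$, which is a true $\fS_m$-module. Since induction commutes with direct sums, $M_n(W_m)=\bigoplus_{|\lambda^{(i)}|=m} M_n(\cS^{\lambda^{(i)}})$, and therefore $V_n=\bigoplus_m M_n(W_m)$, a finite sum.

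Next we determine the stability behavior of each piece. Lemma~\ref{lem:quasi-free-implies-increasing-mult}(2) says that $\{M_n(W_m)\}$ stabilizes sharply at $N_m:=\max\{|\lambda|+\lambda_1 : \cS^\lambda \text{ a summand of } W_m\}$. The same lemma, applied once to each irreducible, says that $\{M_n(\cS^{\lambda^{(i)}})\}$ stabilizes sharply at $|\lambda^{(i)}|+\lambda^{(i)}_1$. So stability of the whole sum for $n\ge N:=\max_i(|\lambda^{(i)}|+\lambda^{(i)}_1)$ follows by repeatedly applying Lemma~\ref{lem:weak-strict-stab-for-sums}(a) to a finite direct sum. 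In fact this is immediate, since the coefficients $c_\lambda$ in Definition~\ref{stable-def} are additive over direct sums.

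The main obstacle is sharpness: we must show that $V$ does not stabilize at $N-1$. Cancellation cannot occur, because all modules involved are true modules and multiplicities simply add. I would first try the following argument. Choose a generator $\cS^\mu$ attaining $|\mu|+\mu_1=N$. The proof of Lemma~\ref{lem:quasi-free-implies-increasing-mult}(3) (via Pieri's rule) shows that sharpness of $\{M_n(\cS^\mu)\}$ at $N$ is witnessed by a specific failure at $n=N-1$. Either some irreducible $\cS^{\nu}$ of $M_{N-1}(\cS^\mu)$ fails to be the padding of a partition of size at most $N-1$ with first row condition, or the multiplicity of some padded irreducible $V(\lambda)$ jumps between $n=N-1$ and $n=N$. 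For example, $\cS^{(\mu_1,\mu)}$ appears in $M_N$ but its predecessor $\cS^{(\mu_1-1,\mu)}$ is not a valid padding at $n=N-1$. Because multiplicities in $V_n$ are sums of nonnegative multiplicities over the summands, and by the injectivity property in Lemma~\ref{lem:quasi-free-implies-increasing-mult}(1) each summand's multiplicity of $V(\lambda)$ is weakly increasing in $n$, this jump persists in the total $V_n$. Hence $V$ cannot stabilize at $N-1$. Equivalently, after grouping all generators into the single true module $W=\bigoplus_m W_m$, one can rerun the proof of Lemma~\ref{lem:quasi-free-implies-increasing-mult}(2), which only uses this monotonicity and nonnegativity. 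This argument shows that the sharp bound is exactly $\max\{|\lambda|+\lambda_1\}$.
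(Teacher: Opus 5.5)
Your proposal is correct and follows the paper's route. Both arguments reduce to Lemma~\ref{lem:quasi-free-implies-increasing-mult}(2) summand by summand, then use nonnegativity of multiplicities together with the monotonicity from Part~(1) to rule out the kind of cancellation seen in Example~\ref{ex:even-odd-problem-stab}. You are more explicit than the paper: you group generators by degree and exhibit the witness $\cS^{(\mu_1,\mu)}$ in $V_N$, and for that witness nonnegativity alone already suffices.
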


\begin{proof}
    This is essentially a restatement of Lemma~\ref{lem:quasi-free-implies-increasing-mult}, part (2).  It follows from 
the  monotonicity property of the coefficients that is 
guaranteed to hold within any direct summand $\{ M_n(S^{\mu }) \} $  by Lemma~\ref{lem:quasi-free-implies-increasing-mult}, Part (1)).
\end{proof}

The monotonicity guaranteed for $\{ M_n(\cS^{\lambda })\} $ by part (1) of 
Lemma   ~\ref{lem:quasi-free-implies-increasing-mult} shows that examples such as the following cannot occur in the context of  quasi-freely generated FI-modules.

\begin{ex}\label{ex:even-odd-problem-stab}
Consider the sequence of $\fS_n$-modules defined by $U_n=3\,\mathbbm{1}_{\fS_n}$ for $n\ge 1$. Thus each $U_n$ is simply three copies of the trivial representation.
Define $\fS_n$-modules 
\[V_n= \begin{cases} \mathbbm{1}_{\fS_n}, & 
n=1,3\\
2\, \mathbbm{1}_{\fS_n}, &
n= 2,4\\
\mathbbm{1}_{\fS_n}, & n\ge 5.
\end{cases}
\qquad \text{ and } \qquad 
W_n= \begin{cases} 2\,\mathbbm{1}_{\fS_n}, & 
n=1,3\\
 \mathbbm{1}_{\fS_n}, &
n= 2,4\\
 2\, \mathbbm{1}_{\fS_n}, & n\ge 5.
\end{cases}
\]
 Then clearly $U_n=V_n\oplus W_n$. Also the sequences $\{V_n\}$, $\{W_n\}$   stabilize sharply  at $n=5$, but their sum $\{U_n\}$ stabilizes sharply earlier, at $n=1$.
\end{ex}
 
We will also make substantial use of the next result.
It  is an 
immediate consequence of  Item (1) of Lemma~\ref{lem:quasi-free-implies-increasing-mult}. 

\begin{cor}\label{cor:FI-submodules}  Let 
$\{V_n\}, \{W_n\}$ be  quasi-freely generated FI-modules. Then the FI-module $\{V_n\oplus W_n\}$ cannot stabilize earlier than either $\{V_n\}$ or 
$\{W_n\}$.
\end{cor}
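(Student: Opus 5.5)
The plan is to write both $\{V_n\}$ and $\{W_n\}$ in their quasi-free form. Then compare the sharp stability bound of the sum with the sharp bounds of the summands, using Proposition~\ref{prop:quasi-freely-gen-immplies-sharp-max}.

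By Definition~\ref{quasi-free}, write $V_n=\bigoplus_{i} M_n(\cS^{\lambda^{(i)}})$ and $W_n=\bigoplus_{j} M_n(\cS^{\mu^{(j)}})$, each a finite direct sum. Then $V_n\oplus W_n$ is again a finite direct sum of FI-modules of the form $M_n(\cS^{\nu})$. Its list of generators is the union of the two lists, so the sum is itself quasi-freely generated.

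Proposition~\ref{prop:quasi-freely-gen-immplies-sharp-max} now computes all three sharp stability points. The summands stabilize sharply at
\[
N_V=\max_i \bigl(|\lambda^{(i)}|+\lambda^{(i)}_1\bigr), \qquad N_W=\max_j \bigl(|\mu^{(j)}|+\mu^{(j)}_1\bigr).
\]
The sum stabilizes sharply at the maximum of $|\nu|+\nu_1$ over the union of the generator lists, which is $\max(N_V,N_W)$. In particular the sum stabilizes sharply no earlier than $N_V$ and no earlier than $N_W$, which is the claim.

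The one point needing care is that Proposition~\ref{prop:quasi-freely-gen-immplies-sharp-max} really gives \emph{sharpness}, and not just an upper bound, for a direct sum. This is where Example~\ref{ex:even-odd-problem-stab} shows things can go wrong in general. Here the key input is the injectivity property, Lemma~\ref{lem:quasi-free-implies-increasing-mult}(1): multiplicities in each summand $M_n(\cS^\nu)$ are weakly increasing along the padding $\mu\mapsto\mu^+$. Multiplicities in a direct sum add, so there can be no cancellation. Suppose the failure of stability at $n=N-1$ (with $N=\max(N_V,N_W)$) comes from a summand $M_n(\cS^\nu)$ with $|\nu|+\nu_1=N$. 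That failure is a strict increase in some padded multiplicity between $n=N-1$ and $n=N$, and it persists in the total.

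I expect this step to be the main obstacle, and I would verify it as follows. Fix such a $\nu$. Lemma~\ref{lem:quasi-free-implies-increasing-mult}(3) gives that $M_n(\cS^\nu)$ has not stabilized at $N-1$. Concretely, some padded multiplicity $c_\lambda(n)$ satisfies $c_\lambda(N)>c_\lambda(N-1)$: at $n=N-1$, Pieri's rule shows the component $(\nu_1,\nu)$ cannot yet appear. Every other summand contributes a weakly increasing amount to $c_\lambda(n)$, so the total multiplicity also increases strictly. Hence the sum has not stabilized at $N-1$.
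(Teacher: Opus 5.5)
Your proof is correct and is essentially the paper's argument. The paper simply calls the corollary an immediate consequence of the monotonicity in Lemma~\ref{lem:quasi-free-implies-increasing-mult}(1); you route it through Proposition~\ref{prop:quasi-freely-gen-immplies-sharp-max}, which the paper derives from that lemma. You observe that the direct sum is quasi-freely generated by the union of the two generator lists, so its sharp bound is $\max(N_V,N_W)$, and your non-cancellation check supporting this is sound. In that check, positivity alone already suffices: $V(\nu)$ with $|\nu|+\nu_1=N$ cannot occur at all when $n=N-1$.
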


As our final topic in this section of preliminaries, we verify that the two main examples considered  in this  paper are both  FI-modules and both   satisfy the first two requirements for uniform representation stability.  We will verify the third requirement for uniform representation stability later in the paper.  

\begin{prop}\label{get-FI-modules}
If a family $\{ P_i\} $ of posets has $P_i=B_i$ for all $i$ or  $P_i=\Pi_i$ for all $i$ then both $\{ \beta_S(P_n)\} $ and $\{ W\!H_S(P_n) \} $ are FI-modules. 
\end{prop}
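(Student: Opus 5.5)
The plan is to build the FI-module structure directly from functoriality of the posets themselves. An injection $f:\mathbf{n}\to\mathbf{m}$ will induce a rank-preserving, order-preserving, injective poset map $f_*:P_n\to P_m$. For $B_n$, set $f_*(A)=f(A)$. For $\Pi_n$, send a set partition $\pi$ of $\mathbf{n}$ to the partition of $\mathbf{m}$ whose non-singleton structure is the image $f(\pi)$, with every element of $\mathbf{m}\setminus f(\mathbf{n})$ a singleton block. In both cases $f_*$ preserves rank: rank is $|A|$ in $B_n$, and rank is $m$ minus the number of blocks in $\Pi_m$, which equals $n$ minus the number of blocks of $\pi$. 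Both maps preserve cover relations, and satisfy $(g\circ f)_*=g_*\circ f_*$ and $(\mathrm{id})_*=\mathrm{id}$. When $f$ is a bijection, $f_*$ is exactly the $\fS_n$-action already used. One point needs care: $\hat 1$ is not sent to $\hat 1$. This is harmless, because the rank set $S$ only involves nontrivial ranks of $P_n$, which remain nontrivial ranks of $P_m$ for $m\ge n$. Hence $f_*$ restricts to a simplicial map $\Delta(P_n^S)\to\Delta(P_m^S)$.

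For $\beta_S$, I will use that $\beta_S(P_n)$ is the top reduced homology of $\Delta(P_n^S)$, by the Cohen--Macaulay property and \cite[Theorem 1.2]{Stanley-aspects}. Since $f_*$ is a simplicial map of complexes of the same dimension $|S|-1$, it induces $\tilde H_{|S|-1}(\Delta(P_n^S))\to\tilde H_{|S|-1}(\Delta(P_m^S))$. Functoriality of homology then gives the FI-module axioms. When $\max S\ge n$ the module is $0$, and those maps are zero. Concretely, on the chain level a chain $x_1<\dots<x_k$ goes to $f_*(x_1)<\dots<f_*(x_k)$, and this commutes with the boundary map.

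For $W\!H_S$, the argument is a direct sum version of the same idea. Take a summand $\tilde H_{|S|-2}([\hat 0,x]^{S\setminus\{\max S\}})$ with $\operatorname{rank}x=\max S$. The map $f_*$ restricts to an isomorphism of intervals $[\hat 0,x]\to[\hat 0,f_*(x)]$, since it is injective, order-preserving, and preserves ranks, and an order ideal below $x$ maps onto the ideal below $f_*(x)$. This is clear in $B_n$. In $\Pi_n$, the partitions below $f_*(x)$ are refinements, and these correspond to refinements of $x$ because the new singletons cannot be refined further. So $f_*$ induces an isomorphism on that summand, landing in the summand indexed by $f_*(x)$. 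Summing over $x$ gives the map $W\!H_S(P_n)\to W\!H_S(P_m)$. Functoriality again follows from $(g f)_*=g_*f_*$, and equivariance for bijections recovers the $\fS_n$-action in \eqref{eqn:Whit-G-struct}.

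The only real obstacle I anticipate is the bookkeeping for $\Pi_n$: checking that the singleton-padding map is well defined, rank- and order-preserving, and compatible with composition. I will verify this directly by writing blocks as $f(B)$ for $B\in\pi$ together with $\{j\}$ for $j\notin f(\mathbf{n})$. Everything else is formal functoriality of simplicial homology.
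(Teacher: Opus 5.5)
Your proposal is correct and follows essentially the same route as the paper: an injection induces a rank-preserving embedding of $P_n$ onto a lower interval of $P_m$. That embedding carries maximal chains of $P_n^S$ to maximal chains of $P_m^S$ and each $[\hat 0,x]$ isomorphically onto $[\hat 0,f_*(x)]$, so top-dimensional cycles go to cycles and functoriality gives the FI-module structure; your explicit check that $(g\circ f)_*=g_*\circ f_*$ supplies what the paper leaves to ``by virtue of how they are defined.'' One harmless slip: $\Pi_n$ has rank $n-1$, so for partition lattices the range where you declare the modules zero should be $\max S\ge n-1$ rather than $\max S\ge n$, though nothing in your argument depends on this.
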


\begin{proof}
First we check that any injection $i$ of sets from $[m]$ to $[n]$  induces an isomorphism from $P_m$ to a lower interval $[\hat{0},u]$ of rank ${\rm rk}(P_m)$  in $P_n$ in each case under consideration.  
For the Boolean lattices, $u$ is the subset of $\{1,2,\dots ,n\} $ 
consisting of $\{ i(1),i(2),\dots ,i(m)\}$.  For the partition lattices, $u$ is the set partition having the letters $\{ i(1),i(2),\dots ,i(m)\} $ in a single block and all other letters in singleton blocks.  
For instance, $i: \{ 1,2\} \rightarrow \{ 1,2,3,4 \} $ with $i(1)=3$ and $i(2)=2$ sends $B_2$ to the interval $[\hat{0},\{ 3,2\} ]$ in $B_4$ and sends $\Pi_2 $ to the interval $[\hat{0}, 32|1|4]$ in $\Pi_4$.  Particularly important is the observation that these injections preserve poset rank, and hence send maximal chains in a rank-selected subposet $P_m^S$ to maximal chains in $P_n^S$.

It is straightforward to confirm that  every homology cycle in $P_m^S$ 
is mapped by each such injection of posets to a homology cycle in $P_n^S$;
since the homology of an open interval in each of these posets (and in each rank-selected subposet) is concentrated in top degree, by virtue of shellability of these posets,  none of the cycles can be boundaries.  
This 
injective mapping of cycles to cycles gives maps on homology that satisfy the necessary relations to be functorial by virtue of how they are defined.  In this manner, one gets 
the maps needed here for rank-selected homology. 
Similarly,  one may also deduce that rank-selected Whitney homology for a fixed rank set $S$ for either family of posets is also  an FI-module.
\end{proof}

Let $\phi_n :P_n\rightarrow P_{n+1}$ denote  the special case of the injection in the proof of Proposition ~\ref{get-FI-modules}, that is induced by the injection $i:\{ 1,2,\dots ,n\} \rightarrow \{ 1,2, \dots ,n+1\} $ having $i(j)=j$ for $j=1,2,\dots ,n$.  
Using this map $\phi_n$, the next result follows directly from the proof of Proposition ~\ref{get-FI-modules}.  The bounds appearing in the next corollary 
come from the necessary condition that the rank of $P_n$ must be at least 
$\max S + 1$ for $\beta_S(P_n)$ to be well-defined.

\begin{cor}\label{first-uniform}
The first requirement  for uniform representation stability (see Definition ~\ref{uniform-rep-stability}) is  satisfied by $\{ WH_S(P_n), \phi_n \} $ and by $\{ \beta_S(P_n),\phi_n\} $ for $P_n=B_n$ when $n\ge \max S+1$ and for $P_n=\Pi_n$ when $n\ge \max S+2$.  
\end{cor}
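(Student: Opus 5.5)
The plan is to check injectivity of $\phi_n$ directly on the level of cycles, using the lower-interval embeddings built in the proof of Proposition~\ref{get-FI-modules}. The map $\phi_n$ embeds $P_n$ isomorphically onto the lower interval $[\hat 0,u]$ of $P_{n+1}$. Here $u=\{1,\dots,n\}$ for Boolean lattices, and $u=12\cdots n|n+1$ for partition lattices. This embedding preserves rank. So it sends chains of $P_n^S$ to chains of $P_{n+1}^S$, and the order complex $\Delta(P_n^S)$ onto the subcomplex $\Delta([\hat 0,u]^S)$ of $\Delta(P_{n+1}^S)$.

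The first step is to set the range. The module $\beta_S(P_n)$ is meaningful only when $\max S$ is a nontrivial rank of $P_n$, that is $\max S\le \rank(P_n)-1$. Since $\rank B_n=n$ and $\rank \Pi_n=n-1$, this gives $n\ge \max S+1$ for Boolean lattices and $n\ge \max S+2$ for partition lattices.

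For $\beta_S$, the key observation is that homology is concentrated in the top degree $|S|-1$. The top-dimensional reduced homology of a simplicial complex equals its group of top-dimensional cycles, because there are no boundaries from higher dimension. Now $\phi_n$ induces an injective simplicial map on order complexes, since it is an isomorphism onto its image. It therefore gives an injective chain map in each degree. Its restriction to top cycles is injective, and its image consists of cycles in $\Delta(P_{n+1}^S)$, since the boundary commutes with the map. Each top cycle is a nonzero homology class exactly when it is nonzero as a chain, so the induced map $\beta_S(P_n)\to\beta_S(P_{n+1})$ is injective.

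For $W\!H_S$, the argument is summand by summand. The map $\phi_n$ sends each $x$ of rank $\max S$ in $P_n$ to a distinct element $\phi_n(x)$ of the same rank in $P_{n+1}$. It also restricts to an isomorphism $[\hat 0,x]\cong[\hat 0,\phi_n(x)]$, because $[\hat 0,u]$ is a lower interval. So the summand $\tilde H_{|S|-2}([\hat 0,x]^{S\setminus\{\max S\}})$ maps isomorphically onto the summand indexed by $\phi_n(x)$. The images of different summands lie in different direct summands, so the total map is injective. Here I use that the FI-structure maps from Proposition~\ref{get-FI-modules} are exactly these induced maps.

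The main obstacle is only bookkeeping: confirming that the maps $\phi_n$ are the structure maps defined in Proposition~\ref{get-FI-modules}. That holds by definition, since $\phi_n$ is the special case $i(j)=j$ of the injections used there. Cohen–Macaulayness, from shellability, is what guarantees that nothing is lost modulo boundaries.
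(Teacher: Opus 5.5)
Your proposal is correct and follows essentially the same route as the paper, which deduces this corollary directly from the proof of Proposition~\ref{get-FI-modules}. In both, the rank-preserving embedding of $P_n$ onto the lower interval $[\hat 0,u]$ sends top-degree cycles injectively to top-degree cycles, which cannot be boundaries. For $W\!H_S$ this is done summand by summand, and the range comes from requiring $\max S$ to be a nontrivial rank of $P_n$.

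One small refinement to your last sentence: top-dimensional cycles are never boundaries for purely dimensional reasons, as your own earlier paragraph says. Cohen--Macaulayness is therefore needed only to identify the virtual module $\beta_S(P_n)$ with that top homology, not for the injectivity itself.
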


With a little more work, we can also deduce the following from the proof of Proposition ~\ref{get-FI-modules}. 
In what follows,  $\phi_n$ is induced by an arbitrary injection $\iota:\{1,2,\ldots,n\} \rightarrow \{1,2,\ldots,n+1\}$.
\begin{cor}\label{second-uniform}
The second requirement for uniform representation stability is satisfied by 
$\{ W\!H_S(P_n),\phi_n\} $ for $P_n=B_n$ when $n\ge \max S$, and for  $P_n=\Pi_n$ when  
$n\ge 2\max S$.
\end{cor}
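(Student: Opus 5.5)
The goal is to show that for $n$ at least the stated threshold, every element of $W\!H_S(P_{n+1})$ lies in the span of the $\fS_{n+1}$-translates of $\phi_n(W\!H_S(P_n))$. The idea is to exploit the direct sum decomposition in Definition~\ref{def:Whit-S}. By definition,
\[
W\!H_S(P_{n+1})=\bigoplus_{\rank(x)=\max S}\tilde H_{|S|-2}\bigl([\hat 0,x]^{S\setminus\{\max S\}}\bigr),
\]
so it suffices to show the following. For every element $x\in P_{n+1}$ of rank $\max S$, the summand indexed by $x$ is the image of some summand of $W\!H_S(P_n)$ under $\phi'_n$, where $\phi'_n$ is the map induced by some injection $\iota:[n]\to[n+1]$. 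Equivalently, it suffices to find $\sigma\in\fS_{n+1}$ with $\sigma(x)=\phi_n(y)$ for some $y\in P_n$. Since $\sigma\circ\phi_n$ is again the map induced by an injection, this reduces everything to an orbit statement about rank-$\max S$ elements.

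The key step is to show that each $x$ of rank $\max S$ lies in the image of an interval embedding, that is, in an interval $[\hat 0,u]$ with $u$ of the form described in the proof of Proposition~\ref{get-FI-modules} for a proper subset of $[n+1]$. For $B_{n+1}$, an element $x$ of rank $\max S$ is a subset of size $\max S$. When $n\ge\max S$, some letter $j\in[n+1]$ lies outside $x$, so $x$ sits inside the image of the injection $[n]\to[n+1]$ omitting $j$. For $\Pi_{n+1}$, an element $x$ of rank $r=\max S$ is a set partition with $n+1-r$ blocks. I want some letter to be a singleton block of $x$, since then $x$ lies in $[\hat 0,u]$ where $u$ has all other letters in one block. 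The non-singleton blocks of $x$ cover at most $2r$ letters, because a block of size $b$ contributes $b-1\ge b/2$ to the rank. Hence if $n+1>2r$, i.e.\ $n\ge 2\max S$, a singleton exists.

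Once the containment is established, the lower interval $[\hat 0,x]$ in $P_{n+1}$ is the isomorphic image of the interval $[\hat 0,y]$ in $P_n$ under the rank-preserving poset embedding. This embedding carries rank-selected chains and cycles bijectively, so the summand for $y$ maps isomorphically onto the summand for $x$. Summing over all $x$ then gives surjectivity of the span of the images.

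The main obstacle is bookkeeping rather than substance. One point to check is that the maps in the FI-structure on Whitney homology are exactly these summand-wise isomorphisms, which was only sketched in Proposition~\ref{get-FI-modules}. The other is that the counting bound $n+1-(\text{letters in nontrivial blocks})\ge 1$ is stated with the right inequality at the boundary case $n=2\max S$, where the worst case is $x$ having $\max S$ blocks of size two.
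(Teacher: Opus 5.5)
Your proposal is correct and follows essentially the same route as the paper. Both decompose $W\!H_S(P_{n+1})$ into summands indexed by the rank-$\max S$ elements $x$, and both use the count of letters in nontrivial blocks (at most $2\max S$ for $\Pi$, at most $\max S$ for $B$) to write each $x$ as $\pi(\phi_n(y))$ with $\pi\in\fS_{n+1}$; the summand for $x$ is then the image of the summand for $y$ under the interval isomorphism, and the reverse containment, which the paper states explicitly, is immediate.
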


\begin{proof}
  Recall the definition of the rank-selected Whitney homology $W\!H_S(P_n)$ as a direct sum \[\bigoplus_{x\in P_n} \tilde{H}_{|S|-2}([\hat 0,x]^{S\setminus \{\max S\}})\]
  over elements $x\in P_n$ of rank $\max S$.
  Note that each $x\in P_n$ of rank $\max S$  has at most $2\max S$ letters in nontrivial blocks of the set partition $x$ when $P_n=\Pi_n$ and at most $\max S$ letters in the set $x$ when $P_n= B_n$, regardless of how large $n$ is.  This implies that each $x\in P_{i+1}$ of rank $\max S$ may be obtained as $\pi (\phi_i (u))$ for some $u\in P_i$ and some $\pi \in \fS_{i+1}$ provided we have $i\ge 2\max S$ in the case that $P_i=\Pi_i$ or we have $i\ge \max S$ in the case that $P_i = B_i$.  
  
  Notice that this implies the stronger statement that each such interval $[\hat{0},x]$ for $P_{i+1}= B_{i+1}$ with $i\ge \max S$ or for $P_{i+1} =\Pi_{i+1}$ with $i\ge 2\max S$ is isomorphic to  an interval $[\hat{0},\pi (\phi_i(u))]$ for this same  $u\in P_i$  and $\pi \in \fS_{i+1}$.  This ensures that every element of $W\!H_S(P_n)$ is a direct sum of elements of the $\fS_{i+1}$-orbit of $\phi_i(W\!H_S(P_i))$ for $i\ge 2\max S$ when $P_i=\Pi_i$ and for $i\ge \max S$ when $P_i=B_i$.  On the other hand, $\phi_i(W\!H_S(P_i))$ is a subspace of the vector space $W\!H_S(P_{i+1})$ and  is sent to a subspace of $W\!H_S(P_{i+1})$ by $\fS_{i+1}$, 
  implying that $\sigma \phi_i(W\!H_S(P_i))$ is contained in $W\!H_S(P_{i+1})$ for each $\sigma \in \fS_{i+1}$ and hence 
  the span of the  $\fS_{i+1}$-orbit of $\phi_i(W\!H_S(P_i))$ is contained in  $W\!H_S(P_{i+1})$ for all $i\ge 2\max S$ when $P_i=\Pi_i$ and for all $i\ge \max S $ when $P_i=B_i$. Since we already showed that $W\!H_{i+1}$ is contained in the span of the $\fS_{i+1}$-orbit of $\phi_i(W\!H_S(P_i))$, the result follows.  
\end{proof}

We use now use the fact that the second condition for uniform representation stability holds for Whitney homology to deduce the much more subtle result that it also holds for rank-selected homology.  

\begin{prop}\label{second-beta}
    The second condition for uniform representation stability is satisfied for $\beta_S(B_n)$ and $\beta_S(\Pi_n)$.
\end{prop}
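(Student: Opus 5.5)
We would like to prove that the span of the $\fS_{n+1}$-orbit of $\phi_n(\beta_S(P_n))$ is all of $\beta_S(P_{n+1})$ for $n$ large, where $P_n=B_n$ or $P_n=\Pi_n$. The plan is to induct on $|S|$, using the direct sum decomposition in Item (1) of Proposition~\ref{prop:alt-sum-expression-S}, namely $W\!H_S(P_n)\cong \beta_S(P_n)\oplus\beta_{S^{(1)}}(P_n)$. We will make this decomposition concrete and compatible with the maps $\phi_n$. The natural map is the one induced by the inclusion of $\bigsqcup_x [\hat 0,x]^{S\setminus\{\max S\}}$ into $P^S$: it sends the top chain space of $W\!H_S(P)$ to chains of $P^S$ and induces the surjection $\pi_S: W\!H_S(P)\to \beta_S(P)$ from Sundaram's acyclicity argument. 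The kernel of $\pi_S$ is identified with $\beta_{S^{(1)}}(P)$ via the boundary-type map that deletes the top element $x$. Since every map involved is built from chain-level operations (including or deleting poset elements), these maps commute with the poset injections of Proposition~\ref{get-FI-modules}. They are therefore maps of FI-modules and in particular $\fS_n$-equivariant and compatible with each $\phi_n$.

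With the surjection $\pi_S: W\!H_S(P_{n+1})\to\beta_S(P_{n+1})$ of FI-modules in hand, the result follows formally from Corollary~\ref{second-uniform}. Take $y\in \beta_S(P_{n+1})$ and pick a preimage $w\in W\!H_S(P_{n+1})$. By Corollary~\ref{second-uniform}, for $n\ge 2\max S$ (respectively $n\ge \max S$ for $B_n$) we can write $w=\sum_j c_j\,\sigma_j\phi_n(w_j)$ with $w_j\in W\!H_S(P_n)$ and $\sigma_j\in\fS_{n+1}$. Applying $\pi_S$ and using equivariance together with $\pi_S\circ\phi_n=\phi_n\circ\pi_S$ gives
\[y=\sum_j c_j\,\sigma_j\phi_n(\pi_S(w_j)).\]
This lies in the span of the $\fS_{n+1}$-orbit of $\phi_n(\beta_S(P_n))$. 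In this formulation no induction on $|S|$ is even needed, since surjectivity alone suffices. Induction would be needed only if we instead worked with the inclusion $\beta_{S^{(1)}}\hookrightarrow W\!H_S$ and tried to split off the complement.

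The main obstacle is justifying that the surjection $W\!H_S(P)\twoheadrightarrow\beta_S(P)$ exists at the chain level and commutes with $\phi_n$. Proposition~\ref{prop:alt-sum-expression-S} is stated only as an isomorphism of $G$-modules, which is not automatically natural. The first thing to try is an explicit construction. A top cycle $z$ in $[\hat 0,x]^{S\setminus\{\max S\}}$ gives the chain $z*x$ in $\Delta(P^S)$ obtained by appending $x$ to every chain. The sum over $x$ of such chains is a cycle in $P^S$ exactly when the boundary terms $\partial z * x$ and $\pm z$ cancel, so we need to fix the definition. The cleaner route is the long exact sequence of the pair $(\Delta(P^S),\Delta(P^{S\setminus\{\max S\}}))$. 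Its relative top homology is $\bigoplus_x \tilde H(\text{link of }x)$, which equals $W\!H_S(P)$, and Cohen–Macaulayness reduces the sequence to a short exact sequence
\[0\to\beta_S\to W\!H_S\to\beta_{S^{(1)}}\to0.\]
Everything here is natural with respect to rank-preserving poset embeddings onto lower intervals, such as those of Proposition~\ref{get-FI-modules}. This construction gives an injection of $\beta_S$ rather than a surjection onto it. Over $\CC$ we then use the $\fS_n$-equivariant splitting provided by Maschke's theorem, but it must be chosen compatibly with $\phi_n$. To avoid needing compatibility, we instead argue by induction on $|S|$ together with dimension and character counts: by the inductive hypothesis, Corollary~\ref{second-uniform}, and the exact sequence, the orbit span of $\phi_n(\beta_S(P_n))$ has the same character as $\beta_S(P_{n+1})$, so it must equal it. Verifying that this character count closes up is the step I expect to require the most care.
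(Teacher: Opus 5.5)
Your proposal has a genuine gap, and it sits where you suspected. The formal argument in your second paragraph needs a surjection $W\!H_S(P)\twoheadrightarrow\beta_S(P)$ commuting with $\phi_n$, and no such map exists. The pair $(\Delta(P^S),\Delta(P^{S^{(1)}}))$ gives the natural sequence $0\to\beta_S\to W\!H_S\to\beta_{S^{(1)}}\to 0$, so $\beta_S$ is the \emph{sub}object. Run with the map that actually exists, your formal argument proves the second condition only for the quotient $\beta_{S^{(1)}}$.

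Choosing equivariant complements does not rescue this, because the sequence does not split compatibly with $\phi_n$. Take $S=\{i\}$ and suppose $s$ is a compatible section of the augmentation $W\!H_{\{i\}}(P_n)\to\beta_\emptyset(P_n)=\CC$. Then $s$ sends $1$ in degree $n+1$ to an $\fS_{n+1}$-invariant vector, i.e.\ a combination of orbit sums of rank-$i$ elements. It also sends $1$ to $\phi_n$ of the degree-$n$ section, which is supported only on elements in which $n+1$ is a singleton block (resp.\ $n+1\notin x$). Every orbit contains elements where $n+1$ is not a singleton (resp.\ lies in $x$), so all coefficients vanish and the section is $0$, which is impossible.

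Your fallback, induction on $|S|$ plus a character count, cannot close either, because characters carry no information here. Let $I^X\colon\Ind_{\fS_n}^{\fS_{n+1}}X(P_n)\to X(P_{n+1})$ be the natural map; its image is the orbit span of $\phi_n(X(P_n))$. Map the induced level-$n$ sequence to the level-$(n+1)$ sequence. Since $I^{W\!H_S}$ is onto by Corollary~\ref{second-uniform}, the snake lemma gives
\[
\operatorname{coker} I^{\beta_S}\cong \operatorname{coker}\bigl(\ker I^{W\!H_S}\to \ker I^{\beta_{S^{(1)}}}\bigr).
\]
Every character identity you can extract from Corollary~\ref{second-uniform}, Proposition~\ref{prop:alt-sum-expression-S} and your inductive hypothesis holds whatever this cokernel is. Put differently, generation degree passes to quotient FI-modules but not to submodules: $\cS^{(n-1,1)}\subset\CC^n$ is generated only in degree $2$, while $\CC^n$ is generated in degree $1$.

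The paper's own proof is this same dimension count. It applies $\phi_n$ summand by summand to $W\!H_S\cong\beta_S\oplus\beta_{S^{(1)}}$ as though that splitting were natural, which is exactly the point you doubted, so it does not supply the missing step.

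What is needed is an input about $\beta_S$ itself. One route: let $Q_a\subseteq P^S_{n+1}$ consist of the elements in which $a$ is a singleton block (resp.\ which omit $a$).
\begin{itemize}
\item Each $\Delta(Q_a)$ is an $\fS_{n+1}$-translate of $\phi_n(\Delta(P^S_n))$.
\item These subcomplexes cover $\Delta(P^S_{n+1})$ once $n\ge 2\max S$ (resp.\ $n\ge\max S$).
\item An intersection of $k$ of them is the order complex of a rank-selected partition (resp.\ Boolean) lattice on $n+1-k$ letters. For small $k$ it is therefore Cohen--Macaulay of dimension $|S|-1$, or a cone.
\end{itemize}
A Mayer--Vietoris spectral sequence argument for this cover should then show that $\bigoplus_a\tilde H_{|S|-1}(\Delta(Q_a))\to\tilde H_{|S|-1}(\Delta(P^S_{n+1}))$ is onto once $n$ is slightly larger than $2\max S$. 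If no explicit range were required, Noetherianity of FI-modules over $\CC$, applied to the submodule $\beta_S\subseteq W\!H_S$, would also give the statement.
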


\begin{proof}
 We start with the fact from Proposition~\ref{prop:alt-sum-expression-S}  that $$W\!H_S(P_n) \cong \beta_S(P_n) \oplus \beta_{S\setminus \{ \max S\} } (P_n).$$ 
 Underlying this is the vector space isomorphism 
 \begin{equation}\label{eqn:unif-stab2} W\!H_S(P_n) \cong \tilde{H}_{|S|-1}(P_{n}^S)\oplus \tilde{H}_{|S|-2}(P_{n}^{S\setminus \max S}).
 \end{equation}
We apply $\phi_n$ to each part of this statement about vector spaces, and then take the span of the $\fS_{n+1}$-orbit of each part of~\eqref{eqn:unif-stab2}.   This expresses  the  span of the $\fS_{n+1}$-orbit of 
$\phi_n(W\!H_S(P_n))$ as the direct sum of the span of the $\fS_{n+1}$-orbit of $\phi_n(\tilde{H}_{|S|-1|}(P_n^S))$ and  the span of the $\fS_{n+1}$-orbit of $\phi_n(\tilde{H}_{|S|-2}(P_n^{S\setminus \max S}))$.  We already verified in Corollary~\ref{second-uniform}  that the left hand side, namely
the  span of the $\fS_{n+1}$-orbit of 
$\phi_n(W\!H_S(P_n))$, is equal  to $W\!H_S(P_{n+1})$.   For the right hand side, we have vector space containments  within the vector spaces  $\tilde{H}_{|S|-1}(P_{n+1}^S)$ and $\tilde{H}_{|S|-2}(P_{n+1}^{S\setminus \max S})$ that we wish to prove are isomorphisms of vector spaces.  However,  we also  know that 
$$W\!H_S(P_{n+1}) \cong \tilde{H}_{|S|-1}(P_{n+1}^S)\oplus \tilde{H}_{|S|-2}(P_{n+1}^{S\setminus \max S}).$$   Thus, by  dimensionality considerations   the vector space
containments  for rank-selected homology must also be isomorphisms. 
\end{proof}

\begin{rk}
    An analogous result to Proposition ~\ref{get-FI-modules} also holds for $\{ \mathcal{B}_n(q)\} $ for fixed $q$, with the category of finite sets $\{ {\bf n} := \{ 1,2,\dots ,n\}\}  $ replaced by the category of finite dimensional vector spaces $\{ \FF_q^n|n\ge 1\} $ over a finite field $\FF_q$ and the endomorphism groups $\{ \fS_n \} $ replaced by general linear groups over $\FF_q$.   In this setting, the element $u$ appearing in the proof of Proposition ~\ref{get-FI-modules} is the subspace of $\FF_q^n $ in which the last $n-m$ coordinates are set to 0, while the first $m$ coordinates may vary freely within $\FF_q$.  
    While these are not FI-modules, again one expects a strong analogy with $B_n$, and indeed we will give a sharp stability bound in this case in Section ~\ref{Boolean-section}.  It is straightforward to verify the general linear group analogues of Corollaries ~\ref{first-uniform}, ~\ref{second-uniform} and~\ref{second-beta} for $\{ \mathcal{B}_n(q)\} $.
\end{rk}
    We will verify the third requirement for uniform representation stability for $\{ W\!H_S (B_n) ,\phi_n\} $ and for $\{ \beta_S(B_n),\phi_n\} $
    with stable range $n\ge 2\max S - |S| +1$ in both cases  in Theorem ~\ref{thm:Bool-stability}.  
    We will verify the  third requirement  for uniform representation stability for  $\{ \beta_S (\Pi_n)\} $ and $\{ W\!H_S(\Pi_n)\} $ 
    with stable range  $n\ge 4\max S - |S| + 1$ in both cases  in Theorem  ~\ref{main-partition-stability-theorem}.  
   
\section{Rank-selected homology of Boolean lattices}\label{Boolean-section}

In this section, we show in the case of the Boolean lattice $B_n$ how to use an EL-labeling  
to produce a basis  for the top homology of the rank-selected subposet $B_n^S$ for any rank set $S$.   This will serve as a model for our more general results regarding geometric lattices in the next section.

\subsection{The Boolean lattice}

Proposition~\ref{Boolean-rank-selection} is   well known to experts in this area. 
We next give a proof of this result that is not the usual proof found in the literature, leaving some of the details to be verified in greater generality in the next section, once more machinery is introduced.    We include this proof overview now  because it will show, in a substantially  simplified setting, the general approach we use for the partition lattice and other geometric lattices later in the paper. 

\begin{prop}[{\cite{So}, \cite[Theorem 4.3]{Stanley-aspects}, \cite[Theorem 3.4.4 and Exercise 3.4.5]{WachsPosetTop2007}}]
\label{Boolean-rank-selection}
Let $S = \{ s_1 < s_2 < \cdots < s_r \} \subseteq \{ 1,2,\dots ,n-1\} $.
The rank-selected homology $\beta_S(B_n)$ for the Boolean lattice on $n$ letters   has $\fS_n$-module structure given by the Specht module of ribbon shape $\Rib(s_1, s_2-s_1, s_3-s_2,\dots ,s_r-s_{r-1},n-s_r )$.
\end{prop}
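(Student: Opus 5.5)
The plan is to use the EL-labeling of $B_n$ that labels a cover $A\prec A\cup\{a\}$ by the added letter $a$. By Theorem~\ref{thm:rank-sel-hom-facets} and Proposition~\ref{folklore-homology-facets}, the homology facets of the induced shelling of $B_n^S$ are the chains $\gamma$ for which $f_{first}(\gamma)$ has descents exactly at the ranks in $S$. A maximal chain of $B_n$ is a permutation $w=w_1\cdots w_n$. For a chain $\gamma=(A_1\subset\cdots\subset A_r)$ in $B_n^S$, the chain $f_{first}(\gamma)$ lists each difference $A_i\setminus A_{i-1}$ in increasing order. So $f_{first}(\gamma)$ automatically ascends inside each block, and the homology facets are exactly the chains whose consecutive blocks satisfy $\max(A_i\setminus A_{i-1})>\min(A_{i+1}\setminus A_i)$.

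Record such a chain as a filling $F$ of $\Rib(s_1,s_2-s_1,\dots,n-s_r)$: put the block $A_i\setminus A_{i-1}$ in the $i$-th row from the bottom, increasing left to right. In a ribbon, the last box of one row sits directly below the first box of the next row, so the descent condition says exactly that this column decreases upward, i.e.\ increases top to bottom. Hence homology facets correspond bijectively to standard fillings of the ribbon. Since a shelling gives a wedge of spheres indexed by homology facets, $\dim\beta_S(B_n)$ equals the number of standard ribbon tableaux of this shape, which is $\dim \cS^{\Rib}$.

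To upgrade the dimension count to an isomorphism, I would define an explicit $\fS_n$-equivariant map. Tabloids of the ribbon shape correspond to chains in $B_n^S$, since a row-unordered filling records exactly the sets $A_i\setminus A_{i-1}$. Consider the linear map $\Phi$ from the tabloid module $M^{\Rib}$ to the chain space $C_{r-1}(\Delta(B_n^S))$, sending each tabloid to its chain (with a fixed orientation). This map is equivariant and is in fact an isomorphism of permutation modules. Each column of a ribbon has at most two boxes, so the polytabloid $v_F=b_F\{F\}$ is an alternating sum over swaps within the two-box columns.

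The key claim is that $\Phi(v_F)$ is a cycle. In $\partial\Phi(v_F)$, deleting a set $A_i$ merges rows $i$ and $i+1$. The terms that differ only by swapping the two entries in the column joining these rows then produce the same face with opposite signs, so they cancel. Granting this, $\Phi(\cS^{\Rib})$ lands in the cycle space, which equals $\tilde H_{r-1}(B_n^S)$ because top homology has no boundaries. The map is injective because $\Phi$ is injective on $M^{\Rib}$. Since both spaces have the same dimension, $\Phi$ is an equivariant isomorphism $\cS^{\Rib}\cong\beta_S(B_n)$.

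I expect the cancellation in the boundary, with consistent signs, to be the main obstacle; it is the step the paper defers to the geometric-lattice section. If checking it directly proves messy, a fallback avoids the cycle computation. Compute characters: by \eqref{beta-definition} we have $\beta_S=\sum_{T\subseteq S}(-1)^{|S\setminus T|}\alpha_T$, and $\ch\alpha_T=h_{\text{composition of }T}$. This is exactly the known inclusion–exclusion expansion of the ribbon Schur function $r_{\alpha}=\sum(-1)^{\ell(\alpha)-\ell(\beta)}h_\beta$ over coarsenings $\beta$ of $\alpha$, which identifies $\ch\beta_S(B_n)$ with $s_{\Rib}$.
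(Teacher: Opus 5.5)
Your proof is correct and is essentially the paper's argument. It uses the same EL-labeling, the same identification of homology facets with standard ribbon fillings, and the same equivariant tabloid-to-chain map, under which each polytabloid becomes a cycle by pairing terms via the transposition of the two boxes joining rows $i$ and $i+1$. You conclude with injectivity plus the count $\dim\cS^{\Rib}=\#\{\text{standard fillings}\}$, whereas the paper uses Wachs's cohomology-duality criterion to show directly that the images of the standard polytabloids form a homology basis. Your character-theoretic fallback is Stanley's classical proof and is also fine.

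One small correction: a ribbon column can have more than two boxes whenever $S$ contains consecutive integers, for example the hook $\Rib(1,\dots,1,n-i)$. So $\Col$ need not be a product of two-element groups. Your cancellation still goes through, because it only uses that this one transposition lies in the column group and becomes a row permutation after the rows are merged.
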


\begin{proof}
 One way to see this result holds is by using the standard EL-labeling for  a Boolean lattice which labels each cover relation $S\prec S \cup \{ i\} $ with the label $i$. From this one obtains a homology basis for the rank-selected homology $\beta_S (B_n)$, with the generators of the homology basis corresponding in a completely natural way  
 to the polytabloids given by the fillings of the  ribbon shape $\Rib(s_1, s_2-s_1, s_3-s_2,\dots ,s_r-s_{r-1},n-s_r) $ that are strictly increasing in rows (left to right) and columns (top to bottom).   This correspondence is via
 the $\fS_n$-equivariant map ${\bar f}_{chain}$ defined  as  a quotient map of the map  $f_{chain}$, introduced next.  The map $f_{chain}$ sends each permutation $\pi_1\dots \pi_n \in \fS_n$ to the unique maximal chain
having $\pi $ as its label sequence, i.e., the chain
 \[\hat{0} \prec \{ \pi_1 \} \prec \{ \pi_1,\pi_2\} \prec \cdots \prec \{ \pi_1, \ldots,\pi_n\}.  \] 
Next we replace $\fS_n$ by the quotient group 
$\fS_n /(\fS_{s_1}\times \fS_{s_2-s_1}\times \cdots \times \fS_{n-s_r})$ where we caution readers that in this proof (and throughout this paper)   $\fS_{s_1}\times \fS_{s_2-s}\times \cdots \times \fS_{n-s_r}$ denotes the subgroup of 
$\fS_n$ which permutes the letters appearing in the leftmost $s_1$ positions in one-line notation amongst themselves and likewise permutes the letters appearing in positions $\{ s_1+1,\dots ,s_2\} $ amongst themselves and so on (in other words,  we regard $\fS_n $ as permuting positions rather than values); once we identify permutations with fillings of our ribbon later in this proof, this subgroup  will be exactly the row stabilizer group for the ribbon.  
Observe that two permutations are in the same equivalence class in
$\fS_n /(\fS_{s_1}\times \fS_{s_2-s_1}\times \cdots \times \fS_{n-s_r})$ if and only if they map to maximal chains in $B_n$ whose restrictions to rank set $S$ yield identical maximal chains in $B_n^S$.  This allows us to derive from $f_{chain}$ a quotient  map ${\bar f}_{chain}$  sending each  element of 
$\fS_n /(\fS_{s_1}\times \fS_{s_2-s_1}\times \cdots \times \fS_{n-s_r})$ to  a maximal chain in $B_n^S$.    One may easily 
observe that this quotient map is in fact a bijection.  

But ${\bar f}_{chain}$ may alternatively be viewed as a map sending each 
tabloid $\{ T\} $ of shape 
$\Rib (s_1,s_2-s_1,\dots ,n-s_r)$ having  distinct entries $\{ 1,2,\dots ,n\} $ to a maximal chain in $B_n^S$ by identifying these tabloids bijectively with the elements of $\fS_n/(\fS_{s_1}\times \fS_{s_2-s_1}\times \cdots \times \fS_{n-s_r})$; this identification of tabloids with elements of the quotient group arises by regarding  each 
$\pi \in S_n $ as a filling of the shape 
 $\Rib(s_1,s_2-s_1,\dots ,s_r-s_{r-1},n-s_r)$ by inserting the sequence 
 $\pi_1,\pi_2,\dots ,\pi_n$ of entries  from left to right and bottom to top in the shape $\Rib (s_1,s_2-s_1,\dots ,n-s_r)$.

 Treating ${\bar f}_{chain}$ as a map from tabloids to maximal chains in $B_n^S$, one may easily check that ${\bar f}_{chain}$ sends each basis vector $v_T$ of the Specht module of shape $\Rib(s_1,s_2-s_1,\dots ,n-s_r)$ to a linear combination of maximal chains in $B_n^S$ whose boundary is 0 (as is proven in Proposition~\ref{do-have-cycles} in the more general setting  of geometric lattices).   Example~\ref{ex:hom-cycle-Boolean} illustrates these ideas. 

 Shellability theory allows one to deduce that the homology facets in the  shelling for $B_n^S$  induced by the EL-labeling for $B_n$, are  exactly the maximal chains of $B_n^S$ obtainable by applying  the map $f_{chain}$ 
 to a  standard Young tableaux of shape $\Rib(s_1,s_2-s_1,\dots ,n-s_r)$ and then restricting the resulting maximal chain of $B_n$ to rank set $S$. 
 Using the fact that each cycle in our proposed homology basis has a homology facet in its support together with the fact  that each homology facet appears in the support of  exactly one of these  cycles,  it can be shown (via the reasoning  in the proof of  Theorem ~\ref{thm:homology-basis-EL-lab})  that these cycles indexed by the standard Young tableaux of shape $\Rib (s_1,s_2-s_1,\dots ,n-s_r)$ comprise a homology basis.
\end{proof}

 \begin{ex}\label{ex:hom-cycle-Boolean} Consider the rank set $S=\{2,5\}$ in $P=B_8$. 
Let $\pi$ be the permutation (in one line notation) 34167258, corresponding to the maximal chain 
\[f_{chain}(\pi)=(\hat 0\overset{3}{<}3\overset{\bf 4}{<}{\bf 34}\overset{1}{<}134\overset{6}{<}1346\overset{\bf 7}{<}
{\bf 13467}\overset{2}{<}123467\overset{5}{<}1234567\overset{8}{<}\hat 1).\]

This maximal chain in $B_8$ restricts to the maximal chain $\gamma=(\emptyset<\{3,4\}<\{1,3,4,6,7\}< \hat 0)$ in the rank-selected subposet  $B_8^S$, and maps bijectively to the ribbon filling of shape $\Rib(2, 3, 3)$ (with row lengths determined by $S$) given by 
\[F=\tableau{& & &2 &5 & 8\\ &1&6 &7\\3&4}
\]
By definition of the polytabloid $v_F$, we have 
\[v_F=\{F\}-\{(1,4) F\}-\{(2,7) F\} +\{(1,4)(2,7) F\}
\]
Hence $\bar f_{chain}(v_F)$ equals the following linear combination of maximal chains in $B_8^S$:
\[(\hat 0<34<13467<\hat 1) -(\hat 0<31<13467<\hat 1)
-(\hat 0<34<13462<\hat 1)+(\hat 0<31<13462<\hat 1)
\]
It is easy to check that the boundary is zero, i.e., it is a homology cycle, a fact proved 
more generally for arbitrary geometric lattices in   Proposition~\ref{do-have-cycles}. 
\end{ex}

Proposition~\ref{Boolean-rank-selection} completely determines the rank-selected homology of the Boolean lattice.   
Our next task is to establish the  \emph{sharp} stability bound of $2\max S - |S| + 1$ for $\{ \beta_S(B_n)\}$ as a consequence of this result. 

In Section 7 of ~\cite{Church-Farb}, the authors 
show for the Boolean lattice $B_n$ that the rank-selected homology  modules $\beta_S(B_n)$ are multiplicity  representation stable. In  other words, it is shown for each 
$\lambda $ that the coefficient of $V(\lambda )$ (see Definition ~\ref{stable-def} for this notation) has a stability upper bound of $\max (S) + |\lambda |$.  No upper bound is given for 
$|\lambda |$, hence their usage of the term multiplicity representation stability.
Corollary~\ref{second-beta}  implies $|\lambda |\le \max (S) $ for all $V(\lambda )$ appearing in the stabilized  formula  for $\beta_S(B_n)$, thereby giving a stability bound of $2\max S$ for $\beta_S(B_n)$.

Next we will sharpen this to $2\max S - |S| + 1$ in Theorem ~\ref{thm:Bool-stability}, after first considering two special cases.

\begin{prop}\label{prop:Booleean-rank-sel-easy} The rank-selected homology  $\{ \beta_S(B_n)\} $ stabilizes sharply  
\begin{enumerate}
    \item at $i+1$ if $S=\{1,2,\ldots, i\}$ consists of the first $i$ consecutive ranks;
    \item at $2i$ if $S=\{ i\}$.
\end{enumerate}
\end{prop}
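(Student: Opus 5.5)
By Proposition~\ref{Boolean-rank-selection}, $\beta_S(B_n)$ is the ribbon Specht module of shape $\Rib(s_1,s_2-s_1,\dots,n-s_r)$. In both special cases this ribbon is simple enough that its Frobenius characteristic can be read off from the Pieri rule. The plan is to compute the decomposition explicitly and then read off the sharp stability point directly from Definition~\ref{stable-def}.

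\emph{Case (1), $S=\{1,\dots,i\}$.} The ribbon $\Rib(1,1,\dots,1,n-i)$ is the hook shape $(n-i,1^i)$, as noted in the background section. So $\beta_S(B_n)=\cS^{(n-i,1^i)}$, which is irreducible for $n\ge i+1$. In the notation of Definition~\ref{stable-def} this is $V(1^i)$ with $|\lambda|=i$, valid whenever $n-i\ge \lambda_1=1$, i.e.\ $n\ge i+1$. So the sequence has stabilized at $B=i+1$. For sharpness, note that at $n=i$ the rank set $S$ is no longer a set of nontrivial ranks, so the module is $0$ (or undefined). Either way, the stable formula $V(1^i)$ fails there, so $i+1$ is the least valid bound.

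\emph{Case (2), $S=\{i\}$.} The ribbon $\Rib(i,n-i)$ is a two-row ribbon, a row of length $i$ and a row of length $n-i$ overlapping in one column. The corresponding skew Specht module has characteristic $h_ih_{n-i}-h_{i-1}h_{n-i+1}$. Equivalently, $\beta_{\{i\}}=\alpha_{\{i\}}-\alpha_\emptyset$. For $n\ge 2i$, Pieri gives $h_ih_{n-i}=\sum_{j=0}^{i}s_{(n-j,j)}$, so
\[
\beta_{\{i\}}(B_n)=\cS^{(n-i,i)}=V((i)).
\]
This is valid once $n-i\ge i$, so the sequence has stabilized at $2i$.

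For sharpness in case (2), at $n=2i-1$ we have $i>n-i$. The shape is then the ribbon $\Rib(i,i-1)$, and its characteristic $h_ih_{i-1}-h_{i-2}h_{i+1}$ evaluates to $s_{(i,i-1)}$, up to the symmetry $\Rib(a,b)\cong\Rib(b,a)$ obtained by rotating the ribbon by $180^\circ$. The stable formula would instead require $V((i))$ with padded shape $(n-i,i)=(i-1,i)$, which is not a partition. So the decomposition at $n=2i-1$ is not of the form $V((i))$. More robustly, for $n<2i$ the module is $\cS^{(i,n-i)}$, which as $V(\lambda)$ has $\lambda=(n-i)$ with $|\lambda|$ depending on $n$. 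Hence the coefficients are not constant in $n$ below $2i$, and the bound $2i$ is sharp.

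The main obstacle is this sharpness check: one must make sure the failure at $n=2i-1$ really violates Definition~\ref{stable-def} and is not merely a relabeling. It does, because the stable decomposition $c_{(i)}=1$ requires $n-i\ge i$. Alternatively, sharpness follows from Lemma~\ref{lem:quasi-free-implies-increasing-mult}(3): the Whitney homology in this case is $M_n(\cS^{(i)})$, up to lower-order summands, and $|\mu|+\mu_1=2i$. One can then transfer the result to $\beta_S$ via Proposition~\ref{prop:WHS-bound-equals-betaS-bound-anyP-AGAIN}.
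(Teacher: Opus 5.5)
Your Case (1) matches the paper's argument. Case (2), however, rests on a false identification. The Specht module of the ribbon $\Rib(i,n-i)$ (the skew shape $(n-1,i)/(i-1)$) has Frobenius characteristic $h_ih_{n-i}-h_n$, not $h_ih_{n-i}-h_{i-1}h_{n-i+1}$. The latter is the Jacobi--Trudi determinant of the \emph{straight} shape $(n-i,i)$, which is not a ribbon once $i\ge 2$. Your own remark $\beta_{\{i\}}=\alpha_{\{i\}}-\alpha_\emptyset$ gives the correct formula, since $\ch\alpha_\emptyset(B_n)=h_n$; so the two expressions you call equivalent are not. Hence $\beta_{\{i\}}(B_n)\cong\cS^{(n-i,i)}$ is false for $i\ge 2$. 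For example, with $n=4$ and $i=2$, the module $\beta_{\{2\}}(B_4)$ is $\tilde H_0$ of six points and has dimension $5$, while $\dim\cS^{(2,2)}=2$. The correct decomposition for $n\ge 2i$, which is what the paper uses, is $\cS^{(n-1,1)}\oplus\cS^{(n-2,2)}\oplus\cdots\oplus\cS^{(n-i,i)}$, i.e.\ $V((1))\oplus\cdots\oplus V((i))$. Your sharpness discussion inherits the same error: both the evaluation to $s_{(i,i-1)}$ at $n=2i-1$ and the claim that the module is $\cS^{(i,n-i)}$ for $n<2i$ are wrong.

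The value $2i$ survives, so the repair is local. With the correct decomposition, stability at $2i$ holds because every $V((j))$ with $j\le i$ is defined once $n\ge 2i$. Sharpness holds because at $n=2i-1$ the characteristic $h_ih_{i-1}-h_{2i-1}$ equals $\sum_{j=1}^{i-1}s_{(2i-1-j,j)}$, so the summand $V((i))$ of the stable decomposition is absent.

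Alternatively, your closing remark already contains a correct proof by a genuinely different route from the paper's. The module $W\!H_{\{i\}}(B_n)$ is exactly the permutation module on $i$-subsets, i.e.\ $M_n(\cS^{(i)})$, with no lower-order summands. It therefore stabilizes sharply at $|\mu|+\mu_1=2i$ by Lemma~\ref{lem:quasi-free-implies-increasing-mult}(3). Since $W\!H_{\{i\}}(B_n)=\beta_{\{i\}}(B_n)\oplus\one$, Lemma~\ref{lem:weak-strict-stab-for-sums}(b) with $W_n=\one$ and $d=2i$ transfers sharpness to $\beta_{\{i\}}(B_n)$. For a single rank set, that lemma is the right citation rather than Proposition~\ref{prop:WHS-bound-equals-betaS-bound-anyP-AGAIN}, which is a statement about all $S$ simultaneously. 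This route avoids computing the ribbon decomposition at all. If you make it the main argument, Case (2) is complete; as written, the primary computation is wrong.
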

\begin{proof} Let $S=\{1,2,\ldots, i\}$. Then Proposition~\ref{Boolean-rank-selection} says that as an $\fS_n$-module, the rank-selected homology is given by the Specht module corresponding to the hook partition $(n-i, 1^i)$. The sharp stability bound follows immediately from $n-i\ge 1$. 

Now let $S$ be the singleton rank $\{i\}, n-1\ge  i\ge 1$.  Proposition~\ref{Boolean-rank-selection} says that the rank-selected  homology is the Specht module $\cS^{\Rib(i, n-i)}$ for the 2-rowed ribbon $\Rib(i, n-i)$ with bottom row length $i$ and top row length $n-i$.  
For $n\ge 2i,$  the expansion into irreducibles is 
\[\cS^{\Rib(i, n-i)}=
\cS^{(n-1,1)}\oplus \cS^{(n-2,2)}\oplus\cdots\oplus\cS^{(n-i,i)},
\]
giving the sharp stability bound $n=2i$.
\end{proof}
These two examples  suggest that the (top) homology of the rank-selected Boolean subposet $B_n^S$ of the Boolean lattice $B_n$ stabilizes sharply at $n=2\max S-|S|+1$. 

Recall the definition of  $W\!H_S(P)$ from Equation  \eqref{eqn:WH-S-defn}. 
Proposition~\ref{prop:alt-sum-expression-S} and Proposition~\ref{prop:WHS-bound-equals-betaS-bound-anyP-AGAIN} allow us to deduce the following equivalence. 

\begin{cor}\label{cor:WHBoolS-bound-equals-betaS-bound} 
The Whitney homology module 
$W\!H_S(B_n)$ stabilizes sharply at $2\max S - |S| + 1$ for all $S$ if and only if the rank-selected homology module $\beta_S(B_n)$ stabilizes sharply at $2\max S -|S| + 1$ for all $S$. 
\end{cor}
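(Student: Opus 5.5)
The plan is to derive this as a direct specialization of Proposition~\ref{prop:WHS-bound-equals-betaS-bound-anyP-AGAIN}, part (2), to the sequence $P_n=B_n$ with $f(S)=2\max S-|S|+1$ (and $f(\emptyset)=1$). Three hypotheses need checking.

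First, each $B_n$ is a Cohen-Macaulay $\fS_n$-poset. It is EL-shellable via the labeling $S\prec S\cup\{i\}\mapsto i$ used in the proof of Proposition~\ref{Boolean-rank-selection}, hence shellable, hence Cohen-Macaulay. The $\fS_n$-action is by rank-preserving automorphisms. This makes Proposition~\ref{prop:alt-sum-expression-S} available, and with it the decomposition $W\!H_S(B_n)=\beta_S(B_n)\oplus\beta_{S^{(1)}}(B_n)$.

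Second, $f$ must be nonnegative with $f(\emptyset)=1$. Nonnegativity holds because $\max S\ge |S|$ for any nonempty $S$ of positive integers, so $f(S)\ge \max S+1\ge 2$.

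Third, $f$ must satisfy $f(S\setminus\{\max S\})\le f(S)-1$ for all nonempty $S$. Put $S'=S\setminus\{\max S\}$. There are two cases.

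If $S'=\emptyset$, then $S=\{s\}$ with $s\ge 1$, so $f(S)-1=2s-1\ge 1=f(\emptyset)$.

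If $S'\ne\emptyset$, then $\max S'\le \max S-1$ and $|S'|=|S|-1$. Hence
\[
f(S')\le 2(\max S-1)-(|S|-1)+1=f(S)-1 .
\]
This is exactly the "$k=2$" instance already noted at the end of Proposition~\ref{prop:WHS-bound-equals-betaS-bound-anyP-AGAIN}.

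With all hypotheses checked, part (2) of that proposition gives the equivalence: sharp stabilization at $f(S)$ of $W\!H_S(B_n)$ for all $S$ holds if and only if the same holds for $\beta_S(B_n)$ for all $S$.

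I expect the only real subtlety to be bookkeeping. The proposition is stated for all subsets $S$ at once, and the induction on $\max S$ in its converse direction needs the base case $S=\emptyset$. There $W\!H_\emptyset$ and $\beta_\emptyset$ are both the trivial module, which stabilizes sharply at $1=f(\emptyset)$. A second point is that $\beta_S(B_n)$ is only defined once $n\ge \max S+1$. Since $f(S)\ge \max S+1$, stabilization for $n\ge f(S)$ only involves well-defined modules.
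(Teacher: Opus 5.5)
Your proposal is correct and is exactly the paper's argument: the paper's proof simply takes $P_n=B_n$ and $f(S)=2\max S-|S|+1$ in Proposition~\ref{prop:WHS-bound-equals-betaS-bound-anyP-AGAIN}. Your explicit checks just fill in details the paper leaves implicit, namely the Cohen--Macaulay hypothesis, $f(S)\ge 2$, and $f(S\setminus\{\max S\})\le f(S)-1$ (including the case $S\setminus\{\max S\}=\emptyset$).
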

\begin{proof} Take $P_n=B_n$ and $f(S)=2\max S-|S|+1$ 
in Proposition~\ref{prop:WHS-bound-equals-betaS-bound-anyP-AGAIN}.
\end{proof}

\begin{thm}\label{thm:Bool-stability} Let $S\subseteq \{1,2,\ldots,n-1\}$.  Then  the Whitney homology $W\!H_S(B_n)$, and hence the rank-selected homology $\beta_S(B_n)$,  
stabilizes sharply at $n=2\max S-|S|+1$. 
\end{thm}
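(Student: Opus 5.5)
By Corollary~\ref{cor:WHBoolS-bound-equals-betaS-bound}, it suffices to prove the sharp bound for $W\!H_S(B_n)$. We will show that $\{W\!H_S(B_n)\}$ is quasi-freely generated and then read off the sharp bound from Proposition~\ref{prop:quasi-freely-gen-immplies-sharp-max}.

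Let $m=\max S$ and $S'=S\setminus\{m\}$. Each $x\in B_n$ of rank $m$ is an $m$-subset, the interval $[\hat 0,x]$ is isomorphic to $B_m$, and the stabilizer of $x$ is $\fS_m\times\fS_{n-m}$, with $\fS_{n-m}$ acting trivially on the interval. By \eqref{eqn:Whit-G-struct} applied to the rank-selected interval, together with Proposition~\ref{Boolean-rank-selection} applied to $B_m$ with rank set $S'$, we get
\[
W\!H_S(B_n)\cong \Ind_{\fS_m\times\fS_{n-m}}^{\fS_n}\bigl(\cS^{\Rib(s_1,s_2-s_1,\dots,m-s_{r-1})}\otimes\one\bigr)=M_n\bigl(\cS^{\rho}\bigr),
\]
where $\rho$ is the ribbon of size $m$ whose row lengths are the successive differences of $S$ (ending at $m$). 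Expanding the ribbon Specht module into irreducibles $\cS^\mu$ with $|\mu|=m$ shows that the FI-module is quasi-freely generated. Proposition~\ref{prop:quasi-freely-gen-immplies-sharp-max} then gives sharp stability at $m+\max\{\mu_1:\langle s_\mu,s_\rho\rangle\ne 0\}$.

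It remains to show that the largest first row occurring in the ribbon Schur function $s_\rho$ has length exactly $m-|S|+1$; this is the main step. Since $|S|=r$ and $\rho$ has $r$ rows, we need the bound $\mu_1\le m-r+1$ and attainment of it.

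For the upper bound, we will use the Littlewood--Richardson rule, or equivalently Young symmetrizers via Lemma~\ref{lem:Youngsym-annihil-Fulton}. We have $\langle s_\rho,s_\mu\rangle$ equal to the number of standard fillings of $\rho$ whose reading word is a lattice word of content $\mu$, so the value $1$ appears at most once per column of $\rho$. A ribbon with $r$ rows and $m$ boxes has exactly $m-r+1$ columns, since consecutive rows share exactly one column. Hence at most $m-r+1$ ones appear, giving $\mu_1\le m-r+1$.

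For attainment, we will exhibit a Littlewood--Richardson filling with one $1$ at the top of each column. Equivalently, we use the identity $\langle s_\rho,s_\mu\rangle=\langle s_\mu\text{-}$ restriction$\rangle$: the column-sum shape $\mu$ having first row $m-r+1$ and the remaining $r-1$ boxes arranged appropriately, for instance a hook $(m-r+1,1^{r-1})$ when the filling is taken column-strict with entries $2,\dots,r$ down the overlapping columns. We will check that this hook indeed occurs; this is the concrete verification most likely to need care.

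Combining the two bounds, $W\!H_S(B_n)$ stabilizes sharply at $m+(m-r+1)=2\max S-|S|+1$. Corollary~\ref{cor:WHBoolS-bound-equals-betaS-bound} then transfers the sharp bound to $\beta_S(B_n)$.
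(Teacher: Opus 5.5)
Your proposal is correct and follows essentially the same route as the paper. Both arguments identify $W\!H_S(B_n)$ as $\Ind_{\fS_m\times\fS_{n-m}}^{\fS_n}\bigl(\cS^{\Rib(s_1,\dots,m-s_{r-1})}\otimes\one\bigr)$, bound the first row via Littlewood--Richardson fillings (at most one $1$ per column, and the ribbon has $m-r+1$ columns), apply the Hersh--Reiner sharp-stability lemma, and transfer the bound to $\beta_S(B_n)$ via Corollary~\ref{cor:WHBoolS-bound-equals-betaS-bound}. Your attainment step is actually more explicit than the paper's, which only asserts that the maximum is achieved: the hook $(m-r+1,1^{r-1})$ does occur, via the LR filling with $1$'s in every box except the lower box of each height-two column, and those lower boxes receive $2,\dots,r$ from top to bottom. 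Two small fixes: the relevant fillings are semistandard LR tableaux, not standard fillings, and the garbled ``Equivalently\ldots'' sentence should be dropped.
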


\begin{proof} Let $S=\{s_1<s_2<\cdots<s_r=\max S\}$.   We begin by observing that if $u$ is a subset of $B_n$ of size $\max S$,  the interval $[\hat 0, u]$ 
is isomorphic to the Boolean lattice  $B_{\max S}$. Thus the $\fS_n$-module structure of $W\!H_S(B_n)$  is given by the induced module 
\begin{equation}\label{eqn:WHS-Bool-module}
    \Ind_{\fS_{n-\max S}\times \fS_{\max S}}^{\fS_n} \left(\one_{\fS_{n-\max S}}\otimes \widehat{W\!H}_S(B_n)\right)
\end{equation}
where $\widehat{W\!H}_S(B_n)$ is the $\fS_{\max S}$-module coinciding with the rank-selected homology module 
$\beta_{S\setminus \{\max S\}}(B_{\max S})$.
By Proposition~\ref{Boolean-rank-selection}, this in turn is the  Specht module for $\fS_{\max S}$ corresponding to the ribbon $\Rib(s_1, s_2-s_1,\ldots, {\max S}-s_{r-1})$. 

 By examining Littlewood-Richardson fillings \cite{Macdonald}, we see that this ribbon representation decomposes into irreducibles  indexed by partitions $\lambda$ such that $\lambda_1$   is bounded above by $\max S - |S| + 1$, with at least one irreducible representation having precisely this first row length. 

 From~\eqref{eqn:WHS-Bool-module}, the Frobenius characteristic of the $\fS_n$-module $W\!H_S(B_n)$ is $h_{n-{\max S}} f$, where $f$ is the ribbon Schur function for the Specht module in the preceding paragraph, and therefore has degree $\max S$.   From Lemma~\ref{lem:quasi-free-implies-increasing-mult}, Part 2, we conclude that $W\!H_S(B_n)$ stabilizes sharply at ${\max S}+\max S - |S| + 1=2\max S-|S|+1$, as claimed.   

 The statement for $\beta_S(B_n)$ now follows from Corollary~\ref{cor:WHBoolS-bound-equals-betaS-bound}.
\end{proof}
\begin{rk}\label{B_n-is-quasi-free}
The usage of $\widehat{W\!H}_S(B_n)$ in the proof of Theorem ~\ref{thm:Bool-stability} shows that $\{ W\!H_S(B_n)\} $ is a quasi-freely generated FI-module.
\end{rk}
\begin{cor}\label{cor:chain-stab-Bool}  The rank-selected modules of chains $\alpha_S(B_n)$ stabilize sharply at $2\max S$.  
\end{cor}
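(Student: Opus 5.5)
This is a direct application of Corollary~\ref{cor:chain-module-Stab} with $P_n=B_n$ and $f(S)=2\max S-|S|+1$. I will check the hypotheses, feed in Theorem~\ref{thm:Bool-stability}, and then evaluate $f(\{\max S\})$.

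First, the hypotheses on $f$. We have $f(\emptyset)=1$ by the natural convention $\max\emptyset=0$; this is the same convention already used in Proposition~\ref{prop:WHS-bound-equals-betaS-bound-anyP-AGAIN}. For nonempty $S$, the comparison
\[
f(S)=2\max S-|S|+1\le 2\max S=f(\{\max S\})
\]
holds because $|S|\ge 1$. Equality occurs exactly when $|S|=1$, that is, when $S=\{\max S\}$. These are precisely the conditions required in the corollary, which indeed remarks that they hold for $k=2$.

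Next, the stability input. The corollary requires sharp stability of $\beta_T(B_n)$ (or of $W\!H_T(B_n)$) at $f(T)$ for every rank set $T$. Theorem~\ref{thm:Bool-stability} supplies both.

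Conclusion. Corollary~\ref{cor:chain-module-Stab} then gives that $\alpha_S(B_n)$ stabilizes sharply at
\[
f(\{\max S\})=2\max S-1+1=2\max S.
\]

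The one point I would double-check is the bookkeeping inside the corollary's argument for the Boolean case. We write $\alpha_S(B_n)=\bigoplus_{T\subseteq S}\beta_T(B_n)$. The $T=\emptyset$ summand is the trivial module, which stabilizes at $1<2\max S$. Every other $T\ne\{\max S\}$ has $f(T)<2\max S$, so all of these summands together form a $W_n$ that stabilizes at or before $2\max S-1$. Part~(b) of Lemma~\ref{lem:weak-strict-stab-for-sums} applies with $d=2\max S\ge 2$.

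As a sanity check on sharpness, the summand $\beta_{\{\max S\}}(B_n)=\cS^{\Rib(\max S,\,n-\max S)}$ contains $\cS^{(n-\max S,\max S)}$, which only becomes a legitimate padded partition once $n\ge 2\max S$. This agrees with Proposition~\ref{prop:Booleean-rank-sel-easy}(2). I expect no real obstacle here.
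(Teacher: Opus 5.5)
Your proposal is correct and follows the paper's proof: the paper also applies Corollary~\ref{cor:chain-module-Stab} to $P_n=B_n$ with $f(S)=2\max S-|S|+1$, using Theorem~\ref{thm:Bool-stability} as the stability input. Your check of the hypotheses on $f$ and of the splitting $\alpha_S(B_n)=W_n\oplus\beta_{\{\max S\}}(B_n)$ simply spells out what the proof of that corollary already does.
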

\begin{proof}  This follows from Corollary~\ref{cor:chain-module-Stab} applied to $P_n=B_n$ with $f(S)=2\max S-|S|+1$, and Theorem~\ref{thm:Bool-stability}.
\end{proof}
Corollary~\ref{cor:chain-stab-Bool} implies that for fixed $\mu$ and $S$, the Kostka numbers $K_{\mu,\lambda(S)}$ stabilize at $2\max S$. This is because $\ch \alpha_S(B_n)= \sum_{\mu\vdash n} K_{\mu, \lambda(S)}\, s_\mu=h_{\lambda(S)}$, where $\lambda(S)$ is  the partition of $n$ obtained by rearranging the elements   $\{s_1, s_2-s_1,\ldots, \max S-s_{r-1}, n-\max S\}$ in decreasing order. 

Our final result in this section  answers a question asked by Colin Crowley about the subspace lattice (personal communication).  
Let  $\cB_n(q)$ be 
 the lattice of subspaces of an $n$-dimensional vector space over the finite field $\mathbb{F}_q$ with $q$ elements;  it is a geometric lattice  of rank $n$ \cite[Example 3.10.2]{EC1}.  The rank function is given by vector space dimension. The general linear group $GL_n(q)$ is a group of automorphisms for $\cB_n(q)$.  In \cite[Section 5]{Stanley-aspects}, Stanley determines the structure  of the rank-selected homology $GL_n(q)$-modules.  Let $\{\lambda\}_q$ be the $GL_n(q)$-irreducible indexed by the partition $\lambda$. Then 
we have the following.
\begin{prop}\label{prop:subspace-lattice--homology-q-analogue-Boolean} Let $S$ be any set of ranks of $B_n$, and let $\lambda$ be a partition of $n$. Then the multiplicity of the $\fS_n$-irreducible indexed by $\lambda$ in the rank-selected homology $\beta_S(B_n)$ 
coincides with the multiplicity of the $GL_n(q)$-irreducible $\{\lambda\}_q$ indexed by $\lambda$ in the rank-selected homology $\beta_S(\cB_n(q))$.  Hence  the rank-selected homology modules for $\cB_n(q)$ stabilize sharply at $n= 2\max S-|S|+1$.
\end{prop}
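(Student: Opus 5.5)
The plan is to use Stanley's description of $\beta_S(\cB_n(q))$ from \cite[Section 5]{Stanley-aspects}, which parallels the Boolean case. Stanley shows that the multiplicity of the unipotent irreducible $\{\lambda\}_q$ in $\alpha_S(\cB_n(q))$ equals the Kostka number $K_{\lambda,\lambda(S)}$. The reason is that $\alpha_S(\cB_n(q))$ is the permutation module on partial flags of type $S$, namely the induction of the trivial character from a parabolic subgroup. By the Hecke algebra / Curtis–Iwahori–Kilmoyer correspondence, this decomposes into unipotent irreducibles exactly as $\Ind_{\fS_{\lambda(S)}}^{\fS_n}\one$ decomposes into $\fS_n$-irreducibles. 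For $B_n$ the same statement holds: $\ch\,\alpha_S(B_n)=h_{\lambda(S)}=\sum_\lambda K_{\lambda,\lambda(S)}s_\lambda$.

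Next I would pass from chains to homology. Both posets are Cohen–Macaulay geometric lattices, so by \eqref{beta-definition} we have $\beta_S=\sum_{T\subseteq S}(-1)^{|S\setminus T|}\alpha_T$ for each of them. Applying the multiplicity correspondence term by term shows that the multiplicity of $\{\lambda\}_q$ in $\beta_S(\cB_n(q))$ equals the alternating sum $\sum_{T\subseteq S}(-1)^{|S\setminus T|}K_{\lambda,\lambda(T)}$. The same alternating sum is the multiplicity of $\cS^\lambda$ in $\beta_S(B_n)$, which by Proposition~\ref{Boolean-rank-selection} is the number of standard fillings of the ribbon associated to $S$ with descent set $S$. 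This gives the first claim. The key input is only that the multiplicities in the $\alpha_T$ agree for every $T$. The main point to check carefully is Stanley's identification, including that the labelling of unipotent irreducibles by $\lambda$ is compatible: $\{(n)\}_q$ should be trivial, matching $\cS^{(n)}$. I would take this from \cite{Stanley-aspects} rather than reprove it.

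For the stability statement, I would interpret stabilization for $GL_n(q)$-modules as in Definition~\ref{stable-def}, replacing $\cS^{(n-m,\lambda)}$ by $\{(n-m,\lambda)\}_q$. Since the multiplicities of $\{\mu\}_q$ in $\beta_S(\cB_n(q))$ coincide with those of $\cS^\mu$ in $\beta_S(B_n)$ for every $n$ and every $\mu$, the two sequences stabilize at exactly the same $n$, and sharpness transfers as well. Theorem~\ref{thm:Bool-stability} then gives sharp stability at $2\max S-|S|+1$. No obstacle arises beyond the identification above; the remaining steps are formal bookkeeping.
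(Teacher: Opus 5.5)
Your proof is correct, but it works one level below the paper's. The paper quotes Stanley's Theorem 5.1 directly at the level of homology: the multiplicity of $\{\lambda\}_q$ in $\beta_S(\cB_n(q))$ is the number of standard Young tableaux of shape $\lambda$ with descent set $S$. It then uses the well-known fact that this number also counts $\cS^\lambda$ in the ribbon Specht module, and applies Proposition~\ref{Boolean-rank-selection}. You instead match the chain modules. For every $T\subseteq S$, $\alpha_T(\cB_n(q))$ is induced from a parabolic subgroup and $\alpha_T(B_n)$ from the corresponding Young subgroup, and both decompose with the Kostka numbers $K_{\lambda,\lambda(T)}$. You then invert via \eqref{beta-definition}, which is legitimate because both lattices are Cohen--Macaulay.

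What your route buys is that it never uses the ribbon Specht module description or the descent-set combinatorics. It also shows the coincidence as an instance of the Hecke-algebra comparison between the building of $GL_n(q)$ and the Coxeter complex of $\fS_n$, whose rank-selected chain modules match term by term. The paper's route is shorter given Stanley's formula.

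Your labelling concern is harmless. The Kostka matrix is unitriangular, so requiring $\Ind_{P_\mu}^{GL_n(q)}\one=\sum_\lambda K_{\lambda\mu}\{\lambda\}_q$ for all $\mu$ determines the labelling, and it forces $\{(n)\}_q$ to be trivial.

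One slip: you say the alternating sum equals ``the number of standard fillings of the ribbon associated to $S$ with descent set $S$.'' That is wrong as written. That number is $\dim\beta_S(B_n)$, whereas the multiplicity of $\cS^\lambda$ counts standard Young tableaux of shape $\lambda$ with descent set $S$. Nothing in your argument depends on this, since the alternating sum is already the multiplicity of $\cS^\lambda$ in $\beta_S(B_n)$ by $\ch\alpha_T(B_n)=h_{\lambda(T)}$.

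The stability transfer is the same as in the paper. It implicitly uses that every constituent of $\beta_S(\cB_n(q))$ is unipotent, which your parabolic-induction description guarantees.
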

\begin{proof} Stanley  \cite[Theorem 5.1]{Stanley-aspects} shows that the multiplicity of the $GL_n(q)$-irreducible $\{\lambda\}_q$ indexed by $\lambda$ in the rank-selected homology $\beta_S(\cB_n(q))$ equals the number of standard Young tableaux of shape $\lambda$ with descent set $S$.  But it is well known  (again see, e.g,  \cite{Stanley-aspects}) that this is also 
the multiplicity of the irreducible Specht module  $\cS^\lambda$ in the Specht module indexed by the ribbon $\Rib(S)$, and the result follows using Proposition~\ref{Boolean-rank-selection}.   
The sharp stability bound is now immediate from Theorem~\ref{thm:Bool-stability}.
\end{proof}

\begin{cor} $\{ W\!H_S(B_n)\} $ and $\{ \beta_S(B_n)\} $ are both finitely generated FI-modules with FI degree exactly  $\max S$ and sharp stability bound of $2\max S - |S| + 1$. 
\end{cor}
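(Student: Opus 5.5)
The plan is to split the statement into three parts and handle them in this order: finite generation, the upper bound $\max S$ on the generation degree, and the lower bound $\max S$ on the generation degree. The sharp stability bound $2\max S-|S|+1$ for both $\{W\!H_S(B_n)\}$ and $\{\beta_S(B_n)\}$ needs no new work: it is exactly Theorem~\ref{thm:Bool-stability}, together with Corollary~\ref{cor:WHBoolS-bound-equals-betaS-bound}. Proposition~\ref{get-FI-modules} already tells us that both sequences are FI-modules. So the work lies in the generation statements.

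\textbf{Upper bound on the generation degree.} For $W\!H_S(B_n)$, Corollary~\ref{second-uniform} shows that for every $n\ge \max S$ the $\fS_{n+1}$-orbit of $\phi_n(W\!H_S(B_n))$ spans $W\!H_S(B_{n+1})$. Iterating this, the whole FI-module is generated by $W\!H_S(B_{\max S})$ together with the lower-degree pieces. Those lower-degree pieces are $0$, since $B_n$ has no element of rank $\max S$ when $n<\max S$. Each piece is finite dimensional, so the module is finitely generated, in degree $\le \max S$. Remark~\ref{B_n-is-quasi-free} gives the same conclusion directly from the quasi-free structure $M_n(\widehat{W\!H}_S)$, whose single generator $\widehat{W\!H}_S$ lives in degree $\max S$.

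For $\beta_S(B_n)$, I would use Proposition~\ref{second-beta}. It transfers the spanning property from $W\!H_S$ to $\beta_S$ through the FI-compatible splitting $W\!H_S(B_n)\cong \beta_S(B_n)\oplus\beta_{S^{(1)}}(B_n)$ of Proposition~\ref{prop:alt-sum-expression-S}. Its dimension-counting argument applies for every $n\ge \max S$, which is exactly the range of Corollary~\ref{second-uniform}. Hence $\{\beta_S(B_n)\}$ is also generated in degree $\le \max S$, and it is finitely generated.

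\textbf{Lower bound on the generation degree.} Since both modules vanish in every degree $n<\max S$, any generating set must contain an element of degree at least the first degree in which the module is nonzero. It therefore suffices to show that both modules are nonzero in degree exactly $\max S$.

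For $W\!H_S(B_{\max S})$ this holds because it is the ribbon Specht module $\cS^{\Rib(s_1,s_2-s_1,\dots,\max S-s_{r-1})}$, which is nonzero.

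For $\beta_S$ I expect the only real subtlety of the proof, and this is where the convention matters. In degree $n=\max S$, the rank $\max S$ of $B_n$ is the top element $\hat 1$. I would read $\beta_S(B_{\max S})$ as the FI-module value forced by functoriality, namely the summand of $W\!H_S(B_{\max S})$ complementary to $\beta_{S^{(1)}}$. This is the formula of Proposition~\ref{Boolean-rank-selection} with final row length $n-s_r=0$, that is, the Specht module of the ribbon $\Rib(s_1,\dots,\max S-s_{r-1})$, and it is nonzero.

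With this reading, $\beta_S$ is nonzero in degree $\max S$ and zero below it, so its FI degree is exactly $\max S$, matching that of $W\!H_S$. I would check consistency of the reading through the splitting of Proposition~\ref{prop:alt-sum-expression-S}: the transition maps from degree $\max S$ to degree $\max S+1$ should be the ones induced on the $\beta_S$ summand. Combining the upper and lower bounds with Theorem~\ref{thm:Bool-stability} then completes the statement.
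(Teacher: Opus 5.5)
\textbf{Verdict: genuine gap, in the $\beta_S$ half of the degree claim.} Your treatment of $\{W\!H_S(B_n)\}$ is fine. Corollary~\ref{second-uniform} (equivalently, the quasi-free structure of Remark~\ref{B_n-is-quasi-free}) gives finite generation in degree $\le\max S$. Nonvanishing at $n=\max S$ gives degree exactly $\max S$, and the sharp bound is Theorem~\ref{thm:Bool-stability}. This is more direct than the paper's route, which checks the three conditions of Definition~\ref{uniform-rep-stability} and relies on \cite[Theorem 1.13]{CEF}.

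The gap is at exactly the step you flagged, and your resolution of it is internally inconsistent. At $n=\max S$ the only element of rank $\max S$ is $\hat 1$, so
\[
W\!H_S(B_{\max S})=\tilde H_{|S|-2}\bigl((\hat 0,\hat 1)^{S\setminus\{\max S\}}\bigr)=\beta_{S^{(1)}}(B_{\max S})
\]
in its entirety. The ribbon $\Rib(s_1,\dots,\max S-s_{r-1})$ you name is what Proposition~\ref{Boolean-rank-selection} gives for the rank set $S^{(1)}$ in $B_{\max S}$. So it \emph{is} the $\beta_{S^{(1)}}$ summand, and its complement is $0$. Every consistent reading agrees:
\begin{itemize}
\item Proposition~\ref{prop:alt-sum-expression-S}(1) forces $\beta_S(B_{\max S})=0$.
\item The would-be rank-selected poset contains $\hat 1$, so it is a cone.
\item Corollary~\ref{first-uniform} treats $\beta_S(B_n)$ as defined only for $n\ge\max S+1$.
\end{itemize}
So your lower bound for $\beta_S$ fails.

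Your upper bound for $\beta_S$ also fails at $n=\max S$. There the dimension count of Proposition~\ref{second-beta} would present $\beta_S(B_{\max S+1})=\cS^{\Rib(s_1,\dots,s_r-s_{r-1},1)}\neq 0$ as the span of the orbit of $\phi_{\max S}(0)$. The decomposition $W\!H_S\cong\beta_S\oplus\beta_{S^{(1)}}$ is an isomorphism of $\fS_n$-modules, not a splitting compatible with the maps $\phi_n$.

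The case $S=\{1\}$ shows what actually happens. Here $\beta_{\{1\}}(B_n)$ is the space of sum-zero combinations of the atoms $\{1\},\dots,\{n\}$. It is $0$ at $n=1$ and needs the new generator $\{1\}-\{2\}$ at $n=2$. In general, each ribbon cycle ${\bar f}_{chain}(v_F)$ in $B_n^S$ involves only the letters in the first $\max S+1$ boxes of $F$. So with the FI structure of Proposition~\ref{get-FI-modules}, $\{\beta_S(B_n)\}$ is finitely generated, but its generation degree is exactly $\max S+1$, not $\max S$. No argument can give the latter. The paper's own citations do not either: Corollary~\ref{first-uniform} starts at $n=\max S+1$ for $B_n$.

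What does hold for both modules is that the \emph{weight} is exactly $\max S$; here weight means the largest $|\lambda|$ with $V(\lambda)$ occurring. This is the only sense in which the $\beta_S$ claim is correct, and it is what you should prove.
\begin{itemize}
\item For $W\!H_S$, it follows from $\ch W\!H_S(B_n)=h_{n-\max S}\,\ch\widehat{W\!H}_S(B_n)$ and Pieri's rule.
\item For $\beta_S$, use the $\fS_n$-decomposition $W\!H_S=\beta_S\oplus\beta_{S^{(1)}}$. Here $\beta_{S^{(1)}}(B_n)$ is a ribbon Specht module whose top row has length $n-\max S^{(1)}$ (or it is trivial when $|S|=1$), so its weight is strictly less than $\max S$.
\end{itemize}
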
 
\begin{proof} For the  Whitney homology statement, 
combine the sharp stability bound of $2\max S - |S|+1$ for $\{ \beta_S(B_n)\} $ proved in Theorem~\ref{thm:Bool-stability} with Proposition ~\ref{prop:WHS-bound-equals-betaS-bound-anyP-AGAIN} and Corollaries ~\ref{first-uniform} and~\ref{second-uniform}.
    For the rank-selected homology statement, 
    combine Theorem~\ref{thm:Bool-stability} with  Corollaries~\ref{first-uniform}  and~\ref{second-beta}.
\end{proof}
It seems natural to ask whether this same approach may be used to analyze representation stability for other families of posets with $\fS_n$-actions.  In the case of the partition lattice, we show in the remainder of this paper that the answer is yes, once certain obstacles are overcome.  
First we describe some of the issues that arise.  

One key challenge in the case of the partition lattice,  and other geometric lattices, is that the analogues of  $\beta_S(B_n)$ (and the closely related $\widehat{W\!H}_S(B_n)$)
are no longer  Specht modules of ribbon shape.  We will, however, construct a sort of ``ribbon'' homology basis which does enable us to extend the viewpoint above.  
Arbitrary geometric lattices (including the partition lattice in particular) also present one other challenge; Littlewood-Richardson fillings are no longer available as a tool for analyzing $\fS_n$-modules that are not readily decomposable into Specht modules $\cS^\lambda$, in order to deduce an upper bound on $\lambda_1$.

To get past this issue, 
in Section ~\ref{geometric-ribbon-section} 
we will introduce new bases for rank-selected homology and Whitney homology of geometric lattices  that are strikingly similar to the polytabloid bases for Specht modules of ribbon shape.  We will then be able to use the property of Young symmetrizers described next  to get our desired  upper bound on $\lambda_1 +|\lambda |$ for irreducible representations $\cS^{\lambda }$ appearing in $W\!H_S(P)$ and $\beta_S(P)$ for geometric lattices $P$.  We describe this property in the simplified setting of the Boolean lattice $B_n$, where one may use  the viewpoint of 
Theorem ~\ref{Boolean-rank-selection} to express $\beta_S(B_n)$  as a Specht module of ribbon shape $\Rib (s_1,s_2-s_1,\dots ,n-s_r)$. 

Given a standard Young tableau $T$ of shape $\lambda \vdash n$ and a filling $F$ of a ribbon shape 
$\Rib (s_1,s_2-s_1,\dots ,s_r-s_{r-1},n-s_r)$, if $\lambda_1$ is strictly larger than the number of columns in $\Rib (s_1,s_2-s_1,\dots ,n-s_r)$ then there must be some $1\le i < j\le n$ with $i$ and $j$ both appearing in the first row of $T$ and also both appearing in the same column of $F$.  Lemma~\ref{lem:Youngsym-annihil-Fulton} then implies that $a_T v_F=0$.  
Since this holds for all standard fillings $F$ of the shape $\Rib (s_1,s_2-s_1,\dots ,n-s_r)$, it holds for all of the elements of a basis for $\cS^{\Rib (s_1,s_2-s_1,\dots ,n-s_r)}$, implying $b_Ta_T\cdot \cS^{\Rib (s_1,s_2-s_1,\dots ,n-s_r)}=0$.  This in turn implies that no irreducible representation $\cS^{\lambda }$ having $\lambda_1 $ larger than the number of columns in $\Rib (s_1,s_2-s_1,\dots ,n-s_r)$ appears with positive multiplicity in $\cS^{\Rib (s_1,s_2-s_1,\dots ,n-s_r)}$.
See Example~\ref{ex:symm-annihilates-asis-elt-Boolean-larger-ex}.
\begin{ex}\label{ex:symm-annihilates-asis-elt-Boolean-larger-ex}  Consider $B_{12}$ and the rank set $S=\{2,3,4,5,8,9\}$.  Also let $\lambda=(5,4,3)$.

Let  $T=\begin{tableau}{1 & {\bf 2} &{\bf 4} & 9 &10\\3& 5 & 6 & 11 \\7 &8 & 12}\end{tableau}$ and 
$T'=\begin{tableau}{& & &6\\&{\bf 2} &5 &9\\&3\\& {\bf 4}\\&7\\ 1&8}
\end{tableau}$.
Then the polytabloid $v_{T'}$ corresponds $\fS_n$-equivariantly to a 
 basis element for the homology module $W\!H_S(B_{12})$, since $T'$ is a standard filling of the ribbon shape $\Rib(2,1,1,1,3,1)$  for the rank set $S$.  
  Notice that $\lambda_1=5$ is greater than the number of columns of $T'$, which is 4. Hence  any  standard Young tableau of shape $\lambda$  must have at least two entries in the first row that also belong to the same column in $T'$; in  this example  the entries are $2,4$. Let $H$ be the subgroup generated by the transposition $(2,4)$, and let $G$ be the stabilizer of the column of $T'$ containing 2 and 4. 
One may use this pair of entries $2,4$ to show that $a_Tv_{T'}=0$, as we now explain.  The row symmetrizer of $T$ factors into 
   $a_T= (\sum_{\sigma\in \mathrm{Row}_T/ H} \sigma) (1+ (2,4)),$  where $\mathrm{Row}_T/ H$ is a complete set of distinct left  coset representatives of $H$, including the identity element  in $G$,  in $\mathrm{Row}_T.$

   The column symmetrizer $b_{T'}$ can similarly be factored into $b_{T'}= (1-(2,4)) \sum_{\sigma\in G / H} \mathrm{sgn}(\sigma) \sigma$, where now $G/H$ is taken to be a complete set of  right  coset representatives of $H$ in $G$, including the identity element. 
The polytabloid $v_{T'}$ can then be written as follows: 
 $ v_{T'} = \{  (1-(2,4)) \sum_{\sigma\in G / H} \mathrm{sgn}(\sigma) \sigma \cdot T'\}$. 
 Finally  the fact that  $(1+ (2,4)) (1- (2,4))=0$ forces $a_T \cdot  v_{T'}=0$, and hence $b_Ta_T\cdot v_{T'}=0$.
\end{ex}
In Section ~\ref{geometric-ribbons} we will construct  the ribbon bases needed to carry out an analogous analysis  for the partition lattice, and then in Section ~\ref{Young-section}, we develop an analogue of Lemma~\ref{lem:Youngsym-annihil-Fulton} 
that will be applicable to the Specht-like $\fS_n$-modules constructed in Section ~\ref{geometric-ribbons}. 

\section{Ribbon bases for rank-selected homology of geometric lattices}\label{geometric-ribbon-section}

In this section, we establish a new ribbon basis for the rank-selected homology and the rank-selected Whitney homology of any geometric lattice.  In the case of the partition lattice $\Pi_n$, these bases give rise to $\fS_n$-modules which  may be regarded as graphical analogues of Specht modules --  the atoms of $\Pi_n$ are in natural bijection with the edges in the complete graph on $n$ labeled vertices, reflecting the fact that  $\Pi_n$ is the lattice of flats of the graphic matroid given by a complete graph on $n$ vertices. 
We go on to show that, in the case of the partition lattices, Young symmetrizers act on these $\fS_n$-modules in much the same way as they do for traditional Specht modules.

\subsection{Construction of ribbon bases 
}\label{geometric-ribbons}

Motivated by the ribbon-shaped Specht module  structure for the rank-selected homology of Boolean lattices discussed  earlier in the paper, we now construct ``ribbon bases'' for all geometric lattices.
In the case of the partition lattice, this ribbon basis  
 will allow us to deduce sharp  representation stability results later in the paper.
 
 We note that our homology basis construction is somewhat
 reminiscent of the homology basis for the $d$-divisible partition lattice constructed by Wachs in ~\cite{Wachs-divisible}, in that $G$-equivariant bases  are built using 
 (generalized) polytabloids;
 however, our construction is necessarily 
 quite different from the construction 
 in \cite{Wachs-divisible} so as 
 to apply  to rank-selected homology and do so for  
 arbitrary 
 geometric lattices.

In Lemma 7.6.2 and Theorem 7.6.3  in \cite{Bj-hom-matroid}, Bj\"orner shows that certain edge labelings for geometric lattices (referred to  as minimal labelings in \cite{Davidson-Hersh}) are EL-labelings. 
This notion first appears in \cite{Stanley-AdmissibleLattices}, and is defined next.
\begin{defn}\label{def:minlabel}  Let $P$ be a geometric lattice, and for each $x\in P$  let $A(x)$ denote the set of atoms $a$ satisfying $a\le x$ in $P$. Choose  any total order $a_1 < a_2 < \cdots < a_r$ on the atoms of  $P$, and then label each cover relation $u\prec v$ with the label  $\min A(v)\setminus A(u)$, i.e., with the smallest atom that is less than or equal to $v$ but not less than or equal to $u$. This edge labeling $\lambda $ of $P$ is known as the {\em minimal labeling} of $P$ given by the  atom ordering $a_1 < a_2 < \cdots < a_r$.
\end{defn}
An example of a minimal labeling is  the EL-labeling used in the proof of Proposition~\ref{Boolean-rank-selection} for the Boolean lattice. We will be especially interested in the case of the partition lattice $\Pi_n$.   In that case the atoms naturally correspond to pairs $\{ i,j\} $ with 
$1\le i<j\le n$.  Any atom ordering for a geometric lattice was shown by Bj\"orner to give rise to a minimal labeling that is an EL-labeling.  Examples of posets which do not admit minimal labelings at all include weak order (which is not atomic) and Bruhat order (which is not a lattice).

The maps defined next will be important to how we define our homology bases, allowing us to transfer ideas from Specht module theory into the realm of poset topology for geometric lattices.
We will rely heavily on  the notion of a {\it basis of a matroid of rank $n$}. By this we mean a set of $n$ atoms in a geometric lattice $P$ of rank $n$ such that the join of this set of atoms is the maximal element $\hat{1}$ in $P$; this terminology is motivated by the fact that every geometric lattice has a matroid associated to it, whose ground set 
is exactly the set of atoms of the geometric lattice, and whose {\it independent sets} 
are the sets $\{ a_{i_1},\dots ,a_{i_r}\} $ of atoms such that  the join $a_{i_1}\vee \cdots \vee a_{i_r}$ has  rank  $r$.  We refer the reader to \cite{Bj-hom-matroid} for more details about the connection between  matroids and geometric lattices. 

\begin{defn}
Given any filling $F$ of a ribbon shape with atoms, define the \emph{reading word} of $F$ to be the word $(F_1,\dots ,F_n)$ in which $F_i$ is the atom appearing as the entry in the $i$-th box of the ribbon shape,  as we read the entries  from left to right and bottom to top.    
\end{defn}

Now we define the  map $f_{chain}$ sending ribbon fillings to maximal chains in geometric lattices.

\begin{defn}\label{def:fchain}
The map $f_{chain}$ is a surjective map from the space of  fillings of $\Rib (S)$ with independent sets of atoms of  $P$, of size ${\rm rk}(P)$,  
to the space   of maximal   chains in $P$.
The map $f_{chain}$ sends such a  filling $F$ with reading word  $(a_{i_1},a_{i_2},\dots ,a_{i_{{\rm rk} (P)}})$  to the maximal chain 
$(\hat{0}\prec a_{i_1} \prec a_{i_1} \vee a_{i_2} \prec \cdots \prec a_{i_1}\vee \cdots \vee a_{i_{{\rm rk}(P)}}=\hat{1}) $ in $P$.  

 This map is extended linearly to all  linear combinations of such fillings.  In other words, $f_{chain}$ is a surjective map from the space of fillings with matroid bases to the space of maximal chains of $P$. 
    \end{defn}

\begin{ex}\label{ex:ribbon-filling-ordered-atoms} Let $R$ be the ribbon $\Rib(3, 1,3,1).$
\[F=\tableau{&&&& a_8 \\  & &a_5 & a_6 &a_7\\ & & a_4\\ a_1 & a_2 & a_3}\longmapsto (\hat{0} \prec a_1 \prec a_1 \vee a_2 \prec \cdots \prec a_1 \vee \cdots \vee a_8=\hat{1})=f_{chain}(F).\]
\end{ex}
 
As in Section~\ref{subsec:YoungSym}, every such ribbon filling $R_{\cA}$ in turn produces a \emph{ribbon tabloid} $\{ R_{\cA} \}$ obtained by forgetting the order of letters  within each row, and then  a \emph{ribbon polytabloid} $v_{R_{\cA} }$ defined exactly as in~\eqref{eqn:polytabloid}, by applying the column stabilizer to the tabloid $\{R_{\cA}\}$.

Next we define a sort of one-sided inverse to $f_{chain}$, namely a  map sending maximal chains in $P$ 
to ribbon fillings.

\begin{defn}
Given any maximal chain $M$ in a geometric lattice $P$ with EL-labeling $\lambda $, let $f_{rib}(M)$ be the ribbon filling  whose reading word is the label sequence 
$\lambda(M)$.  
\end{defn}

In order to study the rank-selected poset $P^S$, we will make extensive use of the quotient map ${\bar f}_{chain}$ defined next.

    \begin{defn}\label{def:fbarchain}
    For any $S = \{ s_1<s_2<\cdots <s_k\} \subseteq \{ 1,2,\dots ,n-1\} $,
    let ${\bar f}_{chain}$ denote the quotient map obtained from $f_{chain}$ by replacing the space of ribbon fillings $R_{\cA}$ for the ribbon shape $R=\Rib(s_1,s_2-s_1,\dots ,n-s_k)$ to which $f_{chain} $ applies, with the space of corresponding ribbon tabloids $\{R_{\cA}\}$.    For the image of this quotient map, the space of maximal chains in $P$ is replaced with the space of maximal chains in $P^S$.

    The map ${\bar f}_{chain}$ sends any tabloid $\{ F\} $ given by a filling $F$ with reading word $(F_1,\dots ,F_n)$ to the maximal chain $F_1\vee \cdots \vee F_{s_1} < 
    F_1 \vee \cdots \vee F_{s_2} < \cdots < F_1 \vee \cdots  \vee  F_{s_k}$ in $P^S$.  Thus, ${\bar f}_{chain}$ is a map from the space of tabloids of shape $R$ given by fillings with bases of the matroid to the space of maximal chains in $P^S$. 
    \end{defn}

\begin{ex}\label{ex:f-bar-chain}
Let $P$ be a poset of rank 8, and let $S$ be the rank set $\{3,4,7\}$,
giving the ribbon $\Rib(3,1,3,1)$ and its filling $R_{\cA}$  in  Example~\ref{ex:ribbon-filling-ordered-atoms}.  Then the image under ${\bar f}_{chain}$ of the   tabloid $\{R_{\cA}\}$ is the following  chain in the rank-selected subposet $P^S$:
\[(\hat{0} <\underbrace{a_1 \vee a_2\vee a_3}_{\mathrm{Row \, 1}} < \underbrace{a_1 \vee a_2\vee a_3\vee a_4}_{\mathrm{Rows\, 1 \text{ and } 2}}  <\underbrace{a_1 \vee a_2\vee a_3\vee a_4\vee a_5\vee a_6\vee a_7}_{\mathrm{Rows\, 1, 2 \text{ and } 3}} <  \underbrace{a_1 \vee \cdots \vee a_8}_{\mathrm{Rows\, 1, 2, 3\text{ and } 4}}=\hat{1})
 \]    
\end{ex}

In order to better understand the map  ${\bar f}_{chain}$, 
it will be helpful to recall well-known maps from shellability theory (see Theorem~\ref{thm:rank-sel-hom-facets}) for passing back and forth between maximal chains in $P$ and maximal chains  in $P^S$, as well as recalling  the map sending tableaux to tabloids.

\begin{defn}\label{def:ffirst}
Given an EL-labeling 
for a graded, bounded  poset $P$ and a subset $S$ of the set of ranks in $\overline{P}$, 
let ${\rm res}_S$ denote the map sending each maximal chain $\gamma $ in $P$ to the  maximal chain in $P^S$ obtained  by restricting $\gamma $ to the rank set $S$, namely taking the subchain of $\gamma $ consisting exactly  of ranks in $S$.  

Let $f_{first}$ denote the map  (also defined earlier in Theorem ~\ref{thm:rank-sel-hom-facets}) 
sending each maximal chain $\beta  $ in $P^S$ to the  
maximal chain in $P$ containing $\beta $ which has   lexicographically earliest label sequence.    This will be used to characterize shelling homology facets in Proposition ~\ref{homology-facets-using-standard-ribbons}.
\end{defn}

Denote by $\tau$ the map 
which takes a Young tableau $T$ (of arbitrary shape) to its tabloid:  $\tau(T)=\{T\}$.

Given a set $S= \{ s_1,s_2,\dots ,s_k\}\subseteq \{ 1,2,\dots ,n-1\} $ of ranks with $s_1<s_2 < \cdots < s_k$, 
it will be  convenient in what follows 
to use the shorthand
$\Rib (S)$ for $\Rib (s_1,s_2-s_1,\dots ,n-s_k)$.

The following straightforward identity will be useful later in this section for proving that the proposed cycles in our homology bases are indeed cycles:  
    \begin{equation}\label{eqn:fchain-fbarchain}
    \bar f_{chain} \circ \tau=\mathrm{res}_S\circ f_{chain}.     
    \end{equation}

More precisely, let $\cM(P)$ denote the set of maximal chains of a  
geometric lattice $P$, and let $\cF(\Rib(S))$ denote the set of standard fillings of the ribbon shape $\Rib(S)$ corresponding to a rank set $S$.   Finally let $\{\cF(\Rib(S))\}$ denote the set of tabloids obtained from the fillings $\cF(\Rib(S))$.  Then according to Definitions~\ref{def:fchain} and~\ref{def:fbarchain}, we have 
\[f_{chain}:\cF(\Rib(S))\rightarrow \cM(P),\quad
{\bar f}_{chain}:\{\cF(\Rib(S))\}\rightarrow \cM(P^S).\]

\begin{ex}\label{ex:maps-ffirst-frib} Let $P$ be the  Boolean lattice $B_5$,  and let $S$ be the rank set $\{2\}$.
Consider the maximal chain 
 \[M^S=\hat 0< \{1,3\}<\hat 1 \text{ in } P^S.\]
  Using  the standard EL-labeling described in the proof of Proposition~\ref{Boolean-rank-selection}, the lexcographically earliest maximal chain $M$ in $P$ containing $M^S$ is
\[M=f_{first}(M^S)=\emptyset \overset{1}{<} \{1\}\overset{3}{<}\{1,3\}\overset{2}{<}\{1,3,2\}  \overset{4}{<}\{1,3,2,4\}  \overset{5}{<}\hat 1.\]
Then $f_{rib}(M)=f_{rib}(f_{first}(M^S))=\tableau{& 2 & 4 & 5\\1 &3}$.

Notice that $f_{chain}(f_{rib}(M))=f_{first}(M^S)=f_{first}({\rm res}_S(M)). $
\end{ex}

To describe  the elements of our homology basis for $\beta_S (P)$ and then also for $WH_S(P)$, we need one more ingredient: a notion of ordered basis introduced in Definition ~\ref{NBC-def}
that is slightly stronger than  the well-known concept  of an NBC basis for a matroid.   This will  characterize  
the types of label sequences that are  produced by minimal labelings (see Definition~\ref{def:minlabel}). 

\begin{defn}
An {\it NBC independent set} of a geometric lattice $M$  with atom  ordering $a_1 < a_2 < \cdots < a_n$ is a set 
$\{ a_{i_1},\dots ,a_{i_l}\} $ of atoms such that:
\begin{enumerate}
    \item 
$\vee_{k=1}^l a_{i_k}$ has rank $l$.
\item 
If an atom  $a\not \in \{ a_{i_1},\dots ,a_{i_l}\} $  satisfies $a\le \vee_{k=1}^l a_{i_k}$, then $a$ comes later in our atom ordering than at least one element of  $\{ a_{i_1},\dots ,a_{i_l}\} $.
\end{enumerate}
\end{defn}

Recall from Definition~\ref{def:minlabel} that $A(u)$ is the set of atoms  satisfying $a\le_P u$.  

\begin{defn}\label{NBC-def}
An {\it $\minNBC$ basis} 
 for $M$ is an ordered  $N\!BC $ independent set 
$(a_{i_1},\dots ,a_{i_l})$ of $M$ with $l=rk(M)$ such that 
each $a_{i_k}$ for $k=1,2,\dots ,l$ satisfies 
$a_{i_k}= \min A(\vee_{s=1}^k a_{i_s})\setminus A(\vee_{s=1}^{k-1}a_{i_s})$.
\end{defn}

\begin{rk}\label{NBC-matroid-view}
For those who prefer the language of matroid theory, this minimality requirement for $a_{i_k}$  in the definition of  $\minNBC$ basis is  equivalent to requiring 
for each $j=1,2,\dots ,l$ that   $\{ a_{i_j},\dots ,a_{i_l}\} $ is an NBC independent set for the matroid contraction in which we contract 
$a_{i_1},\dots ,a_{i_{j-1}}$; this is the matroid whose associated geometric lattice is the interval
$[J,
\hat{1}]$ in $M$ 
for 
$J = \vee_{k=1}^{j-1}a_{i_k}$. 
In taking this matroid contraction, we make the convention that we still use atoms of the original lattice, so now, for instance,  if two atoms $a_1,a_2$ have $a_1\vee J = a_2\vee J$, we regard $\{ a_1,a_2\} $ as a circuit in our matroid contraction so that $a_1 < a_2$ makes $a_2$ by itself a broken circuit; this notion of broken circuits of size 1 is 
exactly what forces the choice of 
$\min A(v)\setminus A(u)$ as the atom used to label
$u\prec v$ (as in the minimal labeling).
\end{rk}

\begin{rk}
The restriction of the map $f_{chain}$ to the set of fillings using $\minNBC$-bases of a matroid is a bijection from the set of $\minNBC$-bases to the set of maximal chains in the associated geometric lattice, by virtue of $f_{rib}$ serving as a one-sided inverse to $f_{chain}$, with the fillings whose reading words are  $\minNBC$-bases being exactly the fillings obtained by applying $f_{rib}$ to the maximal chains in $P$. 
\end{rk}

We are now prepared to describe our homology bases for 
$\beta_S(P)$ and $W\!H_S(P)$ for any geometric lattice $P$.  Our choices in defining these bases 
are motivated by the  characterization in 
Proposition~\ref{homology-facets-using-standard-ribbons}  
of homology facets in the shelling for $P^S$ as the image under
${\bar f}_{chain} $ of the standard fillings of a ribbon with fillings
whose reading words are  $\minNBC$-bases.  

\begin{defn}\label{beta-basis-def}
To construct a homology basis  
$\mathcal{B}^S_{rib}(P)$ for the top 
homology of $P^S$ for a 
geometric lattice $P$, we first specify a total order $a_1,\dots ,a_n$
on the atoms of $P$.  

We  let 
$$\mathcal{B}^S_{rib}(P) = \{ {\bar f}_{chain}(v_F) \mid F 
\text{ is a standard NBC$^+$ filling of } 
\Rib(s_1,s_2-s_1,\dots ,n-s_k)\},$$
a set described in more detail next.

The elements of  $\mathcal{B}^S_{rib}$ 
are indexed  by the standard fillings of the ribbon shape $\Rib(s_1,s_2-s_1,\dots ,n-s_k)$ with ordered matroid bases $(a_{i_1},\dots ,a_{i_j})$  that are $\minNBC$  bases with respect to our atom ordering.  From such an 
$\minNBC$-basis, we obtain a filling of 
$\Rib(s_1,s_2-s_1,\dots ,n-s_k)$ by proceeding from left to right and bottom to top through $\Rib (s_1,s_2-s_1,\dots ,n-s_k)$, putting the $i$-th atom in the 
$\minNBC$-basis in the $i$-th box encountered.
The standard fillings obtained in this way  are exactly the ribbon fillings sent by the map ${\bar f}_{chain}$ to the homology facets in the  shelling for $P^S$ given by the minimal labeling for $P$ induced by our atom ordering. 
See Theorem  \ref{thm:rank-sel-hom-facets} for this shelling for $P^S$.

We obtain from each of these standard fillings $F$ of 
$\Rib(s_1,s_2-s_1,\dots ,n-s_k)$ an element of our homology basis as follows. 
The first step is to consider the polytabloid associated to $F$, defined in~\eqref{eqn:polytabloid}, i.e., 
the  alternating sum 
$$v_F := \sum_{\sigma \in \mathrm{Col}_F} \mathrm{sgn} (\sigma ) \{ \sigma F\}$$ of tabloids $\{\sigma F\}$ where $\sigma $ permutes the column entries in $F$.  (Recall that a tabloid is a filling where we forget the order of the entries in each row.) 
Then  we apply the map
${\bar f}_{chain}$ to each such $v_F$ to obtain an alternating sum 
${\bar f}_{chain}(v_{F})$ of maximal chains in $P^S$.
These  alternating sums comprise 
$\mathcal{B}_{rib}^S(P)$.
\end{defn}

\begin{ex}\label{ex:ribbon-hom-basis-polytabloid}
Returning to Example~\ref{ex:maps-ffirst-frib}, 
we see that the filling $F=f_{rib}(M)$ of $\Rib(2,3)$ for the rank set $S=\{2\}$ of $B_5$ gives the difference of tabloids below:
    \[v_F=\left\{\tableau{& \textit{2} & 4 & 5\\1 &\textit{3}} \right\}- \left\{\tableau{& \textit{3} & 4 & 5\\1 &\textit{2}} \right\},\]
    because $\Col_F$ is the 2-element subgroup generated by the transposition $(2,3)$.
\end{ex}

\begin{ex}\label{ex:hom-Pi-1} Consider the rank set $S=\{2,5\}$ in $P=\Pi_8$. Let $\gamma$ be the maximal chain in $P^S$ defined by 
$\gamma=(\hat 0<|12|56|<|12356|78| < \hat 1),$ 
where we have omitted the blocks of size 1 for convenience.
Then 
\[f_{first}(\gamma)=\hat 0\overset{12}{<}|12|\overset{\bf 56}{<}{\bf |12|56|}\overset{13}{<}|123|56|\overset{15}{<}|12356|\overset{\bf 78}{<}
{\bf |12356|78|}\overset{14}{<}|123456|78|\overset{17}{<}\hat 1,\]
and 
\[F=f_{rib}( f_{first}(\gamma) )=\tableau{& & &14 &17 \\ &13&15 & {\bf 78}\\12& {\bf 56}}
\]
is a standard $\minNBC$ filling of the ribbon shape $\Rib(2, 3, 2)$.  The column stabilizer $\Col_F$ of the filling $F$ is the Klein-4 group of four permutations, of the two pairs of boxes containing the set of atoms $\{56, 13\}$ and $\{78, 14\}$ respectively. The permuted fillings are as follows:
\[F_1=\tableau{& & &14 &17 \\ & {\bf 56}  &15 & {\bf 78}\\12&   13} \qquad
F_2=\tableau{& & & {\bf 78} &17 \\ &13&15 & 14 \\12& {\bf 56}}\qquad
F_3=\tableau{& & & {\bf 78} &17 \\ &{\bf 56}  &15 & 14 \\12& 13 }.\]

Then 
by definition of the polytabloid $v_F$, we have 
\[v_F=\{F\}-\{ F_1\}-\{ F_2\} +\{ F_3\}.
\]
Hence the map $\bar f_{chain}$ sends $v_F$ to 
\[
\scalebox{0.75}{\ensuremath{
(\hat 0<|12|56|<|12356|78| < \hat 1) 
-(\hat 0<|123|< |12356|78|< \hat 1)
-(\hat 0<|12|56|< |123456| < \hat 1)
+(\hat 0<|123|<|123456| < \hat 1)
}}
\]
\end{ex}
\begin{defn}\label{Whitney-basis}
Consider  a geometric lattice $P$ with atom ordering $a_1,\dots ,a_n$ giving rise to a minimal labeling,  
and consider any subset $S$ of the ranks of $\overline{P}$. We construct  a  basis $\mathcal{W}^S_{rib}(P)$
for $W\!H_S(P)$ for our choice of atom ordering as follows.  Consider each $u\in P$ of rank $\max S$ separately, and use our construction in Definition ~\ref{beta-basis-def} for $\beta_S$ to obtain a ribbon homology basis for  $\beta_{S\setminus \max S}[\hat{0},u]$.  Take the union of these bases as our ribbon basis for $W\!H_S(P)$.
\end{defn}

\begin{ex}\label{OS-example}
The elements of 
$\mathcal{W}_{rib}^{\{ 1,2,\dots ,k\} }(P)$ are exactly those ${\bar f}_{chain}(v_T)$ such that $T$ is a standard filling of the ribbon $\Rib (1,1,\dots ,1)$ having $k$ boxes in a single column in which the boxes are filled from bottom to top  with any ordered NBC independent set $(a_{i_1},\dots ,a_{i_k})$ satisfying  $a_{i_1} > a_{i_2} > \cdots > a_{i_k}$; this is because any NBC independent set listed in descending order is an $\minNBC $-independent set, by results in \cite{Bj-hom-matroid} characterizing the descending chains in a minimal labeling.  These standard fillings  are in bijection with the maximal chains in $P$ that have descent set exactly  $\{ 1,2,\dots ,k\} $ via the map  sending such a filling to the maximal chain which includes 
$\hat{0}\prec a_{i_1} \prec a_{i_1}\vee a_{i_2} \prec \cdots \prec
a_{i_1}\vee \cdots \vee a_{i_k}$, and then proceeds upward from $a_{i_1}\vee\cdots\vee a_{i_k}$ to $\hat{1}$ along  the unique saturated chain on $[a_{i_1}\vee\cdots \vee a_{i_k},\hat{1}]$ having ascending labels under our minimal labeling.  \end{ex}

While this description in Example ~\ref{OS-example} may seem a bit different from how we constructed $\mathcal{W}_{rib}^{\{ 1,2,\dots ,k\} }(P)$ in Definition~\ref{Whitney-basis}, it is well known that the descending chains for a minimal labeling restricted to the geometric lattice $[\hat{0},u]$ have as their label sequences exactly  the  NBC independent sets $\{ a_{i_1},\dots ,a_{i_k}\} $ satisfying 
$a_{i_1}\vee \cdots \vee a_{i_k} = u$ with elements listed in descending order; moreover, by definition of minimal labeling the  distinct descending chains of $[\hat{0},u]$ are  labeled by distinct NBC independent sets. 
This viewpoint allows us to use our ribbon basis for $\beta_{\{ 1,2,\dots ,k-1\} }[\hat{0},u]$ to  give exactly  the desired basis elements given by  fillings having $a_{i_1}\vee\cdots \vee a_{i_k} = u$.  

In the remainder of this section, we will prove for any geometric lattice $P$  that 
$\mathcal{B}^S_{rib}(P)$ is indeed 
a basis for $\beta_S(P)$, and that $\mathcal{W}_S^{rib}(P)$ is indeed a basis for $W\!H_S(P)$.
First we rephrase the results of  
Theorem ~\ref{thm:rank-sel-hom-facets} and Proposition
~\ref{folklore-homology-facets}. Later in this section, using the 
characterization of homology facets for $P^S$ given in Proposition ~\ref{homology-facets-using-standard-ribbons} below,  we prove that our proposed homology bases 
$\mathcal{B}_{rib}^S(P)$ and $\mathcal{W}_{rib}^S(P)$ are indeed bases. 

\begin{prop}\label{describe-homology-facets}
Let $\lambda $ be an   EL-labeling for a graded poset $P$. Then $\lambda $ induces a shelling for $P^S$ in which the homology facets are exactly those facets given by maximal chains  of $P^S$ that are contained in maximal chains  of $P$
whose associated ribbon fillings (via the map $f_{rib}$) are standard fillings of $\Rib (s_1,s_2-s_1,\dots ,n-s_k)$.
\end{prop}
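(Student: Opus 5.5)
This is essentially a translation of Theorem~\ref{thm:rank-sel-hom-facets} and Proposition~\ref{folklore-homology-facets} into the language of ribbon fillings. I will take the shelling of $\Delta(P^S)$ to be the one given by Theorem~\ref{thm:rank-sel-hom-facets}. In that shelling, maximal chains $\gamma$ of $P^S$ are ordered by a linear extension of the lexicographic order on $f_{first}(\gamma)$. Proposition~\ref{folklore-homology-facets} says the homology facets are exactly those $\gamma$ for which the label sequence of $f_{first}(\gamma)$ has descents at exactly the ranks in $S$. So it suffices to prove two things: the descent condition on $f_{first}(\gamma)$ is equivalent to $f_{rib}(f_{first}(\gamma))$ being a standard filling of $\Rib(S)$, and the phrase ``contained in a maximal chain of $P$ whose ribbon filling is standard'' may be replaced by ``$f_{first}(\gamma)$ has standard ribbon filling''.

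\textbf{Step 1 (rows).} For any maximal chain $M\supseteq\gamma$, the entries of row $j$ of $f_{rib}(M)$ are the labels on the segment of $M$ between consecutive ranks $s_{j-1}$ and $s_j$. By the EL property applied to the interval $[x_{s_{j-1}},x_{s_j}]$, the lexicographically earliest chain through $\gamma$ is obtained by taking, on each such interval, the unique weakly increasing chain. For geometric lattices with the minimal labeling this chain is strictly increasing, and for a general EL-labeling I will read ``standard'' as ``row-weak''. Consequently $f_{first}(\gamma)$ has no descents off $S$, so every row of $f_{rib}(f_{first}(\gamma))$ increases. Conversely, if $M\supseteq\gamma$ has increasing rows, then $M$ is rising on each interval, so uniqueness of rising chains forces $M=f_{first}(\gamma)$.

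\textbf{Step 2 (columns).} In a ribbon, consecutive rows share exactly one column: the last box of row $j$ sits directly below the first box of row $j+1$. In reading order these two boxes carry the labels $\lambda(x_{s_j-1},x_{s_j})$ and $\lambda(x_{s_j},x_{s_j+1})$. Since columns are filled bottom to top, column strictness (increasing top to bottom) says exactly that the upper entry is smaller than the lower one, i.e., that there is a descent at rank $s_j$. I will carefully check this orientation convention against Example~\ref{ex:hom-cycle-Boolean}. Combining the two steps, $f_{rib}(M)$ is standard if and only if $M=f_{first}(\gamma)$ and $M$ has descent set exactly $S$, and Proposition~\ref{folklore-homology-facets} then finishes the argument.

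The main obstacle is bookkeeping rather than ideas. The delicate points are getting the column-orientation convention right, and handling ties between weak and strict increase for a general EL-labeling so that ``standard'' matches ``no descents outside $S$''.
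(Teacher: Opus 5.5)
Your proposal is correct and takes the paper's approach. The paper gives no separate proof: it presents this proposition as a rephrasing of Theorem~\ref{thm:rank-sel-hom-facets} and Proposition~\ref{folklore-homology-facets}, and the same row/column translation (a unique rising chain on each interval, and columns increasing $\leftrightarrow$ descents at the ranks of $S$) reappears in its proof of Proposition~\ref{homology-facets-using-standard-ribbons}. Your orientation check is right, since the box in reading position $s_j+1$ sits directly above position $s_j$. Reading ``standard'' as row-weak for a general EL-labeling is a sensible refinement the paper leaves implicit; it is harmless for minimal labelings of geometric lattices, where the labels along a chain are distinct atoms.
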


\begin{prop}\label{homology-facets-using-standard-ribbons}
Consider a geometric lattice $P$ with atom ordering $a_1,\dots ,a_n$ giving rise to a minimal labeling, 
and consider any subset $S$ of the set of ranks in $\overline{P}$.   Then the  homology facets in the induced shelling for $P^S$ are exactly those maximal chains in $P^S$ that are sent by 
$f_{rib}\circ f_{first}$ (composing functions right to left)  to standard fillings $F$ of $\Rib(S)$ whose  reading words 
comprise  $N\!BC^+$ bases for $P$.
\end{prop}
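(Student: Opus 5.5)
The plan is to deduce this from Proposition~\ref{folklore-homology-facets} (equivalently Proposition~\ref{describe-homology-facets}), combined with the fact that Bj\"orner's minimal labeling is an EL-labeling. The work is then a translation between label sequences of chains and ribbon fillings.

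First, the shelling of $P^S$ from Theorem~\ref{thm:rank-sel-hom-facets} has as homology facets exactly those maximal chains $\gamma$ of $P^S$ for which the label sequence $\lambda(f_{first}(\gamma))$ has descent set exactly $S$. So it suffices to prove two things for any maximal chain $M$ of $P$:
\begin{enumerate}
\item the reading word of $f_{rib}(M)$, namely $\lambda(M)$, is always an $\minNBC$ basis;
\item $f_{rib}(M)$ is a standard filling of $\Rib(S)$ if and only if $\lambda(M)$ has descent set exactly $S$.
\end{enumerate}
Applying these with $M=f_{first}(\gamma)$ gives both inclusions at once.

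For (1), write $M=(\hat 0=x_0\prec x_1\prec\cdots\prec x_l=\hat 1)$. The $k$-th label is $b_k=\min A(x_k)\setminus A(x_{k-1})$, so $b_k\le x_k$ and $b_k\not\le x_{k-1}$. By induction and semimodularity, $b_1\vee\cdots\vee b_k=x_k$, so the labels form an independent set spanning $\hat 1$. The defining minimality condition of Definition~\ref{NBC-def} is then literally the labeling rule. For the NBC condition, take an atom $a\notin\{b_1,\dots,b_l\}$. Let $k$ be least with $a\le x_k$; then $a\in A(x_k)\setminus A(x_{k-1})$, so $a>b_k$, and $a$ is indeed later than some element of the set.

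For (2), recall that a filling of a ribbon read left to right and bottom to top is standard exactly when it increases along each row and decreases at each step from the last box of one row to the first box of the next row up. That step is a column relation, and in a ribbon every column relation is of this form. The rows of $\Rib(S)$ occupy positions $s_{i-1}+1,\dots,s_i$ of the reading word. So standardness means: ascents inside each block of positions, and descents at positions $s_1,\dots,s_k$. That is precisely "descent set equals $S$." Labels within a chain are distinct atoms, so weak and strict ascents coincide.

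The main obstacle is the bookkeeping in (2): matching the reading-word convention to the paper's "columns increase top to bottom" convention, and checking that the only column adjacencies in a ribbon are the row-transition boxes. A small example such as Example~\ref{ex:hom-Pi-1} is the natural way to confirm the direction of the inequalities. A second point to state explicitly is that a label sequence with descent set $S$ automatically arises as $f_{first}$ of its restriction, by Proposition~\ref{folklore-homology-facets}. No independent argument is needed for that.
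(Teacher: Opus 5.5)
Your proposal is correct and follows essentially the paper's route. You invoke Proposition~\ref{folklore-homology-facets} (equivalently Proposition~\ref{describe-homology-facets}) and translate ``descent set exactly $S$'' into row and column strictness of $f_{rib}(f_{first}(\gamma))$; your check that every label sequence of the minimal labeling is an $\minNBC$ basis spells out what the paper leaves to Definition~\ref{NBC-def}.

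One correction to your last remark: the fact that a maximal chain $M$ of $P$ with descent set $S$ satisfies $M=f_{first}(\mathrm{res}_S(M))$ is not part of Proposition~\ref{folklore-homology-facets}. Your chain of equivalences does not need it. The paper's converse direction does need it, namely that every standard $\minNBC$ filling arises from a homology facet, which is what Theorem~\ref{thm:homology-basis-EL-lab} uses. The paper proves it from the EL property: each segment of $M$ between consecutive ranks in $S$ is ascending, hence the unique and lexicographically first saturated chain of its interval.
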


\begin{proof}
    The characterization of  homology facets  is justified by combining  Proposition ~\ref{describe-homology-facets} with Definition ~\ref{NBC-def}, using the fact that our minimal labeling is an EL-labeling, hence has a unique ascending chain in each interval of $P$. 
    
    More specifically, given any homology facet in $P^S$, note that we indeed obtain a standard filling by applying $f_{first}$ followed by $f_{rib}$ to it.  Conversely, given any standard filling of $\Rib(S)$  whose reading word is an $\minNBC$ basis, observe that this will be the label sequence on some maximal chain $\gamma $ in $P$ by virtue of how minimal labelings are defined.
    Moreover, the fact that the entries are increasing left to right in each row implies that this label sequence has ascending labels between each pair of elements in the maximal chain ${\rm res}_S (\gamma )$ in $P^S$.  But since there is a unique ascending chain in each interval of $P$,  and since this ascending chain is lexicographically earliest in that interval, we may conclude  that $\gamma = f_{first} ({\rm res}_S (\gamma ))$, and hence  is in  ${\rm im}(f_{first})$.  The fact that entries are increasing down columns guarantees that the maximal chain 
    ${\rm res}_S (\gamma )$ in $P^S$ 
    is indeed a homology facet in the shelling for $P^S$, by virtue of $\gamma $ having descents at all ranks in $S$.  
\end{proof}

Next we prove that the elements of our proposed homology basis $\mathcal{B}_{rib}^S(P)$ are cycles.

\begin{defn}\label{def:d-i}
    In what follows, let $d_i$ denote the part of the boundary map deleting the $i$-th element from a maximal chain in $P^S$.  Thus, 
    $$d_i(u_1<\cdots <u_{i-1}<u_i<u_{i+1}<\cdots < u_k) = (-1)^{i-1}(u_1<\cdots <u_{i-1}<u_{i+1}<\cdots <u_k)$$  and  $d = \sum_{i=1}^kd_i$.  
\end{defn}

\begin{ex}\label{ex:hom-cycle-Pi} Consider Example~\ref{ex:hom-Pi-1} again. 
Recall that $\bar{f}_{chain}(v_F)$ equals 
\[
\scalebox{0.75}{\ensuremath{
(\hat 0<|12|56|<|12356|78| < \hat 1) 
-(\hat 0<|123|<|123456| < \hat 1)
-(\hat 0<|12|56|<|12356|78| < \hat 1)
+(\hat 0<|123|<|123456| < \hat 1)
}}
\]
The component $d_2$ of the boundary map on $P^S$ removes the elements at rank 5, 
and 
changes the signs on both  chains.  But these chains already have opposite signs, so 
$d_2(\bar f_{chain}(v_F))=0$.  One can check also that 
$d_1(\bar f_{chain}(v_F))=0$.
\end{ex}

 \begin{defn}\label{filling-boundary}
Let us also define a boundary map $d^{fill}$ directly  on the fillings of a ribbon shape $\Rib (s_1,s_2-s_1,\dots ,n-s_r)$ as $\sum_{i=1}^{r+1}(-1)^{i-1}d_i^{fill}$ where $d_i^{fill}$ sends the ribbon filling $F$ to the unique ribbon filling of the shape 
$\Rib (s_1,s_2-s_1,\dots ,s_{i-1}-s_{i-2},s_{i+1}-s_{i-1},\dots ,n-s_r)$ having the same reading word as $F$.  In other words, $d_i^{fill}$ merges the $i$-th and $(i+1)$-st rows  (indexing rows from bottom to top)   into a single row,  by appending the $(i+1)$-st row  to the immediate right of the $i$-th row.  
\end{defn}

We illustrate next how $d^{fill}$ acts on a filling $F$ in comparison with how $d$ acts on $\bar{f}_{chain}(\{F\})$ for the same filling $F$,  in Example~\ref{ex:d-and-d-fill}.

\begin{ex}\label{ex:d-and-d-fill}  
Let $\{a_1, a_2,\ldots, a_7\}$ be an independent set of atoms in a geometric lattice of rank 7.
Let $S$ be the rank-set $S=\{2,5\}$, and consider the filling of the ribbon $\Rib(S)=\Rib(2,3,2)$ given by 
 $F=\tableau{& & &a_6 &a_7 \\ &a_3&a_4 & {\bf a_5}\\a_1& {\bf a_2}}.
$

Then $\bar f_{chain}(\{F\}) = (\hat 0< a_1 a_2< a_1 a_2\cdots a_5< \hat 1), $ 
where we have suppressed the join symbol for clarity, and written $a_1 a_2$ for $a_1\vee a_2$ etc.  It follows from Definition~\ref{def:d-i} that 
\[d_1(\bar f_{chain}(\{F\}))= (\hat 0< a_1\cdots a_5<\hat 1).\]

Also, from Definition~\ref{filling-boundary}, $d_1^{fill}(F)=\tableau{& && &a_6 &a_7\\a_1& a_2 & a_3 & a_4 & a_5}\ {\longleftrightarrow}\ 
(\hat 0< a_1\cdots a_5<\hat 1).$

Similarly $d_2(\bar f_{chain}(\{F\})) = (\hat 0< a_1 a_2< \hat 1), $ 
and
\[d_2^{fill}(F)=\tableau{&   a_3 & a_4 & a_5 &a_6 &a_7\\a_1& a_2}\ \longleftrightarrow\  
(\hat 0< a_1 a_2<\hat 1).\]
 This example illustrates the principle behind the next lemma.
\end{ex}

 \begin{lem}\label{d-on-fillings}
Given any filling $F$ of $\Rib (s_1,s_2-s_1,\dots ,n-s_r)$, we have  
$$d_l({\bar f}_{chain}(v_F))= 
({\rm res}_{S\setminus \{ s_i\} } \circ f_{chain} )\,\left(\sum_{\sigma \in \Col_F} \sgn (\sigma )\, \sigma(  d_l^{fill} F) \right).$$
 \end{lem}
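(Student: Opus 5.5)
The identity is essentially bookkeeping: both sides are sums over $\sigma\in\Col_F$. The plan is to compare them term by term and then use linearity.

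First expand the left side. By Definition~\ref{def:fbarchain} and linearity,
\[
{\bar f}_{chain}(v_F)=\sum_{\sigma\in\Col_F}\sgn(\sigma)\,{\bar f}_{chain}(\{\sigma F\}).
\]
By \eqref{eqn:fchain-fbarchain}, each term equals $({\rm res}_S\circ f_{chain})(\sigma F)$. Since $d_l$ is linear, the claim reduces to a single filling. For every filling $G$ of $\Rib(S)$ (we take $G=\sigma F$), we want
\[
d_l\bigl({\rm res}_S(f_{chain}(G))\bigr)=({\rm res}_{S\setminus\{s_l\}}\circ f_{chain})(d_l^{fill}G),
\]
with the sign $(-1)^{l-1}$ placed consistently on both sides. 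Here I read the index $s_i$ in the statement as $s_l$.

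Next, verify this single-filling identity directly, following Example~\ref{ex:d-and-d-fill}. Write the reading word of $G$ as $(G_1,\dots,G_n)$. Then ${\rm res}_S f_{chain}(G)$ is the chain whose $j$-th element is $G_1\vee\cdots\vee G_{s_j}$. Applying $d_l$ deletes the element $G_1\vee\cdots\vee G_{s_l}$ and multiplies by $(-1)^{l-1}$.

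On the other side, $d_l^{fill}$ merges rows $l$ and $l+1$ without changing the reading word. So $f_{chain}(d_l^{fill}G)=f_{chain}(G)$ as maximal chains of $P$. Restricting that chain to $S\setminus\{s_l\}$ gives exactly the chain of $P^S$ with the rank-$s_l$ element removed. The sign $(-1)^{l-1}$ is built into $d^{fill}$ in Definition~\ref{filling-boundary}; the lemma's $d_l^{fill}$ should be read as carrying the same sign convention as $d_l$.

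Then reassemble the terms. Since $d_l^{fill}$ acts only on positions, it commutes with the action of permutations of atoms: $d_l^{fill}(\sigma F)=\sigma(d_l^{fill}F)$, where $\sigma$ acts on the merged shape by the same substitution of entries. Summing over $\sigma\in\Col_F$ with signs $\sgn(\sigma)$ then gives the right-hand side.

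The main obstacle is interpretive rather than computational. Three things need checking:
\begin{itemize}
\item The meaning of $\sigma$ acting on the merged filling must be fixed correctly. Columns of $\Rib(S)$ are no longer columns after merging, so $\sigma$ must be treated as a permutation of entries, not as an element of a column group of the new shape.
\item The sign convention between $d_l$ and $d_l^{fill}$ must match.
\item Well-definedness on tabloids must hold. Merging two rows only coarsens the row equivalence, so $\{\sigma F\}\mapsto$ the merged tabloid is well defined, and ${\rm res}_{S\setminus\{s_l\}}\circ f_{chain}$ depends only on that merged tabloid.
\end{itemize}
I would state these conventions explicitly at the start and then carry out the term-by-term comparison above.
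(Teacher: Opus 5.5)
Your proposal is correct and follows the paper's proof essentially step for step. You rewrite ${\bar f}_{chain}(v_F)$ via \eqref{eqn:fchain-fbarchain}, prove the single-filling identity $d_l\circ{\rm res}_S\circ f_{chain}={\rm res}_{S\setminus\{s_l\}}\circ f_{chain}\circ d_l^{fill}$, and then commute $d_l^{fill}$ past the action of $\Col_F$ by entry substitution. Your sign remark is also right: as literally defined, $d_l$ carries $(-1)^{l-1}$ while $d_l^{fill}$ does not, so the identity holds only up to that global sign, which the paper glosses over and which is harmless for the cycle argument in Proposition~\ref{do-have-cycles}.
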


 \begin{proof}
We begin by using 
Equation~\eqref{eqn:fchain-fbarchain} to observe that 
\[{\bar f}_{chain}(v_F) = {\rm res}_S(f_{chain}(\sum_{\sigma \in \Col_F} \sgn(\sigma ))\, \sigma F). \]  Next we observe that $d_l^{fill}$ describes the impact of the boundary on the fillings rather than on the chains, and as such  commutes  with ${\rm res}_* \circ f_{chain}$, by which we mean that
$$d_l \circ {\rm res}_S \circ f_{chain } = {\rm res}_{S\setminus \{ s_l \}} \circ f_{chain} \circ d_l^{fill}.$$
Note that  ${\rm res}_{S}$ is  replaced by 
${\rm res}_{S\setminus \{ s_l\} }$ when the boundary map is applied before the map ${\rm res}_*$ is applied, since the boundary $d_l^{fill}$ merges the $l$-th and $(l+1)$-st rows, and this has the impact of eliminating the chain rank $s_l$ from $S$. 
Finally, we use the fact  that $d_l^{fill}$ commutes with $\fS_n$ 
to complete the proof. 
 \end{proof}
 
\begin{rk}
    To prove that the elements of our proposed homology bases are indeed cycles, it will be helpful to define column and row stabilizers in a non-standard way that yields the same basis elements ${\bar f}_{chain}(v_F)$ for $v_F =  \sum_{\sigma \in \Col_F} {\rm sgn}(\sigma )\{ \sigma F\} $.
    
    Let $\Rib (S) = \Rib (s_1,s_2-s_1,\dots ,n-s_r)$ be the shape of the ribbon having filling $F$.  Let $(F_1,\dots ,F_n)$ be the reading word of $F$.  Consider the action of $\fS_n$ permuting positions rather than values in $(F_1,F_2,\dots ,F_n)$, so for instance 
    $$(j,j+1)(F_1,\dots ,F_n) = (F_1,\dots ,F_{j-1},F_{j+1},F_j,\dots ,F_n).$$  Let $\Col_{\Rib (S)}$ (resp. $\Row_{\Rib (S)}$) be the subgroup of $\fS_n$ which permutes elements that are in the same column (resp. row) as each other, using this action on positions rather than values.  
    
We will rely heavily on the observation that $$ 
\sum_{\sigma \in \Col_F} {\rm sgn}(\sigma )(\sigma F) =
    \sum_{\sigma \in \Col_{\Rib (S)}} {\rm sgn}(\sigma )(\sigma F) $$ where $\fS_n$ acts on values on the left side and acts on positions on the right side.  This allows us to rewrite $v_F$ as a signed sum using the action on positions, provided that we only pass to tabloids after applying the group action. 
\end{rk}

\begin{prop}\label{do-have-cycles}
Consider a geometric lattice $P$ with atom ordering $a_1,\dots ,a_n$, giving rise to a minimal labeling, 
and consider any subset $S=\{ s_1<\cdots <s_r\} $ of the set of ranks in $\overline{P}$.   Then for each filling $F$ of the ribbon
shape $\Rib (S)$ 
with the elements of a basis for the matroid of $P$, the alternating sum
${\bar f}_{chain}(v_F)$ of maximal chains in $P^S$ is a cycle.  That is,  $d({\bar f}_{chain}(v_F))=0$.  In particular, 
each  element 
${\bar f}_{chain}(v_F)$ of $\mathcal{B}^S_{rib}(P)$  
is a cycle. 
\end{prop}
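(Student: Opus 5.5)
The plan is to show separately that $d_l({\bar f}_{chain}(v_F))=0$ for each $l=1,\dots ,r$, since $d=\sum_l d_l$. By Lemma~\ref{d-on-fillings}, it suffices to show that
\[
({\rm res}_{S\setminus\{s_l\}}\circ f_{chain})\Big(\sum_{\sigma\in\Col_{\Rib(S)}}\sgn(\sigma)\,\sigma(d_l^{fill}F)\Big)=0 .
\]
Here we use the positional action of the preceding remark, so that the column group is $\Col_{\Rib(S)}$, which is independent of the entries.

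Fix $l$. The boxes of rows $l$ and $l+1$ of a ribbon share exactly one column: the last box of row $l$ (position $s_l$ in the reading word) sits directly below the first box of row $l+1$ (position $s_l+1$). So the transposition $t=(s_l,s_l+1)$, acting on positions, lies in $\Col_{\Rib(S)}$. Let $C$ be a set of left coset representatives of $\langle t\rangle$ in $\Col_{\Rib(S)}$. Then
\[
\sum_{\sigma\in\Col_{\Rib(S)}}\sgn(\sigma)\,\sigma=\sum_{c\in C}\sgn(c)\,c\,(1-t).
\]
Since $d_l^{fill}$ commutes with the positional action, each summand $c(1-t)$ applied to $F$ and then followed by $d_l^{fill}$ yields two fillings of the merged ribbon. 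Their reading words differ exactly by swapping the entries in positions $s_l$ and $s_l+1$, and these two positions now lie in the same merged row.

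Next I would observe that ${\rm res}_{S\setminus\{s_l\}}\circ f_{chain}$ only depends on the joins $F_1\vee\cdots\vee F_{s_j}$ for $j\neq l$. Each such join is unchanged by permuting positions within the merged row, that is, within the block of positions $s_{l-1}+1,\dots ,s_{l+1}$. Hence the two fillings $c\,d_l^{fill}F$ and $c\,t\,d_l^{fill}F$ map to the same maximal chain of $P^{S\setminus\{s_l\}}$, and the terms cancel in pairs with opposite signs. Summing over $c\in C$ gives zero, and summing over $l$ gives $d({\bar f}_{chain}(v_F))=0$.

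The main obstacle is bookkeeping rather than substance. One needs $f_{chain}$ to be well defined on all permuted fillings, which holds because any ordering of a matroid basis gives a maximal chain of joins, so $\sigma F$ is still a filling by a basis. One also has to be careful that the sign conventions for $d_l$ (the factor $(-1)^{l-1}$) and for the positional versus value actions are consistent with Lemma~\ref{d-on-fillings}. Since the argument only uses that $t\in\Col_{\Rib(S)}$, it applies to arbitrary fillings by bases, and in particular to standard $\minNBC$ fillings, giving the final claim about $\mathcal{B}^S_{rib}(P)$.
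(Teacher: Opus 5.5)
Your proposal is correct and is essentially the paper's proof. Both arguments reduce via Lemma~\ref{d-on-fillings} to the positional column group $\Col_{\Rib(S)}$, and both cancel, with opposite signs, the summands indexed by $\sigma$ and $\sigma t$ with $t=(s_l,s_l+1)$; their reading words differ only by a swap inside the merged row, so they give the same maximal chain of $P^{S\setminus\{s_l\}}$.

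One convention needs to be stated explicitly. Your left-coset factorization $\sum_{c}\sgn(c)\,c\,(1-t)$ works with the paper's convention that $\sigma F$ has reading word $(F_{\sigma(1)},\dots ,F_{\sigma(n)})$. For a genuine left action on positions it fails: there $cF$ and $ctF$ differ by swapping positions $c(s_l)$ and $c(s_l+1)$, and these need not lie in the merged row when that column of the ribbon has three or more boxes. In that setting you should instead factor as $(1-t)\sum_c\sgn(c)\,c$ over right coset representatives.
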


\begin{proof}
We will show for each $l=1,2,\dots ,r$ that
$d_l({\bar f}_{chain}(v_F))=0$.  We do this by grouping summands  in ${\bar f}_{chain}(v_F)$ in pairs that cancel with each other in  $d_l({\bar f}_{chain}(v_F))$.  

For each $s_l\in S$, consider the portion $d_l$ of the boundary map for $P^S$.
This map $d_l$ eliminates the $l$-th element from each maximal chain 
in $P^S$.  
Note that 
for  $j:= s_l \in S$, 
the $l$-th element in the  maximal chain 
$\mathrm{res}_S (f_{chain}(\sigma F))$ in $P^S$ appearing as a 
summand in ${\bar f}_{chain}(v_F)$
is also the 
$j$-th element of the maximal chain  $f_{chain}(\sigma F)$
in $P$. 

 By Lemma ~\ref{d-on-fillings}, we may  rewrite $d_l({\bar f}_{chain}(v_F))$ as 
$$({\rm res}_{S \setminus \{ s_l\} }\circ f_{chain})\left(\sum_{\sigma\in \Col_F} \sgn(\sigma )\, \sigma ( d_l^{fill} F)\right).  $$  Equation~\eqref{eqn:fchain-fbarchain} allows us to  further rewrite this as 
$$({\bar f}_{chain}\circ \tau )\left( \sum_{\sigma \in \Col_F}{\rm sgn}(\sigma )\,\sigma (d_l^{fill}  F)\right)$$
and prove that this equals 0 to deduce that 
$d_l({\bar f}_{chain}(v_F))=0$.

Observe  that 
the
boundary map $d_l^{fill}$ 
(as in Definition ~\ref{filling-boundary}) applied to $F$ 
moves the $(j+1)$-st box in the reading word order on the boxes of $\Rib (S)$   from its position directly above the $j$-th box 
to the position directly to its right instead.  
Importantly for what follows, observe that the transposition   $(j,j+1)$ belongs to $ \Col_{\Rib (S)}$ and also to  $\Row_{d_l^{fill} (\Rib (S))}$,  where 
$d_l^{fill}(\Rib (S)) := \Rib (S\setminus \{ s_l\} )$. 
For this latter claim, we are using the fact that  $d_l^{fill}(\Rib (S))$ denotes the shape obtained from $\Rib (S) $ by merging the $l$-th and $(l+1)$-st rows, thereby putting the $j$-th and $(j+1)$-st boxes into  a single row.

These features of the action of $(j,j+1)$ will allow us to 
pair up the fillings 
appearing in the sum 
$$\sum_{\sigma \in \Col_{\Rib (S)}}{\rm sgn} (\sigma )\, \sigma (d_l^{fill} F).$$
Then we show that each pair contributes 0 to this sum and hence  that 
\begin{equation}\label{eqn:cyc-prf-1}
(\tau \circ d_l^{fill}) \left(\sum_{\sigma \in \Col_{\Rib (S)}} {\rm sgn} (\sigma) \, \sigma F \right)=0.
\end{equation} 

Specifically, we pair the summand given by any 
filling $\sigma F$ having reading word $$(F_{\sigma (1)},F_{\sigma (2)},\dots ,F_{\sigma (n)})$$ with the summand given by the filling $(j,j+1)\cdot \sigma F$ having reading word $$(F_{\sigma (1)},\dots ,F_{\sigma (j-1)},F_{\sigma (j+1)},F_{\sigma (j)},F_{\sigma (j+2)},\dots ,F_{\sigma (n)}).$$ Note that these two summands have opposite signs since  they differ by a transposition.   Since $\tau $ maps these two fillings to the same tabloid, because $(j,j+1)$ belongs to ${\rm Row}_{d_l^{fill}(\Rib (S))}$, this establishes~\eqref{eqn:cyc-prf-1} 
by virtue of each such pair contributing 0 to the sum. 
Finally we apply 
${\bar f}_{chain}$ to both sides to deduce that $d_l$ sends our proposed cycle ${\bar f}_{chain}(v_F)$ to 0.

Since this holds 
for $l=1,2,\dots ,|S|$ and 
our boundary map $d$ satisfies 
$d= \sum_{l=1}^{|S|} d_l$, it follows that 
$d({\bar f}_{chain}(v_F))=0$.
\end{proof}

Given how $\mathcal{W}_{rib}^S(P)$ is constructed, the next result follows immediately from Proposition ~\ref{do-have-cycles}.

\begin{cor}
The elements of $\mathcal{W}_{rib}^S(P)$ are cycles.
\end{cor}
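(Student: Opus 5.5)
The corollary follows directly from Proposition~\ref{do-have-cycles} applied to each lower interval separately, so the plan is to unwind Definition~\ref{Whitney-basis}. Fix $u\in P$ with $\rank(u)=\max S$.

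First I would check that Proposition~\ref{do-have-cycles} applies to the interval $[\hat 0,u]$. The interval $[\hat 0,u]$ of a geometric lattice is again a geometric lattice, of rank $\max S$. Its atoms are the atoms $a\le u$ of $P$. Restricting the total atom order of $P$ to these atoms gives an atom order for $[\hat 0,u]$. The minimal labeling of $[\hat 0,u]$ induced by this restricted order agrees with the restriction of the minimal labeling of $P$, because for $x\prec y\le u$ the set $A(y)\setminus A(x)$ consists of atoms below $u$. Hence the elements of $\mathcal{W}^S_{rib}(P)$ attached to $u$ are exactly the elements ${\bar f}_{chain}(v_F)$ of $\mathcal{B}^{S\setminus\{\max S\}}_{rib}([\hat 0,u])$. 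Here $F$ ranges over the standard $\minNBC$ fillings of $\Rib(S\setminus\{\max S\})$, taken with respect to the rank $\max S$ of $[\hat 0,u]$, using matroid bases of $[\hat 0,u]$.

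Next I would apply Proposition~\ref{do-have-cycles} to the geometric lattice $[\hat 0,u]$ with rank set $S\setminus\{\max S\}$. It says that each such ${\bar f}_{chain}(v_F)$ is a cycle in the chain complex of $[\hat 0,u]^{S\setminus\{\max S\}}$, that is, of the proper part of $[\hat 0,u]$ restricted to those ranks. This is precisely the complex whose top reduced homology $\tilde H_{|S|-2}([\hat 0,u]^{S\setminus\{\max S\}})$ is the summand of $W\!H_S(P)$ indexed by $u$ in Definition~\ref{def:Whit-S}. Since $W\!H_S(P)$ is defined as a direct sum over $u$, being a cycle in each summand is exactly what is required.

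One degenerate case needs a sentence. If $S=\{\max S\}$, the rank set $S\setminus\{\max S\}$ is empty, the chain complex is the augmented complex of the empty poset, and the ribbon has one row. The polytabloid is then a single tabloid, and its image is the empty chain, which is a $(-1)$-cycle in reduced homology. So the statement holds trivially there as well. There is no real obstacle here; the only point needing care is the compatibility of the minimal labeling with restriction to the interval $[\hat 0,u]$, which is immediate from the definition of the minimal labeling.
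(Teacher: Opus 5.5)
Your proposal is correct and follows the paper's argument: the paper says only that the corollary follows immediately from Proposition~\ref{do-have-cycles}, applied to each lower interval $[\hat 0,u]$ as set up in Definition~\ref{Whitney-basis}. Your compatibility check for the restricted minimal labeling and your treatment of the degenerate case $S=\{\max S\}$ just make explicit what the paper leaves implicit; the labeling check is not strictly needed for the cycle property, since Proposition~\ref{do-have-cycles} holds for any filling by a matroid basis of $[\hat 0,u]$.
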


\begin{thm}\label{thm:homology-basis-EL-lab}
Given a geometric lattice of rank $n$ with an atom ordering $a_1<a_2<\cdots < a_k$ giving rise to a  minimal labeling, and a subset $S=\{s_1<s_2<\cdots<s_k\}$ of ranks, consider  
the set $\mathcal{B}^S_{rib}(P)$ of cycles  ${\bar f}_{chain}(v_{T})$ given by the standard fillings $T$ of ribbon shape $\Rib(s_1,s_2-s_2,\dots ,n-s_k)$ with $\minNBC$ bases.
Then $\mathcal{B}^S_{rib}(P)$ is  a basis for $\beta_S(P)$.
\end{thm}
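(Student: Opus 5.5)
The plan is the standard ``triangularity with respect to homology facets'' argument. We already know from Proposition~\ref{do-have-cycles} that every element ${\bar f}_{chain}(v_T)$ of $\mathcal{B}^S_{rib}(P)$ is a cycle. Since $P^S$ is shellable (Theorem~\ref{thm:rank-sel-hom-facets}) and its order complex has dimension $|S|-1$, there are no boundaries in top degree. Hence these cycles lie in $\tilde H_{|S|-1}(P^S)$. By the shelling theory recalled in Section~\ref{background-section}, the dimension of this space equals the number of homology facets. Proposition~\ref{homology-facets-using-standard-ribbons} says the homology facets are in bijection with the standard $\minNBC$ fillings $T$ of $\Rib(S)$, via $M\mapsto f_{rib}(f_{first}(M))$, with inverse $T\mapsto {\bar f}_{chain}(\{T\})$. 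So $|\mathcal{B}^S_{rib}(P)|$ equals $\dim \beta_S(P)$, and it suffices to prove linear independence.

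For independence, I would show that for each standard $\minNBC$ filling $T$, the homology facet $M_T:={\bar f}_{chain}(\{T\})$ appears in ${\bar f}_{chain}(v_T)$ with coefficient $\pm1$. I would also show that $M_T$ does not appear in ${\bar f}_{chain}(v_{T'})$ for any other standard $\minNBC$ filling $T'$ whose facet is later in the shelling. This yields a triangular matrix with nonzero diagonal after restricting to the coordinates indexed by homology facets, which forces linear independence.

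The cleanest route is the shelling-order argument. Every maximal chain in the support of ${\bar f}_{chain}(v_T)$, other than $M_T$, is ${\bar f}_{chain}(\{\sigma T\})$ with $\sigma\ne 1$ in $\Col_T$. I would show that each such chain comes strictly earlier than $M_T$ in the lexicographic order on $f_{first}$. The filling $\sigma T$ has the same first row-segment as $T$ up to the first position where a column swap acts. At that point $\sigma$ moves a larger atom from the box directly above into the lower row; since $T$ is column strict, the lower entry is the larger one, so this puts a smaller atom in the lower row. The join of the atoms read so far then contains a smaller atom earlier. Because the labeling is minimal, the lexicographically first chain through ${\rm res}_S(f_{chain}(\sigma T))$ gets a smaller label at or before that rank, while agreeing with $f_{first}(M_T)$ before it. I would make this precise by comparing label sequences rank by rank, using that the label of $u\prec v$ is $\min A(v)\setminus A(u)$.

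Granting that, order the basis elements by the shelling position of $M_T$. The latest facet $M_T$ then occurs only in ${\bar f}_{chain}(v_T)$, so in any vanishing linear combination its coefficient must be zero; induction removes elements one at a time.

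The main obstacle is this lexicographic comparison. The tabloid $\{\sigma T\}$ need not have an $\minNBC$ reading word, nor even one whose partial joins are independent in a controlled way. So I must argue about $f_{first}$ of the restricted chain directly rather than about $f_{rib}$ of $\sigma T$.

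If this comparison proves messy, the fallback avoids it. I would show directly that $M_{T'}$ never appears in ${\bar f}_{chain}(v_T)$ for $T'\neq T$. Suppose ${\bar f}_{chain}(\{\sigma T\})$ were a homology facet $M_{T'}$. Then $f_{rib}f_{first}$ of it is a standard $\minNBC$ filling, and I would try to argue by minimality of labels that it must equal $T$ itself, forcing $\sigma=1$. This makes the facet-coordinate matrix the identity rather than merely triangular.
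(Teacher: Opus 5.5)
Your main route is correct, and it differs from the paper's proof in a way that matters. The paper cites Wachs' criterion: duals of homology facets form a cohomology basis, and cycles pairing with it by $\delta_{ij}$ form a homology basis. It then asserts $\langle \bar f_{chain}(v_{T_i}),C_{T_j}\rangle=\delta_{ij}$, on the grounds that no $\sigma T_i$ with $\sigma\neq 1$ is a standard filling. That is exactly your fallback, and it is false, so drop it. The relevant question is whether the \emph{chain} $\bar f_{chain}(\{\sigma T\})$ is a homology facet, not whether the filling $\sigma T$ is standard. Counterexample: in $B_4$ with $S=\{2\}$, take the filling $T$ of $\Rib(2,2)$ with reading word $(3,4,1,2)$.
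It is standard, since $1<4$ in the shared column. Here $\Col_T=\{1,(1\,4)\}$, and $(1\,4)T$ has the same tabloid as the standard filling $T'$ with reading word $(1,3,2,4)$. So $\bar f_{chain}(v_T)=M_T-M_{T'}$, and both chains are homology facets.

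The facet-coordinate matrix is therefore only unitriangular, just as standard tabloids other than $\{T\}$ can occur in a classical polytabloid $v_T$. Your triangularity argument is what the proof actually needs. It also makes the cohomology-basis result unnecessary: linear independence as chains plus the homology-facet count suffices. A unitriangular pairing would equally serve in the paper's framework.

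The lexicographic comparison you flag as the obstacle does go through.
\begin{itemize}
\item Let $q$ be the first reading position changed by $\sigma\neq 1$. Earlier positions of its column are fixed, so the new entry is $T_r$ for some $r>q$ in that column. By column strictness $T_r<T_q$.
\item So a \emph{smaller} atom arrives from some box above, not necessarily directly above. Your sentence about a larger atom moving down should be corrected.
\item Since $q$ is not the top box of its column, $q=s_i$ for some $i$. The two restricted chains agree at ranks $s_1,\dots,s_{i-1}$, ending at $y=T_1\vee\cdots\vee T_{s_{i-1}}$. At rank $s_i$ they are $x=y\vee T_{s_{i-1}+1}\vee\cdots\vee T_q$ and $x'=y\vee T_{s_{i-1}+1}\vee\cdots\vee T_{q-1}\vee T_r$.
\item For a minimal labeling, the lex-first chain on $[y,z]$ is greedy: at the current element $w$, the next label is $\min A(z)\setminus A(w)$. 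For $x$ this gives labels $T_{s_{i-1}+1}<\cdots<T_q$.
\item While the greedy chain to $x'$ agrees with this one, its next label is at most $T_{j+1}$ when $j+1<q$, because independence gives $T_{j+1}\in A(x')\setminus A(w)$. At the last step it is at most $T_r<T_q$.
\item Hence the segment for $x'$ is strictly lex-smaller. Since $f_{first}$ concatenates these interval chains, $\bar f_{chain}(\{\sigma T\})$ precedes $M_T$ in the shelling.
\end{itemize}

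Watch the direction. This shows that $M_T$ cannot occur in $\bar f_{chain}(v_{T'})$ when $M_{T'}$ is \emph{earlier} than $M_T$, not later as your second paragraph states. Your peeling step, where the latest facet occurs only in its own element, uses the correct direction.
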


\begin{proof}
In Proposition ~\ref{do-have-cycles}, we prove that the elements of $\mathcal{B}^S_{rib}(P)$ are cycles.  To prove they are a basis for homology, we will rely on 
two facts observed by  Wachs in  ~\cite{Wachs-cohomology}, the latter of which appears as Proposition 1.1 in that paper. 

The first fact is  that the homology facets $F_1,\dots ,F_m$  of a shelling of a simplicial complex  give rise to a cohomology basis
$C_1,\dots ,C_m$, by letting $C_i$ be the dual of the homology facet $F_i$ for $i=1,\dots ,m$.
The second result asserts that any collection $H_1,\dots ,H_m$ of cycles with the property that $\langle H_i,C_j\rangle = \delta_{i,j}$ for such a shelling-based cohomology basis is itself a homology basis.  

We now explain why  the cycles in 
$\mathcal{B}^S_{rib}(P)$ have this relationship with the cohomology basis for $P^S$ comprised of the duals to the homology facets described in Proposition ~\ref{homology-facets-using-standard-ribbons}.
Each homology facet $F_i$ corresponds to a standard filling $T_i$ of our ribbon shape with atoms of $P$ comprising an $\minNBC$ basis, by Proposition ~\ref{homology-facets-using-standard-ribbons}.  None of the  summands  in $v_{T_i}$ other than the term given by $T_i$ itself  are standard fillings of $\Rib(S)$ since the others are each obtained by permuting elements in the columns of our ribbon filling.  
This shows that  
$\langle {\bar f}_{chain}(v_{T_i}),C_{T_j}\rangle = 0$ for $i\ne j$, while  
$\langle {\bar f}_{chain}(v_{T_i}),C_{T_i}) \rangle =1$ 
by virtue of how ${\bar f}_{chain}(v_{T_i})$ is constructed.  Thus, we may use \cite[Proposition 1.1]{Wachs-cohomology} to deduce that $\mathcal{B}^S_{rib}(P)$ is a homology basis.
\end{proof}

From this we derive an analogous result for rank-selected Whitney homology. 

\begin{thm}\label{thm:min-label-gives-NBC-homology-basis}
Given a minimal labeling for a geometric lattice $P$, the set $\mathcal{W}^S_{rib}(P)$ is a homology basis  
for $W\!H_S(P)$.
\end{thm}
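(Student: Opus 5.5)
The plan is to reduce the statement, one summand at a time, to Theorem~\ref{thm:homology-basis-EL-lab}. By Definition~\ref{def:Whit-S}, $W\!H_S(P)$ is the direct sum over the elements $u\in P$ of rank $\max S$ of $\tilde H_{|S|-2}([\hat 0,u]^{S\setminus\{\max S\}})$. By Definition~\ref{Whitney-basis}, $\mathcal{W}^S_{rib}(P)$ is the disjoint union over these same $u$ of the sets $\mathcal{B}^{S\setminus\{\max S\}}_{rib}([\hat 0,u])$. So it suffices to show, for each fixed $u$, that the ribbon cycles attached to $u$ form a basis of $\beta_{S\setminus\{\max S\}}([\hat 0,u])$. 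Once that is done, the union of these bases is a basis of the direct sum.

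The first step is to check that $[\hat 0,u]$ meets all the hypotheses of Theorem~\ref{thm:homology-basis-EL-lab}.
\begin{enumerate}
\item Any lower interval of a geometric lattice is again a geometric lattice. It is semimodular as a subposet closed under covers of a semimodular lattice, and it is atomic because every element below $u$ is a join of atoms below $u$. Its rank is $\max S$.
\item The atoms of $[\hat 0,u]$ are exactly the atoms of $P$ lying below $u$. We give them the restriction of the chosen total order on the atoms of $P$.
\item For $v\le u$, the set $A(v)$ is the same whether computed in $P$ or in $[\hat 0,u]$. Hence the minimal labeling of $[\hat 0,u]$ induced by the restricted order is precisely the restriction to $[\hat 0,u]$ of the minimal labeling of $P$. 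In particular it is an EL-labeling, by Bj\"orner's result cited before Definition~\ref{def:minlabel}.
\end{enumerate}

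With these checks in place, Theorem~\ref{thm:homology-basis-EL-lab} applies with $[\hat 0,u]$ in place of $P$ and $S\setminus\{\max S\}$ in place of $S$; this is a subset of the nontrivial ranks of $[\hat 0,u]$. The ribbon is $\Rib(s_1,s_2-s_1,\dots,\max S-s_{r-1})$, filled with $\minNBC$ bases of the matroid of $[\hat 0,u]$. The theorem then says the corresponding cycles ${\bar f}_{chain}(v_T)$ form a basis of $\tilde H_{|S|-2}([\hat 0,u]^{S\setminus\{\max S\}})$. The degree is right because that rank-selected subposet has $|S|-1$ nontrivial ranks, so its top homology sits in dimension $|S|-2$.

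Two edge cases need a brief separate check. If $|S|=1$, the rank set $S\setminus\{\max S\}$ is empty. Then $\tilde H_{-1}$ of the empty complex is one-dimensional, and the basis should be the single chain $\hat 0<u$. This matches the single standard $\minNBC$ filling of the one-row ribbon, namely the unique ascending chain of $[\hat 0,u]$. If $\max S$ equals the rank of $P$, there is a unique $u=\hat 1$, and the statement reduces directly to Theorem~\ref{thm:homology-basis-EL-lab}.

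I expect the main point needing care to be item~3, that the minimal labeling restricts correctly to lower intervals. This matters because the $\minNBC$ condition in Definition~\ref{NBC-def} is phrased through the sets $A(\cdot)$. That condition is intrinsic to the lower interval, so the argument should go through. Everything else is bookkeeping of the direct sum decomposition.
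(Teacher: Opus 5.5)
Your proposal is correct and follows the paper's route: the paper likewise deduces the result from Theorem~\ref{thm:homology-basis-EL-lab} applied summand by summand to the lower intervals $[\hat 0,u]$ with $\rank(u)=\max S$, using that each such interval is a geometric lattice whose atoms are among the atoms of $P$. Your explicit checks that the minimal labeling restricts to $[\hat 0,u]$, and your separate treatment of the degenerate case $|S|=1$, spell out details the paper leaves implicit.
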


\begin{proof}
Our method of  proof 
that $\mathcal{B}^S_{rib}(P)$ is a basis for 
$\beta_S(P)$ carries over to show likewise that  
$\mathcal{W}^S_{rib}(P)$ is a homology basis for 
$W\!H_S(P)$ by virtue of how $W\!H_S(P)$ is defined, noting that every interval $[\hat{0},u]$  
in a geometric lattice $P$ is itself a geometric lattice whose atoms  are a subset of the set of atoms of $P$.
\end{proof}

\begin{ex}\label{ex:ribbon-basis-WhS}  Consider the rank set $S=\{2,4\}$ and the rank 4 element  $u=|128|67|45|3|$ in $\Pi_8$.  A ribbon basis element for the homology of $W\!H_S(\Pi_8)$ coming from the interval $[\hat 0, u]^S$ 
is given by the following standard $\minNBC$ filling of $\Rib(2,4)$: 
\[F=\tableau{&45 & 18\\ 12 & 67}.\] 
This yields  the polytabloid 
 $v_F=\left\{\tableau{&45 & 18\\ 12 & 67}\right\}-\left\{ \tableau{&67 & 18\\ 12 & 45}\right\}$.  
 Then $\bar{f}_{chain}$ maps $v_F$ to the difference of chains 
 \[(\hat 0<|12|67|<|128|67|45|=u)- (\hat 0<|12|45|<|128|67|45|=u);\]
 it is a cycle because the boundary map $d_1$ sends the linear combination to 0.
\end{ex}

\subsection{Ribbon generators are $G$-equivariant and sit inside  Boolean sublattices} 

Next we examine  how our ribbon bases interact with a group action, focusing especially on the case of the partition lattice $\Pi_n$.
The atoms in $\Pi_n$ are in bijection with the  ordered pairs $(i,j)$ with $1\le i < j\le n$, allowing us to use these ordered pairs as our ribbon filling labels.
The $\fS_n$-action on $\{ 1,2,\dots ,n\} $ by permuting values also gives a permutation action on the set of ordered pairs $(i,j)$ with $1\le i< j \le n$.

Next is a property of Specht modules that $\beta_S(P)$ and $W\!H_S(P)$ share when $P$ is a geometric lattice with an action of a  group $G$, so that $\beta_S(P$ and $W\!H_S(P)$ are both $G$-modules.  

\begin{prop}\label{Specht-like-action}
Let $P$ by any geometric lattice which is a  $G$-poset.
For each $g\in G$, 
the elements ${\bar f}_{chain}(v_F)$ of $\mathcal{B}_{rib}^S(P)$ (resp.\! $\mathcal{W}_{rib}^S(P))$ satisfy
$g ({\bar f}_{chain}(v_F)) = {\bar f}_{chain}(v_{g F})$ for each filling $F$ of $\Rib (S)$.
\end{prop}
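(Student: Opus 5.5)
The plan is to prove the identity directly from the definitions, reducing everything to two equivariance facts. The first is that $\bar f_{chain}$ is $G$-equivariant on tabloids. The second is that the column-symmetrizing construction $F\mapsto v_F$ intertwines the $G$-action. Throughout, $G$ acts on the atoms of $P$ by poset automorphisms, and hence on fillings of $\Rib(S)$ entrywise: $gF$ is the filling whose $i$-th box contains $g(F_i)$. Since $g$ preserves joins and rank, it sends matroid bases to matroid bases, so $gF$ is again a filling to which $\bar f_{chain}$ applies. Note that $gF$ need not be standard or $\minNBC$, since $g$ need not preserve the atom ordering; this is why the statement is phrased for arbitrary fillings $F$, and why Proposition~\ref{do-have-cycles} was proven for arbitrary basis fillings.

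\textbf{Step 1: $\bar f_{chain}$ is equivariant on tabloids.} Take a filling $F$ with reading word $(F_1,\dots,F_n)$. By Definition~\ref{def:fbarchain}, $\bar f_{chain}(\{F\})$ is the chain whose $l$-th element is $F_1\vee\cdots\vee F_{s_l}$. Since $g$ is a lattice automorphism, $g(F_1\vee\cdots\vee F_{s_l})=gF_1\vee\cdots\vee gF_{s_l}$. Hence
\[g\,\bar f_{chain}(\{F\})=\bar f_{chain}(\{gF\}).\]
This is well defined on tabloids, because permuting the entries within a row does not change any of these joins. Equivalently, one can check the identity $g\circ f_{chain}=f_{chain}\circ g$ on fillings and then combine it with \eqref{eqn:fchain-fbarchain} and the fact that $\mathrm{res}_S$ commutes with $g$, since $g$ is rank preserving.

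\textbf{Step 2: the polytabloid construction is equivariant.} To avoid conjugating column groups, I would use the positional description from the remark preceding Proposition~\ref{do-have-cycles}:
\[v_F=\sum_{\sigma\in\Col_{\Rib(S)}}\sgn(\sigma)\{\sigma F\}.\]
Here $\sigma$ permutes boxes (positions) while $g$ acts on values, so the two actions commute: $g(\sigma F)=\sigma(gF)$. The group $\Col_{\Rib(S)}$ depends only on the shape. Therefore
\[g\,v_F=\sum_{\sigma\in\Col_{\Rib(S)}}\sgn(\sigma)\{\sigma(gF)\}=v_{gF}.\]
Combining this with Step 1 and linearity gives $g(\bar f_{chain}(v_F))=\bar f_{chain}(v_{gF})$. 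The Whitney version $\mathcal{W}^S_{rib}(P)$ follows in the same way. The element $g$ maps the interval $[\hat0,u]$ isomorphically onto $[\hat0,gu]$, and all the joins of entries of $F$ lie below $u$, so the identical computation applies in the direct sum defining $W\!H_S(P)$.

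\textbf{Main obstacle.} The only delicate point is keeping the two actions straight. The value action (by $g$) and the position action (by column permutations) must be shown to commute, and passing to tabloids must be compatible with both. Using the position-based column group removes the need to write $\Col_{gF}=g\,\Col_F\,g^{-1}$. If one prefers the value-based form, that conjugation identity is immediate, and $\sgn$ is invariant under conjugation, so it would give the same result.
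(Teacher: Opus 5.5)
Your proposal is correct and follows essentially the same route as the paper, whose proof is the single remark that the identity follows directly from the definition of $\bar f_{chain}(v_{gF})$. Your two checks unpack that remark: $g$ preserves joins, so it commutes with $\bar f_{chain}$ on tabloids, and the value action of $g$ commutes with the positional column action, so $g\,v_F=v_{gF}$.
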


\begin{proof}
This follows directly 
from the  definition for  ${\bar f}_{chain}(v_{gF})$.  
\end{proof}

\begin{rk}
    In the case of $\Pi_n$, the $\fS_n$-action permuting the values $1,2,\dots ,n$ sends each   
    ribbon filling  $F$ giving rise to an element of our homology basis $\mathcal{B}^S_{rib}(P)$ 
    to another ribbon filling $\sigma F$ for each $\sigma \in \fS_n$.  However,  ${\bar f}_{chain}(v_{\sigma F})$
        often is not in  $\mathcal{B}^S_{rib}(P)$.  
    One reason for this is that the polytabloid $v_{\sigma F}$ 
    might not have a standard filling among its summands.   
    A second issue is that the reading word for $\sigma F$ might not be 
    the label sequence of any maximal chain in $\Pi_n$ 
    given by our minimal labeling, i.e. might not be an 
    $\minNBC $ basis. (Let $F$ be the filling of Example~\ref{ex:hom-Pi-1}, and consider  the filling $\sigma F$ for $\sigma=(3,6)$.  Then the polytabloid $v_{\sigma F}$ is such an example.)  
    \end{rk}
    
Our next result further describes  the structure of our ribbon 
homology basis elements  for all geometric lattices.   It helps explain why geometric lattices are particularly well suited to the construction of ribbon bases that will turn out to behave quite similarly to  the polytabloid bases for Specht modules.  Essentially, it shows that each basis element lives inside a Boolean sublattice. 
First is a lemma we will need.

\begin{lem}\label{swap-chain}
    Consider any geometric lattice $P$ of rank $r$, any  minimal labeling $\lambda $ for $P$, and any maximal chain 
    $\hat{0}\prec u_1\prec u_2 \prec \cdots \prec u_{r-1}\prec \hat{1}$ in $P$.  Let $(\lambda (\hat{0},u_1),\lambda (u_1,u_2),\dots ,\lambda (u_{r-1},\hat{1}))$ denote its label sequence under the minimal labeling given by a fixed atom ordering.  Then  the $r!$ sequences obtained by the $\fS_r$-action permuting the positions of the labels in
    $$(\lambda (\hat{0},u_1),\lambda (u_1,u_2),\dots ,\lambda (u_{r-1},\hat{1}))$$ map to distinct maximal chains in $P$ via the map $$(a_{i_1},a_{i_2},\dots ,a_{i_r})\rightarrow (\hat{0}\prec a_{i_1}\prec a_{i_1}\vee a_{i_2}\prec \cdots \prec a_{i_1}\vee\cdots \vee a_{i_r})$$
\end{lem}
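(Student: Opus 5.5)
The plan is to show that the label set of a maximal chain is an independent set (indeed a basis) of the matroid. Once that is known, distinct orderings produce distinct chains, because the chain determines the ordering. Write $b_k := \lambda(u_{k-1},u_k)$ with $u_0=\hat 0$ and $u_r=\hat 1$. By Definition~\ref{def:minlabel}, $b_k$ is an atom with $b_k\le u_k$ and $b_k\not\le u_{k-1}$. Since $u_{k-1}\prec u_k$ is a cover in a geometric (hence semimodular) lattice, $u_{k-1}\vee b_k$ lies strictly above $u_{k-1}$ and below $u_k$, so $u_{k-1}\vee b_k=u_k$.

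The first step is to prove by induction on $k$ that $u_k=b_1\vee\cdots\vee b_k$. It follows that the join of $\{b_1,\dots,b_r\}$ is $\hat 1$, of rank $r$. So $\{b_1,\dots,b_r\}$ consists of $r$ distinct atoms whose join has rank $r$; that is, it is a basis of the matroid, and every subset of it is independent. In particular, for any subset $J\subseteq\{1,\dots,r\}$, the join $\bigvee_{j\in J} b_j$ has rank exactly $|J|$. I would justify this by submodularity of the rank function, or equivalently by the standard fact that subsets of independent sets are independent.

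The second step handles an arbitrary permutation $\pi\in\fS_r$ and the resulting sequence $(b_{\pi(1)},\dots,b_{\pi(r)})$. By the previous paragraph, each partial join $b_{\pi(1)}\vee\cdots\vee b_{\pi(k)}$ has rank $k$. Hence the image really is a maximal chain, with each step a cover relation.

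The third step proves injectivity. Suppose two permutations $\pi\neq\pi'$ give the same chain, and let $k$ be the first position with $\pi(k)\ne\pi'(k)$. The $k$-th elements of the two chains coincide; call this common element $x_k$. Then both $b_{\pi(k)}$ and $b_{\pi'(k)}$ lie below $x_k$. Also $\pi'(k)\notin\{\pi(1),\dots,\pi(k)\}$, because $\pi'(k)$ differs from $\pi(k)$ and from $\pi'(1),\dots,\pi'(k-1)=\pi(1),\dots,\pi(k-1)$. So the $k+1$ distinct atoms $b_{\pi(1)},\dots,b_{\pi(k)},b_{\pi'(k)}$ have join below $x_k$, which has rank $k$. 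This contradicts the first step, since any $k+1$ of the $b_i$ have join of rank $k+1$.

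The main obstacle is ensuring that each step of the argument uses only properties of geometric lattices. These are: the cover argument via semimodularity, and the rank property of subsets of a basis. I would cite the matroid–geometric lattice correspondence from \cite{Bj-hom-matroid} for the latter rather than reprove submodularity.
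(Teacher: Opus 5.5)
Your proof is correct and follows essentially the same route as the paper, whose one-line argument is that the label set is an independent set, so distinct subsets have distinct joins and the chain determines the ordering. You also supply details the paper leaves implicit: that the labels $b_k$ are distinct and form a basis (via $u_{k-1}\vee b_k=u_k$, which needs only the cover relation, not semimodularity), and that every permuted sequence yields a maximal chain.
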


\begin{proof}
This is immediate from the fact that $\{ a_{i_1},\dots ,a_{i_r}\} $ is an independent set, implying  that its  distinct subsets have distinct joins.  
\end{proof}

\begin{prop}
For $P$ a geometric lattice and ${\bar f}_{chain}(v_F)$ any element of the basis $\mathcal{B}_{rib}^S(P)$, the summands appearing in ${\bar f}_{chain}(v_F)$ are distinct maximal chains in $P^S$.
\end{prop}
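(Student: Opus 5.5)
The plan is to reduce the claim to Lemma~\ref{swap-chain}, applied to the independent set of atoms filling $F$. Write $v_F=\sum_{\sigma\in\Col_F}\sgn(\sigma)\{\sigma F\}$. By Definition~\ref{def:fbarchain}, each summand of ${\bar f}_{chain}(v_F)$ is a chain
\[
\hat 0<\bigvee_{i\le s_1}F'_i<\bigvee_{i\le s_2}F'_i<\cdots<\hat 1 .
\]
Here $(F'_1,\dots,F'_n)$ is the reading word of $\sigma F$. So the $l$-th element of this chain is the join of the set $R_{\le l}(\sigma F)$ of atoms occupying the first $l$ rows of $\sigma F$. Since the entries of $F$ form a matroid basis, and hence an independent set, distinct subsets of them have distinct joins. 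This is the observation underlying Lemma~\ref{swap-chain}.

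It therefore suffices to show that for $\sigma\neq\sigma'$ in $\Col_F$ there is some $l$ with $R_{\le l}(\sigma F)\ne R_{\le l}(\sigma' F)$. Equivalently, I will show that the tabloids $\{\sigma F\}$ for $\sigma\in\Col_F$ are pairwise distinct. Once that holds, the sets of row contents differ, so the cumulative unions $R_{\le l}$ differ for the first row index $l$ at which the contents differ. Independence then gives distinct joins, and hence distinct chains in $P^S$. As a consequence, no cancellation or coalescing occurs among the summands.

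For the distinctness of tabloids, I use the ribbon structure. A ribbon has at most two boxes in each column, so $\Col_F$ is generated by commuting transpositions, one for each column of height two. Suppose $\{\sigma F\}=\{\sigma' F\}$. Set $\rho=\sigma^{-1}\sigma'$, which is a product of some of these column transpositions; then $\rho F$ and $F$ have the same row contents. Any transposition in $\rho$ swaps the two entries of a height-two column. These two entries lie in two distinct adjacent rows, and every other transposition in $\rho$ involves different entries. The swapped entry therefore moves to a different row, which changes the row contents. Hence $\rho=\mathrm{id}$.

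The main obstacle is this tabloid-distinctness step. In particular, one must make sure that the same atom cannot reappear in the original row through another transposition. This is ruled out because the atoms of $F$ are distinct and the column transpositions act on disjoint pairs of boxes. The remaining steps are direct applications of the independence argument in Lemma~\ref{swap-chain}.
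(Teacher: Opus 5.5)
Your overall strategy matches the paper's. The paper combines Lemma~\ref{swap-chain} (independence) with the fact that $\Col_{\Rib(S)}\cap\Row_{\Rib(S)}$ is trivial. However, your proof of the tabloid-distinctness step, which you correctly single out as the crux, rests on a false claim: a ribbon can have columns with more than two boxes. If $S$ contains two consecutive ranks $s_{j-1},s_j$, then $\Rib(S)$ has an interior row of length $s_j-s_{j-1}=1$. The column through that box also contains the last box of the row below and the first box of the row above, so it has at least three boxes. For example, for $S=\{1,\dots,i\}$ the ribbon $\Rib(1,\dots,1,n-i)$ is the hook $(n-i,1^i)$, whose first column has $i+1$ boxes; see also the single-column ribbons in Example~\ref{OS-example}. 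In such cases $\Col_F$ contains a full symmetric group on a column, including $3$-cycles. It is not generated by commuting transpositions on disjoint pairs of boxes. So your argument about ``the'' transposition in $\rho$ and about disjointness only proves the claim for rank sets with no two consecutive elements.

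The repair is easy and needs no information about the structure of the column group. The boxes of any single column lie in pairwise distinct rows, whatever the column's height. So if $\rho=\sigma^{-1}\sigma'\in\Col_F$ is not the identity, pick an entry $x$ with $\rho(x)\neq x$. Then $\rho(x)$ lies in the same column as $x$ but in a different row. Hence $\rho$ maps the entry set $R$ of $x$'s row to a set containing $\rho(x)\notin R$, so $\{\rho F\}\neq\{F\}$. There is no ``reappearance'' issue to worry about: one entry leaving its row already changes that row's contents. This is exactly the fact $\Col_F\cap\Row_F=\{\mathrm{id}\}$ that the paper invokes.

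With this substitution the rest of your argument is correct. That includes taking the first row at which the contents differ, which is automatically at most $r$ because the total set of entries is fixed, and using that distinct subsets of an independent set have distinct joins. Your version actually makes explicit a step the paper glosses over. Distinct chains in $P$ need not restrict to distinct chains in $P^S$. What is needed is that the restricted chain determines the tabloid, which your cumulative-row-union argument shows.
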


\begin{proof}
Distinct summands are given by distinct permutations  in $\Col_{\Rib (S)}$.  Lemma~\ref{swap-chain} ensures   that the map $f_{chain}$ sends $\sigma (F)$ and $\sigma ' (F)$ to distinct maximal chains in $P$ whenever $\sigma $ and $\sigma ' $ are distinct permutations in $\fS_r$.  We claim that 
the map 
${\rm res}_S $ then sends  $f_{chain}(\sigma (F))$ and $f_{chain}(\sigma ' (F))$ to distinct maximal chains in $P^S$,  provided that $\sigma $ and $\sigma ' $ are distinct permutations in  $\Col_{\Rib (S)}$.  This claim follows because the column group for $\Rib (S)$ intersects the  row group for $\Rib (S)$ in only the identity permutation.  Combined, this yields the desired distinctness of summands in 
${\bar f}_{chain}(v_F)$.  
\end{proof} 

\subsection{Isomorphism of $W\!H_{\{ 1,2,\dots ,i\} }(P)$ with the $i$-th graded piece of the Orlik-Solomon algebra}
\label{OS-connection}

    It is natural to ask in the case of rank set $S=\{ 1,2,\dots ,i\} $ how our 
    generators are related to the generators of the $i$-th graded piece of the Orlik-Solomon algebra.  We will be especially interested in the case of the type A braid arrangement.  In that case, the $i$-th cohomology group of the complement of this complex hyperplane arrangement is well-known to be $\fS_n$-equivariantly isomorphic to   $W\!H_{\{ 1,2,\dots ,i\}}(\Pi_n)$ (see  Theorem 1.7 of \cite{Sundaram} for this isomorphism).  Indeed, such an isomorphism (without the $\fS_n$-equivariant structure) holds more  generally for the intersection poset of any complex hyperplane arrangement, and it holds $G$-equivariantly for any such intersection poset having  automorphism group $G$.

    Our homology basis for $W\!H_{\{ 1,2,\dots ,i\} }(P)$ 
    is indexed by  the  NBC independent sets of size $i$ in the matroid associated to the geometric lattice $P$;
    this is explained in Example ~\ref{OS-example}.
    The Orlik-Solomon algebra of a geometric lattice has a monomial basis for its $i$-th graded piece that is also indexed by the NBC independent sets of size $i$.  
    
    Recall that the Orlik-Solomon algebra of a matroid   is an exterior algebra  $\mathcal{E}$ generated by the atoms of the geometric lattice, with relations given by the circuits of the matroid.  Let $e_a$ denote the generator indexed by the atom $a$, and let $*$ denote the product in this algebra.  Recall that a circuit in a matroid  is a dependent set of atoms, namely a set  $\{ a_{i_1},\dots a_{i_s}\} $ of atoms with $\rank (a_{i_1}\vee \cdots \vee a_{i_s}) <s$,  with the further minimality property  that removing  any one of its  elements gives an independent set.  
    For each circuit $\{ a_{i_1},\dots ,a_{i_s}\} $ with $a_{i_1} < \cdots < a_{i_s}$, there is a relation
    $$\sum_{l=1}^r (-1)^{i-1} e_{a_{i_1}}*\cdots *\hat{e}_{a_{i_l}} *\cdots *e_{a_{i_s}} = 0.$$  
    These relations give a way of expressing  each broken circuit (i.e. each independent set obtainable  by removing from a   circuit its smallest element)  of size $i$ as a linear combination of NBC independent sets of size $i$, namely independent sets containing no broken circuits.  Thus,  the NBC independent sets of size $i$ generate the $i$-th graded piece of the Orlik-Solomon algebra. 
   
    The fact that NBC independent sets of size $i$ index both bases provides  a natural correspondence for any geometric lattice $P$  between 
    the  generators $$\{ {\bar f}_{chain}(v_F) :  F \text{ is standard filling with an NBC set} \} $$ in the basis $\mathcal{W}^{1,2,\dots ,i}_{rib}(P)$ and the monomials 
    serving  as a basis for  the $i$-th graded piece of the Orlik-Solomon algebra of this geometric lattice.  In fact, there is a natural map  from the $i$-th graded piece of the Orlik-Solomon algebra given by $P$  to $W\!H_{\{ 1,2,\dots ,i\} }(P)$ sending any monomial $e_{a_{j_1}}\cdots e_{a_{j_i}}$ of degree $i$ to $\bar{f}_{chain}(v_F)$ for the filling $F$ having reading word $(a_{j_1},\dots ,a_{j_i})$.  
    Call this the OS-to-Whitney correspondence.
    
    Now consider the case of a geometric lattice which is a $G$-poset. 
    By Proposition ~\ref{Specht-like-action},  
    $g  ({\bar f}_{chain}(v_F)) = {\bar f}_{chain}(v_{gF})$ for each $g\in G$  and each ribbon filling $F$ with entries an independent set of size $i$.
    The monomials $e_F:= e_{F_1}*e_{F_2}*\cdots *e_{F_i}$ of degree $i$ in the Orlik-Solomon algebra 
    likewise satisfy $g(e_F) = e_{gF}$.  
    Thus, the OS-to-Whitney correspondence 
    is  $G$-equivariant.
    
For any geometric lattice $P$, 
the Orlik-Solomon algebra  relations given by matroid circuits  are sent by  the  OS-to-Whitney correspondence
to relations which  hold  
in $W\!H_{\{ 1,2,\dots ,i\} }(P)$.  This follows from the next result, which was essentially  proven (in somewhat different language) in Section 3 of \cite{Orlik-Solomon}.   
We include our own proof below.

\begin{prop}\label{relations-to-relations}
For each circuit $\{ a_{i_1} ,\dots ,a_{i_j}\} $ in our geometric lattice $P$ with $a_{i_1}>a_{i_2}>\cdots > a_{i_j}$,
there is  a relation 
$$\sum_{s=1}^j (-1)^s {\bar f}_{chain} (v_{F(a_{i_1},\dots ,\hat{a}_{i_s},\dots ,a_{i_j})})= 0$$
in $W\!H_{\{ 1,2,\dots ,j-1\}} (P)$ where $F(a_{i_1},\dots ,\hat{a}_{i_s},\dots ,a_{i_j})$ denotes the filling of the ribbon shape consisting of a single column with $j-1$ boxes with reading word  $(a_{i_1},\dots ,\hat{a}_{i_s},\dots ,a_{i_j})$.  
\end{prop}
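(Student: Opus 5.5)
We will prove the relation by checking that the left-hand side is zero already at the level of chains, not merely modulo boundaries. First we locate all terms in a single summand of $W\!H_{\{1,\dots,j-1\}}(P)$. Write $C=\{a_{i_1},\dots,a_{i_j}\}$ and $C_s=C\setminus\{a_{i_s}\}$. Since $C$ is a circuit, every subset of $C$ of size at most $j-1$ is independent. Hence $\bigvee C_s$ has rank $j-1$, and it is $\le \bigvee C$, which also has rank $j-1$ because $C$ is dependent. So all the joins $\bigvee C_s$ equal one element $u$ of rank $j-1$. Every summand therefore lies in the summand $\tilde H_{j-3}([\hat 0,u]^{\{1,\dots,j-2\}})$, i.e.\ in the top homology of the proper part of $[\hat 0,u]$. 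There are no chains of higher dimension there, so a linear combination of maximal chains is zero in homology exactly when it is zero as a chain. It therefore suffices to show that each maximal chain $c=(\hat 0<x_1<\cdots<x_{j-2}<u)$ has total coefficient $0$.

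Next we make the terms explicit. The single-column ribbon with $j-1$ boxes has trivial row groups and full column group $\fS_{j-1}$. So $\bar f_{chain}(v_{F(C_s)})$ is the signed sum, over all orderings $(b_1,\dots,b_{j-1})$ of $C_s$, of the chains $\hat 0<b_1<b_1\vee b_2<\cdots<u$. The sign is that of the permutation relative to the reading word, which is $C_s$ listed in decreasing order.

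The key step is to show that a fixed chain $c$ arises from at most one ordering in each of exactly two sums. Suppose $c$ comes from an ordering $(b_1,\dots,b_{j-1})$ of some $C_s$. We claim that $C\cap A(x_k)=\{b_1,\dots,b_k\}$ for every $k\le j-2$. Indeed, if another circuit atom $a$ satisfied $a\le x_k$, then $\{b_1,\dots,b_k,a\}$ would be $k+1\le j-1$ independent atoms lying below an element of rank $k$, which is impossible. Two consequences follow. First, the chain $c$ determines $b_1,\dots,b_{j-2}$ uniquely. Second, $C\setminus A(x_{j-2})$ consists of exactly two atoms, say $a_{i_p}$ and $a_{i_q}$ with $p<q$. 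Hence $c$ occurs precisely in the sums for $s=p$ (via the ordering $(b_1,\dots,b_{j-2},a_{i_q})$) and for $s=q$ (via $(b_1,\dots,b_{j-2},a_{i_p})$), each time exactly once. Conversely, both of these orderings do produce $c$: in each case the first $j-2$ joins are the $x_k$, and the final join is $u$.

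Finally, the signs cancel. The sign of an ordering $w$ of $C_s$, taken relative to decreasing order on $C_s$, equals $(-1)^{j-s}$ times the sign of the ordering $(w,a_{i_s})$ of $C$, taken relative to $(a_{i_1},\dots,a_{i_j})$. This is because moving $a_{i_s}$ from position $s$ to the end costs $j-s$ transpositions. Let $\epsilon$ be the sign of $(b_1,\dots,b_{j-2},a_{i_q},a_{i_p})$ as an ordering of $C$; the ordering $(b_1,\dots,b_{j-2},a_{i_p},a_{i_q})$ then has sign $-\epsilon$. The coefficient of $c$ is therefore
\[
(-1)^p(-1)^{j-p}\epsilon+(-1)^q(-1)^{j-q}(-\epsilon)=(-1)^j\epsilon-(-1)^j\epsilon=0 .
\]
The main obstacle is the uniqueness claim in the key step: it is what guarantees that no chain appears with multiplicity, or appears in a third sum. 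It rests entirely on the circuit property that all proper subsets of $C$ are independent. The rest is sign bookkeeping.
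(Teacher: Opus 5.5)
Your proof is correct and follows essentially the same route as the paper's proof. Both arguments show that all the chains end at the single rank-$(j-1)$ flat $u=\bigvee C$, that each maximal chain arises from exactly two of the omitted-atom sums, and that its two occurrences carry opposite signs. Your explicit argument that the chain determines $b_1,\dots,b_{j-2}$, via $C\cap A(x_k)=\{b_1,\dots,b_k\}$, fills in the $2$-to-$1$ claim that the paper only asserts. Your sign bookkeeping, moving the omitted atom to the end, replaces the paper's $\mathrm{revinv}$ count and is cleaner.
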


\begin{proof}
It helps to note that for any circuit 
$\{ a_{i_1},\dots ,a_{i_j}\} $, the map 
$$(a_{i_1},\dots ,\hat{a}_{i_s},\dots ,a_{i_j})\rightarrow 
< \cdots  < a_{i_1}\vee \cdots \vee a_{i_{s-1}}\vee a_{i_{s+1}} \vee \cdots \vee a_{i_j}$$ yields a chain whose largest element $a_{i_1}\vee \cdots \vee a_{i_{s-1}} \vee a_{i_{s+1}} \vee \cdots \vee a_{i_j}$ is the same for all choices of $s$.  Thus, 
the map $\overline{f}_{chain}$ effectively forgets what entry is in the top box in $\Rib (\{ 1,2,\dots , j-1\} ) = \Rib (1,1,\dots ,1)$.
In other words,  the map
$$(a_{k_1}, \dots ,a_{k_{j-1}})\rightarrow a_{k_1} < a_{k_1}\vee a_{k_2} < \cdots < a_{k_1}\vee \cdots  \vee a_{k_{j-1}}$$ 
only depends on $(a_{k_1},\dots ,a_{k_{j-2}})$, provided that  $\{ a_{k_1},\dots ,a_{k_{j-1}}\} \subset \{ a_{i_1} , \dots ,a_{i_j}\}  $. 

Using this forgetful property, we will show for the  alternating sum giving rise to our proposed relation that each chain appearing in the alternating sum  appears exactly  twice with opposite signs.  The first important fact we will use to check this  is  that the map sending $(a_{k_1},\dots ,a_{k_{j-1}})$ to $a_{k_1} < a_{k_1}\vee a_{k_2} < \cdots < a_{k_1} \vee a_{k_2}\vee \cdots \vee a_{k_{j-1}}$ is a 2 to 1 map, by virtue of there being exactly  two elements $a$ in the circuit with the property that $a \vee (a_{k_1}\vee \cdots \vee a_{k_{j-2}}) = a_{i_1}\vee \cdots \vee a_{i_j}$, hence two choices for $a_{k_{j-1}}$.  These two choices  
are the two  elements $a_{i_s}$ and $a_{i_t}$
in $\{ a_{i_1},\dots ,a_{i_j}\} \setminus \{ a_{k_1},\dots ,a_{k_{j-2}}\} $.  These two choices give rise to a pair of chains that  will  cancel each other out.  

What remains to show is that these chains $a_{k_1} < \cdots < a_{k_1}\vee \cdots \vee a_{k_{j-2}} < (a_{k_1} \vee \cdots \vee a_{k_{j-2}})\vee a_{i_s} $ and $ a_{k_1} < \cdots < a_{k_1}\vee \cdots \vee a_{k_{j-2}} <  (a_{k_1}\vee \cdots \vee a_{k_{j-2}})\vee a_{i_t}$ appear with opposite signs.

Let  us assume 
$s<t$.  Note that the  sign for the chain with  $a_{k_{j-1}} = a_{i_s}$  (resp. $a_{k_{j-1}}= a_{i_t}$)  is $(-1)^{(s-1)+({\rm revinv}(\pi_s))}$ (resp. $(-1)^{(t-1)+({\rm revinv}(\pi_t))}$)  where we define ${\rm revinv}(\pi_s)$ to be  the number of adjacent transpositions one must apply to $\pi_s := (a_{k_1},\dots ,a_{k_{j-2}}, a_{i_s})$ to obtain a subsequence of $(a_{i_1},\dots ,a_{i_j})$.    Thus, showing we get opposite signs amounts to showing  that 
$$(-1)^{(s-1)+({\rm revinv}(\pi_s))} \text{ and } (-1)^{(t-1)+({\rm revinv}(\pi_t))}$$ have opposite signs.  But $(t-1)-(s-1)=t-s$ while 
$({\rm revinv}(\pi_s))-({\rm revinv}(\pi_t))= t-s-1$, as we now explain.   The ${\rm revinv}$ discrepancy  calculation above can be verified by  
comparing inversion sets for $\pi_s$ and $\pi_t$ and getting $t-s-1$ discrepancies resulting from  the $t-s-1$ values $a_{i_l}$ for $l=s+1,\dots ,t-1$ that each satisfy $a_{i_s} > a_{i_l} > a_{i_t}$;  each of these letters  forms an inversion with $a_{i_s}$ when $a_{i_s}$  is put in the last position  but does not form an inversion with $a_{i_t}$ when it is in the last position.  
The result follows.
\end{proof}

These relations may easily be generalized 
to relations which suffice to straighten any ${\bar f}_{chain}(v_F)$ in which the reading word for $F$ contains a broken circuit, and indeed results in Section 3 of 
\cite{Orlik-Solomon} imply this more general statement.  For example, given a dependent set $\{ a_4 > a_3 > a_2 > a_1\} $ of atoms in which $\{ a_4,a_2,a_1 \} $ is a circuit while $\{ a_4,a_3,a_2\} $ is an independent set, one may use reasoning as in the  proof of Proposition ~\ref{relations-to-relations} above to verify  the relation
$${\bar f}_{chain}(v_{F(a_4,a_3,a_2)}) - {\bar f}_{chain}(v_{F(a_4,a_3,a_1)}) + {\bar f}_{chain}(v_{F(a_2,a_3,a_1)}) = 0.$$  More  generally, for any $\{ a_{i_1}> a_{i_2} > \cdots a_{i_s}\} $  containing a circuit $ \{ a_{i_{j_1}} > \cdots > a_{i_{j_t}}\} $, there is a relation obtained by taking an alternating sum over the ways to omit one element belonging to the specified circuit  while holding  fixed  the positions of all the atoms not involved in the circuit and arranging the atoms from  the circuit in descending order in their allotted positions.  

Thus, the OS-to-Whitney correspondence
sends every relation in the Orlik-Solomon algebra to a relation among our ribbon generators, by Proposition ~\ref{relations-to-relations} and its slight generalization discussed above.
We proved in Theorem ~\ref{thm:homology-basis-EL-lab} that the elements of our ribbon basis are linearly independent, guaranteeing that there cannot be  any further relations in 
$W\!H_{\{ 1,2,\dots ,i\}} (P)$ not already implied by these relations.
From this one  may deduce 
the following:

\begin{thm}\label{OS-isomorphism}
Given any geometric lattice $P$, the map sending the monomial generators of the $i$-th graded piece of the Orlik-Solomon algebra of $P$ (namely the degree $i$ monomials consisting of NBC sets with variables listed in decreasing order)   to the corresponding elements of 
$\mathcal{W}_{\{ 1,2,\dots ,i\} }(P)$ 
is a vector space isomorphism.  Moreover, if $G$ is an   automorphism group of $P$, then this isomorphism is $G$-equivariant.    
\end{thm}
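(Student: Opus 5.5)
We would define a linear map $\Phi$ from the $i$-th graded piece $\mathcal{A}^i$ of the Orlik--Solomon algebra to $W\!H_{\{1,2,\dots,i\}}(P)$ at the level of the exterior algebra, and then show that it is well defined, surjective and injective. On the exterior algebra $\mathcal{E}^i$ we set $\Phi(e_{a_{j_1}}*\cdots *e_{a_{j_i}}) := \bar f_{chain}(v_{F})$, where $F$ is the one-column filling with reading word $(a_{j_1},\dots,a_{j_i})$, and we send dependent words to $0$.

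The first step is to check that this is compatible with the exterior (alternating) relations. For a single column, $\Col_F$ is the full symmetric group on the column, so $v_{\sigma F}=\sgn(\sigma)v_F$. Hence $\bar f_{chain}(v_F)$ changes sign under reordering, exactly as the monomial does. This makes $\Phi$ a well-defined map on $\mathcal{E}^i$. $G$-equivariance follows at once from Proposition~\ref{Specht-like-action} together with $g(e_F)=e_{gF}$.

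The second step is to show that $\Phi$ kills the Orlik--Solomon ideal in degree $i$. This ideal is spanned by products $e_{b}*\partial e_C$, where $C$ is a circuit. The case of a bare circuit is Proposition~\ref{relations-to-relations}. For the general product, we will use the extension sketched after that proposition: we hold fixed the positions of the atoms outside the circuit, and run the same two-to-one cancellation argument on the remaining boxes. This works because $\bar f_{chain}$ depends only on the joins of initial segments, and two choices from a circuit give the same join once the other elements are present. Products whose support is dependent go to $0$ by definition, or else reduce to this case. Verifying this generalized cancellation, with the correct signs, is the step I expect to be the main obstacle. I would first reduce to the situation where the circuit atoms occupy consecutive top boxes, using the alternating property from the first step, and then invoke the sign computation in the proof of Proposition~\ref{relations-to-relations} verbatim.

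The third step is to show that $\Phi$ is an isomorphism. The NBC monomials with variables in decreasing order form a basis of $\mathcal{A}^i$. By Example~\ref{OS-example}, $\Phi$ sends each such monomial to the corresponding element of $\mathcal{W}^{\{1,\dots,i\}}_{rib}(P)$, and this gives a bijection onto that set. By Theorem~\ref{thm:min-label-gives-NBC-homology-basis}, this set is a basis of $W\!H_{\{1,\dots,i\}}(P)$. Therefore $\Phi$ maps a basis bijectively onto a basis, and so it is a vector space isomorphism. Because $\Phi$ was defined on all monomials and respects the relations, this isomorphism agrees with the $G$-equivariant map from the first step, which gives the equivariance statement.
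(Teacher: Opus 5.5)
Your proposal is correct and follows essentially the paper's own route. You define the OS-to-Whitney map on all monomials, get equivariance from Proposition~\ref{Specht-like-action}, send circuit relations to zero via Proposition~\ref{relations-to-relations} and its generalization, and conclude with the ribbon basis of Theorem~\ref{thm:min-label-gives-NBC-homology-basis}. Your ``hold the non-circuit atoms fixed'' idea does work if you apply it to each ordering in the polytabloid expansion: pair that ordering with the one obtained by replacing the circuit atom in the last circuit-occupied position by the omitted atom, which gives the same chain and, by the same inversion count, the opposite sign.

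The one small difference is that you import the theorem that NBC monomials form a basis of $\mathcal{A}^i$. The paper uses only that they span (via straightening) and deduces their independence, hence injectivity, from the linear independence of the ribbon basis.
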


After proving 
Theorem ~\ref{OS-isomorphism}, we 
discovered that it is very closely related to Theorem 3.7 in \cite{Orlik-Solomon}.  Our  shellability viewpoint significantly simplifies one part of the proof from \cite{Orlik-Solomon}, 
namely their argument that (the Orlik-Solomon version of)
the ribbon generators cannot satisfy any  relations not implied by those appearing in Proposition ~\ref{relations-to-relations} and its slight generalization discussed afterwards.  

\subsection{Useful property of  Young symmetrizers applied to the ribbon basis for $\Pi_n$}
\label{Young-section}

\begin{lem}\label{lem:YoungSym-annihilate-ribbon}  
Let $\Rib (S)$  
be a ribbon  corresponding to some rank set $S$, and let $F$ be a filling of $\Rib (S)$ that produces  the homology basis element $v_{F}$.   Suppose the following three 
conditions hold for  four distinct letters $b,c,d,e$.
\begin{enumerate}
   \item $F$ has two boxes in the same column, such that one box has the label $\{b,c\} $  and the other with the label 
$\{ d,e\} $;
\item 
none of the letters $b,c,d,e$ appear within the labels on any other boxes of $F$;
\item there is a standard tableau $T$ of shape $\lambda\vdash n$ such that all four letters $b,c,d,e$ appear in the first row of $T$.
\end{enumerate}
 Then 
 the Young symmetrizer $b_T a_T$ sends $v_{F}$ to 0.
\end{lem}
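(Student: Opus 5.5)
The plan is to imitate the Boolean argument of Example~\ref{ex:symm-annihilates-asis-elt-Boolean-larger-ex}. There, a single transposition lies both in the row group of $T$ and in the column group of the filling, and the factorization $(1+\tau)(1-\tau)=0$ kills the polytabloid. Here the two column entries $\{b,c\}$ and $\{d,e\}$ are atoms of $\Pi_n$, i.e.\ pairs of letters, so a single transposition of letters does not swap them. The natural replacement is the permutation $\pi=(b,d)(c,e)\in\fS_n$. Acting on values, $\pi$ sends the atom $\{b,c\}$ to $\{d,e\}$ and vice versa. By condition (2), it fixes every other atom appearing in $F$, since no other box contains any of $b,c,d,e$. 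Hence $\pi F$ is exactly the filling obtained from $F$ by swapping the contents of the two boxes in the common column, i.e.\ $\pi F=\sigma_0F$ with $\sigma_0\in\Col_{F}$ the transposition of those two boxes.

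First I will show that $(1+\pi)\, v_F=0$ at the level of tabloids, and hence, by Proposition~\ref{Specht-like-action}, $(1+\pi)\,\bar f_{chain}(v_F)=\bar f_{chain}(v_F)+\bar f_{chain}(v_{\pi F})=0$. The identity $\pi(\sigma F)=\sigma(\pi F)$ holds for all $\sigma\in\Col_{\Rib(S)}$, using the position-action description from the Remark preceding Proposition~\ref{do-have-cycles}, because $\pi$ acts on entries and $\sigma$ acts on positions. Therefore
\[
\pi v_F=\sum_{\sigma}\sgn(\sigma)\{\sigma\pi F\}=\sum_\sigma \sgn(\sigma)\{\sigma\sigma_0F\}=\sgn(\sigma_0)\,v_F=-v_F,
\]
after reindexing $\sigma\mapsto\sigma\sigma_0$. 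This gives $v_{\pi F}=\pi v_F=-v_F$.

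Next I will factor the row symmetrizer. By condition (3), $b,c,d,e$ all lie in the first row of $T$, so $H=\langle\pi\rangle\cong \mathbb{Z}/2$ is a subgroup of $\Row_T$. Choosing left coset representatives $R$ of $H$ in $\Row_T$, we get $a_T=\bigl(\sum_{\rho\in R}\rho\bigr)(1+\pi)$. Consequently
\[
a_T\,\bar f_{chain}(v_F)=\Bigl(\sum_{\rho\in R}\rho\Bigr)(1+\pi)\,\bar f_{chain}(v_F)=0,
\]
and so $b_Ta_T\,\bar f_{chain}(v_F)=0$. The statement writes $v_F$, but since the $\fS_n$-action is compatible with $\bar f_{chain}$, the vanishing holds both for the polytabloid and for the homology cycle.

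The main obstacle is verifying carefully that $\pi F=\sigma_0 F$ as fillings, including the ordering convention inside each atom. Atoms are unordered pairs, so $\pi\{b,c\}=\{d,e\}$ holds regardless of which element is listed as smaller. The one point where condition (2) is essential is that $\pi$ must not alter any other box, since otherwise $\pi F$ would leave the $\Col_F$-orbit of $F$. I will also check that the sign bookkeeping in the reindexing is correct, i.e.\ that $\sgn(\sigma\sigma_0)=-\sgn(\sigma)$. This follows because $\sigma_0$ is a transposition of positions within one column, so it lies in $\Col_{\Rib(S)}$.
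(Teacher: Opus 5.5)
Your proposal is correct and follows the paper's proof. Like the paper, you factor $a_T=\bigl(\sum_{\rho}\rho\bigr)(1+(b,d)(c,e))$ using condition (3), and you use conditions (1) and (2) to see that $(b,d)(c,e)$ swaps the two column entries of $F$, so it acts on $v_F$ by $-1$. Your additional step of commuting the value action with the position action of $\Col_{\Rib(S)}$ and reindexing over the column group makes explicit what the paper asserts in a single line.
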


\begin{proof}   
Condition (3) guarantees that  the row symmetrizer $a_T$ has a right factor $(1+(b,d)(c,e))$, since the product of transpositions $(b,d)(c,e)$ fixes the first row of $T$.  Explicitly, let $H$ be the subgroup of the row stabilizer group $\mathrm{Row}_T$ generated by $(b,d)(c,e)$, and let $\mathrm{Row}_T/H$ be a complete set of distinct left coset representatives (including the identity element) of $H$ in $\mathrm{Row}_T$. Then we have 
\[ a_T=\left(\sum_{\sigma\in \mathrm{Row}_T/H} \sigma\right) (1+(b,d)(c,e)).
\]
Conditions (1) and (2) guarantee that $(b,d)(c,e)$ swaps the labels $\{b,c\}$ and $\{d,e\}$ in $F$ while leaving all other labels in $F$ unchanged, and hence sends  $v_{F}$ to its negative $-v_{F}$.
Hence $(1+(b,d)(c,e))$ applied to the homology basis element $v_{F}$ gives 0.   It follows from the above factorization of $a_T$ that $a_T v_{F}=0$, and hence $b_T a_T v_{F}=0$, as claimed.
See Figure~\ref{fig:symmetrizer-annihilation1}.
\end{proof}

\begin{figure}
    \centering
    \[ \text{Let } T=\tableau{1 & 2 & 3 &4},\]
    
    \vskip 5pt
    Then $a_T=\sum_{\sigma\in S_4} \sigma= w\cdot (1+(13)(24))  \text{ for some $w$ in the group algebra}.$
\[F=\tableau{\underline{12}\\ \underline{34}} \qquad 
(13)(24) F= \tableau{\underline{34}\\ \underline{12}} \qquad\qquad 
v_{F}= \bigg\{ \tableau{\underline{12} \\ \underline{34}}\bigg\}-\bigg\{ \tableau{\underline{34}\\ \underline{12}}\bigg\} \qquad
(13)(24)\cdot v_{F}= - v_{F}\]
Hence $a_T v_{F} = w(1+(13)(24)) v_{F}=0.$
    \caption{A small example of a Young symmetrizer annihilating a ribbon homology element.  The ribbon entries  are an NBC independent set in $\Pi_4$.} 
    \label{fig:symmetrizer-annihilation1}
\end{figure}

\section{Stability results  for rank-selected Whitney homology and rank-selected homology in $\Pi_n$}\label{Pi-n-section} 

In this section, we prove the sharp stability bound of 
$4\max S - |S| + 1$  of Conjecture 11.3 in \cite{Hersh-Reiner} for the rank-selected homology of the partition lattice, namely for $\beta_S(\Pi_n)$.    In order to do this, we prove the sharp stability bound of $4\max S - |S| + 1$ for $W\!H_S(\Pi_n)$, then invoke Corollary ~\ref{cor:WHPiS-bound-equals-betaS-bound} below to deduce Conjecture 11.3 of \cite{Hersh-Reiner}. 
In Section ~\ref{sharp-section}, we will  prove  that 
$W\!H_S(\Pi_n)$ cannot stabilize earlier than $4\max S - |S| +1$. Then we lay the groundwork for proving stability and also verify our desired stability bound for $W\!H_S(\Pi_n)$ in special cases  such as rank sets $S$ with $|S|\le 4$, as a warm-up for the general case.   Finally, this   sharp stability bound for 
$W\!H_S(\Pi_n)$ is proven  in Section ~\ref{H-R-Conj-proved}.
We will rely upon  the fact that $\Pi_n$ is a geometric lattice of rank $n-1$, enabling the usage of our ribbon bases for geometric lattices.

Exactly as for the Boolean lattice, Proposition~\ref{prop:alt-sum-expression-S} and Proposition~\ref{prop:WHS-bound-equals-betaS-bound-anyP-AGAIN} allow us to deduce the following equivalence.
\begin{cor}\label{cor:WHPiS-bound-equals-betaS-bound}
$W\!H_S(\Pi_n)$ stabilizes sharply at $4\max S - |S| + 1$ for all $S$ if and only if $\beta_S(\Pi_n)$ stabilizes at $4\max S -|S| + 1$ for all $S$.
\end{cor}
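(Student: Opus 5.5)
This corollary is a direct specialization of Proposition~\ref{prop:WHS-bound-equals-betaS-bound-anyP-AGAIN}, exactly parallel to the proof of Corollary~\ref{cor:WHBoolS-bound-equals-betaS-bound}. The plan is to take $P_n=\Pi_n$ and $f(S)=4\max S-|S|+1$, and then check the hypotheses of part~(2) of that proposition. Part~(2) gives the sharp equivalence; the statement "stabilizes at" for $\beta_S$ is read as the sharp statement, matching the surrounding text.

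First, $\{\Pi_n\}$ is a sequence of Cohen-Macaulay $\fS_n$-posets. Each $\Pi_n$ is a geometric lattice, hence EL-shellable via any minimal labeling (Definition~\ref{def:minlabel} and Bj\"orner's result). Shellable graded posets are Cohen-Macaulay, and the $\fS_n$-action is by rank-preserving poset automorphisms. Thus Proposition~\ref{prop:alt-sum-expression-S} applies to $\Pi_n$, and so does Proposition~\ref{prop:WHS-bound-equals-betaS-bound-anyP-AGAIN}.

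Second, we verify the conditions on $f$. We adopt the convention $f(\emptyset)=1$, which is consistent with $\beta_\emptyset(\Pi_n)=W\!H_\emptyset(\Pi_n)$ being the trivial module. That module stabilizes sharply at $n=1$, as it is $\mathbbm{1}_{\fS_n}$ for every $n$. The function $f$ is nonnegative on nonempty $S\subseteq\{1,2,\dots\}$, since $4\max S\ge |S|$. For nonempty $S$, let $S'=S\setminus\{\max S\}$. We have $|S'|=|S|-1$ and $\max S'\le \max S-1$, where the case $S'=\emptyset$ is handled by $f(\emptyset)=1\le 4\max S-1+1-1$. Hence
\[
f(S')\le 4(\max S-1)-(|S|-1)+1 = f(S)-3\le f(S)-1,
\]
which is the inequality required in part~(2). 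This is precisely the "in particular" clause of the proposition with $k=4$.

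Applying part~(2) of Proposition~\ref{prop:WHS-bound-equals-betaS-bound-anyP-AGAIN} then yields the stated equivalence. There is no real obstacle here. The only point needing care is the bookkeeping at the base of the induction in that proposition, namely the case $S^{(1)}=\emptyset$, and the convention $f(\emptyset)=1$ handles it.
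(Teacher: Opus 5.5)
Your proposal is correct and is the same argument as the paper's proof: specialize part~(2) of Proposition~\ref{prop:WHS-bound-equals-betaS-bound-anyP-AGAIN} to $P_n=\Pi_n$ and $f(S)=4\max S-|S|+1$. Your explicit checks spell out what the paper leaves implicit: Cohen--Macaulayness via EL-shellability, $f(S\setminus\{\max S\})\le f(S)-3$, and the $f(\emptyset)=1$ base case. Reading the $\beta_S$ side as ``sharply'' is the right interpretation, since that is what part~(2) delivers.
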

\begin{proof}  Take $P_n=\Pi_n$ and $f(S)=4\max S -|S|+1$ 
in Proposition~\ref{prop:WHS-bound-equals-betaS-bound-anyP-AGAIN}. \end{proof}
In view of this corollary, the remainder of the section will focus on stability of $W\!H_S(\Pi_n) $.
The sharp stability bound of $4\max S - |S| + 1$ for $W\!H_S(\Pi_n)$ was proved  in the special case of  consecutive rank sets $S=\{1,2\ldots, i\}$ in \cite[Corollary 5.4]{Hersh-Reiner}, thereby establishing  Conjecture 11.3 of \cite{Hersh-Reiner}  
for these  rank sets. 

Next we 
handle singleton sets $S=\{i\}$ before turning to general $S$.  Our proof for $S=\{ i\} $ also serves as a warm-up for our techniques, and provides motivation for Definition~\ref{def:essential-part} in the next section. 

\begin{lem}\label{lem:sharp-bound-singletonS} For $S=\{i\}$, $W\!H_S (\Pi_n)$ stabilizes sharply at $n=4i$, and hence so does $\beta_S(\Pi_n)$.
\end{lem}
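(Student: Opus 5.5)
For $S=\{i\}$ we have $S\setminus\{\max S\}=\emptyset$. So $W\!H_{\{i\}}(\Pi_n)$ is the direct sum over rank-$i$ elements $x\in\Pi_n$ of $\tilde H_{-1}$ of the empty rank selection, i.e.\ one trivial copy per $x$. Thus $W\!H_{\{i\}}(\Pi_n)=\alpha_{\{i\}}(\Pi_n)$ is the permutation module on set partitions of rank $i$ (ribbon $\Rib(i)$ with a single row). I would organize it by the \emph{nontrivial part} of $x$.

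A rank-$i$ partition $x$ is determined by a partition $\mu$ of a set of size $m$ into nonsingleton blocks with $m-\ell(\mu)=i$, together with $n-m$ singletons. Hence
\[
W\!H_{\{i\}}(\Pi_n)\cong\bigoplus_{\mu}\Ind_{\fS_m\times\fS_{n-m}}^{\fS_n}\bigl(\Ind_{\mathrm{Stab}(x_\mu)}^{\fS_m}\one\otimes\one_{\fS_{n-m}}\bigr),
\]
where the sum runs over integer partitions $\mu$ with all parts $\ge 2$ and $|\mu|-\ell(\mu)=i$, and $m=|\mu|\le 2i$. This exhibits $\{W\!H_{\{i\}}(\Pi_n)\}$ as quasi-freely generated. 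Its generators are the irreducible constituents of the $\fS_m$-modules $\ch^{-1}\prod_j h_{a_j}[h_j]$, where $a_j$ is the number of parts of $\mu$ equal to $j$. By Proposition~\ref{prop:quasi-freely-gen-immplies-sharp-max}, the sharp stability point is the maximum of $m+\lambda_1$ over constituents $\lambda$.

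Next I would bound $\lambda_1$. Following the proof of Lemma~\ref{plethysm-first-row-bound}, the maximal power of $x_1$ in $h_{a}[h_j]$ is $a(j-1)+a=aj$, but that comes from all boxes being $x_1^j$. Instead I would use the Young symmetrizer philosophy, or more simply a direct monomial count for the product: in $\prod_j h_{a_j}[h_j]$ the largest $x_1$-exponent is $m$, the trivial constituent. So I need the finer estimate $m+\lambda_1\le 4i$. Since $m\le 2i$ and $\lambda_1\le m$, we get $m+\lambda_1\le 2m\le 4i$, with equality exactly when $m=2i$, i.e.\ $\mu=(2^i)$, and $\lambda=(2i)$ the trivial constituent of $h_i[h_2]$. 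The trivial representation does occur in any permutation module, so the bound $4i$ is attained. By Lemma~\ref{lem:quasi-free-implies-increasing-mult} and Corollary~\ref{cor:FI-submodules}, the summand for $\mu=(2^i)$ cannot be cancelled by the others, which gives sharp stabilization of $W\!H_{\{i\}}(\Pi_n)$ at $4i$.

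Finally, $\beta_{\{i\}}(\Pi_n)=W\!H_{\{i\}}(\Pi_n)-\beta_\emptyset(\Pi_n)$ by Proposition~\ref{prop:alt-sum-expression-S}, and $\beta_\emptyset$ is the trivial module, which stabilizes at $1\le 4i-1$. Lemma~\ref{lem:weak-strict-stab-for-sums}(b) then transfers sharpness to $\beta_{\{i\}}$ (alternatively, invoke Corollary~\ref{cor:WHPiS-bound-equals-betaS-bound}). The only delicate step is confirming quasi-free generation, namely that inducing from $\fS_m\times\fS_{n-m}$ with trivial action on the singletons matches $M_n(\cdot)$. This holds because the stabilizer of $x$ is $\mathrm{Stab}(x_\mu)\times\fS_{n-m}$, so the step is essentially immediate.
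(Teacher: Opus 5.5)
Your proposal is correct and follows essentially the same route as the paper: you decompose the permutation module $W\!H_{\{i\}}(\Pi_n)$ by the type $\mu$ of the nonsingleton blocks, so each summand is $h_{n-m}\prod_j h_{a_j}[h_j]$ with $m=i+\ell(\mu)\le 2i$ (the paper indexes these summands by $r=\ell(\mu)$). You then use $\lambda_1\le m$, attained by the trivial constituent, to get $m+\lambda_1\le 4i$ with equality only for $\mu=(2^i)$, and pass to $\beta_{\{i\}}=W\!H_{\{i\}}-\one_{\fS_n}$; your detour through Lemma~\ref{plethysm-first-row-bound} and Young symmetrizers is unnecessary, since $\lambda_1\le|\lambda|=m$ already suffices.
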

\begin{proof} 
The last statement is a consequence of the fact that $\beta_{\{i\}}(\Pi_n)=W\!H_{\{i\}} -\one_{\fS_n}$.

For $S = \{ i\} $, the Whitney homology is a sum of permutation modules, appearing as induced wreath products of trivial representations.  First we observe that if $n\ge 2i$, $\Pi_n$ contains the partition with $i$ blocks of size 2 and $n-2i$  blocks of size 1 at rank $i$.  The $\fS_n$-orbit of these partitions has Frobenius characteristic $h_i[h_2] h_{n-2i}$, which stabilizes sharply  at $4i$ by Lemma~\ref{plethysm-first-row-bound}, since the trivial representation must occur in the permutation module with Frobenius characteristic $h_i[h_2]$. Hence we may assume $n\ge 2i$.  A calculation will show that the contribution of the remaining permutation modules in the Whitney homology stabilizes for  $n\ge 4i$.

More precisely, in terms of Frobenius characteristics, for $S=\{i\}$ and $n\ge 2i$ we have the  expression 
\[\ch W\!H_S(\Pi_n)=\sum_{r=1}^i \sum_{\substack{\nu\vdash n \\ \ell(\nu)=n-i}} h_{n-i-r}[h_1]\, h_{m_2}[h_2]\cdots h_{m_j}[h_j] \cdots,\]
for integer partitions $\nu$ of $n$ with $n-i-r $ parts equal to 1, and $m_j$ parts equal to $j$ if $j\ge 2$.   In particular, $\sum_{j\ge 2} j m_j=i+r$, and $\sum_{j\ge 2} m_j=r$. We have $i\ge r$ because $i+r=\sum_{j\ge 2} j m_j\ge 2 (\sum_{j\ge 2} m_j  ) =2r$.

Thus the Frobenius characteristic of $W\!H_S(\Pi_n)$ consists of summands of the form $h_{n-i-r} f$ where $\deg f= i+r$.  It is clear from the preceding paragraph that the largest first row that can appear in the Schur expansion of $f=h_{m_2}[h_2]h_{m_3}[h_3]\cdots$ is its degree $i+r$. 

By Part (2) of Lemma~\ref{lem:quasi-free-implies-increasing-mult}, the summand corresponding to a fixed $r$  stabilizes  sharply for $n-i-r\ge i+r$, i.e. for $n\ge 2(i+r)$. But  $2i+2r\le 4i$ and moreover, the bound $4i$ is achieved when $r=i$  and $\nu=(2^i, 1^{n-2i})$.  
Hence we obtain the sharp stability bound of $4i=4\max S-|S| +1$ in this case.
\end{proof}

\subsection{$W\!H_S(\Pi_n)$ cannot stabilize earlier than $4\max S - |S| + 1$}\label{sharp-section}

For a set partition $u\in \Pi_n$ and an integer partition $\lambda$ of $n$, we say $u$ has 
\emph{type} $\lambda$ if the block sizes of $u$ are given by the parts $\lambda_1, \lambda_2, \ldots$ of $\lambda$.  If $\lambda$ has $m_i$ parts equal to $i$, we will sometimes refer to the type of $u$ as $(m_1, m_2, \ldots).$

Again, let $S$ be any subset of nontrivial ranks. 
For the partition lattice $\Pi_n$, we  have the following finer decomposition of the $\fS_n$-module $W\!H_S(\Pi_n)$, according to the type of a set partition.

\begin{defn}\label{def:WHS-submodules}
Define $W\!H_{S,\lambda }(\Pi_n)$ to be the $\fS_n$-submodule  of $W\!H_S(\Pi_n)$ 
obtained by summing over only  those set partitions $u$ of rank $\max S$ and type $\lambda $.  Then 
\begin{equation}\label{def:WH-S-lambda} W\!H_S(\Pi_n)=\bigoplus_{\lambda\vdash n} W\!H_{S, \lambda}(\Pi_n)
\end{equation}
and 
\begin{equation}\label{eqn:WH-S-lambda-u}
W\!H_{S, \lambda}(\Pi_n)=\bigoplus_{\substack{u \text{ of type } \lambda\\ \rank(u)=\max S}}
\tilde{H}_{|S|-2}({[\hat 0, u]}^S). 
\end{equation}
\end{defn}
\begin{defn}\label{def:essential-part}
Now let $k\le n$ and let  $\mu$ be an integer partition of $k$,  having no part of size 1.  Then $(\mu, 1^{n-k})$ is a partition of $n$ with $n-k$ parts equal to 1.  Define the \emph{essential part} of the $\fS_n$-module 
$W\!H_{S, \, (\mu, 1^{n-k} ) }(\Pi_n) $ to be the unique 
$\fS_k$-module $\widehat{W\!H}_{S, \, (\mu, 1^{n-k} ) }(\Pi_n)$  satisfying 
$$W\!H_{S,\, (\mu, 1^{n-k} ) }(\Pi_n) = \Ind_{\fS_{n-k}\times \fS_{k}}^{\fS_n}\left({\one}_{\fS_{n-k}} \otimes \widehat{W\!H}_{S, \, (\mu, 1^{n-k} ) }(\Pi_n)\right).$$ 
Thus one has an isomorphism of  $\fS_k$-modules $\widehat{W\!H}_{S, \, (\mu, 1^{n-k} ) }(\Pi_n)\simeq W\!H_{S,\,\mu}(\Pi_k)$.
\end{defn}
\begin{rk}\label{WH-is-quasi-free}
Note that by  definition, each component 
$\{ W\!H_{S, \lambda}(\Pi_n) \} $ of the rank-selected Whitney homology $\{ W\!H_S(\Pi_n)\} $ is a quasi-freely generated FI-module. 
\end{rk}
This definition is motivated by the following consequence, which we will use  frequently. 
\begin{lem}\label{lem:Stab-bd-from-ess-part}  Let  $\mu$ be a partition of size $|\mu|$, 
with no part of size 1.  Then 
$W\!H_{S, \, (\mu, 1^{n-|\mu|})}(\Pi_n) $ has sharp representation stability at  
\[   n =  \max \{|\lambda|+\lambda_1:\cS^\lambda \text{ appears in } \widehat{W\!H}_{S, \, (\mu, 1^{n-|\mu|})}(\Pi_n)\}.    \] 
\end{lem}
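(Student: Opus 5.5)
The plan is to recognize $\{W\!H_{S,(\mu,1^{n-|\mu|})}(\Pi_n)\}$ as an FI-module of the form $\{M_n(W)\}$ with $n_0=|\mu|$ and $W=\widehat{W\!H}_{S,(\mu,1^{n-|\mu|})}(\Pi_n)$. Once that is established, the statement is a direct application of Part (2) of Lemma~\ref{lem:quasi-free-implies-increasing-mult}.

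First I would confirm that the essential part is a well-defined true $\fS_{|\mu|}$-module that does not depend on $n$. Let $k=|\mu|$. Each set partition $u$ of type $(\mu,1^{n-k})$ is determined by its support: a $k$-subset $A\subseteq\{1,\dots,n\}$ on which $u$ restricts to a set partition of type $\mu$ with no singletons, all other letters being singletons. The interval $[\hat 0,u]$ is canonically isomorphic to the corresponding interval of $\Pi_A\cong\Pi_k$, and this isomorphism respects rank. Hence $\tilde H_{|S|-2}([\hat0,u]^{S\setminus\{\max S\}})$ depends only on the restriction of $u$ to $A$.

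Next I would group the summands in \eqref{eqn:WH-S-lambda-u} according to their support $A$. The $\fS_n$-orbit of the supports is the set of $k$-subsets, and the stabilizer of $A=\{n-k+1,\dots,n\}$ is $\fS_{n-k}\times\fS_k$. The factor $\fS_{n-k}$ acts trivially on each summand, because it fixes $u$ pointwise on the chains. Meanwhile $\fS_k$ acts on $\bigoplus_{u\text{ supported on }A}\tilde H(\cdots)\cong W\!H_{S,\mu}(\Pi_k)$. Frobenius reciprocity, or simply the standard description of a module permuting its summands, then gives the induced-module expression in Definition~\ref{def:essential-part}, with $W=W\!H_{S,\mu}(\Pi_k)$ independent of $n$. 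This is exactly $M_n(W)$ with $n_0=k$, for all $n\ge k$. This step is also the content of Remark~\ref{WH-is-quasi-free}.

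Finally I would decompose $W=\bigoplus_\lambda c_\lambda\cS^\lambda$ with $c_\lambda\ge0$, where every $\lambda\vdash k$, so that $|\lambda|=|\mu|$. Part (2) of Lemma~\ref{lem:quasi-free-implies-increasing-mult} then yields sharp stability at $\max\{|\lambda|+\lambda_1: c_\lambda\ne0\}$, which is the claimed bound. One degenerate case needs care: if $W=0$, the sequence is identically zero and the maximum over the empty set should be read by convention. I expect the main obstacle to be the equivariance bookkeeping in the second step, namely checking that $\fS_{n-k}$ acts trivially without any sign, since it fixes every element of every chain, and that the identification with $\Pi_k$ is $\fS_k$-equivariant. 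The rest is a citation.
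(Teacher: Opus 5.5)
Your proposal is correct and follows the same route as the paper. The paper writes $\ch W\!H_{S,(\mu,1^{n-|\mu|})}(\Pi_n)=h_{n-|\mu|}\cdot\ch\widehat{W\!H}_{S,(\mu,1^{n-|\mu|})}(\Pi_n)$, which amounts to recognizing the module as $M_n(W)$, and then applies Part (2) of Lemma~\ref{lem:quasi-free-implies-increasing-mult} using $|\lambda|=|\mu|$. Your additional check that the essential part is well defined and independent of $n$ (trivial $\fS_{n-k}$-action, identification with $W\!H_{S,\mu}(\Pi_k)$) justifies what the paper takes for granted in Definition~\ref{def:essential-part}.
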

\begin{proof} Passing to Frobenius characteristics, we have 
\[\ch W\!H_{S, (\mu, 1^{n-|\mu|})}(\Pi_n)=h_{n-|\mu|}\cdot \ch \widehat{W\!H}_{S,\, (\mu, 1^{n-|\mu|})}(\Pi_n).\]
The claim now follows from 
Lemma~\ref{lem:quasi-free-implies-increasing-mult},
noting that $|\mu|=|\lambda|$ since the essential part $\widehat{W\!H}_{S,\, (\mu, 1^{n-|\mu|})}(\Pi_n)$ is an $\fS_{|\mu|}$-module.  
\end{proof}

\begin{prop}\label{rank-selection-first-row-bound}
There exists an occurrence of the Specht module $\cS^{\lambda }$ appearing in $\widehat{W\!H_S}(\Pi_n)$ for a partition $\lambda$ having $\lambda_1 = 2\max S - |S| + 1$.
\end{prop}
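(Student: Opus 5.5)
The proposition concerns the essential part of the summand of maximal size. I will take $\mu=(2^{\max S})$, so $|\mu|=2\max S$, and show that $\widehat{W\!H}_{S,(\mu,1^{n-2\max S})}(\Pi_n)\simeq W\!H_{S,(2^{m})}(\Pi_{2m})$, with $m=\max S$, contains an irreducible $\cS^\lambda$ with $\lambda_1=2m-|S|+1$. Combined with Lemma~\ref{lem:Stab-bd-from-ess-part}, this will later give sharpness at $2m+(2m-|S|+1)=4\max S-|S|+1$.

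\textbf{Step 1: identify the module.} Write $u=|12|34|\cdots|(2m-1)\,2m|$. The interval $[\hat 0,u]$ is a Boolean lattice $B_m$ whose atoms are the $m$ pairs. Its stabilizer in $\fS_{2m}$ is the wreath product $\fS_m[\fS_2]$, and the action on $[\hat 0,u]$ factors through $\fS_m$, which permutes the blocks. By Proposition~\ref{Boolean-rank-selection}, $\tilde H([\hat 0,u]^{S\setminus\{m\}})$ is the ribbon Specht module $\cS^{R}$ with $R=\Rib(s_1,s_2-s_1,\dots,m-s_{r-1})$, regarded as an $\fS_m[\fS_2]$-module inflated with trivial $\fS_2$ factors. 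Hence, via \eqref{eqn:plethysm-wreath-def},
\[\ch W\!H_{S,(2^m)}(\Pi_{2m})=s_R[h_2],\]
where $s_R$ is the ribbon Schur function.

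\textbf{Step 2: bound the first row.} Expand $s_R=\sum_\nu c_\nu s_\nu$ with all $c_\nu\ge 0$. As in the proof of Theorem~\ref{thm:Bool-stability}, the largest $\nu_1$ occurring is $m-|S|+1$, which is the number of columns of $R$, and it is attained; for instance, $\nu=(m-|S|+1,1^{|S|-1})$ should occur. Plethysm is additive in the outer function and preserves Schur positivity, so $s_R[h_2]=\sum c_\nu s_\nu[h_2]$ is a nonnegative combination and no cancellation can occur. By Lemma~\ref{plethysm-first-row-bound}, each $s_\nu[h_2]$ has sharp first row bound $|\nu|+\nu_1=m+\nu_1$. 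Taking $\nu$ with $\nu_1=m-|S|+1$ gives a constituent $\cS^\lambda$ with
\[\lambda_1=m+(m-|S|+1)=2\max S-|S|+1.\]

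\textbf{Main obstacle.} The delicate point is stating the wreath-product identification correctly, namely that the homology of the Boolean interval carries trivial action of each $\fS_2$ factor, with no sign twist. This holds because the atoms are the pairs $\{2i-1,2i\}$ themselves, and swapping the two letters within a block fixes every element of $[\hat 0,u]$. I would also double-check the Littlewood--Richardson claim that the maximal first row $m-|S|+1$ of $s_R$ is actually attained, since that is what makes the bound sharp.
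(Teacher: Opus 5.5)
Your proposal is correct and follows essentially the same route as the paper: restrict to set partitions of type $(2^{\max S},1^{n-2\max S})$, identify the essential part with the module induced from $\fS_{\max S}[\fS_2]$ of the ribbon Specht module (with trivial inner $\fS_2$ factor), so that its Frobenius characteristic is $s_R[h_2]$, and then combine the attained first-row bound $\max S-|S|+1$ for $s_R$ with Lemma~\ref{plethysm-first-row-bound}. Your explicit Schur-positivity (no-cancellation) step only spells out what the paper leaves implicit. Your hedged claim about the hook is also true: $(\max S-|S|+1,1^{|S|-1})$ occurs with multiplicity exactly one, since exactly one standard tableau of that hook shape has descent set $\{s_1,\dots,s_{r-1}\}$.
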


\begin{proof}  Let $S=\{s_1<s_2<\cdots<s_r=i\}$. 
We consider those elements $u$ of rank $i= \max S$ which  consist of $i$ blocks of size $2$ and $n-2i$ blocks of size 1.  Restricting to the sum $\bigoplus_{\mathrm{type}(u) = (2^i,1^{n-2i})} \tilde{H}_{r-2}([\hat{0},u]^{S\setminus\{i\}})$ over the  intervals with such elements $u$ of rank $i$ gives an $\fS_n$-submodule 
$W\!H_{S,\, (2^i,1^{n-2i})}(\Pi_n) $   of 
$W\!H_S(\Pi_n)$.  From Definition~\ref{def:essential-part},  the essential part  $\widehat{W\!H}_{S, \, (2^i, 1^{n-2i})}(\Pi_n)$ satisfies $$W\!H_{S, \, (2^i, 1^{n-2i})}(\Pi_n) = \Ind_{\fS_{n-2i}\times \fS_{2i}}^{\fS_n}\left({\one}_{\fS_{n-2i}} \otimes \widehat{W\!H}_{S,\, (2^i,  1^{n-2i})}(\Pi_n)\right).$$  
First observe that the stabilizer of a partition with $i$ parts each of size 2 is the wreath product  of symmetric groups 
$\fS_i[\fS_2]$.
From Definition~\ref{def:essential-part}, $\widehat{W\!H}_{S, \, (2^i,1^{n-2i})}(\Pi_n)$ is isomorphic to the $\fS_{2i}$-module ${W\!H}_{S,\, (2^i)}(\Pi_{2i})$.
We claim  that this in turn is 
the $\fS_{2i}$-module induced from the $\fS_i[\fS_2]$-module   given by the wreath product representation   
\begin{equation}\label{eqn:ess-part-char-all2s}
\chi (s_1, s_2-s_1,s_3-s_2,\dots, i-s_{r-1} )[{\one }_{\fS_2}]\end{equation}
where $\chi (a_1,a_2,\dots )$ denotes the representation given by the Specht module of ribbon shape $\Rib(a_1, a_2,\ldots)$.

The interval $[\hat 0, u]$ in $\Pi_n$ is isomorphic to the Boolean lattice 
$B_i$, where $i=\max S$. Hence
the outer expression $\chi (s_1,s_2-s_1,\dots, i-s_{r-1} )$ in~\eqref{eqn:ess-part-char-all2s} arises from  the $\fS_i$-module structure for the rank-selected homology $\beta_S(B_i)$, as explained in Lemma 
~\ref{Boolean-rank-selection}.    
Also $\fS_2$ acts trivially on $\Pi_2$, giving the inner term of ${\one}_{\fS_2}$ in~\eqref{eqn:ess-part-char-all2s}.
 Exactly as in the proof of Theorem~\ref{thm:Bool-stability}, 
the Frobenius characteristic of the $\fS_{2i}$-module induced from the $\fS_i[\fS_2]$-module~\eqref{eqn:ess-part-char-all2s} is  $f[h_2]$, where $f$ is the Frobenius characteristic of 
the rank-selected Boolean homology $\beta_S(B_i)$. Since $f$ has degree  $i=\max S$,  Lemma ~\ref{plethysm-first-row-bound} now gives the sharp first row length bound of $(\max S - |S| + 1) + \max S$, as desired.
\end{proof}

\begin{prop}\label{all-twos}
For each $S$, $W\!H_{S,\, (2^i, 1^{n-2i})}(\Pi_n)$ stabilizes sharply at $n=4\max S - |S| + 1$.
\end{prop}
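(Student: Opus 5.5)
The plan is to combine Lemma~\ref{lem:Stab-bd-from-ess-part} with the computation already carried out in the proof of Proposition~\ref{rank-selection-first-row-bound}. Write $i=\max S$ and $\mu=(2^i)$, so $|\mu|=2i$. The first step is to record that $W\!H_{S,(2^i,1^{n-2i})}(\Pi_n)$ is the quasi-freely generated FI-module (Remark~\ref{WH-is-quasi-free}) obtained by inducing $\one_{\fS_{n-2i}}\otimes \widehat{W\!H}_{S,(2^i,1^{n-2i})}(\Pi_n)$. By Lemma~\ref{lem:Stab-bd-from-ess-part}, it stabilizes sharply at
\[
n=\max\{\,2i+\lambda_1 : \cS^\lambda \text{ appears in } \widehat{W\!H}_{S,(2^i,1^{n-2i})}(\Pi_n)\,\}.
\]
This holds because every $\lambda$ occurring has $|\lambda|=2i$. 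The problem therefore reduces to showing that the sharp first row length upper bound of the essential part is exactly $2i-|S|+1$.

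For this I will reuse the identification from the proof of Proposition~\ref{rank-selection-first-row-bound}. Since $[\hat 0,u]\cong B_i$ for $u$ of type $(2^i)$, and the stabilizer of $u$ in $\fS_{2i}$ is $\fS_i[\fS_2]$ with $\fS_2$ acting trivially within each block, the essential part has Frobenius characteristic $f[h_2]$. Here $f=\ch\beta_{S\setminus\{i\}}(B_i)$ is the ribbon Schur function for $\Rib(s_1,s_2-s_1,\dots,i-s_{r-1})$, by Proposition~\ref{Boolean-rank-selection} and \eqref{eqn:plethysm-wreath-def}.

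Two facts about $f$ are needed from the proof of Theorem~\ref{thm:Bool-stability}: $f$ is Schur positive of degree $i$, and its sharp first row length upper bound is $i-|S|+1$. The latter holds because the ribbon has $i-(|S|-1)$ columns, and the Littlewood--Richardson filling achieving this bound exists. Writing $f=\sum_\nu c_\nu s_\nu$ with $c_\nu\ge 0$, we get $f[h_2]=\sum_\nu c_\nu s_\nu[h_2]$. Each $s_\nu[h_2]$ is Schur positive and, by Lemma~\ref{plethysm-first-row-bound}, has sharp first row bound $|\nu|+\nu_1=i+\nu_1$. Because all coefficients are nonnegative there is no cancellation, so the sharp first row bound of $f[h_2]$ is $i+\max\{\nu_1: c_\nu\neq 0\}=i+(i-|S|+1)=2i-|S|+1$.

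Substituting into the displayed maximum gives sharp stability at $2i+(2i-|S|+1)=4\max S-|S|+1$, as claimed. The one point needing care is this no-cancellation step. It requires Schur positivity of each plethysm $s_\nu[h_2]$, which holds since it is the characteristic of an induced genuine module, together with the fact that the top first-row term from Lemma~\ref{plethysm-first-row-bound} genuinely occurs with positive coefficient. I expect that to be the main, though mild, obstacle; everything else is bookkeeping already done in earlier results.
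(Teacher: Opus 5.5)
Your proposal is correct and follows the paper's route. The paper derives the statement from Lemma~\ref{lem:Stab-bd-from-ess-part} together with Proposition~\ref{rank-selection-first-row-bound}, whose proof identifies the essential part's characteristic as $f[h_2]$ (with $f$ the ribbon Schur function of $\beta_{S\setminus\{\max S\}}(B_{\max S})$) and then applies Lemma~\ref{plethysm-first-row-bound}. Your no-cancellation step, expanding $f=\sum_\nu c_\nu s_\nu$ with $c_\nu\ge 0$ and using Schur positivity of each $s_\nu[h_2]$, supplies a detail the paper leaves implicit, since that lemma is stated only for a single $s_\lambda[h_n]$ and not for $f[h_2]$.
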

\begin{proof}
This follows from the preceding Proposition~\ref{rank-selection-first-row-bound} and Lemma~\ref{lem:Stab-bd-from-ess-part}.
\end{proof}

\begin{cor}\label{rank-selection-sharpened-stability}
For each $S$, $W\!H_S(\Pi_n)$ cannot stabilize earlier than $n = 4\max S - |S| + 1$.  
\end{cor}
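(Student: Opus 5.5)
The plan is to combine the decomposition in Definition~\ref{def:WHS-submodules} with the monotonicity properties of quasi-freely generated FI-modules. By \eqref{def:WH-S-lambda}, write
\[W\!H_S(\Pi_n)=W\!H_{S,\,(2^i,1^{n-2i})}(\Pi_n)\oplus \bigoplus_{\lambda\ne (2^i,1^{n-2i})} W\!H_{S,\lambda}(\Pi_n),\]
where $i=\max S$. Both summands are quasi-freely generated FI-modules. For each component $W\!H_{S,\lambda}(\Pi_n)$ this is Remark~\ref{WH-is-quasi-free}. The second summand is a finite direct sum of such components, since a set partition of rank $i$ has at most $2i$ letters in nontrivial blocks, so only finitely many types $(\mu,1^{n-|\mu|})$ occur. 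A finite direct sum of quasi-freely generated FI-modules is again quasi-freely generated, directly from Definition~\ref{quasi-free}.

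Next, invoke Corollary~\ref{cor:FI-submodules} with $V_n=W\!H_{S,\,(2^i,1^{n-2i})}(\Pi_n)$ and $W_n$ the complementary sum. It gives that $\{W\!H_S(\Pi_n)\}$ cannot stabilize earlier than $\{V_n\}$. Proposition~\ref{all-twos} says $\{V_n\}$ stabilizes sharply at $4\max S-|S|+1$, which yields the claim.

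The main point needing care is the justification of Corollary~\ref{cor:FI-submodules} in this setting. I would argue it directly via Lemma~\ref{lem:quasi-free-implies-increasing-mult}. By Lemma~\ref{lem:Stab-bd-from-ess-part} and Proposition~\ref{rank-selection-first-row-bound}, $V_n$ has a generator $\cS^{\lambda}$ with $|\lambda|+\lambda_1=4\max S-|S|+1=:N$. Part (1) of Lemma~\ref{lem:quasi-free-implies-increasing-mult} shows that, within each summand $M_n(\cS^\mu)$, the multiplicity of each padded irreducible is weakly increasing in $n$. Part (3) shows that for $M_n(\cS^\lambda)$ some such multiplicity strictly increases when passing from $n=N-1$ to $n=N$; the irreducible $\cS^{(N-1-|\lambda|,\lambda)}$ is not yet legal at $n=N-1$ if $N-1-|\lambda|<\lambda_1$. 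Summing over all generators of $V_n\oplus W_n$, multiplicities in the sum are sums of weakly increasing sequences, at least one of which changes at $n=N$. Therefore no cancellation can occur as in Example~\ref{ex:even-odd-problem-stab}, and the total sequence of multiplicities is not constant from $N-1$ onward. Hence $W\!H_S(\Pi_n)$ has not stabilized at $N-1$, which is what Corollary~\ref{rank-selection-sharpened-stability} asserts.
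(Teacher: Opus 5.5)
Your proposal is correct and is essentially the paper's own argument. You isolate the all-twos component $W\!H_{S,(2^i,1^{n-2i})}(\Pi_n)$, which stabilizes sharply at $4\max S-|S|+1$ by Proposition~\ref{all-twos}, and use the quasi-free structure of the components with Lemma~\ref{lem:quasi-free-implies-increasing-mult} and Corollary~\ref{cor:FI-submodules} to rule out earlier stabilization of the full direct sum. Your extra paragraph just makes Corollary~\ref{cor:FI-submodules} explicit here via the irreducible $\cS^{(\lambda_1,\lambda)}$ with $|\lambda|+\lambda_1=4\max S-|S|+1$: it first becomes legal at that $n$ and cannot be cancelled inside a direct sum of genuine modules, so even the monotonicity is more than you need.
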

\begin{proof}The bound follows from Proposition~\ref{all-twos}, since $W\!H_{S, \, (2^i, 1^{n-2i})}(\Pi_n)$ is a submodule of $W\!H_S(\Pi_n)$, as we now explain.  
By definition (see Definition~\ref{def:essential-part}),  the rank-selected Whitney homology $W\!H_S $ of the partition lattice is a quasi-freely generated  
FI-module in which each component $W\!H_{S,\lambda }$ is itself a quasi-freely generated  FI-module.  This ensures, by  Lemma~\ref{lem:quasi-free-implies-increasing-mult} and 
Corollary~\ref{cor:FI-submodules},  that stability cannot occur earlier than where it occurs in any submodule $W\!H_{S,\lambda }$.
\end{proof}

\subsection{An inequality yielding the conjectured stability bound for small $|S|$}\label{small-S-section}

Some inequalities used in proving  stability bounds for  the 
Whitney homology $W\!H_S(\Pi_n)$ are collected in this section. Lemma ~\ref{lem:Tricia-magic-inequality}  gives an especially useful bound for small $|S|$. 

\begin{lem}\label{lem:equiv-Statement} 
Let $\mu$ be a partition of size at most $n$, with no part of size 1. Let $\ell(\mu)$ be the number of parts of $\mu$, 
and let $m_i(\mu)$ be the number of parts of $\mu$ that are equal to $i$.   Consider $W\!H_{S, \,( \mu, 1^{n-|\mu|})}(\Pi_n)$.
Then 
\begin{enumerate}
    \item $\max S=|\mu|-\ell(\mu)=\sum_{i\ge 2} (i-1) m_i(\mu)$.
    \item For each Specht module $\cS^{\lambda }$ appearing in the essential part  of $W\!H_{S,\,( \mu, 1^{n-|\mu|})}(\Pi_n)$,  we have 
$|\lambda | = 2\max S - \sum_{i\ge 3}  (i-2) m_i(\mu).$
\end{enumerate}
\end{lem}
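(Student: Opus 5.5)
Both items are counting statements about set partitions, so the plan is to read them off from the rank function of $\Pi_n$ and the definition of the essential part in Definition~\ref{def:essential-part}.

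For item (1), recall that $\Pi_n$ is graded with $\rank(u)=n-(\text{number of blocks of }u)$. A set partition $u$ of type $(\mu,1^{n-|\mu|})$ has $\ell(\mu)+(n-|\mu|)$ blocks. Hence
\[
\rank(u)=n-\ell(\mu)-n+|\mu|=|\mu|-\ell(\mu).
\]
The summands of $W\!H_{S,(\mu,1^{n-|\mu|})}(\Pi_n)$ are indexed by elements $u$ of rank $\max S$, by Definition~\ref{def:WHS-submodules}. If this module is nonzero, this forces $\max S=|\mu|-\ell(\mu)$. Writing $|\mu|=\sum_{i\ge 2} i\,m_i(\mu)$ and $\ell(\mu)=\sum_{i\ge2}m_i(\mu)$, which is valid because $\mu$ has no parts of size $1$, gives
\[
|\mu|-\ell(\mu)=\sum_{i\ge2}(i-1)m_i(\mu).
\]
If the module is zero, then (2) is vacuous, and I would read (1) as the defining constraint on the $\mu$ under consideration.

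For item (2), Definition~\ref{def:essential-part} says the essential part is an $\fS_{|\mu|}$-module, isomorphic to $W\!H_{S,\mu}(\Pi_{|\mu|})$. Therefore every Specht module $\cS^\lambda$ appearing in it has $|\lambda|=|\mu|$. It remains to rewrite $|\mu|$ using (1):
\[
2\max S-\sum_{i\ge3}(i-2)m_i(\mu)=\sum_{i\ge2}\bigl(2(i-1)-(i-2)\bigr)m_i(\mu)=\sum_{i\ge 2} i\,m_i(\mu)=|\mu|.
\]
Here the $i=2$ term contributes $2m_2(\mu)$, consistent with the sum $\sum_{i\ge3}$ starting at $3$.

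The only point needing care is checking that the essential part really is indexed by $\fS_{|\mu|}$, that is, that the $n-|\mu|$ singleton blocks are exactly what the induction from $\fS_{n-|\mu|}\times\fS_{|\mu|}$ accounts for. That is built into Definition~\ref{def:essential-part}, since the stabilizer of the union of the nontrivial blocks is $\fS_{|\mu|}\times\fS_{n-|\mu|}$ and $\fS_{n-|\mu|}$ acts trivially on $[\hat0,u]$. So there is no real obstacle.
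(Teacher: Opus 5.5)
Your proposal is correct and follows essentially the same route as the paper: you compute $\rank(u)=|\mu|-\ell(\mu)$ from the block count for item (1), then use $|\lambda|=|\mu|$ (the essential part is an $\fS_{|\mu|}$-module) and rewrite $|\mu|$ via item (1) for item (2). Your explicit remark that item (1) should be read as the constraint making the module nonzero is a small clarification the paper leaves implicit.
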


\begin{proof}  Let $u$ be a set partition of type 
$(\mu, 1^{n-|\mu|})$, so that $u$ has $\ell(\mu) + n-|\mu|$ blocks.

\noindent 
Item (1) follows from the fact that 
$\max S=\rank\, u= n-(\ell(\mu)+n-|\mu|)=|\mu|-\ell(\mu)$, and thus 
$\max S=|\mu|-\sum_{i\ge 2} m_i(\mu)$.  
Now use  $|\mu|=\sum_{i\ge 2} i m_i(\mu)$.

\noindent
For item (2), since the essential part is an $\fS_{|\mu|}$-module, we have $|\lambda|=|\mu|$. from above. But also $|\mu|=\sum_{i \ge 2} i m_i(\mu)=\max S+\sum_{i \ge 2}  m_i(\mu) $, the last inequality following from Item (1).  Now rewrite  $\sum_{i \ge 2}  m_i(\mu)=\sum_{i \ge 2}  (i-1)m_i(\mu)-\sum_{i \ge 2} (i-2) m_i(\mu)$, and use Item (1) again to obtain  the result.
\end{proof}

\begin{lem}\label{lem:Stab-bound} 
To prove a stability upper bound of $4\max S - |S|+1$ for $W\!H_S (\Pi_n)$, it suffices to prove that $\lambda_1 \le 2\max S - |S| + 1 + \sum_{i\ge 3}(i-2)m_i(\mu)$ for each  $\cS^{\lambda }$ appearing in the essential part of  $W\!H_{S, \, (\mu, 1^{n-|\mu|})}(\Pi_n)$ for each set partition type 
$(\mu ,1^{n-|\mu |})$ at rank $\max S$.
\end{lem}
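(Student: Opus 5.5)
We reduce the statement to the type-by-type decomposition of $W\!H_S(\Pi_n)$ together with the sharp-stability criterion for quasi-freely generated FI-modules. By \eqref{def:WH-S-lambda}, $W\!H_S(\Pi_n)=\bigoplus_{\lambda} W\!H_{S,\lambda}(\Pi_n)$, where $\lambda$ ranges over types of set partitions of rank $\max S$. Each such type can be written uniquely as $(\mu,1^{n-|\mu|})$, with $\mu$ having no part of size $1$.

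By Lemma~\ref{lem:equiv-Statement}(1), $|\mu|-\ell(\mu)=\max S$. Since every part of $\mu$ is at least $2$, we have $\ell(\mu)\le \max S$ and hence $|\mu|\le 2\max S$. So only finitely many $\mu$ occur, and the set of relevant $\mu$ is independent of $n$ once $n\ge 2\max S$. The direct sum is therefore a finite sum of quasi-freely generated FI-modules (Remark~\ref{WH-is-quasi-free}). By Lemma~\ref{lem:weak-strict-stab-for-sums}(a), it suffices to show that each summand $W\!H_{S,(\mu,1^{n-|\mu|})}(\Pi_n)$ stabilizes for $n\ge 4\max S-|S|+1$.

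Fix such a $\mu$. By Lemma~\ref{lem:Stab-bd-from-ess-part}, this summand stabilizes sharply at
\[
\max\{|\lambda|+\lambda_1\},
\]
where the maximum runs over the $\cS^\lambda$ appearing in the essential part. Hence it is enough to show $|\lambda|+\lambda_1\le 4\max S-|S|+1$ for every such $\lambda$.

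Lemma~\ref{lem:equiv-Statement}(2) gives $|\lambda|=2\max S-\sum_{i\ge 3}(i-2)m_i(\mu)$. Adding the hypothesized bound $\lambda_1\le 2\max S-|S|+1+\sum_{i\ge3}(i-2)m_i(\mu)$, the correction terms cancel exactly:
\[
|\lambda|+\lambda_1\le 4\max S-|S|+1.
\]
Thus each summand stabilizes by $4\max S-|S|+1$, and so does the finite direct sum. This gives the claimed upper bound.

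The only point needing care is the range of small $n$. For $n<2\max S$, some types $\mu$ cannot yet occur, since they need $|\mu|\le n$. This is harmless, because $4\max S-|S|+1\ge 2\max S$ always holds: $|S|\le\max S$ gives $4\max S-|S|+1\ge 3\max S+1$. So the stable range only involves values of $n$ at which every relevant $\mu$ already contributes, and each contributes through its fixed essential part.

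Apart from this bookkeeping, the reduction is formal. The real work is the first-row bound that the lemma assumes as its hypothesis, not the reduction itself.
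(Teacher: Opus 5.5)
Your proof is correct and follows the paper's argument: you use Lemma~\ref{lem:equiv-Statement}(2) to turn the first-row bound into $|\lambda|+\lambda_1\le 4\max S-|S|+1$, and then apply Lemma~\ref{lem:Stab-bd-from-ess-part} to each type component. Your extra bookkeeping spells out what the paper leaves implicit: only finitely many $\mu$ occur since $|\mu|\le 2\max S$, and Lemma~\ref{lem:weak-strict-stab-for-sums}(a) handles the finite direct sum.
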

\begin{proof}
Using Item (2) of Lemma~\ref{lem:equiv-Statement}, we see that 
the stated first row bound is equivalent to the inequality
\[|\lambda | +\lambda_1\le  4\max S - |S| +1.   \]
By Lemma~\ref{lem:Stab-bd-from-ess-part}, this implies 
$W\!H_S (\Pi_n)$ is representation stable for $n\ge  4\max S - |S| +1$.
\end{proof}

The next lemma  allows our desired stability bound  to be deduced readily for small values of $|S|$.  Indeed we deduce the desired bound for $|S|\le 4 $ quite easily via this approach.  However, it seems increasingly complicated to work with this bound as $|S|$ grows, so instead we use a different approach in the next section to deduce our desired stability bound.  

\begin{lem}\label{lem:Tricia-magic-inequality}
Let $\mu$ be an integer partition with $m_i$ parts of size $i$. 
If  $2\sum_{i\ge 3}(i-2)m_i \ge |S|-1$, then the 
component ${W\!H}_{S,\, (\mu, 1^{n-|\mu|})}$ of $W\!H_{S}$  
satisfies the desired stability upper bound of 
$4\max S - |S| + 1$.
\end{lem}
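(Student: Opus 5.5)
Write $i=\max S$ and $m_j=m_j(\mu)$. By Lemma~\ref{lem:Stab-bound}, it suffices to show that every $\cS^\lambda$ in the essential part of $W\!H_{S,(\mu,1^{n-|\mu|})}(\Pi_n)$ satisfies
\[
\lambda_1\le 2i-|S|+1+\textstyle\sum_{j\ge 3}(j-2)m_j .
\]
Equivalently, we need $|\lambda|+\lambda_1\le 4i-|S|+1$. The plan is to prove a crude bound that ignores $S$ entirely, namely $\lambda_1\le |\lambda|-\ell(\mu)$ or something of comparable strength. The hypothesis $2\sum_{j\ge3}(j-2)m_j\ge |S|-1$ is then exactly what converts this crude bound into the required one.

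The crude bound comes from the trivial estimate $\lambda_1\le|\lambda|$. By Lemma~\ref{lem:equiv-Statement}(2),
\[
|\lambda|=2i-\sum_{j\ge3}(j-2)m_j .
\]
Hence
\[
|\lambda|+\lambda_1\le 2|\lambda| = 4i-2\sum_{j\ge3}(j-2)m_j \le 4i-(|S|-1)=4i-|S|+1,
\]
where the last inequality is the hypothesis. This already gives the desired bound. By Lemma~\ref{lem:Stab-bd-from-ess-part}, applied together with Lemma~\ref{lem:Stab-bound}, the component $W\!H_{S,(\mu,1^{n-|\mu|})}(\Pi_n)$ therefore stabilizes by $n=4\max S-|S|+1$.

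So the proof consists of three steps. First, identify the essential part as an $\fS_{|\mu|}$-module, so that $|\lambda|=|\mu|$. Second, express $|\mu|$ through Lemma~\ref{lem:equiv-Statement}. Third, bound $\lambda_1$ by $|\lambda|$ and apply the hypothesis. The only point needing care is that the stability criterion in Lemma~\ref{lem:Stab-bd-from-ess-part} is exactly $\max(|\lambda|+\lambda_1)$ over constituents, so the inequality above for every constituent suffices. No real obstacle is expected: the lemma is essentially the observation that when many blocks of size $\ge3$ are present, the essential part lives on few enough letters that even the trivial first-row bound is enough.
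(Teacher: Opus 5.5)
Your argument is correct and is essentially the paper's proof: you write $|\lambda|=|\mu|=2\max S-\sum_{i\ge 3}(i-2)m_i$ via Lemma~\ref{lem:equiv-Statement}(2), use the trivial bound $\lambda_1\le|\lambda|$ together with the hypothesis to get $|\lambda|+\lambda_1\le 4\max S-|S|+1$, and conclude with Lemma~\ref{lem:Stab-bd-from-ess-part}. Your opening aside about a bound like $\lambda_1\le|\lambda|-\ell(\mu)$ is neither proved nor needed, so drop it.
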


\begin{proof}
Consider any $\cS^{\lambda }$ in the essential part  $\widehat{W\!H}_{S,\, ( \mu, 1^{n-|\mu|})}$ 
of $W\!H_S$.  
Then we 
have
$|\lambda | = |\mu|=2\max S - \sum_{i\ge 3} (i-2)m_i$ by Item (2) of Lemma ~\ref{lem:equiv-Statement}.  Hence the inequality in the statement is equivalent to 
\[4\max S -2|\lambda|\ge |S|-1. \]
Since $\lambda_1\le |\lambda|$, the lemma follows from Lemma~\ref{lem:Stab-bd-from-ess-part}.  \end{proof}

Below we handle cases with small $S$ to give some amount of  intuition for our approach. 

\begin{cor}\label{cor:sizeSis3}
If $|S| \le 3$, then the conjectured stability upper bound of $4\max S - |S| +1$ is always satisfied.
\end{cor}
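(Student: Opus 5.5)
The plan is to reduce the statement, via Lemma~\ref{lem:Stab-bd-from-ess-part}, Lemma~\ref{lem:Stab-bound} and Corollary~\ref{cor:FI-submodules}, to a type-by-type check: for each type $(\mu,1^{n-|\mu|})$ of an element $u$ at rank $\max S$, every $\cS^\lambda$ in the essential part must satisfy $|\lambda|+\lambda_1\le 4\max S-|S|+1$. The decomposition \eqref{def:WH-S-lambda} writes $W\!H_S(\Pi_n)$ as a finite direct sum of the quasi-freely generated pieces $W\!H_{S,\lambda}(\Pi_n)$. By Proposition~\ref{prop:quasi-freely-gen-immplies-sharp-max}, the whole module therefore stabilizes at the maximum of the bounds for the pieces. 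So it suffices to bound each type separately.

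For each type I would split into cases according to the quantity $E:=\sum_{i\ge 3}(i-2)m_i(\mu)$.

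\textbf{Case $2E\ge |S|-1$.} Lemma~\ref{lem:Tricia-magic-inequality} applies directly and gives the bound.

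\textbf{Case $2E<|S|-1$.} Since $|S|\le 3$, this forces $2E\le 1$, that is, $E=0$. So $\mu=(2^i)$ consists only of parts of size $2$, with $i=\max S$. For this type, the proof of Proposition~\ref{rank-selection-first-row-bound} identifies the essential part as having Frobenius characteristic $f[h_2]$, where $f=\ch \beta_{S\setminus\{\max S\}}(B_i)$ is a ribbon Schur function. Two facts then finish this case:
\begin{itemize}
\item The Littlewood–Richardson argument in the proof of Theorem~\ref{thm:Bool-stability} shows every constituent $s_\nu$ of $f$ has $\nu_1\le \max S-|S|+1$.
\item Lemma~\ref{plethysm-first-row-bound}, applied to each $s_\nu[h_2]$, gives $\lambda_1\le |\nu|+\nu_1\le 2\max S-|S|+1$.
\end{itemize}
Since $|\lambda|=2\max S$, this yields $|\lambda|+\lambda_1\le 4\max S-|S|+1$. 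This is exactly Proposition~\ref{all-twos}, so I would simply cite it.

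The only step needing care is confirming that $|S|\le 3$ really leaves no intermediate case. I expect this to be the main point to check, but it is short. For $|S|\le 1$ we need $2E\ge 0$, which always holds. For $|S|=2$ we need $2E\ge 1$, which fails only when $E=0$. For $|S|=3$ we need $2E\ge 2$, which again fails only when $E=0$. So the two cases above are exhaustive.

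Combining the two cases over all types gives the upper bound. Together with Corollary~\ref{rank-selection-sharpened-stability}, which gives the lower bound, and Corollary~\ref{cor:WHPiS-bound-equals-betaS-bound}, this also transfers the sharp bound to $\beta_S(\Pi_n)$.
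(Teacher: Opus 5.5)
Your proposal is correct and follows essentially the same route as the paper. The paper also splits on whether $2\sum_{i\ge 3}(i-2)m_i\ge |S|-1$ and invokes Lemma~\ref{lem:Tricia-magic-inequality} when it holds; when it fails with $|S|\le 3$, it notes that $m_i=0$ for all $i\ge 3$ and cites Proposition~\ref{all-twos}. Your additional type-by-type reduction via quasi-free generation and your remarks on sharpness and the transfer to $\beta_S$ are consistent with the surrounding results.
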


\begin{proof}
For $|S|\le 3$, the previous lemma says that the desired stability upper bound is satisfied unless  $2\sum_{i\ge 3}(i-2)m_i  < |S| - 1\le 2$.  Thus, to complete the result we only need to handle the case with $2\sum_{i\ge 3}(i-2)m_i < 2$.  In this case we have $m_i=0 $ for all $i\ge 3$.  But we already handled the case when all parts have size 1 or 2 in Proposition~\ref{all-twos}.
\end{proof}

\begin{lem}\label{lem:sizeSis4}
If $|S| = 4$, then the conjectured stability upper bound of 
$4\max S - |S| + 1$ holds. 
\end{lem}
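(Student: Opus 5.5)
The plan is to reduce, via Lemma~\ref{lem:Tricia-magic-inequality}, to a single exceptional set-partition type, and then to rule out the trivial representation there using Lemma~\ref{lem:YoungSym-annihilate-ribbon}. By Lemma~\ref{lem:Tricia-magic-inequality}, the component $W\!H_{S,(\mu,1^{n-|\mu|})}$ already satisfies the bound $4\max S-|S|+1$ whenever $2\sum_{i\ge 3}(i-2)m_i\ge |S|-1=3$. So only types with $\sum_{i\ge3}(i-2)m_i\le 1$ remain. If this sum is $0$, then $\mu=(2^i)$, which is handled by Proposition~\ref{all-twos}. The single remaining case is $m_3=1$, $m_i=0$ for $i\ge 4$, that is, $\mu=(3,2^m)$. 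By Lemma~\ref{lem:equiv-Statement}(1) we get $\max S=2+m$, and by Lemma~\ref{lem:equiv-Statement}(2) we get $|\lambda|=2\max S-1$ for every $\cS^\lambda$ in the essential part. Once all components satisfy the bound, Corollary~\ref{cor:FI-submodules} and the direct sum decomposition \eqref{def:WH-S-lambda} finish the proof.

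For $\mu=(3,2^m)$, Lemma~\ref{lem:Stab-bound} asks for $\lambda_1\le 2\max S-4+1+1=2\max S-2=|\lambda|-1$. So it suffices to show that the trivial representation does not occur in the essential part $\widehat{W\!H}_{S,(3,2^m,1^{n-|\mu|})}(\Pi_n)\cong W\!H_{S,(3,2^m)}(\Pi_k)$, where $k=|\mu|$. I will use Theorem~\ref{thm:irrep-detection} with the single-row tableau $T$ of shape $(k)$. Since the elements of $\mathcal{W}^S_{rib}$ form a basis (Theorem~\ref{thm:min-label-gives-NBC-homology-basis}), it is enough to show that $b_Ta_T$ kills every ribbon basis element $\bar f_{chain}(v_F)$ coming from intervals $[\hat0,u]$ with $u$ of type $(3,2^m)$. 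For this I will apply Lemma~\ref{lem:YoungSym-annihilate-ribbon}; condition (3) is automatic because every letter lies in the one row of $T$.

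The key combinatorial step is to find, in each such filling $F$, the column required by conditions (1) and (2). The filling $F$ is a standard $\minNBC$ filling of the ribbon for the rank set $S$ inside $[\hat0,u]$. That ribbon has $|S|=4$ rows, so it has exactly three vertical adjacencies, giving three pairwise disjoint columns of height $2$. The entries of $F$ form a basis of the matroid of $[\hat0,u]\cong\Pi_3\times B_m$. Such a basis consists of two atoms supported on the $3$-element block, together with the $m$ atoms $\{b,c\}$, one for each $2$-element block.

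The two atoms from the $3$-block occupy at most two of the three height-$2$ columns. Hence some column holds two atoms $\{b,c\}$ and $\{d,e\}$, each of which is a whole $2$-block of $u$. These four letters are distinct, and they appear in no other box of $F$, since every other atom is supported within a different block of $u$. Lemma~\ref{lem:YoungSym-annihilate-ribbon} therefore gives $b_Ta_T\,\bar f_{chain}(v_F)=0$, which excludes $\cS^{(k)}$.

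The step I expect to need the most care is the ribbon-shape count. I must check that with four rows there really are three disjoint height-$2$ columns for every choice of $S$, including rows of length $1$, and that Lemma~\ref{lem:YoungSym-annihilate-ribbon} applies verbatim to the Whitney basis elements for the intervals $[\hat0,u]$.
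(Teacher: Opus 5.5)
Your route is the paper's. You reduce via Lemma~\ref{lem:Tricia-magic-inequality} and Proposition~\ref{all-twos} to $\mu=(3,2^m)$, note that only $\lambda=(|\mu|)$ must be excluded, and kill each ribbon basis element with the one-row symmetrizer via Lemma~\ref{lem:YoungSym-annihilate-ribbon}. To do that you need a column containing two atoms that are whole $2$-blocks of $u$.

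However, the step you flagged is false as stated. The three vertical adjacencies of a four-row ribbon need not give three pairwise disjoint height-$2$ columns. The adjacency between rows $i$ and $i+1$ uses the last box of row $i$ and the first box of row $i+1$. So whenever the second or third row has length $1$ (i.e.\ $s_2-s_1=1$ or $s_3-s_2=1$), two adjacencies share a box. For example, $S=\{1,2,3,4\}$ gives $\Rib(1,1,1,1)$, a single column of height $4$. Likewise $S=\{2,3,5,6\}$ gives $\Rib(2,1,2,1)$, with one column of height $3$ and one of height $2$. In these cases your sentence ``the two atoms from the $3$-block occupy at most two of the three height-$2$ columns'' does not apply.

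The conclusion still holds after an easy repair, and the paper makes exactly this repair. It enumerates the three possible configurations (three columns of height $2$; one of height $3$ plus one of height $2$; one of height $4$) and checks each. The cleanest version is a count. A four-row ribbon with $\max S$ boxes has exactly $\max S-3$ columns, while exactly $\max S-2$ of its boxes carry whole-$2$-block atoms. By pigeonhole, some column contains two of them; this is Lemma~\ref{pigeonhole} with $j=3$. Such atoms come from distinct $2$-blocks, so conditions (1)--(2) of Lemma~\ref{lem:YoungSym-annihilate-ribbon} hold (the two boxes need not be adjacent), and the rest of your argument goes through.

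One minor point: Corollary~\ref{cor:FI-submodules} is a lower-bound statement (a sum cannot stabilize earlier than its summands). To assemble the upper bound over the finitely many components, you want Lemma~\ref{lem:weak-strict-stab-for-sums}(a), or simply Lemma~\ref{lem:Stab-bound}.
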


\begin{proof}  
We need only  consider the case when the inequality of  Lemma~\ref{lem:Tricia-magic-inequality} does not hold. That is, we  focus on the case when $2\sum_{i\ge 3} (i-2)m_i<3$.  This forces $m_3\le 1$ and $m_j=0$ for $j\ge 4$. By Proposition~\ref{all-twos}, we can assume  $\mu=(2^{m_2},3)$ for some $m_2\ge 1$ (since $|S|=4$).
Then our desired first row bound of $2\max S - |S| + 1 + \sum_{i\ge 3}(i-2)m_i$ (from  Lemma~\ref{lem:Stab-bound}) simplifies to 
$2\max S - |S| + 1 + 1 = 2(m_2 + 2) - 2  = 2m_2 + 2 $, 
using Item (1) of Lemma~\ref{lem:equiv-Statement} for the first equality.

As before, let $\cS^\lambda$ be an irreducible appearing in the essential part $W\!H_{S,\, ( \mu, 1^{n-|\mu|})}$, where $\mu=(2^{m_2},3)$.  Then $|\lambda|=2m_2+3$, and our desired first row bound reduces to the inequality $\lambda_1\le |\lambda|-1$.
Equivalently, we wish to rule out the possibility that $\lambda$ consists of a single row.

 Consider a ribbon for elements of our homology basis for the rank-selected interval $(\hat 0, u)^S$, where $u$ has type $(2^{m_2},3)$. 
 Since $|S|=4$, our ribbon can have 3 columns each with 2 boxes, or   a column of size 3 and one of size 2, or   a single column of size 4.
 \begin{center}
 $\ribbon{0/0, 1/0, 2/0, 2/1, 3/1, 4/1, 4/2, 5/2, 5/3, 6/3, 7/3}$
 $\ribbon{0/0, 1/0, 2/0, 2/1, 2/2, 3/2, 4/2, 4/3, 5/3, 6/3, 7/3}$
 $\ribbon{0/0, 1/0, 2/0, 2/1, 2/2, 2/3, 3/3, 4/3, 5/3, 6/3, 7/3}$\end{center}
 
 Assume without loss of generality that the block of size 3 in $u$ is $\{1,2,3\}$.   Then in the labeled ribbon $T'$, at most two boxes in a column will have  labels coming from the part of size 3, the allowed pairs of EL labels  being 
 12, 13 or 12, 23.  This forces some column to have two boxes both with labels of the form $i j$ where $\{i,j\}$ is a block of size 2.  That is, some column has two adjacent boxes $B_1, B_2$, say, labeled $i_1, j_1$ and $i_2 j_2$, where the set $\{i_1, i_2, j_1, j_2\}$ has size 4, because these letters come from two distinct blocks each of size 2. Moreover, because all blocks of $u$ other than $\{1,2,3\}$ have size 2, these letters do not appear in any other box of the ribbon.

All four of these letters appear in the first row of the Young symmetrizer of shape $(\lambda_1)=\lambda$, so the row stabilizer contains $(1+\tau)$ as a factor, where $\tau=(i_1, i_2)(j_1, j_2)$ is the product of two transpositions whose only effect on $T'$ is to  swap the labels in the boxes $B_1, B_2$. Swapping labels in adjacent boxes of a column multiplies our homology basis element $v_{T'}$ by $(-1)$, and hence, by Lemma~\ref{lem:YoungSym-annihilate-ribbon}, this forces the Young symmetrizer corresponding to any tableau $T$ of shape $\lambda$ to give 0 when applied to our generator.
\end{proof}

\subsection{Proof of Hersh-Reiner sharp stability conjecture for $\Pi_n$}\label{H-R-Conj-proved}

Consider the rank set $S = \{ s_1,\dots ,s_r \} $ with $1\le s_1 < s_2 < \cdots < s_r  = \max S$. 
The main thing we will need to do in order to 
prove the conjecture is to  
show that no irreducible $S^{\lambda }$ appearing in $\widehat{W\!H}_S(\Pi_n)$ has first row length $\lambda_1$ strictly greater than $4\max S - |S| + 1 - |\lambda |$.  
The plan is to prove this fact  
for each possible type of set partition $u$ of rank 
$\max S $, in other words for each component $\widehat{W\!H}_{S,\lambda }(\Pi_n)$ in the Whitney homology; recall that the type $(m_1(u),m_2(u),\dots ,m_n(u))$ of an element $u\in \Pi_n$ is the vector in which $m_i(u)$ counts the number of blocks of size $i$ in $u$. 
We will accomplish this by using Theorem~\ref{thm:irrep-detection} in combination with Lemma
~\ref{how-many-ambiguous}, together leading to our proof of the conjecture in Theorem ~\ref{main-partition-stability-theorem}.
 
Our approach is as follows. We  apply any Young 
symmetrizer having more than the allowed number of  boxes in the first row to the module  $\widehat{W\!H}_{S,\, (\mu ,1^{n-|\mu |} )}(\Pi_n)$,  using 
the ribbon basis in Section ~\ref{geometric-ribbons}, 
and show that the result is 0.  
The key is to show that, for each element of our ribbon basis, some column of our ribbon filling 
must have two ``swappable boxes'' in it, allowing us to apply 
Lemma~\ref{lem:YoungSym-annihilate-ribbon}.  
The proof of existence of this pair of swappable boxes is quite delicate, and is carried out in 
Lemma~\ref{how-many-ambiguous}.
A good warm-up is the case with $|S|=4$, handled in Lemma ~\ref{lem:sizeSis4}. 

\begin{defn}\label{def:SYT-swappable-ambig-letter}
    Given an SYT $T$ and an element $u\in \Pi_n$, the \emph{swappable pairs of letters} in $T$ with respect to $u$ are those $i,j \in T$ such that $i$ and $j$ both appear in the first row of $T$ and comprise a block of size 2 in $u$.  A letter $i\in T$ is  \emph{ambiguous} if it does not belong to any swappable pair.  A pair of letters $i,j\in T$ is an \emph{ambiguous pair}  if either $i$ or $j$ is ambiguous.
\end{defn}

\begin{ex} 
Let $T=\tableau{1 & 2 &{\bf 4}& 6 &{\bf 9}\\
                3 & 5\\
                7 & 8}
                $
and let $u=|1|27|{\bf 49}|356|8|$.  Then  $4,9$ is the only swappable pair for $u$, and all other letters are ambiguous.  
\end{ex}

\begin{rk}\label{rk:ambiguous-letter}
    A letter $i$ will be ambiguous if and only if either (a) $i$ appears in a block of size larger than 2 in $u$ or (b)  $i$ appears in a block $\{i,j\}$ of size 2 in $u$ such that $i,j$ are not both in the first row of $T$.
\end{rk}

\begin{defn}\label{def:filling-swappable-ambiguous-box}
Fix a SYT $T$ and a partition $u$ in $\Pi_n$. 
A box in a ribbon with filling $F$ is a \emph{swappable box for the filling $F$} if it contains a swappable pair of letters with respect to  $T$ and $u$.   
 It is an \emph{ambiguous box for $F$} if it contains an ambiguous pair. For clarity in the proofs below, we further distinguish between ambiguous boxes of type (a) or type (b), according to  the distinction made in Remark~\ref{rk:ambiguous-letter}.
\end{defn}

The filling $F$ in 
Example~\ref{ex:ribbon-basis-WhS}, reproduced below,
\[F=\tableau{&45 & 18\\ 12 & 67},\]
has a pair of 
boxes containing the atoms $|45|$ and $|67|$  which are 
swappable boxes 
with respect to the set partition $u = 123|45|67|8$.
On the other hand, the filling 
$F $ in Example~\ref{ex:hom-Pi-1} has no swappable boxes. 

\begin{lem}\label{pigeonhole}
Suppose a ribbon shape  has $j$ boxes which each have a box directly below them in the ribbon, and suppose at most $j-1$ boxes in the ribbon are ambiguous for the  filling $F$.
Then some column has at least two swappable boxes for $F$.
\end{lem}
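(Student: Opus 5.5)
The plan is a pigeonhole argument on the vertical adjacencies of the ribbon. Every box of a ribbon filling is labeled by an atom of $\Pi_n$, i.e. a pair of letters. By Definitions~\ref{def:SYT-swappable-ambig-letter} and~\ref{def:filling-swappable-ambiguous-box}, such a pair either consists of two non-ambiguous letters, in which case it is a swappable pair and the box is swappable, or it contains an ambiguous letter, in which case the box is ambiguous. The first step is to make this dichotomy explicit, so that every box is either swappable or ambiguous. A non-ambiguous letter $i$ belongs to a swappable pair $\{i,j\}$, which is a block of size 2 of $u$ lying in the first row of $T$. So a box whose two letters are both non-ambiguous must contain exactly such a block. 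Here I am assuming the relevant fillings use atoms below $u$, as they do for the ribbon basis of $[\hat 0,u]$; then the two letters of the box lie in a common block of $u$, and the box's pair is that size-2 block.

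The second step is to set up the pigeonhole. Call a box \emph{upper} if there is a box directly below it in the ribbon; by hypothesis there are $j$ upper boxes. Each upper box $B$ determines a vertical domino $\{B,B'\}$, where $B'$ is the box directly below $B$. Because a ribbon contains no $2\times 2$ square, each column is a contiguous vertical segment, and a column with $c$ boxes contributes exactly $c-1$ such dominoes. Every box lies in at most two dominoes: the one where it is the top box and the one where it is the bottom box. A single ambiguous box can therefore spoil at most two dominoes, so counting dominoes directly gives only a weak bound. It is better to count columns.

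The third step is the column count. Suppose, for contradiction, that no column contains two swappable boxes. Then a column with $c_k$ boxes contains at least $c_k-1$ ambiguous boxes. Summing over columns gives at least $\sum_k (c_k-1)=j$ ambiguous boxes, which contradicts the hypothesis of at most $j-1$. Hence some column has at least two swappable boxes.

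I expect the main subtlety to be the first step: checking that, relative to $T$ and $u$, every box really is either swappable or ambiguous. This relies on the filling's atoms being pairs within blocks of $u$, which holds for the fillings arising from $[\hat 0,u]$. If instead a box could contain two non-ambiguous letters coming from different swappable blocks, then that box would be neither swappable nor ambiguous, and the count above would have to be adjusted. Once the dichotomy is established, the remaining argument is routine counting.
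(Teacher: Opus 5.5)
Your proof is correct and is the same pigeonhole count as the paper's: the ribbon has $n-j$ columns (equivalently, your $\sum_k (c_k-1)=j$) and at least $n-j+1$ swappable boxes, so some column contains two swappable boxes. Your explicit check that every box is either swappable or ambiguous, using that the atoms of the filling lie below $u$, fills in a step the paper uses without comment.
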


\begin{proof}
Let $n$ be the number of boxes in our ribbon shape $R$.  Then $R$ must have exactly $n-j$ columns.  It also has at least $n-j+1$ swappable boxes.  Thus, some column must have two or more swappable boxes by the pigeonhole principle.
\end{proof}

It may be helpful to review the statement of Lemma~\ref{lem:Stab-bd-from-ess-part} in order to put the hypotheses below in context.
Let $S=\{s_1<s_2<\cdots<s_r=\max S\}$ be any rank set. Denote by $\Rib_{W\!H}(S)$ the ribbon shape associated to $S$ for the Whitney homology module $W\!H_S(\Pi_n)$. Thus $\Rib_{W\!H}(S)=\Rib(s_1, s_2-s_1,\ldots, \max S-s_{r-1})$, and $|\Rib_{W\!H}(S)|=\max S$. 
\begin{lem}\label{how-many-ambiguous}
Consider a rank set $S$ with associated ribbon $\Rib_{W\!H}(S)$.  Also consider $u\in \Pi_n$ of rank $\max S$ for $n\ge 4\max S - |S| + 1$ 
and a standard tableau $T$ 
of shape $\lambda $ with $\lambda_1 + |\lambda | > 4\max S - |S| + 1$.

Then each saturated chain from $\hat{0}$ to $u$ 
has label sequence giving rise to a filling $F$ of $\Rib_{W\!H}(S)$ such that at most $|S|-2$ boxes in $\Rib_{W\!H}(S)$ 
are ambiguous for the filling $F$.  
\end{lem}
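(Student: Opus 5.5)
The plan is a direct counting argument. The ambiguous boxes of $F$ split into the two types of Remark~\ref{rk:ambiguous-letter}; I bound each type separately and then use the hypothesis $\lambda_1+|\lambda|>4\max S-|S|+1$, together with Lemma~\ref{lem:equiv-Statement}, to show that the total is at most $|S|-2$.

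\textbf{Setup.} Let $u$ have type $(\mu,1^{n-|\mu|})$, with $m_i=m_i(\mu)$. Following Lemma~\ref{lem:Stab-bd-from-ess-part}, we work with the essential part. So $T$ is a standard tableau filled with the $|\mu|$ letters lying in nontrivial blocks of $u$, and $|\lambda|=|\mu|$. Singleton letters never occur in the labels of a saturated chain from $\hat 0$ to $u$, so they play no role.

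\textbf{Structure of $F$.} The label sequence of a saturated chain $\hat 0\prec\cdots\prec u$ under the minimal labeling is an $\minNBC$ basis of $[\hat 0,u]\cong\prod_B \Pi_{|B|}$. Each label $\{i,j\}$ lies inside a single block $B$ of $u$. A block of size $k$ contributes exactly $k-1$ labels, and these labels form a spanning tree on $B$. In particular, each block $\{i,j\}$ of size $2$ contributes exactly one box, labeled $ij$.

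\textbf{Counting ambiguous boxes.}
\begin{itemize}
\item Type (a) boxes are those whose label lies in a block of size at least $3$. There are exactly $\sum_{i\ge 3}(i-1)m_i$ of them.
\item Type (b) boxes are the boxes $ij$ coming from $2$-blocks with at least one of $i,j$ outside the first row of $T$. Distinct such boxes use disjoint letters, so their number is at most the number of letters outside the first row of $T$, which is $|\lambda|-\lambda_1$.
\end{itemize}

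\textbf{Applying the hypothesis.} By Item (2) of Lemma~\ref{lem:equiv-Statement},
\[|\lambda|=2\max S-\sum_{i\ge3}(i-2)m_i.\]
Combined with $\lambda_1>4\max S-|S|+1-|\lambda|$, this gives
\[|\lambda|-\lambda_1<2|\lambda|-4\max S+|S|-1=|S|-1-2\sum_{i\ge3}(i-2)m_i.\]
Both sides are integers, so
\[|\lambda|-\lambda_1\le |S|-2-2\sum_{i\ge3}(i-2)m_i.\]
Adding the two counts, the number of ambiguous boxes is at most
\[\sum_{i\ge3}(i-1)m_i+|S|-2-2\sum_{i\ge3}(i-2)m_i=|S|-2+\sum_{i\ge3}(3-i)m_i\le |S|-2,\]
since $3-i\le 0$ for every $i\ge 3$. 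This is the claimed bound.

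\textbf{Expected obstacle.} The delicate point is the bookkeeping in the type (b) count. One must check that the letters of $T$ are exactly the letters of the nontrivial blocks, so that $|\lambda|=|\mu|$ and Lemma~\ref{lem:equiv-Statement} applies. One must also check that a type (a) box is not double-counted as type (b): letters outside the first row that lie in big blocks only loosen the type (b) bound. Once this is settled, the bound feeds into Lemma~\ref{pigeonhole}. The ribbon $\Rib_{W\!H}(S)$ has exactly $|S|-1$ boxes with a box directly below them, so at most $|S|-2$ ambiguous boxes forces a column with two swappable boxes, to which Lemma~\ref{lem:YoungSym-annihilate-ribbon} applies.
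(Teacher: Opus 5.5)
Your argument is correct, and it takes a more direct route than the paper's. The paper proves the equivalent claim that there are at least $\max S-|S|+2$ swappable boxes, in two stages:
\begin{enumerate}
\item Lemma~\ref{bounding-first-row-pairs} counts first-row-pairable \emph{letters} in an initial segment $L$ of the first row of length exactly $\lambda_1(u)=4\max S-|S|+2-|\lambda(u)|$. This gives at least $\frac12\Sw(u)$ swappable boxes.
\item The inequality $\frac12\Sw(u)\ge\max S-|S|+2$ is then proved by induction on $N(u)=\sum_{i\ge3}(i-1)m_i(u)$. The base case is Lemma~\ref{base-case-for-induction}. The inductive step, Lemma~\ref{swappable-number-cannot-go-down}, splits one letter off a large block and merges two singleton blocks. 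That step is the only place the hypothesis $n\ge 4\max S-|S|+1$ is used.
\end{enumerate}

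You instead count ambiguous \emph{boxes} over the whole first row: type (a) exactly, and type (b) by letters outside the first row. You then finish with Item (2) of Lemma~\ref{lem:equiv-Statement} and integrality. This needs neither the induction nor the hypothesis on $n$. It only uses the convention that $T$ is filled with the $|\mu|$ letters of the nontrivial blocks, which the paper also assumes implicitly via $\rho\vdash|\lambda(u)|$.

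The two bookkeepings give the same lower bound $\lambda_1-m_2-\sum_{i\ge3}im_i$ on swappable boxes, if the paper's $L$ is taken to be the whole first row. Substituting Item (2) of Lemma~\ref{lem:equiv-Statement} at the threshold gives the closed form $\frac12\Sw(u)=\max S-|S|+2+\sum_{i\ge3}(i-3)m_i(u)$. The paper's induction verifies exactly this identity, one block enlargement at a time.

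Your version also has a bonus: the slack $\sum_{i\ge4}(i-3)m_i$ appears directly in your final inequality. So the same count immediately gives the refined component-wise bound $4\max S-|S|+1-K(u)$ of Section~\ref{precise-section}, without going through Lemma~\ref{growth-bound}.
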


\begin{proof} 
Suppose $u$ has type $(\mu, 1^{n-|\mu|})$. 
Recall that 
$m_i(u)$ denotes the number of parts of size $i$ in $u$.  First we make two important definitions. 
\begin{equation}\label{lambda(u)}
|\lambda (u)|  := \sum_{i\ge 2}im_i(u).
\end{equation}
\begin{equation}\label{lambda1(u)}
\lambda_1(u) := 4\max S - |S| + 2 - |\lambda (u)|.
\end{equation}
Note that $|\lambda (u)|=|\mu|$.

The notation 
$|\lambda (u)|$ has been deliberately chosen in order to   designate   
 the size of $\lambda$ for 
any $S^{\lambda }$  appearing in the essential part of $W\!H_{S,\mu ,1^{n-|\mu |} }(\Pi_n)$,  where $(\mu ,1^{n-|\mu |})$ is the type of $u$. 
Defining  $\lambda_1(u)$ as a function of $|\lambda  (u)|$ rather than as the first row length will be crucial to our method of proof. 

  Let   $\rho \vdash |\lambda (u)|$ be any partition of $|\lambda (u)|$ having $\rho_1 = \lambda_1(u)$,  and let  $T $ be any standard filling of shape $\rho $.  Our task is to prove that there must be at least  $2(\max S - |S| + 2)$ boxes in the first row of $T$ whose entries come in pairs,  comprising  $\max S - |S| + 2$ parts of size 2 in $u$.
 
It is worth noting that for some choices of $u$, no $\rho \vdash |\lambda (u)|$ with $\rho_1=\lambda_1(u)$ exists.  This transpires when we have $|\lambda (u) | < \lambda_1(u)$, rendering the condition to be checked for those $u$ vacuous. An example of  this is the case when $u$ has few or no blocks of size 2.    

Our proof will be by induction on $N(u) := \sum_{i\ge 3}(i-1)m_i(u)$ where $m_i(u)$ is the number of parts of size $i$ in $u$.  Some readers might find it helpful to note that this quantity  $N(u)$  
equals the number of ambiguous boxes for
$F$ of type (a) for the ribbon filling $F$ given by any saturated chain from $\hat{0}$ to $u$.   This is because, 
in our saturated chain from $\hat{0}$ to $u$,  each block $C$ of $u$ of size $i\ge 3$ is created by  merging sub-blocks (of $C$)  $i-1$ times.  Moreover, these $i-1$ merge steps are labeled by the minimal labeling with $i-1$ atoms that appear in ambiguous boxes of type (a) in $F$.

For the remainder of this proof, we assume the statements  of three lemmas,  Lemma~\ref{bounding-first-row-pairs}, \ref{base-case-for-induction} and \ref{swappable-number-cannot-go-down}, 
whose proofs  appear after the current lemma. 

 For any $u$, under the hypotheses of the present lemma,  Lemma ~\ref{bounding-first-row-pairs} asserts  the following fact for the filling  $F$  with respect to the SYT $T$ of shape $\rho $ :
 \begin{equation}\label{eqn:lower-bound-swappable-halfS(u)}
    \text{The number of swappable boxes for  $F$  with respect to $T$  is at least }  
    \frac{1}{2}\Sw(u),
 \end{equation}
 where $\Sw(u)$ is the statistic defined by  
\begin{equation}\label{eqn: defn S(u)}
\Sw(u):= \lambda_1 (u) - \sum_{i\ge 3}im_i(u) - (|\lambda (u)|  - \lambda_1(u)).
\end{equation} 
 Next we invoke    
Lemma ~\ref{base-case-for-induction}.  This  shows that  for each $u$ satisfying $N(u)=0$, we have  
\begin{equation}\label{eqn:lower-bound-for-S(u)} 
\frac{1}{2}\Sw(u)\ge \max S - |S| + 2. 
\end{equation}

Combining \eqref{eqn:lower-bound-swappable-halfS(u)}  and \eqref{eqn:lower-bound-for-S(u)} gives the claimed  lower bound on the number of swappable boxes for the filling $F$  in  the base case  $N(u)=0$ of our induction.

In our inductive step, we will prove our result for all $u$ having $N(u)
= N+1>0$, assuming the result holds  for all $u'$ having
$N(u')<N+1$.  
For each $u$ having $N(u)=N+1$, we will rely on the desired result already holding  for a very particular
$u'$ with $N(u')<N+1$ in order to deduce the result for $u$,  so let us now describe which such 
$u'$ to use for each $u$ with $N(u)>0$.

\begin{itemize}
\item (Step 1)
First, we split some block $B$ of $u$ of size at least 3 into a pair of blocks, one of size $|B|-1$ and the other of size 1; we are guaranteed that  $u$ will have  such a block $B$ by the positivity of $N(u)=N+1$,  
which implies that not all blocks of $u$ are of size 1 or 2.   If $u$ has more than one such block $B$, it does not matter which one we use.    Denote by $B'$ the block of size $|B|-1$ in $u'$ that is obtained from $B$ by splitting off a block of size 1 from $B$. 
\item (Step 2)
Second, we merge two blocks of size 1 from $u$ into a single block of size 2 in $u'$; $u$ is indeed guaranteed to have at least two blocks of size 1 by virtue of how large our desired stability bound is, 
 since  $\max |\lambda | =2\max S$ and $n\ge 4\max S - |S| + 1$  together ensure that every $u$ of rank $\max S$ has at least $2\max S - |S| + 1$ blocks of size 1. 
It does not matter which two blocks of size 1 are merged in this step.  
\end{itemize}

Regardless of our choices, this pair of modifications to $u$, producing $u'$,  will ensure $\rank (u) = \rank (u')$ since $u$ and $u'$ both have the same number of parts.   
Passing from $u$ to $u'$ also reduces the value of the statistic  $N(u) = \sum_{i\ge 3}(i-1)m_i(u)$ to a strictly smaller value $N(u') = \sum_{i\ge 3}(i-1)m_i(u')$; this is because merging two blocks of size 1 in $u$ has no impact on this stastistic, while splitting the block of size $|B|$ reduces this statistic  by 1 (if $|B|>3$) or by 2 (if $|B|=3$).  Thus,  inducting on this statistic allows us to 
assume the result for   $u'$ and use it to deduce the result for $u$, as described next. 

 Lemma~\ref{swappable-number-cannot-go-down} shows that, 
 for $u'$ related to $u$ as above, we have the inequality 
 \begin{equation}\label{eqn:swappable-number-cannot-go-down} 
     \Sw(u)\ge \Sw(u').
 \end{equation}

Combining inequality \eqref{eqn:lower-bound-for-S(u)}, which provides the base case, and inequality \eqref{eqn:swappable-number-cannot-go-down},  which is the inductive step,  we obtain 
$\frac{1}{2}\Sw(u) \ge \max S - |S| + 2$ for all $u\in \Pi_n$ of rank $\max S $.  Finally \eqref{eqn:lower-bound-swappable-halfS(u)}   gives us the requisite lower bound on the number of swappable boxes for any  $u\in \Pi_n$ of rank 
$\max S$ for any  shape $\lambda =(\lambda_1,\lambda_2,\dots )  \vdash |\lambda (u)|$ satisfying 
$|\lambda | + \lambda_1 \ge 4\max S - |S| + 2$. \end{proof}

The next three lemmas, already invoked in the proof of Lemma~\ref{how-many-ambiguous}, establish the crucial properties of the statistic $\Sw(u)$, whose definition we reproduce  below.
$$\Sw(u) = \lambda_1(u) - (|\lambda (u)| - \lambda_1(u)) - \sum_{i\ge 3}im_i(u)$$
where $|\lambda (u) | := \sum_{i\ge 2}im_i(u)$ and $\lambda_1(u) := 4\max S - |S| + 2-|\lambda (u)|$.

\begin{lem}\label{bounding-first-row-pairs}
Let $T$ be  any standard Young tableau  of shape $\lambda = (\lambda_1,\lambda_2,\dots )$ 
with $|\lambda |:= \sum_{i\ge 2}im_i(u)$  and  $\lambda_1 \ge 4\max S - |S| + 2 - |\lambda | $. Then 
a lower bound on the number of swappable boxes with respect to $T$, for any ribbon filling $T'$ arising from a saturated chain from $\hat{0}$ to $u$,   is $\frac{1}{2}\Sw(u)$.
\end{lem}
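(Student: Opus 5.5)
The plan is a counting argument on the first row of $T$. Write $\lambda_1 \ge \lambda_1(u)$; we may assume $\lambda_1=\lambda_1(u)$ exactly, since enlarging the first row only adds swappable pairs. The entries of $T$ are the letters in the nontrivial blocks of $u$ (the essential part lives on $|\lambda(u)|=\sum_{i\ge2} i m_i(u)$ letters). Each box of the ribbon filling $T'$ coming from a saturated chain $\hat 0$ to $u$ is labeled by an atom $\{a,b\}$ whose letters lie in a common block of $u$. Each block $\{i,j\}$ of size $2$ contributes exactly one box, labeled $ij$, and that box is swappable exactly when both $i$ and $j$ lie in the first row of $T$. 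So the number of swappable boxes equals the number of size-$2$ blocks of $u$ contained entirely in the first row of $T$, and it suffices to bound this count from below.

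Next I will count first-row letters. The first row has $\lambda_1$ letters. At most $\sum_{i\ge3} i m_i(u)$ of them come from blocks of size at least $3$, so at least $\lambda_1-\sum_{i\ge3} i m_i(u)$ come from size-$2$ blocks. Call such a size-$2$ block \emph{split} if exactly one of its letters is in the first row. A split block has its other letter outside the first row, and only $|\lambda|-\lambda_1$ letters lie outside the first row, so at most $|\lambda(u)|-\lambda_1$ first-row letters belong to split blocks. The remaining first-row letters from size-$2$ blocks come in pairs forming swappable blocks. Hence the number of swappable blocks is at least
\[
\tfrac12\Bigl(\lambda_1-\sum_{i\ge3} i m_i(u) - (|\lambda(u)|-\lambda_1)\Bigr)=\tfrac12\Sw(u),
\]
which is the claim. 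If $\Sw(u)\le 0$ the bound is vacuous.

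The only delicate point is the correspondence between swappable boxes and swappable blocks. I expect this to be the main obstacle, and it needs two checks. First, the minimal-labeling chain must produce exactly one box per size-$2$ block, which holds because merging $i$ and $j$ uses the atom $ij$. Second, boxes carrying letters from larger blocks must never be counted as swappable, which holds by Definition~\ref{def:SYT-swappable-ambig-letter}: a swappable pair must itself be a block of $u$. With these two checks, the box count coincides with the block count, and the inequality above finishes the proof.
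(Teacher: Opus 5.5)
Your proof is correct and is essentially the paper's argument. You count first-row letters, discarding those from blocks of size at least $3$ (at most $\sum_{i\ge3} i\,m_i(u)$ of them) and those from size-$2$ blocks whose partner lies outside the first row (at most $|\lambda(u)|-\lambda_1$ of them); your reduction to $\lambda_1=\lambda_1(u)$ plays the role of the paper's initial segment $L$ of length $\lambda_1(u)$. You also make explicit two points the paper leaves implicit: that $T$ is filled by the non-singleton letters of $u$, and that swappable boxes correspond bijectively to size-$2$ blocks lying entirely in the first row.
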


\begin{proof}
Let $L$ be the set 
consisting of the $k$ leftmost  boxes in the first row of $T$, for some $k\ge 1$. 
Consider the ordered pairs $(i,j)$ of letters $i\ne j$ 
in $L$ 
such that 
$\{i,j\}$ is a block of size 2 in $u$. We refer to such letters $i$ and $j$ 
as being
 {\em first-row-pairable 
in $L$}.   
In particular,  when $L=T$, the number of swappable boxes in the ribbon filling $T'$ with respect to $T$ is  exactly half the number of first-row-pairable letters in $T$. 

To prove this lemma, it suffices to show that $\Sw(u)$ is a lower bound on the number of first-row-pairable letters in $T$. 

The hypothesis $\lambda_1 + |\lambda | \ge 4\max S - |S| + 2$  guarantees that we have at least $\lambda_1(u) := 4\max S - |S| + 2 - |\lambda (u)|$ letters in the first row of $T$.
Now specialize the initial segment  $L$ to be the subset of the first row of $T$ consisting of the leftmost $\lambda_1(u)$ letters in that first row. 
See Figure~\ref{fig:L-inital-segment-of-T}.
\begin{figure}
    \centering
    $\tableau{X &\ldots  &\ldots &X & \  &\ &\ \\
             \ &\ &\ &\ &\ \\ 
             \ &\ &\  }$
    \caption{A subset $L$ of the first row of $T$, indicated by the boxes labelled $X$, as in the proof of Lemma~\ref{bounding-first-row-pairs}.}
    \label{fig:L-inital-segment-of-T}
\end{figure}

The letters in  $L$ which are not first-row-pairable in $L$ 
are of the following two types: 
\begin{enumerate}
\item[Type I:] those  
in a block of $u$ of size larger than 2, or
\item[Type II:] those 
in a block of size 2 in $u$, but paired with some letter not in  $L$. 
\end{enumerate}
Let $A_1$ and $A_2$ be the number of letters in $L$ of Type I and Type II, respectively. Then 
$A_1\le \sum_{i\ge 3}im_i(u)$, since this sum is the number of letters in $T$ appearing in blocks of size larger than 2.

Also $A_2\le |\lambda(u)| - \lambda_1 (u)$ since the latter expression is the total number of letters in $T$ that do not appear  in $L$.  
Hence the number of first-row-pairable letters in $L$ 
is 
\[ \lambda_1(u)-A_1-A_2\ge \lambda_1(u) - \sum_{i\ge 3}im_i(u) - ( |\lambda(u)| - \lambda_1 (u)).\] 
The right-hand side is precisely $\Sw(u)$.
Since the number of first-row-pairable letters in all of $T$ is at least as large as the number of first-row-pairable letters in $L$, we are done.
\end{proof}

\begin{lem}\label{base-case-for-induction}
Under the hypotheses of Lemma ~\ref{how-many-ambiguous}, each $u$ of rank $\max S$ in $\Pi_n$ satisfying $N(u)=0$ has 
$\frac{1}{2} \Sw(u)\ge \max S - |S| + 2$.
\end{lem}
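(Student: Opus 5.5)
This is a direct computation. When $N(u)=0$ we have $m_i(u)=0$ for all $i\ge 3$, so $u$ has type $(2^{m_2},1^{n-2m_2})$. I will substitute this into the definitions of $|\lambda(u)|$, $\lambda_1(u)$ and $\Sw(u)$, and then use Lemma~\ref{lem:equiv-Statement} to write everything in terms of $\max S$ and $|S|$.

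By Item (1) of Lemma~\ref{lem:equiv-Statement}, $\max S=\sum_{i\ge 2}(i-1)m_i(u)=m_2(u)$. Hence
\[|\lambda(u)|=2m_2(u)=2\max S, \qquad \lambda_1(u)=4\max S-|S|+2-2\max S=2\max S-|S|+2.\]
Since the sum $\sum_{i\ge 3}im_i(u)$ vanishes, the definition of $\Sw(u)$ gives
\[\Sw(u)=2\lambda_1(u)-|\lambda(u)|=2(2\max S-|S|+2)-2\max S=2\max S-2|S|+4.\]
Therefore $\frac12\Sw(u)=\max S-|S|+2$, so the claimed inequality holds, in fact with equality.

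There is essentially no obstacle here. The only step needing care is the identification $m_2(u)=\max S$, which comes from the rank computation in Lemma~\ref{lem:equiv-Statement}. The hypotheses of Lemma~\ref{how-many-ambiguous} (namely $n\ge 4\max S-|S|+1$ and the lower bound on $\lambda_1+|\lambda|$) are not needed for this inequality itself. They only guarantee that this base case is used in a setting where $\lambda_1(u)$ letters actually exist in the first row. I should also note the case where $\lambda_1(u)>|\lambda(u)|$, i.e.\ $|S|<2$. There the statement about $\Sw$ still holds formally, but the overall condition in Lemma~\ref{how-many-ambiguous} becomes vacuous, as that lemma's proof remarks.
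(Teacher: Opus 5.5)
Your computation is correct and matches the paper's own proof. With $m_i(u)=0$ for $i\ge 3$ one gets $|\lambda(u)|=2\max S$ and $\lambda_1(u)=2\max S-|S|+2$, hence $\frac12\Sw(u)=\max S-|S|+2$, with equality. Your explicit derivation of $m_2(u)=\max S$ from Item (1) of Lemma~\ref{lem:equiv-Statement} supplies a step the paper uses without comment.
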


\begin{proof}
 In the  $N(u)=0$ case, we have   $\sum_{i \ge 3}(i-1)m_i =0$, which is equivalent to saying that all parts of $u$ have size at most 2.  Thus,  all letters   in the first row of $T$ appear in  boxes of the ribbon $\Rib_{W\!H}(S)$  that are not ambiguous boxes of type (a) with respect to $T$.  In this case, 
we have $|\lambda (u)| =2 m_2(u)= 2\max S$, implying 
$$\lambda_1(u) =  4\max S - |S| + 2 - |\lambda (u)| = 2\max S - |S| + 2.$$   Also note that $\sum_{i\ge 3}im_i(u)=0$ since $N(u)=0$ implies $m_i(u)=0$ for all $i>2$. 
By definition of $\Sw(u)$, we have 
\begin{equation*}
\begin{split}
\frac{1}{2} \Sw(u) 
    &=  \frac{1}{2}\left( 2\lambda_1(u) - |\lambda (u)| -\sum_{i\ge 3} im_i\right)
\end{split}
\end{equation*}  
But when $N(u)=0$, this  
equals 
\begin{align*}
    & \frac{1}{2} (4\max S - 2|S| + 4 - 2\max S)\\
&= \max S - |S| +2,
\end{align*}
establishing the claim of the lemma for $N(u)=0$.
\end{proof}

\begin{lem}\label{swappable-number-cannot-go-down}
Under the hypotheses of  Lemma ~\ref{how-many-ambiguous}, the two elements  $u', u\in \Pi_n$ constructed in the proof of Lemma ~\ref{how-many-ambiguous} with 
$N(u') < N(u)$ also satisfy $\Sw(u)\ge \Sw(u').$  More precisely, $\Sw(u)=\Sw(u')$ when $|B'|=2$, whereas  $\Sw(u)=\Sw(u')+2$ when $|B'|>2$.
\end{lem}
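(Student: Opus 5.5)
Since $\max S$ and $|S|$ are the same for $u$ and $u'$, the plan is to write $\Sw(u)$ as an explicit expression in the block counts $m_i(u)$ and compare term by term. Set $C:=4\max S-|S|+2$. Substituting $\lambda_1(u)=C-|\lambda(u)|$ into the definition gives
\[
\Sw(u)=2C-3|\lambda(u)|-\sum_{i\ge 3}im_i(u).
\]
Because $|\lambda(u)|=\sum_{i\ge 2}im_i(u)$, this becomes
\[
\Sw(u)=2C-6m_2(u)-4\sum_{i\ge 3}im_i(u).
\]
It therefore suffices to track how $m_2$ and the weighted count $\sum_{i\ge3}im_i$ change under the two modifications (Step 1 and Step 2) producing $u'$ from $u$.

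First, Step 2 merges two singletons into a block of size $2$. It raises $m_2$ by one and leaves the sum over blocks of size at least $3$ unchanged. Second, Step 1 replaces the block $B$, with $|B|\ge 3$, by a block $B'$ of size $|B|-1$ and a singleton, and we split into two cases.

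In the case $|B'|=2$, that is $|B|=3$, the block $B$ leaves the sum $\sum_{i\ge3}im_i$, which therefore drops by $3$, while $m_2$ rises by one more. In total $m_2(u')=m_2(u)+2$ and $\sum_{i\ge3}im_i(u')=\sum_{i\ge3}im_i(u)-3$. Hence
\[
\Sw(u')=\Sw(u)-12+12=\Sw(u).
\]

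In the case $|B'|>2$, the block still has size at least $3$, so $\sum_{i\ge3}im_i$ drops by exactly $1$ and $m_2$ rises only by the one from Step 2. Hence
\[
\Sw(u')=\Sw(u)-6+4=\Sw(u)-2,
\]
which gives the stated $\Sw(u)=\Sw(u')+2$. In both cases $\Sw(u)\ge\Sw(u')$.

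The only point needing care is the bookkeeping that the constant $C$ is literally unchanged. This holds because $\rank u=\rank u'=\max S$, which the construction in the proof of Lemma~\ref{how-many-ambiguous} guarantees since the number of blocks is preserved, and because $S$ is fixed. One must also use the definition of $\lambda_1(u)$ as a function of $|\lambda(u)|$ rather than as an actual row length, exactly as in \eqref{lambda1(u)}. The existence of the two singletons used in Step 2 was already justified in that proof, so no further hypothesis is needed.
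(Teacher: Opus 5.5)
Your proof is correct and takes essentially the paper's approach: both are direct bookkeeping of how the block counts change under Steps 1 and 2. The only cosmetic difference is that you first collapse $\Sw(u)$ to the closed form $2C-6m_2(u)-4\sum_{i\ge 3}im_i(u)$. The paper instead notes $|\lambda(u)|=|\lambda(u')|-1$ and $\lambda_1(u)=\lambda_1(u')+1$, so that $2\lambda_1-|\lambda|$ changes by $3$, and then subtracts the change ($3$ or $1$) in $\sum_{i\ge 3}im_i$.
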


\begin{proof}  
We need to examine the blocks $B'$ in $u'$ and $B$ in $u$ where $B'$ is obtained by splitting off a single element from $B$.  There are two cases, depending on whether $|B'|=2$ or $|B'|>2$.  In either case, note that $|\lambda (u)| = |\lambda (u')| -1$, because the reduction from $u$ to $u'$ results in a net loss of one block of size 1. 

Because
$\lambda_1(u)+|\lambda(u)|$ equals a constant, namely $4\max S-|S|+2$, this forces  
$\lambda_1 (u) = \lambda_1 (u') + 1$.  Thus, in either case we have
$$\lambda_1(u) - (|\lambda (u)| - \lambda_1(u)) = \lambda_1(u') - (|\lambda (u')| - \lambda_1(u')) +3.$$     In the first case, we have 
$\sum_{i\ge 3}im_i(u) = (\sum_{i\ge 3}im_i(u')) + 3$, giving
\begin{align*}
    \Sw(u) 
    &= \left(\lambda_1(u')-(|\lambda (u')|-\lambda_1(u')) + 3\right) - \left(\sum_{i\ge 3}im_i(u') +3\right)\\ 
    &=  \Sw(u').
\end{align*}
In the second case, we have 
$\sum_{i\ge 3}im_i(u) = \sum_{i\ge 3}im_i(u') +1$, giving
\begin{align*}
    \Sw(u) 
    &= \left(\lambda_1(u') - (|\lambda (u')|-\lambda_1(u')+3\right) - \left(\sum_{i\ge 3}im_i(u') + 1\right)\\
    &=     \Sw(u') + 2.
    \end{align*}
    In either case, $\Sw(u)\ge \Sw(u')$,
    as desired.
\end{proof}

Next we  use Lemma~\ref{lem:YoungSym-annihilate-ribbon} 
to get our desired upper bound on the length of the first row for $\cS^{\lambda}$ appearing in 
$\widehat{W\!H}_S(\Pi_n)$.

\begin{lem}\label{pulled-together}
Let $T$ be a standard Young tableaux of shape $\lambda $ with $\lambda_1 > 2\max S - |S| + 1 + \sum_{i\ge 3}(i-2)m_i$.  Then each $v_{T'}$ in our homology basis has $b_Ta_Tv_{T'} = 0$.  Thus, the multiplicity of $\cS^{\lambda}$ within the essential part of 
$W\!H_S(\Pi_n)$ is 0.
\end{lem}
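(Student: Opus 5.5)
The plan is to combine Lemma~\ref{how-many-ambiguous}, Lemma~\ref{pigeonhole} and Lemma~\ref{lem:YoungSym-annihilate-ribbon}, and finish with Theorem~\ref{thm:irrep-detection}. Fix a set partition $u$ of rank $\max S$ and type $(\mu,1^{n-|\mu|})$. By Definition~\ref{def:essential-part} we may work with $\widehat{W\!H}_{S,(\mu,1^{n-|\mu|})}(\Pi_n)\cong W\!H_{S,\mu}(\Pi_{|\mu|})$. This module has the ribbon basis $\mathcal{W}^S_{rib}$ of Definition~\ref{Whitney-basis}, given by elements $\bar f_{chain}(v_{T'})$ with $T'$ a standard $\minNBC$ filling of $\Rib_{W\!H}(S)$ coming from saturated chains $\hat 0\to u'$, where $u'$ ranges over the $\fS_{|\mu|}$-orbit of $u$. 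By Item (2) of Lemma~\ref{lem:equiv-Statement}, $|\lambda|=|\mu|=2\max S-\sum_{i\ge3}(i-2)m_i$. So the hypothesis $\lambda_1>2\max S-|S|+1+\sum_{i\ge 3}(i-2)m_i$ is exactly equivalent to $\lambda_1+|\lambda|>4\max S-|S|+1$, which is the hypothesis of Lemma~\ref{how-many-ambiguous}. The stability requirement $n\ge 4\max S-|S|+1$ is only used there to guarantee enough singleton blocks for the induction. Since the essential part does not depend on $n$, we may take $n$ large.

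Next, apply Lemma~\ref{how-many-ambiguous} to $u'$ and $T$. It says that at most $|S|-2$ boxes of the filling $T'$ are ambiguous. The ribbon $\Rib_{W\!H}(S)$ has $|S|$ rows, since $S\setminus\{\max S\}$ has $|S|-1$ elements. So exactly $j=|S|-1$ boxes have a box directly below them. Lemma~\ref{pigeonhole} then gives a column containing two swappable boxes, labelled $\{b,c\}$ and $\{d,e\}$. By definition of swappable, each of these is a block of size $2$ of $u'$ whose two letters both lie in the first row of $T$. The two blocks are distinct, so $b,c,d,e$ are four distinct letters, which is Condition (1) of Lemma~\ref{lem:YoungSym-annihilate-ribbon}.

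For Condition (2), each box label is an atom $\{x,y\}$ produced by the minimal labeling along the chain to $u'$, so both letters of an atom lie in one block of $u'$. A block of size $2$ is created by exactly one merge, hence it contributes exactly one box label. Therefore $b,c,d,e$ appear in no other box. Condition (3) holds since all four letters lie in the first row of $T$. Lemma~\ref{lem:YoungSym-annihilate-ribbon} now gives $b_Ta_T v_{T'}=0$, and hence $b_Ta_T\bar f_{chain}(v_{T'})=0$ by the equivariance in Proposition~\ref{Specht-like-action}. This holds for every basis element and every standard $T$ of shape $\lambda$, so by linearity $c_T$ kills the whole essential part, and Theorem~\ref{thm:irrep-detection} shows that $\cS^\lambda$ has multiplicity $0$.

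The main obstacle is matching the hypotheses of Lemma~\ref{how-many-ambiguous}. That lemma is phrased with the threshold $\lambda_1(u)=4\max S-|S|+2-|\lambda(u)|$ and a shape $\rho$ with $\rho_1=\lambda_1(u)$. Our $T$ may have a longer first row than this. I would handle that through the proof of Lemma~\ref{bounding-first-row-pairs}: restricting to the leftmost $\lambda_1(u)$ entries of the first row only lowers the count of first-row-pairable letters, so the lower bound on swappable boxes still holds for $T$. I also need to check that the ambiguous-box count of at most $|S|-2$ is what that lemma actually delivers. The number of swappable boxes is at least $\max S-|S|+2$, and the ribbon has $\max S$ boxes, so at most $|S|-2$ boxes are ambiguous.
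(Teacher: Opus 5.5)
Your proposal is correct and follows the paper's argument. The paper's proof likewise combines Lemma~\ref{how-many-ambiguous} with Lemma~\ref{pigeonhole} (with $j=|S|-1$) to produce a column containing two swappable boxes, and then applies Lemma~\ref{lem:YoungSym-annihilate-ribbon} and Theorem~\ref{thm:irrep-detection}. Your extra checks only make explicit what the paper leaves implicit: the translation of the first-row hypothesis into $\lambda_1+|\lambda|>4\max S-|S|+1$ via Lemma~\ref{lem:equiv-Statement}, condition (2) from the spanning-forest structure of the filling, and the handling of first rows longer than $\lambda_1(u)$.
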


\begin{proof}
Here we use the existence of a column of $T'$ having two swappable boxes with respect to the standard tableau $T$, as we deduce by combining Lemmas ~\ref{pigeonhole} and ~\ref{how-many-ambiguous} above.
\end{proof}

\begin{thm}\label{main-partition-stability-theorem}
    A sharp stability bound of $4\max S -|S| + 1$ holds for $W\!H_S(\Pi_n)$.  Consequently, the conjectured sharp stability bound of $4 \max S - |S| + 1$ also holds for $\beta_S(\Pi_n)$.
\end{thm}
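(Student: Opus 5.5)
The plan is to establish the sharp bound for $W\!H_S(\Pi_n)$ first and then transfer it to $\beta_S(\Pi_n)$ via Corollary~\ref{cor:WHPiS-bound-equals-betaS-bound}. The lower bound, that $W\!H_S(\Pi_n)$ cannot stabilize before $4\max S-|S|+1$, is already Corollary~\ref{rank-selection-sharpened-stability}. It comes from the type $(2^{\max S},1^{n-2\max S})$ component, whose essential part contains an irreducible with first row $2\max S-|S|+1$ by Proposition~\ref{rank-selection-first-row-bound}. So the real work is the upper bound: $W\!H_S(\Pi_n)$ is stable for $n\ge 4\max S-|S|+1$.

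For the upper bound I decompose $W\!H_S(\Pi_n)=\bigoplus_\mu W\!H_{S,(\mu,1^{n-|\mu|})}(\Pi_n)$ over types $\mu$ (no parts of size $1$) of set partitions $u$ of rank $\max S$. Each summand is quasi-freely generated (Remark~\ref{WH-is-quasi-free}), so by Lemma~\ref{lem:Stab-bd-from-ess-part} it stabilizes sharply at $\max\{|\lambda|+\lambda_1\}$ over the irreducibles $\cS^\lambda$ in its essential part. A finite direct sum of such modules stabilizes once every summand has. By Lemma~\ref{lem:Stab-bound} it therefore suffices to prove
\[
\lambda_1\le 2\max S-|S|+1+\sum_{i\ge 3}(i-2)m_i(\mu)
\]
for every $\cS^\lambda$ in the essential part, which is equivalent to $|\lambda|+\lambda_1\le 4\max S-|S|+1$.

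To prove this inequality, I suppose a standard tableau $T$ has shape $\lambda$ with $|\lambda|+\lambda_1>4\max S-|S|+1$, and use Theorem~\ref{thm:irrep-detection}: it is enough to show $b_Ta_T$ kills every element of a spanning set. I take the ribbon basis $\mathcal{W}^S_{rib}$ of Theorem~\ref{thm:min-label-gives-NBC-homology-basis}, restricted to intervals $[\hat0,u]$ with $u$ of type $\mu$. These basis elements are $\fS_{|\mu|}$-translates of one another via Proposition~\ref{Specht-like-action}, and they span the essential part. Lemma~\ref{pulled-together} then gives vanishing. For each basis element $v_{T'}$, Lemma~\ref{how-many-ambiguous} shows that at most $|S|-2$ boxes of $\Rib_{W\!H}(S)$ are ambiguous. $\Rib_{W\!H}(S)$ has exactly $|S|-1$ boxes lying directly above another box, so Lemma~\ref{pigeonhole} produces a column with two swappable boxes. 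Their labels $\{b,c\},\{d,e\}$ are $2$-blocks of $u$ with all four letters in the first row of $T$, and they appear in no other box. Lemma~\ref{lem:YoungSym-annihilate-ribbon} then gives $b_Ta_Tv_{T'}=0$. Hence the multiplicity of $\cS^\lambda$ is zero, which yields the stated bound.

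The hardest step is the counting in Lemma~\ref{how-many-ambiguous}, which rests on the induction on $N(u)$ and the behaviour of $\Sw(u)$. The point to check carefully is that the hypothesis $n\ge 4\max S-|S|+1$ leaves enough singleton blocks to run the reduction $u\mapsto u'$. A second detail is that the count of boxes with a box below them is exactly $|S|-1$ for the Whitney ribbon of size $\max S$. Once the upper bound is in hand, combining it with Corollary~\ref{rank-selection-sharpened-stability} gives sharpness for $W\!H_S(\Pi_n)$. Corollary~\ref{cor:WHPiS-bound-equals-betaS-bound}, which applies because $f(S)=4\max S-|S|+1$ satisfies the hypothesis of Proposition~\ref{prop:WHS-bound-equals-betaS-bound-anyP-AGAIN}(2), then transfers the sharp bound to $\beta_S(\Pi_n)$ for all $S$.
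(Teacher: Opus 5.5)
Your proposal is correct and follows the paper's proof essentially step for step. The upper bound comes from Lemma~\ref{pulled-together}, which you unpack into Lemmas~\ref{how-many-ambiguous}, \ref{pigeonhole} and \ref{lem:YoungSym-annihilate-ribbon} and feed into Lemma~\ref{lem:Stab-bd-from-ess-part}; sharpness comes from Proposition~\ref{all-twos} and Corollary~\ref{rank-selection-sharpened-stability}; and Corollary~\ref{cor:WHPiS-bound-equals-betaS-bound} transfers the result to $\beta_S(\Pi_n)$. The one slip is your aside that the ribbon basis elements are $\fS_{|\mu|}$-translates of one another: this is false in general, but it is also unnecessary, since $b_Ta_T$ killing a spanning set already suffices by linearity.
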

\begin{proof} Combine 
Lemma~\ref{pulled-together} and Lemma~\ref{lem:Stab-bd-from-ess-part} to get stability.  Sharpness of the bound follows from Proposition~\ref{all-twos} and Proposition~\ref{rank-selection-sharpened-stability}.  
The  second statement is a consequence of 
Corollary~\ref{cor:WHPiS-bound-equals-betaS-bound}.
\end{proof}

In exact analogy with Corollary~\ref{cor:chain-stab-Bool}, we 
deduce the following stability result for the rank-selected chains of $\Pi_n$, implicit in \cite{Hersh-Reiner}.
\begin{cor}\label{cor:chain-stab-Pin}  The rank-selected modules of chains $\alpha_S(\Pi_n)$ stabilize sharply at $4\max S$.  
\end{cor}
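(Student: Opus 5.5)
The plan is to mirror the proof of Corollary~\ref{cor:chain-stab-Bool}, applying Corollary~\ref{cor:chain-module-Stab} to the sequence $P_n=\Pi_n$ with the function $f(S)=4\max S-|S|+1$. Each $\Pi_n$ is a geometric lattice, hence EL-shellable and therefore Cohen--Macaulay. It is also an $\fS_n$-poset. So the standing hypotheses of Corollary~\ref{cor:chain-module-Stab} hold.

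The first step is to check the conditions on $f$. We have $f(\emptyset)=1$ by convention. For nonempty $S$, $f(S)=4\max S-|S|+1\le 4\max S=f(\{\max S\})$, and equality holds exactly when $|S|=1$, that is, when $S=\{\max S\}$. This is the case $k=4$ of the remark at the end of Corollary~\ref{cor:chain-module-Stab}, which already records that $f(S)=k\max S-|S|+1$ satisfies these conditions for $k\ge 2$.

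The second step is to supply the needed input, namely that $\beta_S(\Pi_n)$, or alternatively $W\!H_S(\Pi_n)$, stabilizes sharply at $f(S)$ for every $S$. Theorem~\ref{main-partition-stability-theorem} gives both statements. Corollary~\ref{cor:chain-module-Stab} then yields that $\alpha_S(\Pi_n)$ stabilizes sharply at $f(\{\max S\})=4\max S$.

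As a sanity check, I would run the Whitney-homology route through equation~\eqref{eqn:chain-is-Whitney-sum}. This writes $\alpha_S(\Pi_n)$ as the direct sum of $W\!H_T(\Pi_n)$ over $T\subseteq S$ with $\max S\in T$. The summand $T=\{\max S\}$ stabilizes sharply at $4\max S$, by Lemma~\ref{lem:sharp-bound-singletonS}. Every other summand has $|T|\ge 2$, so it stabilizes by $4\max S-1$. Lemma~\ref{lem:weak-strict-stab-for-sums}(b) then gives the sharp bound.

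I do not expect a genuine obstacle, since all the substance lies in Theorem~\ref{main-partition-stability-theorem}. The only point needing care is the small-$n$ range, where $\max S$ must be a nontrivial rank, i.e.\ $n\ge \max S+2$. Because $4\max S\ge \max S+2$, the modules are defined throughout the stable range, so this causes no issue.
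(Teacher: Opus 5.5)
Your proposal is correct and takes the same route as the paper. The paper deduces this corollary, in exact analogy with Corollary~\ref{cor:chain-stab-Bool}, by applying Corollary~\ref{cor:chain-module-Stab} to $P_n=\Pi_n$ with $f(S)=4\max S-|S|+1$ and taking the required sharp bounds from Theorem~\ref{main-partition-stability-theorem}. Your Whitney-homology sanity check via \eqref{eqn:chain-is-Whitney-sum} and Lemma~\ref{lem:weak-strict-stab-for-sums}(b) is simply the second case of the proof of Corollary~\ref{cor:chain-module-Stab}, written out explicitly.
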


Our sharp stability bound also makes evident the following.

\begin{cor}
The smallest sharp stability bound for $\beta_S(\Pi_n)$ for any rank set $S$ is  $3\max S + 1$ for a fixed maximal element 
$\max  (S)$  while  $4\max S$ is the largest sharp stability bound among sets $S$ having  this same fixed maximal element $\max S$. More specifically, for any fixed choice of $\max S$, stability occurs earliest for 
$S = \{ 1,2,\dots ,\max S \} $ and  occurs latest for $S = \{ \max S\} $.  
\end{cor}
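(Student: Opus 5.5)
The corollary follows from treating the sharp bound $f(S)=4\max S-|S|+1$ of Theorem~\ref{main-partition-stability-theorem} as a function of $S$ with $m=\max S$ held fixed. Since $f$ depends on $S$ only through $\max S$ and $|S|$, and is strictly decreasing in $|S|$ once $\max S=m$ is fixed, the minimum and maximum sharp bounds are attained exactly at the extreme values of $|S|$.

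The admissible rank sets with $\max S=m$ are the subsets $S\subseteq\{1,\dots,m\}$ containing $m$. Hence $1\le |S|\le m$. Equality $|S|=1$ holds if and only if $S=\{m\}$, and equality $|S|=m$ holds if and only if $S=\{1,2,\dots,m\}$. Substituting into $f$:
\begin{itemize}
\item $S=\{m\}$ gives $f(S)=4m-1+1=4m$, which is the largest value;
\item $S=\{1,\dots,m\}$ gives $f(S)=4m-m+1=3m+1$, which is the smallest value.
\end{itemize}

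It remains to note that these are genuinely sharp bounds for $\beta_S(\Pi_n)$, not merely upper bounds. Theorem~\ref{main-partition-stability-theorem} asserts sharpness for every $S$. For the two extreme cases this is also consistent with earlier results: Lemma~\ref{lem:sharp-bound-singletonS} gives $4i$ for $S=\{i\}$, and \cite[Corollary 5.4]{Hersh-Reiner} covers the consecutive rank sets $\{1,\dots,i\}$.

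There is no real obstacle here. The only point needing care is the bijection between $|S|$ and the extremal sets, namely that $|S|=1$ and $|S|=m$ each force a unique $S$ once $\max S$ is fixed. That uniqueness is what justifies saying stability occurs earliest precisely for $S=\{1,\dots,\max S\}$ and latest precisely for $S=\{\max S\}$.
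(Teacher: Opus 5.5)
Your proposal is correct and is the paper's own argument. The paper gives no separate proof; it only says the corollary is ``evident'' from the sharp bound $4\max S-|S|+1$ of Theorem~\ref{main-partition-stability-theorem}. Substituting the extreme values $|S|=1$ and $|S|=\max S$, which occur only for $\{\max S\}$ and $\{1,\dots,\max S\}$ respectively, is exactly that reasoning.
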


Tying this in with the theory of FI-modules, we deduce the following.

\begin{cor}
For any fixed $S$, $\{ W\!H_S(\Pi_n)\} $ is a finitely generated FI-module with FI degree of  $2\max S$ and sharp stability bound   of $4\max S - |S| + 1$.  
\end{cor}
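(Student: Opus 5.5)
The statement to prove is the final corollary. It says that $\{W\!H_S(\Pi_n)\}$ is a finitely generated FI-module, of FI degree $2\max S$, with sharp stability bound $4\max S-|S|+1$. The plan is to assemble it from three earlier results: the quasi-free structure of Whitney homology (Definition~\ref{def:essential-part} and Remark~\ref{WH-is-quasi-free}), Theorem~\ref{main-partition-stability-theorem}, and the CEF equivalence between finite generation and uniform representation stability.

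\emph{Finite generation.} By Definition~\ref{def:WHS-submodules},
\[
W\!H_S(\Pi_n)=\bigoplus_{\mu} W\!H_{S,(\mu,1^{n-|\mu|})}(\Pi_n),
\]
where $\mu$ runs over partitions with no part equal to $1$ and $|\mu|-\ell(\mu)=\max S$ (Lemma~\ref{lem:equiv-Statement}). Only finitely many such $\mu$ exist, since $|\mu|\le 2\max S$. Each summand equals $M_n(\widehat{W\!H}_{S,(\mu,1^{n-|\mu|})})$, and the essential part is a finite-dimensional $\fS_{|\mu|}$-module isomorphic to $W\!H_{S,\mu}(\Pi_{|\mu|})$. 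Decomposing each essential part into irreducibles shows that $\{W\!H_S(\Pi_n)\}$ is a finite direct sum of modules $\{M_n(\cS^\lambda)\}$, hence quasi-freely generated and in particular finitely generated. As a cross-check, Corollaries~\ref{first-uniform} and~\ref{second-uniform} together with Theorem~\ref{main-partition-stability-theorem} give all three conditions of uniform representation stability, so \cite[Theorem 1.13]{CEF} also yields finite generation.

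\emph{FI degree.} The generators $\cS^\lambda$ sit in degrees $|\lambda|=|\mu|$. Since $|\mu|=\max S+\ell(\mu)$ and every part of $\mu$ is at least $2$, we get $|\mu|\le 2\max S$, with equality exactly for $\mu=(2^{\max S})$. So the generation degree is at most $2\max S$. To see it is exactly $2\max S$, note that the essential part for $\mu=(2^{\max S})$ is nonzero: Proposition~\ref{rank-selection-first-row-bound} exhibits an irreducible in it. Because the structure is quasi-free, this generator cannot be produced from lower-degree pieces. Concretely, $W\!H_S(\Pi_{2\max S})$ contains the span of the chains ending at set partitions of type $(2^{\max S})$, and these involve all $2\max S$ letters, so they cannot lie in the image of $W\!H_S(\Pi_{2\max S-1})$ under the $\fS$-orbits of the FI maps.

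\emph{Sharp stability.} The bound $4\max S-|S|+1$, including its sharpness, is exactly Theorem~\ref{main-partition-stability-theorem}. It is consistent with Proposition~\ref{prop:quasi-freely-gen-immplies-sharp-max}: the maximum of $|\lambda|+\lambda_1$ over the generators is attained on the all-twos component (Proposition~\ref{all-twos}).

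The only step needing care is making precise that the FI degree is \emph{exactly} $2\max S$, i.e.\ that the top-degree generator is genuinely new. The direct-sum decomposition into $M_n$'s settles this, because $M_n(W)$ for $W\ne0$ in degree $n_0$ contributes new generators precisely in degree $n_0$.
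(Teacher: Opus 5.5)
Your proposal is correct and follows essentially the same route as the paper. Finite generation comes from the finite decomposition of $W\!H_S(\Pi_n)$ into the quasi-free components $W\!H_{S,(\mu,1^{n-|\mu|})}(\Pi_n)$, with the uniform-stability criterion of \cite[Theorem 1.13]{CEF} as the alternative the paper also mentions. The degree $2\max S$ comes from the all-twos component, since every other $\mu$ has $|\mu|=2\max S-\sum_{i\ge 3}(i-2)m_i(\mu)<2\max S$, and the sharp bound is Theorem~\ref{main-partition-stability-theorem}; your explicit check that the type-$(2^{\max S})$ generator is genuinely new in degree $2\max S$ spells out a point the paper leaves implicit.
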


\begin{proof}
The fact that we have a finitely generated FI-module  follows from our  decomposition   of  
$W\!H_S(\Pi_n)$ into  components $W\!H_{S,(\mu ,1^{n-|\mu |})}(\Pi_n)$, as in Definition~\ref{def:WHS-submodules},
each  generated by finitely many individual FI-module generators, 
using the description of FI-module generators appearing in~\cite[Proposition 2.6]{CEF}.  
The degree calculation of $2\max S$ comes from 
considering the component 
$W\!H_{S,(2^{\max S},1^{n-2\max S})}(\Pi_n)$ which has
$|\lambda | = 2\max S$ and our observation (see Lemma~\ref{lem:equiv-Statement}, Item (2))  that every other component 
$W\!H_{S,(\mu ,1^{n-|\mu |})}(\Pi_n)$ is generated in strictly lower degrees. 

 Alternatively this result may be deduced from our having now verified the three requirements for uniform representation stability, which  implies that $\{ W\!H_S(\Pi_n) \} $ is a finitely generated FI-module.  
\end{proof}

\begin{cor}
    The rank-selected homology $\{ \beta_S(\Pi_n)\} $ for fixed rank set $S$ is a finitely generated FI-module with FI degree of $2\max S$ and sharp stability bound of 
    $4\max S - |S| + 1.$
\end{cor}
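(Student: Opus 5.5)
The plan is to transfer the three properties just established for $\{W\!H_S(\Pi_n)\}$ to $\{\beta_S(\Pi_n)\}$, using the decomposition of Proposition~\ref{prop:alt-sum-expression-S}, Item (1):
\[
W\!H_S(\Pi_n)\cong \beta_S(\Pi_n)\oplus \beta_{S^{(1)}}(\Pi_n).
\]
The three properties are finite generation, FI degree $2\max S$, and sharp stability at $4\max S-|S|+1$.

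\textbf{Finite generation.} By Proposition~\ref{get-FI-modules}, $\{\beta_S(\Pi_n)\}$ is an FI-module. Corollary~\ref{first-uniform} gives injectivity for $n\ge \max S+2$, and Proposition~\ref{second-beta} gives the orbit-spanning condition. For the stability requirement, I would invoke Theorem~\ref{main-partition-stability-theorem}, which gives $\beta_S(\Pi_n)$ stable, sharply, at $4\max S-|S|+1$. The three requirements of Definition~\ref{uniform-rep-stability} are then met, so \cite[Theorem 1.13]{CEF} yields finite generation. As an alternative route, $\beta_S$ is the image of the FI-module $W\!H_S$ under the projection coming from the above splitting, hence a quotient of a finitely generated FI-module. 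This route requires the splitting to be compatible with the FI-maps, which I would check on cycles exactly as in the proof of Proposition~\ref{second-beta}.

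\textbf{Sharp stability bound.} This is precisely the second statement of Theorem~\ref{main-partition-stability-theorem}.

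\textbf{FI degree: upper bound.} The degree is the largest $m$ such that $V_m$ is not spanned by the $\fS_m$-orbit of the image of $V_{m-1}$. Since $\beta_S$ is a direct summand of $W\!H_S$, and the FI-maps respect the summands, the degree of $\beta_S$ is at most that of $W\!H_S$, namely $2\max S$.

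\textbf{FI degree: lower bound.} I would show that $\beta_S$ genuinely needs a generator in degree $2\max S$. The natural argument uses the quasi-free structure. In the proof of Proposition~\ref{rank-selection-first-row-bound}, the component $W\!H_{S,(2^{i},1^{n-2i})}$ with $i=\max S$ contains an irreducible $\cS^{\lambda}$ with $|\lambda|=2\max S$ and $\lambda_1=2\max S-|S|+1$. This irreducible is the one responsible for sharp stability at $|\lambda|+\lambda_1=4\max S-|S|+1$.

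Its counterpart in the other summand fails to reach this value. By the induction embedded in Proposition~\ref{prop:WHS-bound-equals-betaS-bound-anyP-AGAIN}, $\beta_{S^{(1)}}$ stabilizes by $4\max S^{(1)}-|S|+2\le 4\max S-|S|-2$. So no generator of $\beta_{S^{(1)}}$ can have $|\mu|+\mu_1$ reaching $4\max S-|S|+1$.

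Hence the relevant generator lies in $\beta_S$. Since every generator has $\mu_1\le |\mu|$, the only way for a generator of $\beta_S$ to reach $|\mu|+\mu_1=4\max S-|S|+1$ is to have $|\mu|\ge (4\max S-|S|+1)/2$. I would rather argue directly that the degree-$2\max S$ generator $\cS^\lambda$ cannot sit in $\beta_{S^{(1)}}$. This follows because every element of rank $\max S^{(1)}<\max S$ involves at most $2\max S-2$ letters, so $\beta_{S^{(1)}}$ is generated in degree at most $2\max S^{(1)}<2\max S$ by Corollary~\ref{second-uniform}. Therefore the degree-$2\max S$ generator of $W\!H_S$ must come from $\beta_S$.

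\textbf{Main obstacle.} The hardest step is making the splitting $W\!H_S=\beta_S\oplus\beta_{S^{(1)}}$ genuinely FI-equivariant, so that generators can be apportioned between the summands. I would handle this as in Proposition~\ref{second-beta}, using the vector space isomorphism with $\tilde H_{|S|-1}(P^S)\oplus\tilde H_{|S|-2}(P^{S\setminus\max S})$ and checking that the maps $\phi_n$ preserve each piece.
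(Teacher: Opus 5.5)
Your finite-generation and sharp-stability steps are the paper's proof. That proof uses only Proposition~\ref{get-FI-modules}, Corollary~\ref{first-uniform}, Proposition~\ref{second-beta}, Theorem~\ref{main-partition-stability-theorem} and \cite[Theorem 1.13]{CEF}, and gives no separate argument for the FI degree. The gap is in what you add for the degree, and in your alternative finite-generation route. Both assume that $W\!H_S\cong\beta_S\oplus\beta_{S^{(1)}}$ is a splitting of FI-modules, and it is not.

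What is FI-natural is one-sided. A top cycle of $\Pi_n^S$ is a sum, over $x$ of rank $\max S$, of top cycles of $[\hat 0,x]^{S^{(1)}}$ with $x$ appended. So $\beta_S$ is an FI-submodule of $W\!H_S$, and deleting $x$ gives an FI-map $W\!H_S\to\beta_{S^{(1)}}$ with kernel $\beta_S$; the other summand is only a quotient. In general no $\fS_n$-stable complements can be chosen compatibly with the maps $\phi_n$. Take $S=\{i\}$. A complement $C_n$ to the sum-zero subspace $\beta_{\{i\}}(\Pi_n)$ of the permutation module on rank-$i$ set partitions is a trivial line, spanned by an $\fS_n$-invariant vector $w_n$. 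But $\phi_n(w_n)\neq 0$ is supported on set partitions in which $n+1$ is a singleton block. Hence it is not $\fS_{n+1}$-invariant and cannot lie in $C_{n+1}$. So your check that ``$\phi_n$ preserves each piece'' cannot succeed.

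Without the splitting, neither degree bound follows, because generation degree does not pass to FI-submodules. For $S=\{1\}$, $W\!H_{\{1\}}(\Pi_n)=M(\one_{\fS_2})$ is generated in degree $2$. However, $\beta_{\{1\}}(\Pi_n)$ is $0$ for $n\le 2$ and is the space of sum-zero combinations of edges of $K_n$ for $n\ge 3$. It therefore needs a generator in degree $3=2\max S+1$, so your upper bound is false when $\max S=1$. Similarly, your lower-bound step applies Corollary~\ref{second-uniform}, which concerns $W\!H_{S^{(1)}}$, to its submodule $\beta_{S^{(1)}}$.

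What can be salvaged is right exactness of FI-homology. Let $I$ be the image of $W\!H_S$ in $\beta_{S^{(1)}}$. Then $H_0(\beta_S)_m\to H_0(W\!H_S)_m\to H_0(I)_m\to 0$ is exact, so $\beta_S$ has a generator in degree $2\max S$ provided $I$ has none there, since $W\!H_S$ does. That is again a generation bound for a rank-selected homology module, which you have not established. An upper bound on the degree of $\beta_S$ would further need control of the relations of $I$, via $H_1(I)\to H_0(\beta_S)\to H_0(W\!H_S)$, or a direct argument on cycles. For finite generation alone, replace the ``quotient'' route by Noetherianity \cite{CEF}: $\beta_S$ is an FI-submodule of the finitely generated $W\!H_S$.
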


\begin{proof}
We proved this is an FI-module in Proposition ~\ref{get-FI-modules}, so by Theorem 1.13 from \cite{CEF} it suffices to check that it satisfies the three requirements for uniform representation stability with stable range $4\max S - |S| + 1.$  The first two requirements were proven in Corollaries~\ref{first-uniform} and~\ref{second-beta}.
The requisite sharp stability bound was proven in  Theorem ~\ref{main-partition-stability-theorem}.
\end{proof}

\section{More precise 
stability bounds for individual  submodules
$W\!H_{S,\lambda }(\Pi_n)$ of $W\!H_S(\Pi_n)$}\label{precise-section}

Next we show how certain individual components of 
$W\!H_S(\Pi_n)$ stabilize earlier than our sharp bound 
for the entire $\fS_n$-module $W\!H_S(\Pi_n)$.  A key tool for doing this is the following lemma.

\begin{lem}\label{growth-bound}
    Given any $S\subseteq \{ 1,2,\dots ,n-2\} $ and any $u\in \Pi_n$ of rank $\max S$, we have  
    $\Sw(u)\ge \Sw(u_0) + 2\sum_{i\ge 4}(i-3)m_i(u)$ for any 
    $u_0\in \Pi_n$ of rank $\max S $ satisfying 
    $N(u_0)=0.$
\end{lem}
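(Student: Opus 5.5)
The plan is a direct computation rather than an induction: both sides of the inequality can be expressed in closed form in terms of $\max S$, $|S|$, and the multiplicities $m_i(u)$. In fact I expect to obtain an equality,
\[
\Sw(u)=\Sw(u_0)+2\sum_{i\ge 4}(i-3)m_i(u).
\]

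First, expand the definition of $\Sw(u)$. Using $\lambda_1(u)=4\max S-|S|+2-|\lambda(u)|$, we get
\[
\Sw(u)=2\lambda_1(u)-|\lambda(u)|-\sum_{i\ge 3}im_i(u)=2(4\max S-|S|+2)-3|\lambda(u)|-\sum_{i\ge 3}im_i(u).
\]
Next, eliminate $|\lambda(u)|$ with Item (2) of Lemma~\ref{lem:equiv-Statement} (equivalently, Item (1) combined with $|\lambda(u)|=\sum_{i\ge2}im_i(u)$). This gives $|\lambda(u)|=2\max S-\sum_{i\ge 3}(i-2)m_i(u)$, and substituting yields
\[
\Sw(u)=2\max S-2|S|+4+\sum_{i\ge 3}\bigl(3(i-2)-i\bigr)m_i(u)=2\max S-2|S|+4+\sum_{i\ge 3}(2i-6)m_i(u).
\]
The coefficient $2i-6$ vanishes at $i=3$, so the sum equals $2\sum_{i\ge 4}(i-3)m_i(u)$.

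Then apply the same formula to any $u_0$ of rank $\max S$ with $N(u_0)=0$. Here all $m_i(u_0)=0$ for $i\ge 3$, so $\Sw(u_0)=2\max S-2|S|+4$. This agrees with the computation in Lemma~\ref{base-case-for-induction}, and it shows that $\Sw(u_0)$ is independent of which such $u_0$ is chosen. Comparing the two expressions gives the claimed bound, with equality.

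As a consistency check, I would also note the alternative route: iterate the reduction $u\mapsto u'$ from the proof of Lemma~\ref{how-many-ambiguous}. By Lemma~\ref{swappable-number-cannot-go-down}, a split step leaves $\Sw$ unchanged when $|B|=3$ and drops it by $2$ when $|B|\ge 4$. Shrinking a block of size $i\ge 3$ down to size $2$ takes $i-2$ steps, of which exactly $i-3$ start from a block of size at least $4$. This recovers the same total of $2\sum_{i\ge4}(i-3)m_i(u)$. That route needs enough singleton blocks at every step, which is exactly why I prefer the direct computation. The only real point of care is the bookkeeping in the substitution, in particular getting the coefficient $3(i-2)-i=2i-6$ right. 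There is no genuine obstacle.
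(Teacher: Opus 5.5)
Your computation is correct and proves the statement with equality, via the closed form
\[
\Sw(u)=2\max S-2|S|+4+2\sum_{i\ge 4}(i-3)m_i(u).
\]
This formula follows from $\rank(u)=\sum_{i\ge 2}(i-1)m_i(u)$ exactly as you describe.

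The paper argues differently. It builds $u$ from $u_0$ by a sequence of the steps $u'\to u$ used in the induction of Lemma~\ref{how-many-ambiguous}, each of which enlarges one block $B'$ by one element. It applies Lemma~\ref{swappable-number-cannot-go-down} at each step: $\Sw$ rises by $2$ exactly when $|B'|>2$. It then counts that a block of size $i\ge 4$ in $u$ forces exactly $i-3$ such steps, so the increments telescope to $2\sum_{i\ge4}(i-3)m_i(u)$.

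Your route is shorter and self-contained:
\begin{itemize}
\item It needs no chain of intermediate set partitions. So there is no need to check that enough singleton blocks are available at every stage; in Lemma~\ref{how-many-ambiguous} that came from the hypothesis $n\ge 4\max S-|S|+1$, which is absent here.
\item It shows at once that $\Sw(u_0)$ does not depend on the choice of $u_0$.
\item The same formula gives Lemmas~\ref{base-case-for-induction} and~\ref{swappable-number-cannot-go-down} as immediate corollaries.
\end{itemize}
The paper's telescoping version, in exchange, stays inside the inductive framework already set up. It also shows which individual block enlargements contribute the extra swappable pairs.
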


\begin{proof}
We use the fact that $u$ may be obtained from some $u_0\in \Pi_n$ satisfying $N(u_0)=0$ by a series of inductive steps $u' \rightarrow u$ of the type described in the proof of Lemma ~\ref{how-many-ambiguous}, each of which has $N(u)>N(u')$.  These steps each increase the size of a block referred to as $B'$, of size $|B'|\ge 2$ in $u'$, to a size one larger in $u$.  
By Lemma ~\ref{swappable-number-cannot-go-down}, we have 
$\Sw(u) = \Sw(u')+2$ whenever $|B'|>2$, while 
$\Sw(u)=\Sw(u')$ whenever $|B'|=2$.  Thus, it suffices to show that there must be at least $\sum_{i\ge 4}(i-3)m_i(u)$ of these inductive steps having $|B'|>2$.

Notice that $u_0$ has no blocks of size larger than 2, since  $N(u_0)=0$.  
Thus, each block of size $i\ge 4$ in $u$ requires exactly  $i-3$ of the $u'\rightarrow u$ inductive steps  that each satisfy $\Sw(u)=\Sw(u')+2$, since each such block of size $i\ge 4$ in $u$  requires a series of $i-3$ of these steps, in which the block is enlarged from size 3 to size $i$.   Thus, any path of inductive steps from $u_0$ to $u$ requires a total of  $\sum_{i\ge 3}(i-3)m_i(u)$   of the inductive steps of the type that each increase the value of $\Sw$ by 2. 
The upshot is that $\Sw(u) = \Sw(u_0) + 2\sum_{i\ge 4}(i-3)m_i(u)$.
\end{proof}

\begin{thm}
Let $u\in \Pi_n$ be any set partition of rank $\max S $ and type 
$(\mu ,1^{n-|\mu |})$.  Then the $\fS_n$-module $W\!H_{S,(\mu ,1^{n-|\mu |})}(\Pi_n)$ satisfies the stability bound
$4\max S - |S| + 1 - K(u)$ where 
$K(u)  = \sum_{i\ge 4} (i-3)m_i(u)$.
\end{thm}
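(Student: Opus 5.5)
The plan is to rerun the Young symmetrizer argument behind Lemma~\ref{pulled-together} and Theorem~\ref{main-partition-stability-theorem}, this time with the threshold lowered by $K(u)$. The surplus in the statistic $\Sw$ supplied by Lemma~\ref{growth-bound} should pay for the lower threshold. By Lemma~\ref{lem:Stab-bd-from-ess-part}, it suffices to show that every $\cS^{\lambda}$ in the essential part $\widehat{W\!H}_{S,(\mu,1^{n-|\mu|})}(\Pi_n)$ satisfies $|\lambda|+\lambda_1\le 4\max S-|S|+1-K(u)$. Here $|\lambda|=|\mu|=|\lambda(u)|$. So I fix a standard tableau $T$ of shape $\lambda\vdash|\lambda(u)|$ with $\lambda_1\ge 4\max S-|S|+2-K(u)-|\lambda(u)|$ and aim to show $b_Ta_Tv_{T'}=0$ for every ribbon basis element $v_{T'}$ of $\mathcal{W}^S_{rib}$ supported on intervals $[\hat0,u']$ with $u'$ of the same type as $u$. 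Theorem~\ref{thm:irrep-detection} then finishes.

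The first step redoes Lemma~\ref{bounding-first-row-pairs} with the modified quantity $\lambda_1'(u):=\lambda_1(u)-K(u)$. Take $L$ to be the leftmost $\lambda_1'(u)$ boxes of the first row of $T$. The same Type I/Type II count shows that the number of first-row-pairable letters is at least
\[\lambda_1'(u)-\sum_{i\ge3}im_i(u)-(|\lambda(u)|-\lambda_1'(u))=\Sw(u)-2K(u).\]
So $T'$ has at least $\tfrac12\Sw(u)-K(u)$ swappable boxes. Next, Lemma~\ref{growth-bound} combined with Lemma~\ref{base-case-for-induction} gives $\tfrac12\Sw(u)\ge \max S-|S|+2+K(u)$. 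The $K(u)$ terms cancel, leaving at least $\max S-|S|+2$ swappable boxes.

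For the final step, the ribbon $\Rib_{W\!H}(S)$ has $\max S$ boxes and $\max S-|S|+1$ columns, so $|S|-1$ boxes sit directly above another box. At least $\max S-|S|+2$ swappable boxes therefore exceeds the number of columns, and pigeonhole, as in Lemma~\ref{pigeonhole}, gives a column with two swappable boxes. The letters in these two boxes form two distinct $2$-blocks of $u$ that lie entirely in the first row of $T$, and they appear in no other box. Lemma~\ref{lem:YoungSym-annihilate-ribbon} then yields $b_Ta_Tv_{T'}=0$.

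The main obstacle is checking that Lemma~\ref{growth-bound} really gives the bound with $u_0$ satisfying the base-case computation. In its proof, the path from $u_0$ to $u$ must be an inductive sequence in which all intermediate elements keep rank $\max S$. The count of $|B'|>2$ steps must also be exactly $\sum_{i\ge4}(i-3)m_i(u)$; the proof writes $\sum_{i\ge3}$ in one place, and the two agree since the $i=3$ term vanishes. I also need $\lambda_1'(u)\le\lambda_1$ to remain nonnegative and meaningful, and the vacuous case $|\lambda(u)|<\lambda_1'(u)$ handled trivially. Sharpness is not claimed, so nothing further is needed.
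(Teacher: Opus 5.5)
Your proposal is correct and follows the paper's proof essentially step for step. You shrink $L$ to the leftmost $\lambda_1(u)-K(u)$ first-row boxes to get at least $\tfrac12\Sw(u)-K(u)$ swappable boxes, combine Lemma~\ref{growth-bound} with Lemma~\ref{base-case-for-induction} to get $\tfrac12\Sw(u)\ge \max S-|S|+2+K(u)$, and conclude by pigeonhole and Lemma~\ref{lem:YoungSym-annihilate-ribbon}. Your worry about the path from $u_0$ to $u$ is unnecessary: $\Sw(u)$ depends only on the type of $u$, and substituting $|\lambda(u)|=2\max S-\sum_{i\ge 3}(i-2)m_i(u)$ into its definition gives the exact identity $\tfrac12\Sw(u)=\max S-|S|+2+K(u)$.
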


\begin{proof}
First we verify  that the proof of Lemma ~\ref{bounding-first-row-pairs} can be very gently modified to show the following: Let $u$ be an element of rank $\max S $ in $\Pi_n$ and let $T$ be any standard Young tableau of shape $\lambda $ with 
$|\lambda | = \sum_{i\ge 2}im_i(u)$ and $\lambda_1\ge 4\max S - |S| + 2 - |\lambda | - K$ for some constant $K$.  Then a lower bound on the number of swappable boxes with respect to $T$, for any ribbon filling $F$ arising from a saturated chain from $\hat{0}$ to $u$, is $\frac{1}{2}\Sw(u) - K$.  The proof modification simply involves making $L$ smaller, by taking $L$ to be the leftmost $\lambda_1(u) - K$ boxes in the first row of $T$.  This has the effect of decreasing by $K$ our lower bound, thus replacing $\frac{1}{2}\Sw(u)$ with $\frac{1}{2}\Sw(u)-K$. 
That completes the justification of the modified version of Lemma ~\ref{bounding-first-row-pairs}. 

Now let $\Sw_K(u) = \Sw(u) - 2K$, so $\frac{1}{2}\Sw_K(u) = \frac{1}{2}\Sw(u) - K$.  We  use Lemma ~\ref{base-case-for-induction} to deduce that 
$\frac{1}{2}\Sw_K(u_0)\ge \max S - |S| + 2 - K$ for any $u_0$ satisfying $N(u_0)=0$.  
By Lemma ~\ref{growth-bound}, we have that 
$\Sw(u) = \Sw(u_0)+ 2\sum_{i\ge 4}(i-3)m_i(u)$ for any $u_0$ satisfying $N(u_0)=0$.  
For  $K = \sum_{i\ge 4}(i-1)m_i(u)$, 
this yields  
$$\frac{1}{2} \Sw_K(u) =  \frac{1}{2}(\Sw_K(u_0) + 2K) = \frac{1}{2}\Sw(u_0).$$
But we may combine this with the 
inequality $$ \frac{1}{2}\Sw(u_0) \ge  \max S - |S| + 2,$$   
which follows from  Lemma ~\ref{swappable-number-cannot-go-down},  to deduce the inequality $\frac{1}{2}\Sw_K(u)\ge \max S - |S| + 2$ for $K= \sum_{i\ge 4}(i-3)m_i(u)$.  
Now we combine our earlier  inequality, namely the lower bound of $\frac{1}{2}\Sw_K(u)$ on the number of swappable boxes with respect to $T$, with the  second inequality $\frac{1}{2}\Sw_K(u)\ge \max S - |S| + 2$ we have just derived.  Together these  give the desired lower bound of $\max S - |S| + 2$ 
on the number of swappable boxes with respect to $T$. 
This shows that any $\cS^{\lambda}$ satisfying 
$|\lambda | + \lambda_1 > 4\max S - |S| + 1 - K(u)$ has multiplicity 0 in $W\!H_{S,(\mu , 1^{n-|\mu |})}$, yielding the desired stability bound.  
\end{proof}

\begin{prop}\label{prop:no-parts-lessthan-k}
Let $\mu$ be an  integer partition of size at most $n$ and having no parts of size less than $k$. Then  $W\!H_{S, (\mu ,1^{n-|\mu |})}(\Pi_n)$ is representation  stable for $n\ge \frac{2k}{k-1}\max S$. 
\end{prop}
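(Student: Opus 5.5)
By Lemma~\ref{lem:Stab-bd-from-ess-part}, the component $W\!H_{S,(\mu,1^{n-|\mu|})}(\Pi_n)$ stabilizes sharply at $\max\{|\lambda|+\lambda_1\}$, the maximum taken over the irreducibles $\cS^\lambda$ occurring in its essential part. Every such $\lambda$ is a partition of $|\mu|$, so $\lambda_1\le|\lambda|=|\mu|$. Hence it suffices to bound $|\mu|$ from above in terms of $\max S$ and $k$, and then use the crude estimate $|\lambda|+\lambda_1\le 2|\mu|$. No Young symmetrizer argument is needed; this is a counting bound on block sizes.

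The main step is the bound on $|\mu|$, using Item (1) of Lemma~\ref{lem:equiv-Statement}. Since $\mu$ has no parts of size $1$ (essential parts are defined that way) and none of size less than $k$, every part $i$ satisfies $i\ge k\ge 2$. For such $i$ we have $i-1\ge \frac{k-1}{k}\,i$. Therefore
\[
\max S=\sum_{i\ge k}(i-1)m_i(\mu)\ \ge\ \frac{k-1}{k}\sum_{i\ge k} i\,m_i(\mu)=\frac{k-1}{k}\,|\mu|,
\]
that is, $|\mu|\le \frac{k}{k-1}\max S$. Combining this with the previous paragraph gives $|\lambda|+\lambda_1\le 2|\mu|\le \frac{2k}{k-1}\max S$ for every $\cS^\lambda$ in the essential part. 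Lemma~\ref{lem:Stab-bd-from-ess-part} (equivalently Lemma~\ref{lem:quasi-free-implies-increasing-mult}, Part (2), applied to $h_{n-|\mu|}\cdot\ch\widehat{W\!H}$) then gives stability for all $n\ge \frac{2k}{k-1}\max S$.

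Two small points need checking. First, if $\frac{2k}{k-1}\max S$ is not an integer, the statement should be read as stability for all integers $n$ at least that value, which is consistent with the integer bound $\max(|\lambda|+\lambda_1)$. Second, the degenerate case where the essential part is zero is trivially stable. I do not expect any real obstacle. The only place one might worry is whether $\lambda_1\le|\lambda|$ is too lossy, but the proposition only claims an upper bound, not sharpness, so this estimate is enough. As a consistency check, $k=2$ gives $4\max S$, which agrees with the bound of Lemma~\ref{lem:sharp-bound-singletonS} and with Theorem~\ref{main-partition-stability-theorem} for $S=\{\max S\}$.
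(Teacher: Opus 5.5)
Your proposal is correct and follows essentially the same route as the paper. You bound $|\mu|\le \frac{k}{k-1}\max S$ using Item (1) of Lemma~\ref{lem:equiv-Statement}, then apply $|\lambda|+\lambda_1\le 2|\mu|$ and Lemma~\ref{lem:Stab-bd-from-ess-part}; the only difference is that the paper derives the bound as $\max S=|\mu|-\ell(\mu)\ge (k-1)\ell(\mu)$ instead of your termwise inequality $i-1\ge \frac{k-1}{k}\,i$.
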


\begin{proof}
We use Lemma~\ref{lem:Stab-bd-from-ess-part}, but first 
we show that $|\mu|\le \frac{k}{k-1}\max S$.

From Item (1) of Lemma~\ref{lem:equiv-Statement}, we have 
$\max S=|\mu| -\ell(\mu)$.
Also, since every part of $\mu$ has size at least k, we have  $|\mu|\ge k \ell(\mu)$. We conclude that $\max S \ge (k-1)\ell(\mu)$, and thus 
$|\mu|=\max S+\ell(\mu)\le \frac{k}{k-1}\max S$ \ as claimed.

Now consider  the essential part $\widehat{W\!H}_{S, (\mu, 1^{n-|\mu|})}(\Pi_n)$.  By definition, this is an $\fS_{|\mu|}$-module. 
It follows that any Specht module $\cS^\lambda$ appearing in the essential part has $|\lambda|=|\mu|$, and hence 
every such $\lambda$ satisfies 
\[ k\max S\ge (k-1)|\mu|\ge \frac{k-1}{2}(|\lambda|+\lambda_1).\]
The last inequality is due to the fact that $2|\mu|=2|\lambda|\ge|\lambda| +\lambda_1 $. The conclusion follows from Lemma~\ref{lem:Stab-bd-from-ess-part}.
\end{proof}

\section{Graphical and matroidal Specht-like modules}\label{graphical-section}

One could in principle define Specht-like modules for any skew shape (or straight shape) $\lambda /\mu $ with $n$ boxes and any matroid of rank $n$ as follows.   Consider all possible  fillings of the boxes of the shape  with the elements of the ground set, requiring that the set of entries in the boxes of the shape be a basis  
for the matroid.   Now consider the polytabloids generated by these fillings and the vector space generated by all such polytabloids.  

This gives  a matroid analogue of a Specht module.    
In the language of geometric lattices, the entries in a  filling would be any set of $n$ atoms whose join is the top element in the geometric lattice. 
When this geometric lattice is  the partition lattice, the associated matroid is the graphic matroid given by a complete graph on $n$ labeled vertices, and then the fillings by matroid bases consist of collections of graph edges comprising spanning trees.  

\begin{prop} 
  Fix any ordering on the ground set of a matroid  (i.e., the  atoms of a geometric lattice).  Consider the polytabloids given by the standard fillings with respect to this atom ordering  with independent sets of atoms.  Then these polytabloids generate all of  the polytabloids resulting from fillings by independent sets.
    \end{prop}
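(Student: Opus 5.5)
We will run the standard Garnir straightening algorithm (as in \cite[\S 7.4]{Fulton} or \cite[Chapter 8]{James}), with the integers $1,\dots,n$ replaced by the totally ordered ground set. Straightening in Specht modules only uses three things: the entries are distinct elements of a totally ordered set, the group acts by permuting boxes (positions), and there are polytabloid identities. None of these involves matroid structure. So we read the statement as saying that every polytabloid $v_F$, with $F$ a filling of the fixed (skew) shape by a fixed set of atoms $\cA$ forming a basis, lies in the span of the $v_{F'}$ with $F'$ a standard filling by the \emph{same} set $\cA$. The polytabloid identities we use will be verified in the space of tabloids filled with elements of $\cA$, which is just $M^{\lambda/\mu}$ with the labels $1,\dots,n$ renamed by the order-preserving bijection $\cA\to\{1,\dots,n\}$. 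Since both independence and the basis property depend only on $\cA$, every filling that arises stays a filling by the same matroid basis.

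The steps will be as follows. First, we transport everything along the order-preserving bijection $\cA\cong[n]$. Then $v_F$ becomes the ordinary skew polytabloid $v_{\tilde F}$, and the standard $\cA$-fillings correspond exactly to standard tableaux of shape $\lambda/\mu$.

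Second, we reduce to column-strict fillings. If $\sigma\in \Col_F$, then $v_{\sigma F}=\sgn(\sigma)v_F$, so we may sort each column increasingly.

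Third, we straighten row violations. Suppose some row has adjacent entries $x>y$, with $x$ in column $j$ and $y$ in column $j+1$. We form the Garnir element for the set $A$ of entries of column $j$ weakly below $x$ and the set $B$ of entries of column $j+1$ weakly above $y$. The standard identity $\sum_{\pi}\sgn(\pi)v_{\pi F}=0$ then expresses $v_F$ as a combination of $v_{F'}$ whose column words are strictly larger in the usual dominance/lexicographic order on column tabloids. Because this order is finite, the process terminates. At the end, every remaining filling is column strict and row strict, that is, standard.

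The main obstacle is the validity of the Garnir relation for skew shapes. We need $|A|+|B|$ to exceed the length of the overlap of the two columns, so that some pair from $A\cup B$ shares a row in every tabloid summand, which is what produces the cancellation. For a skew shape, column $j+1$ may extend higher than column $j$ and column $j$ may extend lower. We would handle this by restricting $A$ and $B$ to the rows that the two columns share, which is possible because the violating pair $x,y$ lies in a common row. Since we are transporting to the ordinary setting, we can also simply cite the known fact that the standard polytabloids span the skew Specht module $\cS^{\lambda/\mu}$ \cite{James}, and pull the resulting linear relations back through the bijection. The point to check is that this bijection intertwines the positional actions of $\Col$ and $\Row$, which it does because it only relabels entries.
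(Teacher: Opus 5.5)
Your overall route is the paper's. The paper fixes the independent set, notes that only the total order on its elements matters, and appeals to Fulton's classical argument that standard polytabloids span. Your relabelling along the order-preserving bijection $\cA\cong\{1,\dots,n\}$, followed by citing the spanning theorem for skew Specht modules and pulling the relations back, is exactly this, and that citation by itself completes the proof.

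The self-contained Garnir route, however, contains a step that fails, so you should drop it or correct it.

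\emph{Why the restriction fails.} The pigeonhole argument behind the Garnir relation needs $|A|+|B|$ to exceed the number of rows meeting column $j$ \emph{or} column $j+1$, not the number of rows they share. The reason is that the column group can move entries of $A$ into rows of column $j$ lying below column $j+1$, and entries of $B$ into rows of column $j+1$ lying above column $j$. Restricting $A$ and $B$ to the shared rows therefore invalidates the relation. This already happens for the smallest nontrivial ribbon. In $\Rib(2,1)$, take bottom row $3,2$ with $1$ above the $2$. Your restriction gives $A=\{3\}$, $B=\{2\}$ and the putative relation $v_F=v_{(2\,3)F}$. Writing tabloids as $\{\text{top}\mid\text{bottom}\}$, one has $v_F=\{1\mid 2,3\}-\{2\mid 1,3\}$ and $v_{(2\,3)F}=\{1\mid 2,3\}-\{3\mid 1,2\}$. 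Hence $v_F-v_{(2\,3)F}=\{3\mid 1,2\}-\{2\mid 1,3\}\neq 0$.

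\emph{No repair is needed.} Use the usual unrestricted sets: $A$ is the entries of column $j$ weakly below $x$, and $B$ is the entries of column $j+1$ weakly above $y$. In a skew shape $\lambda/\mu$ this gives $|A|+|B|=\lambda'_j-\mu'_{j+1}+1$, which is one more than the number of rows met by the two columns. So the relation holds exactly as for straight shapes. In the example it reads $v_F=v_{(1\,3)F}+v_{(2\,3)F}$.
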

    \begin{proof}
    For any particular independent set $\{ F_1,\dots ,F_n\} $,
    the set of  polytabloids $v_{\sigma F}$ with reading word  
    $(F_{\sigma (1)},\dots ,F_{\sigma (n)})$ for  $\sigma \in S_n$  is generated by the  smaller set of $v_{\sigma F}$ in which the filling $\sigma F$ is standard; 
    this follows directly from the  proof showing the traditional Specht modules are generated by the polytabloids given by standard fillings (see e.g. \cite{Fulton}).  
    \end{proof}

When $\lambda /\mu $ is a ribbon shape, applying the map $f_{chain}$ to  this generating set above
introduces further  relations beyond the Garnir relations (see \cite{Fulton}).  We showed in Section ~\ref{geometric-ribbon-section} that these extra relations lead to smaller bases indexed by those standard fillings whose reading words are 
$\minNBC $ independent sets.

Ribbon shapes are particularly well suited  
to the study of the rank-selected homology of a geometric lattice, hence our focus throughout this paper on ribbon shapes. However,  one could imagine uses for other shapes as well.

\begin{rk}
There is a substantial literature regarding objects known as  set-valued tableaux (see  e.g. ~\cite{Buch}, ~\cite{KMY}, ~\cite{KMY2}, ~\cite{MT}, ~\cite{PS}) 
These were first introduced by Anders Buch in \cite{Buch} as a combinatorial tool for calculating structure constants for the K-theory of the Grassmannian.  In the case of the partition lattice $\Pi_n$, our ribbon fillings with pairs $\{ i,j\} $ satisfying $1\le i<j\le n$ may also  be regarded as set-valued tableaux.
However, the established notion of standard filling  for set-valued tableaux in the context of studying K-theory of Grassmannians  \cite{Buch} 
does not coincide with the version of standardness needed for our approach to understanding the  rank-selected homology of the partition lattice.
We are not aware of any prior work organizing the set-valued tableaux of \cite{Buch} into Specht-like modules. 
\end{rk}

\section{Acknowledgments}
The authors thank  Colin Crowley, Shreeya Moghe, Scott Neville, Franco Saliola, 
and Alex Yong 
for helpful discussions, for proofreading and feedback, and for assistance with using software to calculate examples and to generate figures involving set-valued tableaux.  They especially thank 
Dan Dugger and Vic Reiner  for helping them better understand FI-modules,  Nick Proudfoot for asking how their  ribbon bases  relate to the monomial bases for the graded pieces  
of the Orlik-Solomon algebra, and Ben Elias for asking whether their theory also applies to Specht modules of shapes other than ribbon shapes.  

They are also extremely  grateful to ICERM and Brown University for providing a wonderful work environment during the Fall 2025 semester program.

\end{document}